\numberwithin{equation}{section}
\newtheorem{theorem}{Theorem}[section]
\newtheorem{proposition}[theorem]{Proposition}
\newtheorem{corollary}[theorem]{Corollary}
\newtheorem{lemma}[theorem]{Lemma}
\newtheorem{conjecture}[theorem]{Conjecture}
\theoremstyle{definition}
\newtheorem{observation}[theorem]{Observation}
\newtheorem{problem}[theorem]{Problem}
\newtheorem{example}[theorem]{Example}
\newtheorem{remark}[theorem]{Remark}
\newtheorem{notation}[theorem]{Notation}
\newtheorem*{rep@theorem}{\rep@title}
\newcommand{\newreptheorem}[2]{%
\newenvironment{rep#1}[1]{%
 \def\rep@title{#2 \ref{##1}}%
 \begin{rep@theorem}}%
 {\end{rep@theorem}}}
\theoremstyle{definition}
\newtheorem{defn}[theorem]{Definition}
\def\cocoa{{\hbox{\rm C\kern-.13em o\kern-.07em C\kern-.13em o\kern-.15em A}}}
\newcommand{\sign}{{\mathrm {sign}}}
\newcommand{\Loc}{{\mathrm {Loc}}}
\newcommand{\Fun}{{\mathrm {Fun}}}
\newcommand{\ch}{{\mathrm {ch}}}
\newcommand{\height}{{\mathrm {ht}}}
\newcommand{\content}{{\mathrm {cont}}}
\newcommand{\type}{{\mathrm {type}}}
\newcommand{\sort}{{\mathrm {sort}}}
\newcommand{\III}{{\mathcal {I}}}
\newcommand{\WWW}{{\mathcal {W}}}
\newcommand{\UUU}{{\mathcal {U}}}
\newcommand{\TTT}{{\mathcal {T}}}
\newcommand{\OOO}{{\mathcal {O}}}
\newcommand{\maj}{{\mathrm {maj}}}
\newcommand{\wt}{{\mathrm {wt}}}
\newcommand{\depth}{{\mathrm {depth}}}
\newcommand{\shape}{{\mathrm {shape}}}
\newcommand{\End}{{\mathrm {End}}}
\newcommand{\cyc}{{\mathrm {cyc}}}
\newcommand{\symm}{{\mathfrak{S}}}
\newcommand{\CC}{{\mathbb {C}}}
\newcommand{\RR}{{\mathbb {R}}}
\newcommand{\AAA}{{\mathcal{A}}}
\newcommand{\BBB}{{\mathcal{B}}}
\newcommand{\GGG}{{\mathcal{G}}}
\newcommand{\xx}{{\mathbf {x}}}
\newcommand{\one}{{\mathbf{1}}}
\newcommand{\exc}{{\mathrm {exc}}}
\newcommand{\inv}{{\mathrm {inv}}}
\newcommand{\Class}{{\mathrm {Class}}}
\begin{document}

\title[Partial permutations and character evaluations]
{Partial permutations and character evaluations}

\author[Zachary Hamaker]{Zachary Hamaker}
\author[Brendon Rhoades]{Brendon Rhoades}

\address
{Department of Mathematics \newline \indent
University of Florida \newline \indent
Gainesville, FL, 32611, USA}
\email{zhamaker@ufl.edu}

\address
{Department of Mathematics \newline \indent
University of California, San Diego \newline \indent
La Jolla, CA, 92093, USA}
\email{bprhoades@ucsd.edu}

\begin{abstract}
Let $I = (i_1, \dots, i_k)$ and $J = (j_1, \dots, j_k)$ be two length $k$ sequences drawn from $\{1, \dots, n \}$. We have  the group algebra element $[I,J] := \sum_{w(I) = J} w \in \CC[\symm_n]$ where the sum is over permutations $w \in \symm_n$ which satisfy $w(i_p) = j_p$ for $p = 1, \dots, k$. We give an algorithm for evaluating irreducible characters $\chi^\lambda: \CC[\symm_n] \to \CC$ of the symmetric group on the elements $[I,J]$. This algorithm is a hybrid of the classical Murnaghan--Nakayama rule and a new {\em path Murnaghan--Nakayama rule} which reflects the decomposition of a partial permutation into paths and cycles.

These results first appeared in~\href{https://arxiv.org/abs/2206.06567}{arXiv:2206.06567}, which is no longer intended for publication.
We originally used the character theoretic results in this paper to prove asymptotic results on moments of certain permutation statistics restricted to conjugacy classes. A referee generously shared a combinatorial argument which is strong enough to prove these results without recourse to character theory.
These results now appear in our companion paper~\cite{HRMoment}.
However, the approach in this paper are more explicit, as we demonstrate with several examples.
\end{abstract}

\maketitle

\section{Introduction}
\label{sec:Introduction}

Let $\symm_n$ denote the symmetric group of permutations of $[n] := \{1, \dots, n\}$. 
The set $\Fun(\symm_n,\CC)$ of all functions $f: \symm_n \to \CC$ is in bijection with the group algebra $\CC[\symm_n]$ via
\[ f: \symm_n \to \CC \quad \Leftrightarrow \quad \sum_{w \, \in \, \symm_n} f(w) \cdot w \in \CC[\symm_n].
\]
For $k \leq n$, a {\em partial permutation of $[n]$ of size $k$} is an ordered pair $(I,J)$ where $I = (i_1, \dots, i_k)$ and $J = (j_1, \dots, j_k)$ are length $k$ lists of distinct elements of $[n]$. The pair $(I,J)$ represents the function $I \to J$ given by $i_1 \mapsto j_1, i_2 \mapsto j_2, \dots, i_k \mapsto j_k$. We define
\begin{equation}
    \symm_{n,k} := \{ \text{all partial permutations $(I,J)$ in $\symm_n$ of size $k$} \}.
\end{equation}
For any $(I,J) \in \symm_{n,k}$ we have an indicator statistic $\one_{I,J}: \symm_n \to \CC$ given by
\begin{equation}
    \label{eq:one-definition}
    \one_{I,J}(w) := \begin{cases}
        1 & \text{if }w(I) = J, \\
        0 & \text{otherwise,}
    \end{cases}
\end{equation}
where $w(I) = w(i_1,\dots,i_k) := (w(i_1), \dots, w(i_k))$.
We let $[I,J] \in \CC[\symm_n]$ for the associated group algebra element
\begin{equation}
    \label{eq:group-algebra-element-definition}
    [I,J] := \sum_{w \, \in \, \symm_n} \one_{I,J}(w) \cdot w = \sum_{w(I) \, = \, J} w \in \CC[\symm_n].
\end{equation}

The maps $\one_{I,J}: \symm_n \to \CC$ are not linearly independent. For example,  both sides of
\[ \one_{1,1} + \cdots + \one_{1,n} = \one_{1,1} + \cdots + \one_{n,1}\]
give the constant function $1$ on $\symm_n$. The $\one_{I,J}$ measure the complexity of functions $\symm_n \to \CC$ as follows.

\begin{defn}
    \label{def:local-statistics}
    A function $f: \symm_n \to \CC$ is {\em $k$-local} if there exist constants $c_{I,J} \in \CC$ such that 
    \[
    f = \sum_{(I,J) \, \in \, \symm_{n,k}} c_{I,J} \cdot \one_{I,J}
    \]
    as functions $\symm_n \to \CC$. We write $\Loc_k(\symm_n,\CC)$ for the $\CC$-vector space of $k$-local maps $f: \symm_n \to \CC$.
\end{defn}

The concept encoded in Definition~\ref{def:local-statistics} has appeared in previous literature \cite{EFP, EZ, HGG} as the {\em degree} of a permutation statistic. In that language, the $k$-local statistics are those of degree at most $k$.
We use our terminology in this paper to avoid confusion since many other notions degree will occur.


For example, the statistic $\inv(w) = \# \{ 1 \leq i < j \leq n \,:\, w(i) > w(j) \}$ is 2-local. Local statistics have appeared in various guises in the probability literature \cite{EFP, EZ, HGG}.
The spaces $\Loc_k(\symm_n,\CC)$ nest as a filtration
\begin{equation}
\label{eq:local-filtration}
    \CC = \Loc_0(\symm_n,\CC) \subseteq \Loc_1(\symm_n,\CC) \subseteq \cdots \subseteq \Loc_{n-1}(\symm_n,\CC) = \CC[\symm_n]
\end{equation}
where $\CC = \Loc_0(\symm_n,\CC)$ is the space of constant functions $\symm_n \to \CC$.
This filtration satisfies 
\begin{equation}
    \Loc_k(\symm_n,\CC) \cdot \Loc_{\ell}(\symm_n,\CC) \subseteq \Loc_{k + \ell}(\symm_n,\CC).
\end{equation}
under the pointwise product $(f \cdot g)(w) := f(w) \cdot g(w)$ of functions $\symm_n \to \CC$. A result of Ellis--Friedgut--Pilpel \cite{EFP} (see also Even-Zohar \cite{EZ} or Huang--Guestrin--Guibas \cite{HGG})
the vector space dimension of $\Loc_k(\symm_n,\CC)$ is 
\begin{equation}
    \dim \Loc_k(\symm_n,\CC) = \# \{ w \in \symm_n \,:\, w \text{ has an increasing subsequence of length $n-k$} \}.
\end{equation}
The expository Section~\ref{sec:Partial} proves these properties of $\Loc_k(\symm_n,\CC)$ using the techniques of algebraic combinatorics.

Our main results are about character evaluations. Irreducible characters $\chi^\lambda: \symm_n \to \CC$ of  $\symm_n$ are in one-to-one correspondence with partitions $\lambda \vdash n$. The {\em Murnaghan--Nakayama rule} is an algorithm which computes $\chi^\lambda(w)$ for any permutation $w \in \symm_n$ as a signed sum over certain ribbon tableaux. We extend $\chi^\lambda$ to a map $\CC[\symm_n] \to \CC$ by linearity:
\begin{equation}
\label{eq:character-linear-extension}
    \chi^\lambda \left(  \sum_{w \, \in \, \symm_n} a_w \cdot w \right) := \sum_{w \, \in \, \symm_n} a_w \cdot \chi^\lambda(w)
\end{equation}
for $a_w \in \CC$. For a given group algebra element $a = \sum_{w \in \symm_n} a_w \cdot w \in \CC[\symm_n]$, one can ask for a combinatorial rule for the character evaluation $\chi^\lambda(a)$. For general $a$, this problem is intractable: besides applying \eqref{eq:character-linear-extension} and the Murnaghan--Nakayama rule $n!$ times, nothing else can be said. Our main result may be paraphrased as follows.

\begin{theorem}
    \label{thm:intro-character-eval} {\em ($\subset$ Corollary~\ref{cor:trace-interpretation}, Theorem~\ref{rmk:degree-bounds})} Let $(I,J) \in \symm_{n,k}$. There is an explicit combinatorial rule given by a signed enumeration of ribbon tilings for calculating the family of irreducible character evaluation $\{ \chi^\lambda([I,J]) \,:\, \lambda \vdash n \}$.
    This rule calculates  $\{ \chi^\lambda([I,J]) \,:\, \lambda \vdash n \}$ using a constant (and finite) set of tilings whenever $n \geq 2k$.
\end{theorem}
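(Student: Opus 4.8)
The plan is to reduce the computation of $\chi^\lambda([I,J])$ to a sum of ordinary character evaluations $\chi^\lambda(w)$ by analyzing the combinatorial structure of a partial permutation. First I would observe that a partial permutation $(I,J)$ of size $k$ determines a directed graph on vertex set $I \cup J$ with an edge $i_p \to j_p$ for each $p$; since the $i_p$ are distinct and the $j_p$ are distinct, every vertex has out-degree $\le 1$ and in-degree $\le 1$, so this graph is a disjoint union of directed \emph{paths} and directed \emph{cycles}. The cycles behave exactly as in an ordinary permutation, while each directed path of edge-length $\ell$ forces the image of $\ell$ specified points but leaves the endpoints of the path free. The key point is that the set $\{w \in \symm_n : w(I) = J\}$ is a coset-like object: after contracting each path and each cycle, $[I,J]$ can be rewritten as a product (in $\CC[\symm_n]$) of simpler ``elementary'' partial-permutation elements, one per path and one per cycle, times the full group algebra element on the un-involved points.

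Next I would set up a generating-function / symmetric-function translation. Using the Frobenius characteristic and the standard fact that $\sum_{\lambda \vdash n} \chi^\lambda(w)\, s_\lambda = p_{\mu(w)}$ for a permutation of cycle type $\mu$, the ordinary Murnaghan--Nakayama rule is the statement that multiplication by a power sum $p_r$ corresponds to adding a border strip (ribbon) of size $r$. The contribution of a directed cycle of length $r$ in $(I,J)$ is therefore exactly a factor of $p_r$, handled by the classical rule. For a directed path with $\ell$ edges, I would derive the corresponding symmetric-function operator — call it the \emph{path operator} — by a direct inclusion–exclusion over which of the path's endpoints get identified with other free points; I expect this to produce a signed sum over border strips whose length ranges over $\ell, \ell+1, \dots$ (reflecting whether the two free endpoints of the path are distinct, coincide with each other after closing up, or merge with an ambient point). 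This is the ``path Murnaghan--Nakayama rule'' named in the introduction, and establishing it — i.e.\ identifying precisely the right signed enumeration of ribbon tilings attached to a path — is the step I expect to be the main obstacle, since one must check that the inclusion–exclusion collapses to a clean ribbon-theoretic statement and that signs match the determinantal (Jacobi–Trudi / border-strip) expansion.

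With both building blocks in hand, the combinatorial rule for $\{\chi^\lambda([I,J])\}$ is: decompose $(I,J)$ into its paths $P_1, \dots, P_a$ (of edge-lengths $\ell_1, \dots, \ell_a$) and cycles $C_1, \dots, C_b$ (of lengths $r_1, \dots, r_b$); then $\chi^\lambda([I,J])$ is computed by peeling border strips from $\lambda$ in all ways dictated by the classical rule for each $p_{r_i}$ and by the path rule for each $P_j$, with the leftover cells of $\lambda$ (those not covered after removing $\sum r_i + \sum \ell_j + (\text{number of path-endpoint merges})$ cells) filled by the trivial character of the residual symmetric group on the untouched points, contributing a factor counting standard-ish tableaux of the remaining shape. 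Each individual peeling is a ribbon tiling, so the whole evaluation is a signed enumeration of ribbon tilings, proving the first assertion.

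Finally, for the uniformity claim when $n \ge 2k$: a partial permutation of size $k$ involves at most $2k$ points of $[n]$ (namely $I \cup J$), and each path has edge-length at most $k$, each cycle length at most $k$. The number of border strips that can be removed in the path/cycle peeling steps is bounded purely in terms of $k$ (each strip has size $\le k+1$ and there are at most $k$ of them), and the condition $n \ge 2k$ guarantees that the ambient points not touched by $(I,J)$ are numerous enough that the ``merge an endpoint with an ambient point'' cases are always available and are governed by the same finite list of local ribbon shapes rather than by the specific value of $n$ — the $n$-dependence enters only through the residual trivial-character factor, which is a single explicitly determined number (a dimension of an irreducible, or a ratio of such), not through the set of tilings used. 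Hence the same finite set of tilings computes the whole family $\{\chi^\lambda([I,J]) : \lambda \vdash n\}$ for every $n \ge 2k$, which is the second assertion. I would make the bound $2k$ sharp by noting that when $n < 2k$ one can have $I \cup J = [n]$ with no free ambient points, forcing the endpoint-merge analysis to degenerate and the tiling set to depend on $n$.
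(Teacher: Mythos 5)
Your skeleton is right — decompose $G(I,J)$ into directed paths and cycles, peel ribbons for the cycles via the classical Murnaghan--Nakayama rule, and invent a new ribbon rule for the paths — and this matches the paper's strategy (Proposition~\ref{prop:path-cycle-factorization} plus Theorem~\ref{thm:path-Murnaghan--Nakayama}). But there are two genuine gaps.

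First, you treat the paths multiplicatively: ``$[I,J]$ can be rewritten as a product of simpler elements, one per path and one per cycle,'' and you propose a separate ``path operator'' for each path. This is exactly the point where the naive approach fails. When a partial permutation consisting of several paths is completed to a full permutation, the paths can be concatenated with \emph{each other} into cycles (the head of one path may map to the tail of another), so the contribution of a path of size $\mu_i$ is entangled with all the other paths: by Proposition~\ref{prop:path-to-classical}, $\vec{p}_{\mu} = \sum_{w \in \symm_r} \prod_{C \in w} p_{\mu_C}$ sums over all ways of linking the paths into cycles, and consequently $\vec{p}_{\mu}$ is \emph{not} multiplicative in $\mu$. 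Your inclusion--exclusion ``over which of the path's endpoints get identified with other free points'' would have to include identifications between endpoints of distinct paths, and once it does, there is no single-path operator; the whole collection of paths must be handled at once. You flag the derivation of the path rule as ``the main obstacle'' and then do not carry it out; but that derivation \emph{is} the theorem. The paper's content here is the precise statement (the signed sum over monotonic ribbon tilings, with tails in distinct columns and left-to-right partition-building, weighted by $m(\mu)!$) together with its proof via word arrays, the sign-reversing involution on unstable pairs (Lemma~\ref{lem:sign-reversing}), and the bijection with enhanced monotonic tilings. None of this is recoverable from the proposal as written.

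Second, the stability claim for $n \geq 2k$ is asserted rather than proved, and the mechanism you describe is not the right one. The $n$-dependence does not sit only in ``a single residual trivial-character factor'': it enters through a multinomial coefficient counting the arrangements of the size-one and larger ribbons lying in the first row. The paper's argument decomposes each monotonic tiling into a frozen region $T_0$ (ribbons with tail outside row one) and a tropical region in row one, proves that $T_0$ can contain at most $|\mu| - 2\ell(\mu)$ singleton ribbons (Proposition~\ref{prop:path-power-formula}), and deduces that the set of frozen tilings is literally the same finite set once $n \geq 2(|\mu|-\ell(\mu)) \leq 2k$. Your count ``$I \cup J$ has at most $2k$ points'' does not yield this, and the sharpness remark at the end is not established (nor claimed by the paper).
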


 The na\"ive approach to computing $\{ \chi^\lambda([I,J]) \,:\, \lambda \vdash n \}$  requires $(n-k)!$ applications of the Murnaghan--Nakayama rule or each partition $\lambda \vdash n$. Theorem~\ref{thm:intro-character-eval} is therefore a massive improvement.

Our algorithm for computing $\chi^\lambda([I,J])$ uses the decomposition of the partial permutation $(I,J)$ into disjoint cycles and paths.  The classical Murnaghan--Nakayama rule computes the Schur expansion $p_\nu = \sum_{\lambda \vdash n} \chi^\lambda_\nu s_\lambda$ of a power sum symmetric function $p_\nu$. We define a new {\em path power sum} basis $\{ \vec{p}_\mu \,:\, \mu \vdash n \}$ of the vector space of degree $n$ symmetric functions (Corollary~\ref{cor:path-basis}). We prove a {\em path Murnaghan--Nakayama rule} which computes the Schur expansion $\vec{p}_\mu = \sum_{\lambda \vdash n} \vec{\chi}^\lambda_\mu \cdot s_\lambda$ of a path power sum (Theorem~\ref{thm:path-Murnaghan--Nakayama}).   Given a partial permutation $(I,J)$ of $[n]$, our algorithm for computing $\{ \chi^\lambda([I,J]) \,:\, \lambda \vdash n \}$ is a hybrid of the path Murhaghan-Nakayama rule for the paths of $(I,J)$ and the classical Murnaghan--Nakayama rule for the cycles of $(I,J)$.

The coefficients $\vec{\chi}^\lambda_\mu$ appearing in the Schur expansion of $\vec{p}_\mu = \sum_{\lambda \vdash n} \vec{\chi}^\lambda_\mu \cdot s_\lambda$ are signed counts of certain {\em monotonic ribbon tilings}; see Definition~\ref{def:monotonic-definition}.
An example monotonic ribbon tiling is shown below; the tails of the ribbons (the white circles) lie in distinct columns and left-to-right ribbon addition gives a chain in Young's lattice.  Proving the path Murnaghan--Nakayama rule is the most technical part of the paper.

\begin{center}
\begin{tikzpicture}[scale = 0.35]

  \begin{scope}
    \clip (0,0) -| (2,2) -| (4,3) -| (9,4) -| (10,5) -| (0,0);
    \draw [color=black!25] (0,0) grid (10,5);
  \end{scope}

  \draw [thick] (0,0) -| (2,2) -| (4,3) -| (9,4) -| (10,5) -| (0,0);

  \draw [thick, rounded corners] (0.5,0.5) |- (3.5,4.5);
  \draw [color=black,fill=black,thick] (3.5,4.5) circle (.4ex);
  \node [draw, circle, fill = white, inner sep = 2pt] at (0.5,0.5) { };
  
  \draw [thick, rounded corners] (1.5,0.5) |- (2.5,3.5);
  \draw [color=black,fill=black,thick] (2.5,3.5) circle (.4ex);
  \node [draw, circle, fill = white, inner sep = 2pt] at (1.5,0.5) { };
  
  \draw [thick, rounded corners] (2.5,2.5) -| (3.5,3.5) -| (4.5,4.5) -- (7.5,4.5);
  \draw [color=black,fill=black,thick] (7.5,4.5) circle (.4ex);
  \node [draw, circle, fill = white, inner sep = 2pt] at (2.5,2.5) { };

    \node [draw, circle, fill = white, inner sep = 2pt] at (5.5,3.5) { };
    
  \draw [thick, rounded corners] (6.5,3.5) -| (8.5,4.5);
  \draw [color=black,fill=black,thick] (8.5,4.5) circle (.4ex);
  \node [draw, circle, fill = white, inner sep = 2pt] at (6.5,3.5) { };   
  
      \node [draw, circle, fill = white, inner sep = 2pt] at (9.5,4.5) { };

\end{tikzpicture}
\end{center}

To get a flavor of how monotonic tilings can rapidly calculate $\chi^\lambda([I,J])$, consider the special case $I = J = \varnothing$. Up to a scalar, the group algebra element  $[\varnothing,\varnothing] = \sum_{w \in \symm_n} w \in \CC[\symm_n]$ projects onto the trivial representation so that 
\[ [\varnothing,\varnothing] \cdot V^\lambda = 0 \text{ unless $\lambda = (n)$.}\]
Since $V^{(n)}$ is the trivial $\symm_n$-module, we have
\begin{equation}
\label{eq:intro-trivial-evaluation}
\chi^{\lambda}([\varnothing,\varnothing]) = \mathrm{trace}_{V^{\lambda}}\left( \sum_{w \, \in \, \symm_n} w \right) =  \begin{cases}
    n! & \text{if $\lambda = (n)$,} \\
    0 & \text{otherwise.}
\end{cases}
\end{equation}
Equation~\eqref{eq:intro-trivial-evaluation} could be obtained with \eqref{eq:character-linear-extension} and $n!$ applications of the Murnaghan--Nakayama rule for each partition $\lambda \vdash n$. On the other hand, since the partial permutation $(\varnothing,\varnothing) \in \symm_{n,0}$ has $n$ paths of size 1 and no cycles, the family  $\{ \chi^\lambda([\varnothing,\varnothing]) \,:\ \lambda \vdash n \}$ of character evaluations will arise as the coefficients $\vec{p}_{(1^n)} = \sum_{\lambda \vdash n} \chi^\lambda([\varnothing,\varnothing]) \cdot s_\lambda$ in the Schur expansion of the path power sum $\vec{p}_{(1^n)}$. The only monotonic ribbon tiling with $n$ ribbons of size 1 is 
\begin{center}
\begin{tikzpicture}[scale = 0.35]
\begin{scope}
   \clip (0,0) -| (2,1) -|  (0,0);
    \draw [color=black!25] (0,0) grid (10,1);
\end{scope}

\begin{scope}
   \clip (8,0) -| (10,1) -|  (8,0);
    \draw [color=black!25] (0,0) grid (10,1);
\end{scope}

\draw [thick]  (0,0) -| (10,1) -| (0,0);
\node [draw, circle, fill = white, inner sep = 1.2pt] at (0.5,0.5) { };   
\node [draw, circle, fill = white, inner sep = 1.2pt] at (1.5,0.5) { };   
\node [draw, circle, fill = white, inner sep = 1.2pt] at (9.5,0.5) { };   
\node [draw, circle, fill = white, inner sep = 1.2pt] at (8.5,0.5) { };   

\node at (5,0.5) {$\cdots$};
\end{tikzpicture}
\end{center}
because this is the only way to place $n$ size 1 ribbons in different columns and obtain a partition.  The path Murnaghan--Nakayama rule (Theorem~\ref{thm:path-Murnaghan--Nakayama}) gives the Schur expansion $\vec{p}_{(1^n)} = n! \cdot s_{n}$, yielding Equation~\eqref{eq:intro-trivial-evaluation} in a single step.

Let $\Class(\symm_n,\CC)$ be the vector space of class functions  $\symm_n \to \CC$ with its basis of irreducible characters $\{ \chi^\lambda \,:\, \lambda \vdash n\}$.  The {\em Reynolds operator} is the  projection $R: \Fun(\symm_n,\CC) \to \Class(\symm_n,\CC)$ defined by
\[ R \, f(w) := \frac{1}{n!} \sum_{v \, \in \, \symm_n} f(v^{-1} w v)\] for $f : \symm_n \to \CC$. Given an arbitrary function $f:\symm_n \to \CC$, we may regard $R \, f$ as the best class function approximation of $f$. Various authors have considered the irreducible character expansions of $R \, f$ where $f$ is a classical permutation statistic \cite{Hultman} or a statistic related to pattern counting \cite{GP, GR}. Our results calculate character expansions in a systematic way when $f$ is $k$-local for small $k$; this includes classical permutation statistics such as inversion number, descent number, exceedance number, and major index. The theory of character polynomials combines with our approach to give structural results on moments of statistics $f: \symm_n \to \CC$ restricted to conjugacy classes. These probabilistic results are derived in a more straightforward way in the companion paper \cite{HRMoment} by directly considering the symmetrized indicator functions $R \, \one_{I,J}$. An advantage of this paper is that it facilitates the explicit combinatorial expansion of $R \, \one_{I,J}$ into irreducible characters.


The rest of the paper is organized as follows. In {\bf Section~\ref{sec:Background}} we give background information on combinatorics, representation theory, and symmetric functions. {\bf Section~\ref{sec:Partial}} is devoted to the general study of local statistics and local class functions on $\symm_n$. {\bf Section~\ref{sec:Atomic}} introduces the atomic symmetric function $A_{n,I,J}$ of a partial permutation $(I,J)$ and the path power sum $\vec{p}_\mu$ indexed by a partition $\mu$. It is proven that $A_{n,I,J}$ factors into path and classical power sums according to the path and cycle types of $(I,J)$; see Proposition~\ref{prop:path-cycle-factorization}. {\bf Section~\ref{sec:Path}} is the heart of the paper in which the path Murnaghan--Nakayama rule is proven and our combinatorial rule for $\chi^\lambda([I,J])$ is deduced. {\bf Section~\ref{sec:Statistic}} applies the path Murnaghan--Nakayama rule to compute the irreducible character expansions of symmetrized versions for the classical statistics exceedance and major index. We conclude in {\bf Section~\ref{sec:Conclusion}} with some open problems.

\section{Background}
\label{sec:Background}

\subsection{Combinatorics} For $n \geq 0$, a {\em partition} of $n$ is a sequence $\lambda = (\lambda_1 \geq \cdots \geq \lambda_k)$ of positive integers with $\lambda_1 + \cdots + \lambda_k = n$. We write $\ell(\lambda) := k$ for the number of parts of $\lambda$ and $|\lambda| := n$ for the sum of the parts of $\lambda$. We adopt the shorthand $\lambda \vdash n$ to mean that $\lambda$ is a partition of $n$. We sometimes add trailing zeros to $\lambda$ so that e.g. $(3,1,1)$ and $(3,1,1,0,0)$ are the same partition of 5.  

We identify a partition $\lambda$ with its (English) {\em Young diagram} consisting of $\lambda_i$ left-justified boxes in row $i$. The Young diagram of $(4,3,1) \vdash 8$ is shown below.

\begin{center}
\begin{tikzpicture}[scale = .3]
  \begin{scope}
    \clip (0,0) -| (1,1) -| (3,2) -| (4,3) -| (0,0);
    \draw [color=black!25] (0,0) grid (4,3);
  \end{scope}

  \draw [thick] (0,0) -| (1,1) -| (3,2) -| (4,3) -| (0,0);


\end{tikzpicture}
\end{center}

Let $\lambda$ be a partition. For $i \geq 1$ we let $m_i(\lambda)$ be the multiplicity of $i$ as a part of $\lambda$. We set \[m(\lambda)! := \prod_{i \, \geq \, 1} m_i(\lambda)!.\] For example, if $\lambda = (4,4,3,1,1,1)$ then $m(\lambda)! = m_4(\lambda)! m_3(\lambda)! m_2(\lambda)! m_1(\lambda)! = 2! \cdot 1! \cdot 0! \cdot 3! = 12$.

Let $\lambda, \mu$ be partitions such that $\mu_i \leq \lambda_i$ for all $i$. The {\em skew shape} $\lambda/\mu$ is the set-theoretic difference $\lambda/\mu := \lambda - \mu$ of Young diagrams. We define $|\lambda/\mu| := |\lambda| - |\mu|$. The skew shape $(4,4,1)/(2,1)$ is shown below.

\begin{center}
\begin{tikzpicture}[scale = .3]
  \begin{scope}
    \clip (0,0) -| (1,1) -| (4,3) -| (2,2) -| (1,1) -| (0,0);
    \draw [color=black!25] (0,0) grid (4,3);
  \end{scope}
 
 \draw [thick] (0,0) -| (1,1) -| (4,3) -| (2,2) -| (1,1) -| (0,0);
\end{tikzpicture}
\end{center}

Let $n \geq 0$ A {\em (strong) composition} of $n$ is a sequence $\alpha = (\alpha_1, \dots, \alpha_k)$ of positive integers with $\alpha_1 + \cdots + \alpha_k = n$. As with partitions, we define $|\alpha| := n$ and $\ell(\alpha) := k$. We write $\alpha \models n$ to mean that $\alpha$ is a composition of $n$. 

A {\em set partition} of $[n]$ is a family $\pi = \{ B_1, \dots, B_k\}$ of nonempty subsets of $[n]$ (called {\em blocks}) such that $[n] = B_1 \sqcup \cdots \sqcup B_k$. We write $\Pi_n$ for the collection of all set partitions of $[n]$. The set $\Pi_n$ has a partial order given $\pi \leq \pi'$ if and only if $\pi'$ refines $\pi$.
The set partition $\{\{1\}, \dots, \{n\}\}$ is the minimum elements of this partial order and the set partition $\{\{1,\dots,n\}\}$ is the maximum element of this partial order.

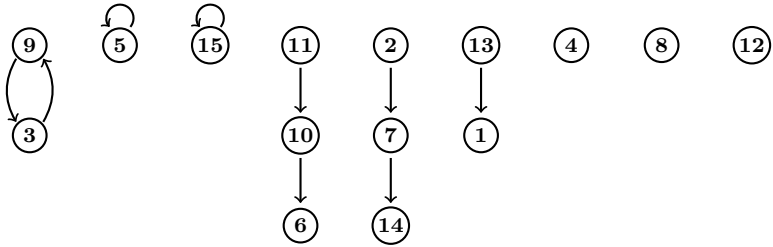
\begin{figure}
 \begin{center}
 \begin{tikzpicture}[scale = 0.6]
 
 \coordinate (v11) at (6,0);
 
 \coordinate (v10) at (6,-2);
 
 \coordinate (v6) at (6,-4);

 \coordinate (v2) at (8,0);
 
 \coordinate (v7) at (8,-2);
 
\coordinate (v14) at (8,-4);
  
 \coordinate (v13) at (10,0);
   
 \coordinate (v1) at (10,-2);
    
 \coordinate (v8) at (14,0);
 
 \coordinate (v4) at (12,0);
 
 \coordinate (v12) at (16,0);
 
 \coordinate (v9) at (0,0);
 
 \coordinate (v3) at (0,-2);
 
 \coordinate (v5) at (2,0);
 
 \coordinate (v15) at (4,0);

   \node [draw, circle, fill = white, inner sep = 2pt, thick] at (v1)
  {\scriptsize {\bf 1} };
     \node [draw, circle, fill = white, inner sep = 2pt, thick] at (v2)
  {\scriptsize {\bf 2}};
     \node [draw, circle, fill = white, inner sep = 2pt, thick] at (v3)
  {\scriptsize {\bf 3}};
     \node [draw, circle, fill = white, inner sep = 2pt, thick] at (v4)
  {\scriptsize {\bf 4}};
     \node [draw, circle, fill = white, inner sep = 2pt, thick] at (v5)
  {\scriptsize {\bf 5}};
     \node [draw, circle, fill = white, inner sep = 2pt, thick] at (v6)
  {\scriptsize {\bf 6}};
     \node [draw, circle, fill = white, inner sep = 2pt, thick] at (v7)
  {\scriptsize {\bf 7}};
     \node [draw, circle, fill = white, inner sep = 2pt, thick] at (v8)
  {\scriptsize {\bf 8}};
     \node [draw, circle, fill = white, inner sep = 2pt, thick] at (v9)
  {\scriptsize {\bf 9}};
     \node [draw, circle, fill = white, inner sep = 1pt, thick] at (v10)
  {\scriptsize {\bf 10}};
     \node [draw, circle, fill = white, inner sep = 1pt, thick] at (v11)
  {\scriptsize {\bf 11}};
     \node [draw, circle, fill = white, inner sep = 1pt, thick] at (v12)
  {\scriptsize {\bf 12}};
     \node [draw, circle, fill = white, inner sep = 1pt, thick] at (v13)
  {\scriptsize {\bf 13}};
     \node [draw, circle, fill = white, inner sep = 1pt, thick] at (v14)
  {\scriptsize {\bf 14}};
     \node [draw, circle, fill = white, inner sep = 1pt, thick] at (v15)
  {\scriptsize {\bf 15}};
  
  \draw [->, thick] (6,-0.5) -- (6,-1.5);
  \draw [->, thick] (6,-2.5) -- (6,-3.5);
 \draw [->, thick] (8,-0.5) -- (8,-1.5);
 \draw [->, thick] (8,-2.5) -- (8,-3.5);
 \draw [->, thick] (10,-0.5) -- (10,-1.5);
 \draw [->, thick]  (-0.3,-0.3) to[bend right]  (-0.3,-1.7);
 \draw [->, thick]  (0.3,-1.7) to[bend right]  (0.3,-0.3);
 
 \draw[->, thick]  ($(2,0.6) + (-40:3mm)$) arc (-40:220:3mm);

 \draw[->, thick]  ($(4,0.6) + (-40:3mm)$) arc (-40:220:3mm);
 
 \end{tikzpicture} 
 \end{center}
 \caption{The graph $G(I,J)$ of $(I,J) \in \symm_{15,9}$  where $I = (11,10,2,7,13,9,3,5,15)$ and $J = (10,6,7,14,1,3,9,5,15)$. 
 The cycle type of is $(2,1,1)$ and the path type is $(3,3,2,1,1,1)$.}
 \label{fig:graph}
 \end{figure}

As in the introduction, a {\em partial permutation of $[n]$} is an ordered pair $(I,J)$ where $I$ and $J$ are sequences of elements of $[n]$ of the same length and the entries of both $I$ and $J$ are distinct. The common length of $I$ and $J$ is the {\em size} of the partial permutation $(I,J)$. We write $\symm_{n,k}$ for the family of partial permutations of $[n]$ of size $k$. 

Let $(I,J)$ be a partial permutation of $[n]$ with $I = (i_1, \dots, i_k)$ and $J = (j_1, \dots, j_k)$. The {\em graph} of $(I,J)$ is the directed graph $G(I,J)$ on the vertex set $[n]$ with edges $i_1 \to j_1, \dots, i_k \to j_k$. See Figure~\ref{fig:graph} for an example. 

For any partial permutation $(I,J)$ of $[n]$, each component of $G(I,J)$ is a directed path or a directed cycle. The {\em size} of a path or a cycle is the number of vertices in that path or cycle. The {\em path type} of $(I,J)$ is the partition $(\mu_1 \geq \mu_2 \geq \cdots )$ obtained by writing the path sizes in $G(I,J)$ in weakly decreasing order.\footnote{This convention differs from that in our companion paper \cite{HRMoment}; our path size is the path size in \cite{HRMoment} plus one.}
The {\em cycle type} of $(I,J)$ is the partition $(\nu_1 \geq \nu_2 \geq \cdots)$ obtained by writing the cycle sizes in $G(I,J)$ in weakly decreasing order; see Figure~\ref{fig:graph}.

\begin{observation}
    \label{obs:path-cycle-type}
    Suppose $(I,J) \in \symm_{n,k}$ has path type $\mu$ and cycle type $\nu$. Then
    \[ |\mu| + |\nu| = n \quad \text{and} \quad |\mu| - \ell(\mu) + |\nu| = k.\]
\end{observation}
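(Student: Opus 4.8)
The plan is to count the vertices and edges of the directed graph $G(I,J)$ in two ways. First I would recall the setup: $G(I,J)$ has vertex set $[n]$ and exactly $k$ edges, namely $i_1 \to j_1, \dots, i_k \to j_k$. Since the entries of $I$ are distinct and the entries of $J$ are distinct, every vertex of $G(I,J)$ has out-degree at most one and in-degree at most one; as already observed above, this forces each connected component of $G(I,J)$ to be a directed path or a directed cycle. Record also the convention that a size-$1$ path is a single isolated vertex, contributing no edges.

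For the first identity, the components of $G(I,J)$ partition its vertex set, so $n$ equals the total number of vertices, which is the sum of the sizes of the path components plus the sum of the sizes of the cycle components. By the definitions of path type $\mu$ and cycle type $\nu$, these two sums are $|\mu|$ and $|\nu|$ respectively, so $|\mu| + |\nu| = n$.

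For the second identity, I would instead count edges. A directed path on $m$ vertices has $m-1$ edges and a directed cycle on $m$ vertices has $m$ edges. Summing over all components, and using that the number of path components equals $\ell(\mu)$, the total number of edges of $G(I,J)$ is $\sum_{\text{paths}}(\mathrm{size}-1) + \sum_{\text{cycles}}(\mathrm{size}) = (|\mu| - \ell(\mu)) + |\nu|$. On the other hand $G(I,J)$ has exactly $k$ edges by construction, so $|\mu| - \ell(\mu) + |\nu| = k$.

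I do not anticipate any real obstacle: the argument is purely a double count of vertices and edges. The only points needing (already-granted) care are the structural fact that each component is a path or a cycle, and the bookkeeping that size-$1$ paths contribute $0$ edges, which is consistent with the formula $m-1=0$ when $m=1$.
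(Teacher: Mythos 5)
Your double count of vertices and edges is correct and complete; the paper states this as an Observation without proof, and your argument is exactly the intended justification (vertices give $|\mu|+|\nu|=n$, edges give $(|\mu|-\ell(\mu))+|\nu|=k$). Nothing to add.
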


\subsection{Representation theory} All representations in this paper will be finite-dimensional and over  $\CC$. Let $G$ be a finite group and let $V$ be a $G$-module. Write $\End_\CC(V)$ for the $\CC$-algebra of $\CC$-linear maps $\varphi: V \to V$. The space $\End_\CC(V)$ carries the structure of a $(G \times G)$-module via
\[ ((g_1,g_2) \cdot \varphi)(v) := g_1 \cdot \varphi( g_2^{-1} \cdot v)\]
for $g_1, g_2 \in G$, $\varphi \in \End_\CC(V)$, and $v \in V$. 
The following result will be crucial in our analysis of local statistics.  

\begin{theorem}
    \label{thm:aw}
    {\em (Artin-Wedderburn)} Let $G$ be a finite group. We have an isomorphism of $\CC$-algebras
    \[ \Phi: \CC[G] \longrightarrow \bigoplus_V \End_\CC(V) \]
    where the direct sum ranges over (isomorphism class representatives of) irreducible $G$-modules and $\Phi$ is defined by
    \[ \Phi(a) \cdot v := a \cdot v \quad \quad \text{for all $v \in V$}\]
    for all $a \in \CC[G]$, and each $G$-irreducible $V$. 
\end{theorem}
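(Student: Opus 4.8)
The statement to prove is the Artin--Wedderburn theorem as given: that for a finite group $G$, the map $\Phi: \CC[G] \to \bigoplus_V \End_\CC(V)$ sending $a$ to the tuple of operators $v \mapsto a\cdot v$ is an isomorphism of $\CC$-algebras. Here is my plan.

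\medskip

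\textit{Setup and strategy.} First I would observe that $\Phi$ is visibly a $\CC$-algebra homomorphism: the module axiom $(ab)\cdot v = a\cdot(b\cdot v)$ says exactly that $\Phi(ab) = \Phi(a)\Phi(b)$ in each coordinate, additivity is clear, and $\Phi(1)$ acts as the identity on every $V$. So the content is that $\Phi$ is bijective. The plan is to first reduce to semisimplicity: by Maschke's theorem, $\CC[G]$ is a semisimple $\CC$-algebra (every submodule of a $\CC[G]$-module has a complement, since $|G|$ is invertible in $\CC$ and one can average any projection over $G$). Then $\CC[G]$, viewed as a left module over itself, decomposes as a direct sum of irreducibles, and grouping isomorphic summands gives $\CC[G] \cong \bigoplus_V V^{\oplus m_V}$ as a $G$-module, where $V$ ranges over irreducible $G$-modules and $m_V = \dim V$ (this multiplicity count follows because $V$ occurs in the regular representation with multiplicity equal to its dimension, which one gets from $\Hom_G(\CC[G], V) \cong V$ as vector spaces).

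\medskip

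\textit{Injectivity and the algebra structure.} For injectivity: if $a \in \CC[G]$ lies in $\ker\Phi$, then $a$ annihilates every irreducible $G$-module, hence annihilates the regular representation $\CC[G]$ (since it is a sum of irreducibles), hence $a = a\cdot 1 = 0$. For surjectivity, I would invoke Schur's lemma: since the ground field is $\CC$ (algebraically closed), $\End_G(V) = \CC$ for each irreducible $V$, and $\Hom_G(V, W) = 0$ for non-isomorphic irreducibles $V, W$. Computing $\End_G(\CC[G])$ via the decomposition $\CC[G] \cong \bigoplus_V V^{\oplus \dim V}$ and using Schur's lemma gives $\End_G(\CC[G]) \cong \bigoplus_V M_{\dim V}(\CC) \cong \bigoplus_V \End_\CC(V)$. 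Now the key point: the image of $\Phi$ lands in $\bigoplus_V \End_\CC(V)$, and one identifies the image of $\CC[G]$ under $\Phi$ with the algebra of $G$-equivariant endomorphisms of the regular representation acting by \emph{right} multiplication --- more precisely, left multiplication by $\CC[G]$ on $\CC[G]$ is precisely the commutant of right multiplication, and the double centralizer theorem (or a direct dimension count) shows this commutant is all of $\bigoplus_V \End_\CC(V)$. Alternatively, and more cleanly for a plan: $\Phi$ is an injective algebra map between two vector spaces; compute that $\dim \CC[G] = |G| = \sum_V (\dim V)^2 = \dim \bigoplus_V \End_\CC(V)$, where the middle equality is the standard dimension formula obtained from the regular representation decomposition. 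Injectivity plus equality of (finite) dimensions forces bijectivity.

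\medskip

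\textit{Main obstacle.} The subtle point --- and the one I would be most careful about --- is establishing the dimension identity $\sum_V (\dim V)^2 = |G|$, equivalently that each irreducible $V$ appears in $\CC[G]$ with multiplicity exactly $\dim V$. This rests on Maschke (semisimplicity, to get a decomposition at all) together with the computation $\dim \Hom_G(\CC[G], V) = \dim V$, which uses that $\Hom_G(\CC[G], V) \cong V$ via $\varphi \mapsto \varphi(1_G)$. Once this multiplicity is in hand, the rest is formal: Schur's lemma computes the target, injectivity is a one-line argument, and comparing dimensions closes the proof. So my plan is: (1) Maschke $\Rightarrow$ $\CC[G]$ semisimple; (2) decompose the regular representation and compute multiplicities $\Rightarrow$ $\sum_V(\dim V)^2 = |G|$; (3) check $\Phi$ is an algebra homomorphism directly from the module axioms; (4) check $\ker\Phi = 0$ since an element killing all irreducibles kills $1 \in \CC[G]$; (5) conclude by equality of dimensions that $\Phi$ is an isomorphism. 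I expect step (2) to be where essentially all the representation-theoretic input is concentrated.
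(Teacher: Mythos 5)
Your plan is mathematically sound, but note that the paper does not prove this statement at all: Theorem~\ref{thm:aw} is stated as classical background (the Artin--Wedderburn decomposition of $\CC[G]$) and used as a black box, so there is no in-paper proof to compare against. Your outline is the standard argument and all of its steps check out: Maschke gives semisimplicity; the evaluation-at-identity isomorphism $\mathrm{Hom}_G(\CC[G],V)\cong V$ together with Schur's lemma over $\CC$ gives the multiplicity $m_V=\dim V$ and hence $|G|=\sum_V(\dim V)^2$ (this also shows, as you implicitly need, that there are only finitely many irreducibles, since each occurs in the regular representation); the kernel argument via $a = a\cdot 1$ is correct; and injectivity plus equality of finite dimensions closes the proof. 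You are right to flag step (2) as the place where the representation-theoretic content lives, and right to prefer the dimension-count route to surjectivity over the double-centralizer sketch, which as written is the only hand-wavy part of the proposal. One small presentational point: when you pass from $\dim\mathrm{Hom}_G(\CC[G],V)=\dim V$ to ``multiplicity equals $\dim V$,'' you are already using $\mathrm{End}_G(V)=\CC$, so Schur's lemma enters at step (2), not only at the surjectivity stage.
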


Let $\Fun(G,\CC)$ be the $\CC$-vector space of functions $f: G \to \CC$. We identify $\Fun(G,\CC)$ with $\CC[G]$ by associating $f: G \to \CC$ to the group algebra element $\sum_{g \in G} f(g) \cdot g \in \CC[G]$.  A function $f: G \to \CC$ is a {\em class function} if $f(h g h^{-1}) = f(g)$ for all $g,h \in G$. Let $\Class(G,\CC)$ be the vector space of class functions $f: G \to \CC$. Then $\Class(G,\CC)$ is a subspace of $\Fun(G,\CC)$. The center $Z(\CC[G])$ of the group algebra $\CC[G]$ is given by
\begin{equation} Z(\CC[G]) = \left\{ \sum_{g \, \in \, G} f(g) \cdot g \,:\, f \in \Class(G,\CC) \right\}. \end{equation}
The image of $Z(\CC[G])$ under the Artin-Wedderburn isomorphism $\Phi$ of Theorem~\ref{thm:aw} is as follows.

\begin{corollary}
    \label{cor:aw-central}
    Let $G$ be a finite group with group algebra $\CC[G]$. Let $Z(\CC[G])$ be the center of $\CC[G]$. The image of $Z(\CC[G])$ under the isomorphism $\Phi: \CC[G] \xrightarrow{ \, \sim \,} \bigoplus_V \End_\CC(V)$ of Theorem~\ref{thm:aw} is
    \[ \Phi( Z(\CC[G])) = \bigoplus_V \CC \cdot \mathrm{id}_V.\]
    The direct sum is again over (isomorphism class representatives of) irreducible $G$-modules $V$ and $\CC \cdot \mathrm{id}_V$ is the 1-dimensional subspace of $\End_\CC(V)$ consisting of scalar maps $V \to V$.
\end{corollary}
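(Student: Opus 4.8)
The plan is to use the fact that $\Phi$ is an \emph{algebra} isomorphism (Theorem~\ref{thm:aw}), so it carries the center of $\CC[G]$ isomorphically onto the center of the target algebra $\bigoplus_V \End_\CC(V)$. Thus the first step is to record the elementary identity
\[ \Phi(Z(\CC[G])) = Z\left( \bigoplus_V \End_\CC(V) \right), \]
which follows because an algebra isomorphism preserves the relation ``$ab = ba$ for all $b$.'' It then remains to compute the right-hand side.

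Second, I would observe that the center of a finite direct sum of $\CC$-algebras is the direct sum of their centers: an element $(\varphi_V)_V$ is central in $\bigoplus_V \End_\CC(V)$ if and only if each $\varphi_V$ commutes with every element of $\End_\CC(V)$ (multiplication in the direct sum being componentwise, with the other components free to be chosen independently, in particular supported on a single summand). Hence
\[ Z\left( \bigoplus_V \End_\CC(V) \right) = \bigoplus_V Z(\End_\CC(V)). \]

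Third, I would identify $Z(\End_\CC(V))$ for a single irreducible $V$. Choosing a basis gives $\End_\CC(V) \cong M_d(\CC)$ with $d = \dim_\CC V$, and a standard computation with elementary matrices (or, equivalently, Schur's lemma applied to $V$ as a module over $\End_\CC(V)$) shows that the only linear maps $V \to V$ commuting with all of $\End_\CC(V)$ are the scalar maps: $Z(\End_\CC(V)) = \CC \cdot \mathrm{id}_V$. Combining the three steps yields $\Phi(Z(\CC[G])) = \bigoplus_V \CC \cdot \mathrm{id}_V$, as claimed.

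There is no serious obstacle here; this is a formal consequence of Theorem~\ref{thm:aw} together with two standard facts (center of a product is the product of centers, and the center of a matrix algebra over a field is the scalars). The only point requiring any care is making sure the componentwise structure of $\bigoplus_V \End_\CC(V)$ is used correctly when reducing centrality in the product to centrality in each factor; once that is in place the result is immediate.
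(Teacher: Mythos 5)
Your argument is correct and is essentially the same as the paper's: the paper also notes that the isomorphism $\Phi$ carries center to center, reduces to the center of each matrix algebra $\End_\CC(V) \cong M_d(\CC)$, and invokes the standard fact that this center consists of the scalar matrices. You have merely spelled out the intermediate step that the center of a finite direct sum is the direct sum of the centers, which the paper leaves implicit.
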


Corollary~\ref{cor:aw-central} follows from Theorem~\ref{thm:aw} as follows. If $V$ is a finite-dimensional vector space, then $\End_\CC(V)$ is isomorphic to the algebra of $(\dim V) \times (\dim V)$ complex matrices. It is not difficult to show that the center of this matrix algebra consists of scalar matrices.

\subsection{Symmetric functions}
Let $\Lambda = \bigoplus_{n \geq 0} \Lambda_n$ be the graded ring of symmetric functions in an infinite variable set $\xx = (x_1, x_2, \dots )$ over the field $\CC$. If $f \in \Lambda$ and $N \geq 0$, we write $f(x_1, \dots, x_N) := f(x_1, \dots, x_N, 0, 0, \dots )$ for the polynomial in $x_1, \dots, x_N$ obtained by evaluating $x_i \to 0$ for all $i > N$.  

Bases of $\Lambda_n$ are indexed by partitions $\lambda \vdash n$. We will use the {\em power sum basis} $p_\lambda$ and {\em Schur basis} $s_\lambda$.
If $\lambda = (\lambda_1, \dots, \lambda_k)$, we have the power sum symmetric function
\[
p_\lambda := p_{\lambda_1} \cdots p_{\lambda_k} \text{ where } p_d := \sum_{i \, \geq \, 1} x_i^d \text{ for $d > 0$}.
\]
The bialternant definition of Schur functions will be the most convenient for our purposes. Fix $N \geq 0$ and let $\varepsilon \in \CC[\symm_N]$ be the group algebra element
\begin{equation}
    \varepsilon := \sum_{w \, \in \, \symm_N} \sign(w) \cdot w
\end{equation}
which antisymmetrizes over $\symm_N$. The element $\varepsilon$ acts on the polynomial ring $\CC[x_1, \dots, x_N]$. An {\em alternant} is a polynomial $f \in \CC[x_1, \dots, x_N]$ such that $\varepsilon \cdot f = f$. For any sequence $(\beta_1, \dots, \beta_N)$ of nonnegative integers we have an alternant 
\begin{equation}
    a_\beta(x_1,\dots,x_N) := \varepsilon \cdot x^{\beta + \delta}
\end{equation}
where $\delta := (N-1,\dots,1,0)$ and $\beta + \delta$ is componentwise addition of exponent vectors.
For a partition $\lambda = (\lambda_1, \lambda_2, \dots )$, the {\em Schur function} $s_\lambda \in \Lambda$ is the limit
\begin{equation}
    s_\lambda = \lim_{N \to \infty} s_\lambda (x_1, \dots, x_N) := \lim_{N \to \infty} \frac{\varepsilon \cdot x^{\lambda + \delta}}{\varepsilon \cdot x^\delta}.
\end{equation}

If $w \in \symm_n$ is a permutation, the {\em cycle type} $\mathrm{cyc}(w) \vdash n$ is the partition of $n$ obtained by writing the cycle lengths of $w$ in weakly decreasing order. The {\em Frobenius characteristic map}
\[ \ch_n : \Class(\symm_n,\CC) \longrightarrow \Lambda_n \]
is defined by
\[ \ch_n: f \mapsto \frac{1}{n!} \sum_{w \, \in \, \symm_n} f(w) \cdot p_{\mathrm{cyc}(w)}. \]
The map $\ch_n$ is an isomorphism of $\CC$-vector spaces $\Class(\symm_n, \CC) \xrightarrow{ \, \sim \, } \Lambda_n$.

Irreducible representations of $\symm_n$ are in one-to-one correspondence with partitions of $n$. If $\lambda \vdash n$ is a partition, we write $V^\lambda$ for the associated $\symm_n$-irreducible. The vector space dimension of $V^\lambda$ equals the number $f^\lambda$ of standard Young tableaux of shape $\lambda$. Let $\chi^\lambda: \symm_n \to \CC$ be the character of $V^\lambda$. The character $\chi^\lambda$ is a class function and we have
$\ch_n(\chi^\lambda) = s_\lambda.$
Given two partitions $\lambda, \mu \vdash n$, let $\chi^\lambda_\mu$ be the common value of $\chi^\lambda(w)$ on any permutation $w \in \symm_n$ of cycle type $\mu$. The power sum and Schur bases of $\Lambda_n$ are related by 
\begin{equation}
\label{eq:classical-power-to-schur}
    p_\mu = \sum_{\lambda \, \vdash \, n} \chi^\lambda_\mu \cdot s_\lambda.
\end{equation}

The {\em Murnaghan--Nakayama rule} is a signed formula for the Schur expansion of the power sum $p_\mu$. In light of \eqref{eq:classical-power-to-schur}, this gives a method for computing the irreducible characters of $\symm_n$. In order to state this rule, we need some combinatorial definitions.

 A {\em ribbon} $\xi$ is an edgewise connected skew partition whose Young diagram contains no $2 \times 2$ square. The {\em height} $\height(\xi)$
 of a ribbon $\xi$ is the number of rows occupied by $\xi$, less one. The {\em sign} of $\xi$ is $\sign(\xi) := (-1)^{\height(\xi)}$.
For example, the figure
 \begin{center}
 \begin{tikzpicture}[scale = .3]
  \begin{scope}
    \clip (0,0) -| (1,1) -| (2,2) -| (3,3) -| (5,5) -| (0,0);
    \draw [color=black!25] (0,0) grid (5,5);
  \end{scope}

  \draw [thick] (0,0) -| (1,1) -| (2,2) -| (3,3) -| (5,5) -| (0,0);

  \draw [thick, rounded corners] (1.5,1.5) |- (2.5,2.5) |- (4.5,3.5) ;
  \draw [color=black,fill=black,thick] (4.5,3.5) circle (.4ex);
  \node [draw, circle, fill = white, inner sep = 2pt] at (1.5,1.5) { };
  \end{tikzpicture}
 \end{center}
 shows a ribbon $\xi$ of size 6, satisfying $\height(\xi) = 2$ and $\sign(\xi) = (-1)^2 = +1$ embedded inside the 
 Young diagram of $(5,5,3,2,1)$.
 Ribbons whose removal from a Young diagram yields a Young diagram (such as above) are called {\em rim hooks}.
 The southwesternmost cell in a ribbon $\xi$ is called its {\em tail}; the tail is decorated with a white circle $\circ$ in the above picture.

Let $\lambda/\rho$ be a skew shape. A {\em standard ribbon tableau} $T$ of shape $\lambda/\rho$ is a chain
\[
T = (\rho = \lambda^{(0)} \subset \lambda^{(1)} \subset \cdots \subset \lambda^{(r)} = \lambda)
\]
of partitions such that each difference $\xi^{(i)} := \lambda^{(i)}/\lambda^{(i-1)}$ is a ribbon. The {\em type} of $T$ is the composition $(\mu_1, \dots, \mu_r)$ where $\mu_i := |\xi^{(i)}|$. The {\em sign} of $T$ is the product 
\[
\sign(T) := \sign(\xi^{(1)}) \cdots \sign(\xi^{(r)})
\]
of the constituent ribbons of $T$.

\begin{theorem}
    \label{thm:classical-mn}
    {\em (Murnaghan--Nakayama rule)} 
    Let $\lambda$ be a partition and let $r > 0$. We have 
    \[
    s_\lambda \cdot p_r = \sum_\nu \sign(\nu/\lambda) \cdot s_\nu
    \]
    where the sum is over all $\nu \supseteq \lambda$ such that $\nu/\lambda$ is a ribbon of size $r$.
\end{theorem}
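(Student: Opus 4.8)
The plan is to argue directly from the bialternant definition of the Schur functions recalled in the excerpt. Work in $N$ variables with $N$ large, keep the notation $\delta = (N-1, \dots, 1, 0)$ and $a_\beta = \varepsilon \cdot x^{\beta + \delta}$, and recall $s_\mu(x_1, \dots, x_N) = a_\mu / a_0$ and $p_r(x_1, \dots, x_N) = x_1^r + \cdots + x_N^r$. Since the Murnaghan--Nakayama identity is an identity of symmetric functions, homogeneous of degree $|\lambda| + r$, it is enough to prove it after evaluation at $x_1, \dots, x_N$ for all large $N$, that is, to prove
\[ a_\lambda \cdot p_r \;=\; \sum_\nu \sign(\nu/\lambda) \cdot a_\nu, \]
the sum ranging over $\nu \supseteq \lambda$ with $\nu/\lambda$ a ribbon of size $r$, and then to divide by $a_0$.

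The first step is the elementary identity $a_\lambda \cdot p_r = \sum_{k=1}^N a_{\lambda + r\ee_k}$, where $\ee_k$ is the $k$-th standard basis vector. This holds because $p_r$ is symmetric and hence commutes with the antisymmetrizer $\varepsilon$:
\[ p_r \cdot (\varepsilon \cdot x^{\lambda + \delta}) = \varepsilon \cdot (p_r \cdot x^{\lambda+\delta}) = \sum_{k=1}^N \varepsilon \cdot x^{(\lambda + r\ee_k) + \delta}. \]

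The second step is to rewrite each summand $a_{\lambda + r\ee_k}$ in terms of the standard basis $\{a_\mu : \mu \text{ a partition}\}$. Put $\alpha := \lambda + \delta$, a strictly decreasing sequence with $\alpha_i = \lambda_i + N - i$. The exponent sequence of $a_{\lambda + r\ee_k}$ is $\alpha$ with $\alpha_k$ replaced by $\alpha_k + r$; if $\alpha_k + r \in \{\alpha_1, \dots, \alpha_N\}$ this bialternant vanishes, and otherwise there is a unique index $s$ with $0 \le s < k$ such that $\alpha_{s+1} < \alpha_k + r < \alpha_s$ (with the convention $\alpha_0 = +\infty$). Sorting the exponent sequence into decreasing order then costs the sign $(-1)^{k-1-s}$ and produces $\nu + \delta$ for the partition $\nu$ determined by $\nu_i = \lambda_i$ for $i \le s$ and $i > k$, by $\nu_{s+1} = \alpha_k + r - (N-s-1)$, and by $\nu_i = \lambda_{i-1} + 1$ for $s+1 < i \le k$. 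From these formulas one reads off that $\lambda \subseteq \nu$, that $|\nu/\lambda| = r$, that $\nu/\lambda$ occupies exactly rows $s+1$ through $k$, and that consecutive rows of $\nu/\lambda$ overlap in a single column; hence $\nu/\lambda$ is edgewise connected, contains no $2 \times 2$ square, and is a ribbon of size $r$ with $\height(\nu/\lambda) = k - 1 - s$, so that $\sign(\nu/\lambda) = (-1)^{k-1-s}$ matches the sorting sign. It then remains to check that $k \mapsto \nu$ is a bijection between $\{k : \alpha_k + r \notin \{\alpha_1, \dots, \alpha_N\}\}$ and the set of partitions $\nu \supseteq \lambda$ with $\nu/\lambda$ a ribbon of size $r$ (the inverse sends such a $\nu$ to the index of the bottom row of $\nu/\lambda$). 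Combining the two steps and dividing by $a_0$ gives the claim.

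The main obstacle is precisely this second step: the translation between the bumped $\beta$-sequence $\alpha + r\ee_k$ and the geometry of adjoining a border strip to $\lambda$, including the sign identification $(-1)^{k-1-s} = \sign(\nu/\lambda)$ and the bijectivity. This is the standard abacus bookkeeping with first-column hook lengths $\alpha_i = \lambda_i + N - i$, but the edge cases need care: when $\alpha_k + r$ exceeds $\alpha_1$ (the border strip starts in the top row), and when $\alpha_k + r$ collides with some $\alpha_i$ (the determinant degenerates and must be discarded). One must also confirm that each ribbon of size $r$ that can be added to $\lambda$ is produced by exactly one index $k$, which is what makes the correspondence a genuine bijection rather than merely a surjection.
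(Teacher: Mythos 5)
Your proposal is correct and follows essentially the same route as the paper, which proves the classical rule in Section~\ref{sec:Path} by multiplying the alternant $a_\lambda$ by $p_r$, expanding into the $N$ shifted alternants $\varepsilon\cdot x^{(\lambda+r\ee_k)+\delta}$, and identifying each nonzero term with a signed ribbon addition (Observation~\ref{obs:ribbon-addition}). The only difference is that you spell out the sorting bookkeeping, the sign identification $(-1)^{k-1-s}=\sign(\nu/\lambda)$, and the bijectivity that the paper leaves implicit in that observation; these details check out.
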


Let $\lambda, \mu \vdash n$. Iterating Theorem~\ref{thm:classical-mn} yields
\[ \chi^\lambda_\mu = \sum_T \sign(T)\]
 where the sum is over standard ribbon tableaux $T$ of shape $\lambda$ and type $\mu$. 
For example, let $\mu = (3,2,2,1)$. The coefficient $\chi^{431}_{3221}$ of 
$s_{431}$ in $p_{3221}$ is the signed sum of standard ribbon tableaux of 
shape $\lambda = (4,3,1)$ and type $\mu = (3,2,2,1)$.  The five ribbon tableaux in question are shown in Figure~\ref{fig:classical-mn}; we conclude that
$\chi^{431}_{3221} = 1 - 1 + 1 - 1 - 1 = -1.$

\begin{figure}
\begin{center}
\begin{tikzpicture}[scale = .4]
  \begin{scope}
    \clip (0,0) -| (1,1) -| (3,2) -| (4,3) -| (0,0);
    \draw [color=black!25] (0,0) grid (4,3);
  \end{scope}

  \draw [thick] (0,0) -| (1,1) -| (3,2) -| (4,3) -| (0,0);

  \draw [thick, rounded corners] (0.5,0.5) -- (0.5,2.5);
  \draw [color=black,fill=black,thick] (0.5,2.5) circle (.4ex);
  \node [draw, circle, fill = white, inner sep = 1pt] at (0.5,0.5)
  {\scriptsize 1};

  \draw [thick, rounded corners] (1.5,1.5) -- (1.5,2.5);
  \draw [color=black,fill=black,thick] (1.5,2.5) circle (.4ex);
  \node [draw, circle, fill = white, inner sep = 1pt] at (1.5,1.5)
  {\scriptsize 2};

  \draw [thick, rounded corners] (2.5,1.5) -- (2.5,2.5);
  \draw [color=black,fill=black,thick] (2.5,2.5) circle (.4ex);
  \node [draw, circle, fill = white, inner sep = 1pt] at (2.5,1.5)
  {\scriptsize 3};

  \node [draw, circle, fill = white, inner sep = 1pt] at (3.5,2.5)
  {\scriptsize 4};

 \node at (2.5,0) {$+1$};

   \begin{scope}
    \clip (6,0) -| (7,1) -| (9,2) -| (10,3) -| (6,0);
    \draw [color=black!25] (6,0) grid (10,3);
  \end{scope}

  \draw [thick] (6,0) -| (7,1) -| (9,2) -| (10,3) -| (6,0);

  \draw [thick, rounded corners] (6.5,0.5) -- (6.5,2.5);
  \draw [color=black,fill=black,thick] (6.5,2.5) circle (.4ex);
  \node [draw, circle, fill = white, inner sep = 1pt] at (6.5,0.5)
  {\scriptsize 1};

  \draw [thick, rounded corners] (7.5,1.5) -- (7.5,2.5);
  \draw [color=black,fill=black,thick] (7.5,2.5) circle (.4ex);
  \node [draw, circle, fill = white, inner sep = 1pt] at (7.5,1.5)
  {\scriptsize 2};

  \draw [thick, rounded corners] (8.5,2.5) -- (9.5,2.5);
  \draw [color=black,fill=black,thick] (9.5,2.5) circle (.4ex);
  \node [draw, circle, fill = white, inner sep = 1pt] at (8.5,2.5)
  {\scriptsize 3};

  \node [draw, circle, fill = white, inner sep = 1pt] at (8.5,1.5)
  {\scriptsize 4};
  
  \node at (8.5,0) {$-1$};

 \begin{scope}
    \clip (12,0) -| (13,1) -| (15,2) -| (16,3) -| (12,0);
    \draw [color=black!25] (12,0) grid (16,3);
  \end{scope}

  \draw [thick] (12,0) -| (13,1) -| (15,2) -| (16,3) -| (12,0);

  \draw [thick, rounded corners] (12.5,0.5) -- (12.5,2.5);
  \draw [color=black,fill=black,thick] (12.5,2.5) circle (.4ex);
  \node [draw, circle, fill = white, inner sep = 1pt] at (12.5,0.5)
  {\scriptsize 1};

  \draw [thick, rounded corners] (13.5,2.5) -- (14.5,2.5);
  \draw [color=black,fill=black,thick] (14.5,2.5) circle (.4ex);
  \node [draw, circle, fill = white, inner sep = 1pt] at (13.5,2.5)
  {\scriptsize 2};

  \draw [thick, rounded corners] (13.5,1.5) -- (14.5,1.5);
  \draw [color=black,fill=black,thick] (14.5,1.5) circle (.4ex);
  \node [draw, circle, fill = white, inner sep = 1pt] at (13.5,1.5)
  {\scriptsize 3};

  \node [draw, circle, fill = white, inner sep = 1pt] at (15.5,2.5)
  {\scriptsize 4};
  
  \node at (14.5,0) {$+1$};

   \begin{scope}
    \clip (18,0) -| (19,1) -| (21,2) -| (22,3) -| (18,0);
    \draw [color=black!25] (18,0) grid (22,3);
  \end{scope}

  \draw [thick]  (18,0) -| (19,1) -| (21,2) -| (22,3) -| (18,0);

  \draw [thick, rounded corners] (18.5,2.5) -- (20.5,2.5);
  \draw [color=black,fill=black,thick] (20.5,2.5) circle (.4ex);
  \node [draw, circle, fill = white, inner sep = 1pt] at (18.5,2.5)
  {\scriptsize 1};

  \draw [thick, rounded corners] (18.5,0.5) -- (18.5,1.5);
  \draw [color=black,fill=black,thick] (18.5,1.5) circle (.4ex);
  \node [draw, circle, fill = white, inner sep = 1pt] at (18.5,0.5)
  {\scriptsize 2};

  \draw [thick, rounded corners] (19.5,1.5) -- (20.5,1.5);
  \draw [color=black,fill=black,thick] (20.5,1.5) circle (.4ex);
  \node [draw, circle, fill = white, inner sep = 1pt] at (19.5,1.5)
  {\scriptsize 3};

  \node [draw, circle, fill = white, inner sep = 1pt] at (21.5,2.5)
  {\scriptsize 4};
  
  \node at (20.5,0) {$-1$};

     \begin{scope}
    \clip (24,0) -| (25,1) -| (27,2) -| (28,3) -| (24,0);
    \draw [color=black!25] (24,0) grid (28,3);
  \end{scope}

  \draw [thick]  (24,0) -| (25,1) -| (27,2) -| (28,3) -| (24,0);

  \draw [thick, rounded corners] (24.5,1.5) |- (25.5,2.5);
  \draw [color=black,fill=black,thick] (25.5,2.5) circle (.4ex);
  \node [draw, circle, fill = white, inner sep = 1pt] at (24.5,1.5)
  {\scriptsize 1};

  \draw [thick, rounded corners] (26.5,2.5) -- (27.5,2.5);
  \draw [color=black,fill=black,thick] (27.5,2.5) circle (.4ex);
  \node [draw, circle, fill = white, inner sep = 1pt] at (26.5,2.5)
  {\scriptsize 2};

  \draw [thick, rounded corners] (25.5,1.5) -- (26.5,1.5);
  \draw [color=black,fill=black,thick] (26.5,1.5) circle (.4ex);
  \node [draw, circle, fill = white, inner sep = 1pt] at (25.5,1.5)
  {\scriptsize 3};

  \node [draw, circle, fill = white, inner sep = 1pt] at (24.5,0.5)
  {\scriptsize 4};

\node at (26.5,0) {$-1$};
  
\end{tikzpicture}
\end{center}
\caption{The standard ribbon tableaux computing $\chi^{431}_{3221}$ together
with their signs. The numbers in the tails indicate the order in which ribbons are added.}
\label{fig:classical-mn}
\end{figure}
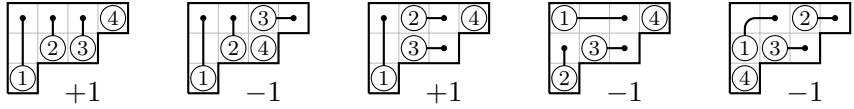

The status of $\chi^{\lambda}_{\mu}$ as a character evaluation guarantees that $\chi^{\lambda}_{\mu}$ is invariant
under permutations of $\mu = (\mu_1, \dots, \mu_r)$, although this is not evident from
Theorem~\ref{thm:classical-mn}.
Changing the order of the sequence $\mu = (\mu_1, \dots, \mu_r)$ can result in different sets of standard ribbon tableaux
(and these sets can have different sizes), but the signed ribbon tableau count is independent of the order of $(\mu_1, \dots, \mu_r)$.
Mendes  \cite{Mendes} gave a combinatorial proof of this result.
We extend the notation $\chi^\lambda_\mu$ by defining
\begin{equation}
    \chi^{\lambda/\rho}_\mu := \sum_T \sign(T)
\end{equation}
where $\lambda/\rho$ is a skew shape and
the sum is over standard ribbon tableaux of shape $\lambda/\rho$ and type $\mu$.

\section{Partial permutations and local statistics}
\label{sec:Partial}

This section develops basic properties of the space $\Loc_k(\symm_n,\CC)$ of $k$-local statistics $\symm_n \to \CC$. The results are proven in various places and in various guises throughout the probability literature. We state and prove them here in using the language and terminology of algebraic combinatorics.

\subsection{The locality filtration} Recall that $\Loc_k(\symm_n,\CC)$ is the vector space spanned by the indicator statistics $\one_{I,J}: \symm_n \to \CC$ for $(I,J) \in \symm_{n,k}$. The pointwise product of these indicator functions is as follows.

\begin{lemma}
    \label{lem:multiply-by-singleton}
    Let $(I,J) \in \symm_{n,k}$ and $1 \leq i, j \leq n$. Write $I = (i_1, \dots, i_k)$ and $J = (j_1, \dots, j_k)$. We have
    \[
    \one_{I,J} \cdot \one_{i,j} = \begin{cases}
        \one_{I \, i,  \, J \, j} & \text{if $i \notin I$ and $j \notin J$,} \\
        \one_{I,J} & \text{if $i = i_r$ and $j = j_r$ for some $1 \leq r \leq k$,} \\
        0 & \text{otherwise.}
    \end{cases}
    \]
    Here $I \, i := (i_1, \dots, i_k,i)$ and $J \, j := (j_1, \dots, j_k,j)$.
\end{lemma}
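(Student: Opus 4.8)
The plan is to verify the three cases of the formula directly from the definition of the pointwise product of indicator statistics. Recall from \eqref{eq:one-definition} that $(\one_{I,J} \cdot \one_{i,j})(w) = \one_{I,J}(w) \cdot \one_{i,j}(w)$, which equals $1$ precisely when $w(i_p) = j_p$ for all $p = 1, \dots, k$ \emph{and} $w(i) = j$, and equals $0$ otherwise. So the whole lemma amounts to understanding, for a fixed $(I,J) \in \symm_{n,k}$ and a fixed pair $(i,j)$, the set of permutations $w \in \symm_n$ simultaneously satisfying the $k$ constraints from $(I,J)$ and the extra constraint $w(i) = j$.

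First I would dispose of the easy cases. If $i \notin I$ and $j \notin J$, then appending the edge $i \to j$ to the partial permutation $(I,J)$ yields a legitimate partial permutation $(I\,i, J\,j) \in \symm_{n,k+1}$ (the entries of $I\,i$ are still distinct, likewise for $J\,j$), and the simultaneous constraint set is exactly the defining constraint set of $\one_{I\,i, J\,j}$; hence the two functions agree pointwise. If instead $i = i_r$ and $j = j_r$ for some $r$, then the constraint $w(i) = j$ is already implied by the constraints coming from $(I,J)$, so adding it changes nothing and $\one_{I,J} \cdot \one_{i,j} = \one_{I,J}$.

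For the remaining ``otherwise'' case I would argue that the combined constraint system is infeasible, so the product is the zero function. The leftover possibilities are: (a) $i = i_r$ for some $r$ but $j \neq j_r$; (b) $j = j_r$ for some $r$ but $i \neq i_r$; and (c) $i \notin I$, $j \in J$ with $j = j_r$ and $i \neq i_r$ --- but note (c) is subsumed by (b). In case (a), any $w$ satisfying the $(I,J)$-constraints has $w(i) = w(i_r) = j_r \neq j$, so $w(i) = j$ fails; the constraint set is empty. In case (b), $w(i) = j = j_r = w(i_r)$ forces $i = i_r$ since $w$ is injective, contradicting $i \neq i_r$; again the constraint set is empty. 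In either subcase the sum in \eqref{eq:group-algebra-element-definition} is over the empty set, giving $0$. One should double-check that these three listed cases are genuinely exhaustive: negating ``$i \notin I$ and $j \notin J$'' and ``$\exists r:\ i = i_r$ and $j = j_r$'' leaves precisely (a) and (b), so the case analysis is complete.

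I do not anticipate any real obstacle here; the only thing requiring a moment's care is the bookkeeping of the case split --- making sure the ``otherwise'' clause is exactly the complement of the first two clauses and that injectivity of $w$ (rather than merely functionality) is invoked where needed, namely in case (b).
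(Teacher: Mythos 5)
Your proof is correct and follows the same route as the paper: evaluate the pointwise product on each $w$, observe that it is the indicator of the combined constraint system, and then read off the three cases. The paper compresses the case analysis into "and the lemma follows," whereas you spell out the exhaustiveness check and the use of injectivity of $w$ in the incompatible cases; the substance is identical.
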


\begin{proof}
    For any $w \in \symm_n$, we have 
    \[
    (\one_{I,J} \cdot \one_{i,j})(w) = \one_{I,J}(w) \cdot \one_{i,j}(w) = \begin{cases}
        1 & \text{if $w(I) = J$ and $w(i) = j$,} \\
        0 & \text{otherwise,}
    \end{cases}
    \]
    and the lemma follows.
\end{proof}

Lemma~\ref{lem:multiply-by-singleton} implies that the spaces $\Loc_k(\symm_n,\CC)$ nest as $k$ varies.

\begin{proposition}
    \label{prop:local-nesting}
    Any $k$-local statistic $f: \symm_n \to \CC$ is $(k+1)$-local.
\end{proposition}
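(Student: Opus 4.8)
The plan is to reduce the statement to a single identity expressing an arbitrary indicator $\one_{I,J}$ with $(I,J) \in \symm_{n,k}$ as a $\CC$-linear combination of indicators $\one_{I',J'}$ with $(I',J') \in \symm_{n,k+1}$. Once this is done, $k$-locality of a function $f$ — which by definition means $f = \sum_{(I,J)} c_{I,J} \one_{I,J}$ over $\symm_{n,k}$ — immediately rewrites $f$ as a combination of $\one_{I',J'}$ over $\symm_{n,k+1}$, so $f$ is $(k+1)$-local, giving the desired inclusion $\Loc_k(\symm_n,\CC) \subseteq \Loc_{k+1}(\symm_n,\CC)$.

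To produce such an identity, I would fix $(I,J) \in \symm_{n,k}$, write $I = (i_1,\dots,i_k)$ and $J = (j_1,\dots,j_k)$, and pick any index $i \notin I$ (which exists since $k \leq n-1 < n$; if $k = n$ the statement is vacuous as $\Loc_{n-1} = \CC[\symm_n]$ is already everything). Then I would observe that for any $w \in \symm_n$ with $w(I) = J$, the value $w(i)$ is some element of $[n] \setminus J$, and these cases are mutually exclusive. Concretely, $\one_{I,J}(w) = \sum_{j \in [n] \setminus J} \one_{I,J}(w)\one_{i,j}(w)$, since exactly one term on the right is nonzero (the one with $j = w(i)$) whenever $w(I) = J$, and all terms vanish otherwise. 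Invoking Lemma~\ref{lem:multiply-by-singleton} with this choice of $i$ (for which $i \notin I$, so we are always in the first or third case), each product $\one_{I,J} \cdot \one_{i,j}$ equals $\one_{I\,i,\,J\,j}$ when $j \notin J$ and is $0$ otherwise; since the sum ranges exactly over $j \in [n]\setminus J$, we obtain
\[
\one_{I,J} = \sum_{j \, \in \, [n] \setminus J} \one_{I \, i, \, J \, j},
\]
and every pair $(I\,i,\,J\,j)$ on the right lies in $\symm_{n,k+1}$ because $i \notin I$ and $j \notin J$ guarantee the entries of both extended lists remain distinct.

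With this identity in hand the proof is essentially complete: substituting it into $f = \sum_{(I,J) \in \symm_{n,k}} c_{I,J} \one_{I,J}$ and collecting terms exhibits $f$ as a $\CC$-linear combination of indicators indexed by $\symm_{n,k+1}$, which is exactly the assertion that $f \in \Loc_{k+1}(\symm_n,\CC)$. I do not anticipate a genuine obstacle here — the content is entirely in Lemma~\ref{lem:multiply-by-singleton}, which has already been established. The only point requiring a moment's care is the bookkeeping that the sum over $j \in [n] \setminus J$ correctly accounts for all of $\one_{I,J}$ (i.e.\ that no contribution is lost to the "otherwise" case of the lemma), which holds precisely because any $w$ with $w(I) = J$ sends $i$ into $[n] \setminus J$. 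One could alternatively phrase the whole argument purely pointwise, without reference to the lemma, by checking the displayed identity holds when evaluated at each $w \in \symm_n$; I would likely present it via the lemma for brevity since it is stated immediately prior.
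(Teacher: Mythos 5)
Your proof is correct and follows essentially the same route as the paper: both reduce to expressing a single $\one_{I,J}$ as a sum of $(k+1)$-indicators via Lemma~\ref{lem:multiply-by-singleton}. The only difference is cosmetic — the paper fixes $j \notin J$ and writes $\one_{I,J} = \one_{I,J}\cdot(\one_{1,j}+\cdots+\one_{n,j}) = \sum_{i \notin I}\one_{I\,i,\,J\,j}$, while you fix $i \notin I$ and sum over $j \notin J$; the two identities are mirror images of one another.
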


\begin{proof}
    Let $(I,J) \in \symm_{n,k}$. It suffices to show that $\one_{I,J}: \symm_n \to \CC$ is $(k+1)$-local. Indeed, fix $1 \leq j \leq n$ with $j \notin J$. Since $\one_{1,j} + \cdots + \one_{n,j} = 1$, Lemma~\ref{lem:multiply-by-singleton} implies
    \[ \one_{I,J} = \one_{I,J} \cdot \left( \one_{1,j} + \cdots + \one_{n,j}  \right) = \sum_{i \, \notin \, I} \one_{I \, i, \, J \, j}\]
    which is visibly $(k+1)$-local.
\end{proof}

Proposition~\ref{prop:local-nesting} gives the filtration
\begin{equation}
\label{eq:locality-filtration}
\CC = \Loc_0(\symm_n,\CC) \subseteq \Loc_1(\symm_n,\CC) \subseteq \cdots \subseteq \Loc_{n-1}(\symm_n,\CC) = \Fun(\symm_n,\CC)
\end{equation}
where the equality $\Loc_{n-1}(\symm_n,\CC) = \Fun(\symm_n,\CC)$ follows from
\begin{equation}
    \one_{(1, 2, \dots, n-1), \, (w(1), w(2), \dots, w(n-1))}: v \mapsto \begin{cases} 1 & v = w \\ 0  & v \neq w \end{cases} \quad \text{for all $w,v \in \symm_n$.}
\end{equation}
The filtration \eqref{eq:locality-filtration} behaves well under pointwise product.

\begin{proposition}
    \label{prop:local-product}
    Let $f, g: \symm_n \to \CC$ be statistics. If $f$ is $k$-local and $g$ is $\ell$-local then $f \cdot g: \symm_n \to \CC$ is $(k + \ell)$-local.
\end{proposition}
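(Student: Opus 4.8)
The plan is to reduce to the case of indicator statistics and then iterate Lemma~\ref{lem:multiply-by-singleton}. Since $\Loc_{k+\ell}(\symm_n,\CC)$ is a vector space and, by definition, $f$ is a $\CC$-linear combination of indicators $\one_{I,J}$ with $(I,J) \in \symm_{n,k}$ while $g$ is a $\CC$-linear combination of indicators $\one_{K,L}$ with $(K,L) \in \symm_{n,\ell}$, bilinearity of the pointwise product reduces the claim to the following: for $(I,J) \in \symm_{n,k}$ and $(K,L) \in \symm_{n,\ell}$, the product $\one_{I,J} \cdot \one_{K,L}$ lies in $\Loc_{k+\ell}(\symm_n,\CC)$.

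First I would observe that if $K = (k_1, \dots, k_\ell)$ and $L = (l_1, \dots, l_\ell)$, then $\one_{K,L} = \one_{k_1, l_1} \cdot \one_{k_2, l_2} \cdots \one_{k_\ell, l_\ell}$ as functions $\symm_n \to \CC$, since a permutation $w$ satisfies $w(K) = L$ if and only if $w(k_p) = l_p$ for every $p$. Hence
\[
\one_{I,J} \cdot \one_{K,L} = \one_{I,J} \cdot \one_{k_1, l_1} \cdot \one_{k_2, l_2} \cdots \one_{k_\ell, l_\ell},
\]
and I would evaluate the right-hand side by applying Lemma~\ref{lem:multiply-by-singleton} one singleton at a time, from left to right.

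The key point is that each such multiplication has one of exactly three effects on the partial permutation obtained so far: it appends a new pair (increasing the size by exactly $1$), it leaves the partial permutation unchanged, or it produces the zero function. By a straightforward induction on the number of singletons multiplied, the partial product $\one_{I,J} \cdot \one_{k_1,l_1} \cdots \one_{k_p, l_p}$ is therefore either identically zero or equal to $\one_{I', J'}$ for some partial permutation $(I', J') \in \symm_{n,m}$ with $k \le m \le k + p$. Taking $p = \ell$ shows that $\one_{I,J} \cdot \one_{K,L}$ is either $0$ or $\one_{I',J'}$ for a partial permutation $(I',J')$ of size at most $k + \ell$; in either case it is $(k+\ell)$-local, using Proposition~\ref{prop:local-nesting} to pass from size $m \le k+\ell$ up to size $k+\ell$.

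I do not anticipate a real obstacle here beyond bookkeeping with Lemma~\ref{lem:multiply-by-singleton}. The only point that deserves a moment's care is that multiplying by a single $\one_{i,j}$ can increase the size by at most one, so that after $\ell$ such steps the size is bounded by $k+\ell$ (rather than growing faster); the collapse and annihilation cases only ever leave the size fixed or destroy the term, so they pose no difficulty.
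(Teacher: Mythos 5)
Your proof is correct and follows essentially the same route as the paper: both arguments reduce to products of indicator statistics by bilinearity, factor into singleton indicators, iterate Lemma~\ref{lem:multiply-by-singleton} to see that the product is either zero or a single indicator of size at most $k+\ell$, and finish with Proposition~\ref{prop:local-nesting}. Your version is slightly more explicit about tracking the size of the partial product at each step, but the underlying mechanism is identical.
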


\begin{proof}
    Let $(I,J) \in \symm_{n,m}$ with $I = (i_1, \dots, j_m)$ and $J = (j_1, \dots, j_m)$. Lemma~\ref{lem:multiply-by-singleton} implies
    \[ \one_{I,J} = \one_{i_1,j_1} \cdots \one_{i_m,j_m}.\]
    If $(I_1,J_1) \in \symm_{n,k}$ and $(I_2,J_2) \in \symm_{n,\ell}$, we deduce that there exist $1 \leq a_r, b_r \leq n$ for $r = 1, \dots, k + \ell$ so that
    \[
    \one_{I_1,J_1} \cdot \one_{I_2,J_2} = \prod_{r \, = \, 1}^{k + \ell} \one_{a_r,b_r}.
    \]
    Lemma~\ref{lem:multiply-by-singleton} and Proposition~\ref{prop:local-nesting} imply that $\one_{I_1,J_1} \cdot \one_{I_2,J_2}$ is $(k + \ell)$-local. The result follows by linearity.
\end{proof}

The product group $\symm_n \times \symm_n$ acts on the group algebra $\CC[\symm_n]$ by left and right multiplication. The induced action on the function space $\Fun(\symm_n,\CC)$ is given by
\begin{equation}
    ((u,v) \cdot f)(w) = f(u^{-1} w v) \quad \quad \text{ for all } u,v,w \in \symm_n \text{ and } f: \symm_n \to \CC.
\end{equation}
The filtration \eqref{eq:locality-filtration} is stable under this action.

\begin{lemma}
    \label{lem:action-on-indicators}
    Let $u,v \in \symm_n$ and let $(I,J) \in \symm_{n,k}$. We have
    \[
    (u,v) \cdot \one_{I,J} = \one_{v(I),u(J)}.
    \]
\end{lemma}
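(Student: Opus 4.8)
The plan is to verify the identity pointwise on an arbitrary permutation $w \in \symm_n$. By definition of the $(\symm_n \times \symm_n)$-action on $\Fun(\symm_n,\CC)$, we have $((u,v) \cdot \one_{I,J})(w) = \one_{I,J}(u^{-1} w v)$. So the task reduces to showing that $\one_{I,J}(u^{-1} w v) = \one_{v(I),u(J)}(w)$ for all $w$, which is equivalent to showing that the two defining conditions agree: $u^{-1} w v$ sends $I$ to $J$ if and only if $w$ sends $v(I)$ to $u(J)$.

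The key step is a direct unravelling of these conditions. First I would note that $(u^{-1} w v)(i_p) = j_p$ for all $p = 1, \dots, k$ is equivalent, after applying $u$ to both sides and using that $u$ is a bijection, to $w(v(i_p)) = u(j_p)$ for all $p$. Writing $v(I) = (v(i_1), \dots, v(i_k))$ and $u(J) = (u(j_1), \dots, u(j_k))$, this last system of equations is precisely the statement that $w(v(I)) = u(J)$, i.e.\ that $\one_{v(I),u(J)}(w) = 1$. Running the equivalences in reverse shows the conditions fail together as well, so $\one_{I,J}(u^{-1} w v) = \one_{v(I), u(J)}(w)$ for every $w$, and hence $(u,v) \cdot \one_{I,J} = \one_{v(I),u(J)}$ as functions on $\symm_n$.

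I should also observe that the right-hand side is well-formed: since $I$ and $J$ have distinct entries and $u,v$ are bijections, the lists $v(I)$ and $u(J)$ again have distinct entries, so $(v(I),u(J)) \in \symm_{n,k}$ is a genuine partial permutation and $\one_{v(I),u(J)}$ makes sense. There is no real obstacle here; the only thing to be careful about is the ``crossed'' placement of $u$ and $v$ (the left multiplier $u$ acts on the target list $J$ while the right multiplier $v$ acts on the source list $I$), which is exactly what the inverse on $u$ in the action $((u,v)\cdot f)(w) = f(u^{-1}wv)$ produces. This lemma then feeds into stability of the locality filtration under the $(\symm_n \times \symm_n)$-action, since it shows the action merely permutes the spanning set $\{\one_{I,J} : (I,J) \in \symm_{n,k}\}$ of $\Loc_k(\symm_n,\CC)$.
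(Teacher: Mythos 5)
Your proof is correct and follows the same route as the paper's: evaluate both sides pointwise at an arbitrary $w$, unwind the definition of the action to get $\one_{I,J}(u^{-1}wv)$, and observe that $u^{-1}wv(I) = J$ holds if and only if $wv(I) = u(J)$. The extra remarks on well-formedness of $(v(I),u(J))$ are fine but not needed.
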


\begin{proof}
    Let $w \in \symm_n$. We have 
    \begin{equation}
        ((u,v) \cdot \one_{I,J})(w) = \one_{I,J}(u^{-1}wv) = \begin{cases} 1 & u^{-1}wv(I) = J \\ 0 & u^{-1}wv(I) \neq J\end{cases} = 
        \begin{cases} 1 & wv(I) = u(J) \\ 0 & wv(I) \neq u(J) \end{cases} = \one_{v(I),u(J)}(w).
    \end{equation}
\end{proof}

\subsection{Artin-Wedderburn and increasing subsequences}
Recall that $V^\lambda$ is the irreducible representation of $\symm_n$ associated to a partition $\lambda \vdash n$. The Artin-Wedderburn Theorem~\ref{thm:aw} in the case of $G = \symm_n$ therefore gives an isomorphism of $\CC$-algebras
\begin{equation}
    \label{eq:aw-sn}
    \Phi: \CC[\symm_n] \xrightarrow{ \, \, \sim \, \, } \bigoplus_{\lambda \, \vdash \, n} \End_\CC(V^\lambda)
\end{equation}
where $\Phi(a)$ acts on the $\lambda$-component by $v \mapsto a \cdot v$ for all $v \in V^\lambda$.
If we identify $\CC[\symm_n] = \Fun(\symm_n,\CC)$, for any $k$ the subspace $\Loc_k(\symm_n,\CC) \subseteq \End_\CC(V^\lambda)$ maps to a subspace of  $\bigoplus_{\lambda \, \vdash \, n} \End_\CC(V^\lambda)$ under $\Phi$.  This image may be described explicitly as follows.

\begin{theorem}
    \label{thm:aw-image}
    For $n,k \geq 0$, the image of $\Loc_k(\symm_n, \CC)$ under the Artin-Wedderburn isomorphism $\Phi$ is 
    \[ \Phi \left(\Loc_k(\symm_n,\CC)  \right) = \bigoplus_{\substack{\lambda \,  \vdash \, n \\ \lambda_1 \, \geq \, n-k}} \End_\CC(V^\lambda).\]
\end{theorem}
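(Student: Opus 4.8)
The plan is to prove the two inclusions separately, exploiting the $(\symm_n \times \symm_n)$-equivariance of everything in sight. First note that under the Artin--Wedderburn isomorphism $\Phi$ of \eqref{eq:aw-sn}, the $(\symm_n \times \symm_n)$-action on $\CC[\symm_n]$ by left/right multiplication corresponds to the action on $\bigoplus_\lambda \End_\CC(V^\lambda)$ in which $(u,v)$ sends $\varphi$ on the $\lambda$-component to $u \cdot \varphi \cdot v^{-1}$; by Schur's lemma each $\End_\CC(V^\lambda)$ is $(\symm_n \times \symm_n)$-irreducible (it is $V^\lambda \otimes (V^\lambda)^*$). By Lemma~\ref{lem:action-on-indicators}, $\Loc_k(\symm_n,\CC)$ is a $(\symm_n \times \symm_n)$-submodule of $\CC[\symm_n]$, hence so is its image under $\Phi$, and therefore $\Phi(\Loc_k(\symm_n,\CC))$ is a direct sum of some subset of the $\End_\CC(V^\lambda)$. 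So the content of the theorem is purely a statement about \emph{which} $\lambda$ occur: namely $\End_\CC(V^\lambda)$ appears in $\Phi(\Loc_k(\symm_n,\CC))$ if and only if $\lambda_1 \geq n-k$. Equivalently, writing $d_\lambda := n - \lambda_1$ for the number of boxes of $\lambda$ below the first row, the claim is: $\lambda$ occurs $\iff d_\lambda \leq k$.

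For the inclusion $\Phi(\Loc_k(\symm_n,\CC)) \subseteq \bigoplus_{\lambda_1 \geq n-k} \End_\CC(V^\lambda)$, it suffices to show that for every generator $\one_{I,J}$ with $(I,J) \in \symm_{n,k}$ and every $\lambda$ with $\lambda_1 < n-k$, the operator $[I,J]$ acts as $0$ on $V^\lambda$. Here I would use a standard model: realize $V^\lambda$ as a quotient (or submodule) of the permutation module $M^{\lambda} = \CC[\symm_n/\symm_\lambda]$, or more efficiently, recall that $V^\lambda$ does \emph{not} appear in the permutation module $M^\mu = \mathrm{Ind}_{\symm_\mu}^{\symm_n} \mathbf{1}$ on cosets of a Young subgroup $\symm_\mu$ unless $\lambda \trianglerighteq \mu$ in dominance order. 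Now observe that $[I,J] = \sum_{w(I)=J} w$ is, up to the left/right translation by a fixed permutation carrying $I$ to $J$, essentially the element $\sum_{w \in \symm_{[n]\setminus I}} w$ averaging over a Young subgroup $\symm_{n-k} \times \{1\}^k \cong \symm_{[n]\setminus I}$; more precisely, fix $w_0$ with $w_0(I) = J$, then $\{w : w(I) = J\} = w_0 \cdot \mathrm{Stab}(I)$ where $\mathrm{Stab}(I) = \{ v : v(i_p) = i_p \ \forall p\} \cong \symm_{n-k}$. Thus $[I,J] = w_0 \cdot e$ where $e = \sum_{v \in \symm_{n-k}} v$ is $(n-k)!$ times the projector onto $\mathrm{Ind}_{\symm_{n-k}}^{\symm_n}\mathbf{1}$-isotypic vectors fixed by $\symm_{n-k}$. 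Since $e \cdot V^\lambda$ is spanned by $\symm_{n-k}$-fixed vectors in $V^\lambda$, and $\dim (V^\lambda)^{\symm_{n-k}} = \langle \chi^\lambda, \mathrm{Ind}_{\symm_{n-k}}^{\symm_n}\mathbf{1}\rangle = \langle \chi^\lambda, \sum_{\mu \vdash k} M^{(n-k,\mu)}\text{-stuff}\rangle$ — concretely this multiplicity is the number of SSYT of shape $\lambda$ with content having first part $\geq n-k$, which is nonzero exactly when $\lambda_1 \geq n-k$ — we get $e \cdot V^\lambda = 0$ when $\lambda_1 < n-k$, hence $[I,J] \cdot V^\lambda = w_0 \cdot e \cdot V^\lambda = 0$. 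For the reverse inclusion, I need, for each $\lambda$ with $\lambda_1 \geq n-k$, that $\End_\CC(V^\lambda)$ lies in $\Phi(\Loc_k(\symm_n,\CC))$; since the image is a sum of whole blocks, it is enough to exhibit a single $k$-local $f$ whose image has nonzero $\lambda$-component, equivalently $\chi^\lambda([I,J]) \neq 0$ for some $(I,J) \in \symm_{n,k}$. Take $I = J = (1,2,\dots,k)$, so $[I,I] = \sum_{v \in \symm_{\{k+1,\dots,n\}}} v = e'$, the sum over a Young subgroup $\symm_{n-k}$; then $\chi^\lambda(e') = (n-k)! \cdot \langle \chi^\lambda|_{\symm_{n-k}\times\symm_k}, \mathbf{1}\boxtimes(\text{reg})\rangle$-type quantity, and more simply $\mathrm{trace}_{V^\lambda}(e') = (n-k)! \dim(V^\lambda)^{\symm_{n-k}} > 0$ precisely when $\lambda_1 \geq n-k$ by the same branching computation. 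That gives a nonzero $\lambda$-component, and by $(\symm_n\times\symm_n)$-irreducibility of the block, the whole block $\End_\CC(V^\lambda)$ is in the image.

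The main obstacle is pinning down cleanly the statement ``$\dim (V^\lambda)^{\symm_{n-k}} \neq 0 \iff \lambda_1 \geq n-k$.'' This is the combinatorial heart and I would handle it via the branching rule / Pieri: $(V^\lambda)^{\symm_{n-k}}$ as an $\symm_{n-k}$-module restricted and then taking invariants has dimension $\langle \mathrm{Res}_{\symm_{n-k}} V^\lambda, \mathbf{1}_{\symm_{n-k}}\rangle$, which by iterated branching equals the number of standard-ish removal sequences, i.e.\ the number of ways to reach $\lambda$ by adding a horizontal strip of size $n-k$ to a partition $\rho \vdash k$; such a $\rho$ exists iff $\lambda_1 \geq n-k$ (remove the longest row down to length $\leq$ whatever, keeping rows $2,3,\dots$). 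A slicker alternative, which I would use to streamline, is to invoke Theorem~\ref{thm:aw} together with the fact that $\Loc_k$ is spanned by products $\one_{i_1,j_1}\cdots\one_{i_k,j_k}$ (Proposition~\ref{prop:local-product} and its proof) and reduce everything to understanding $\Phi(\one_{i,j})$ for singletons, but the invariants-under-Young-subgroup argument above is the most transparent route and is what I would write up. I should also double-check the dominance-order / branching bookkeeping so that the equivalence is stated with the correct inequality ($\lambda_1 \geq n-k$, not $>$), which matches the claimed $\bigoplus_{\lambda_1 \geq n-k}$.
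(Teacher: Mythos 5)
Your proposal is correct and follows essentially the same route as the paper: both arguments reduce to the observation that the image must be a direct sum of whole blocks $\End_\CC(V^\lambda)$ (you phrase this via $(\symm_n\times\symm_n)$-irreducibility of the blocks, the paper via simplicity of matrix algebras applied to the two-sided ideal generated by $\eta_{n-k}=\sum_{v\in\symm_{n-k}}v$), and both determine which blocks survive by identifying $[I,J]\cdot V^\lambda$ with the $\symm_{n-k}$-fixed space and invoking the branching rule to get $\lambda_1\ge n-k$. The only cosmetic difference is that the paper notes $\Loc_k(\symm_n,\CC)$ is exactly the ideal generated by $\eta_{n-k}$ and so settles both inclusions at once, whereas you split the argument into two inclusions; the underlying computations are identical.
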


\begin{proof}
    Consider the embedding of symmetric groups $\symm_{n-k} \subseteq \symm_n$ where $\symm_{n-k}$ acts on the first $n-k$ letters. Let $\eta_{n-k}$ be the group algebra element
    \begin{equation}
        \eta_{n-k} := \sum_{w \, \in \, \symm_{n-k}} w \in \CC[\symm_n].
    \end{equation}
    In the correspondence $\CC[\symm_n] \leftrightarrow \Fun(\symm_n,\CC)$, we have $\eta_{n-k} \leftrightarrow \one_{I_0,J_0}$ where \[I_0 = J_0 = (n-k+1,\dots,n-1,n).\]
    Lemma~\ref{lem:action-on-indicators} implies that the subspace $\Loc_k(\symm_n,\CC) \subseteq \Fun(\symm_n,\CC)$ of $k$-local statistics corresponds to the two-sided ideal $\III_{n,k} \subseteq \CC[\symm_n]$ generated by $\eta_{n-k}$. We therefore want to compute the image
    \[
    \Phi(\III_{n,k}) \subseteq \bigoplus_{\lambda \, \vdash \, n} \End_\CC(V^\lambda)
    \]
    of this ideal under $\Phi$. Since $\Phi$ is an algebra isomorphism, the subset $\Phi(\III_{n,k})$ is an ideal in $\bigoplus_{\lambda \vdash n} \End_\CC(V^\lambda)$.

    For any $\lambda \vdash n$, an easy computation shows that the matrix ring $\End_\CC(V^\lambda) \cong \mathrm{Mat}_{f^\lambda}(\CC)$ is {\em simple}: it has no nontrivial two-sided ideals. Consequently, the ideals of the direct sum $\bigoplus_{\lambda \vdash n} \End_\CC(V^\lambda)$ correspond to subsets of $\{ \lambda \,:\, \lambda \vdash n \}$. In particular, there is a set $P(n,k)$ of partitions of $n$ such that
    \begin{equation}
        \Phi(\III_{n,k}) = \bigoplus_{\lambda \, \in \, P(n,k)} \End_\CC(V^\lambda).
    \end{equation}
    The definition of $\Phi$ implies that 
    \begin{equation}
        P(n,k) = \{ \lambda \vdash n \,:\, \eta_{n-k} \cdot V^\lambda \neq 0 \}.
    \end{equation}

    If $V$ is an $\symm_n$-module, it is not hard to see that \[ \eta_{n-k} \cdot V = V^{\symm_{n-k}} := \{ v \in V \,:\, w \cdot v 
= v \text{ for all $w \in \symm_{n-k}$} \}.\]
In particular, we have $\eta_{n-k} \cdot V \neq 0$ if and only if the trivial $\symm_{n-k}$-module appears in the restriction $\mathrm{Res}^{\symm_n}_{\symm_{n-k}}(V)$ of $V$ from $\symm_n$ to $\symm_{n-k}$. We conclude that
\begin{equation}
    P(n,k) = \left\{ \lambda \vdash n \,:\, \mathrm{triv}_{\symm_{n-k}} \text{ appears in } \mathrm{Res}^{\symm_n}_{\symm_{n-k}}(V^\lambda) \right\} = \{ \lambda \vdash n\,:\, \lambda_1 \geq n-k \}
\end{equation}
where the second equality uses $\mathrm{triv}_{\symm_{n-k}} = V^{(n-k)}$ and the branching rule for symmetric group modules. This completes the proof.
\end{proof}

\begin{remark}
    \label{rmk:local-literature}
    Statements equivalent to Theorem~\ref{thm:aw-image} (or portions thereof) have appeared in the literature. In machine learning, Huang, Guestrin, and Guibas \cite[Appendix C]{HGG} proved the inclusion $\subseteq$ using the orthogonal basis of $\symm_n$-irreducibles.
    Ellis, Friedgut and Pilpel proved \cite[Thm. 7]{EFP} the reverse inclusion $\supseteq$ using the branching rule for $\symm_n$-modules. In his study of permutation patterns, Even-Zohar proved a `dual' result \cite[Lem. 7]{EZ} which classifies when functions $f: \symm_n \to \CC$ vanish when summed over any two-sided coset $u \cdot \symm_r \cdot v$ for $r \leq n$. The Peter-Weyl Theorem shows that \cite[Lem. 7]{EZ} is equivalent to Theorem~\ref{thm:aw-image}. We have included a proof of Theorem~\ref{thm:aw-image} for convenience and in a form which we hope will be the most accessible to the reader in algebraic combinatorics.
\end{remark}

As a consequence of Theorem~\ref{thm:aw-image}, we get a formula for the dimension of the space of $k$-local statistics $\symm_n \to \CC$. For $\lambda \vdash n$, let $f^\lambda := \dim V^\lambda$.

\begin{corollary}
    \label{cor:local-dimension}
    For $n,k \geq 0$, the dimension of $\Loc_k(\symm_n,\CC)$ is 
    \[ \dim \Loc_k(\symm_n,\CC) = \sum_{\substack{\lambda \, \vdash \, n \\ \lambda_1 \, \geq \, n-k}} (f^\lambda)^2.\]
    This quantity also equals the number of permutations $w = [w(1), \dots, w(n)] \in \symm_n$ whose longest increasing subsequence has length $\geq n-k$.
\end{corollary}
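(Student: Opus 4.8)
The plan is to treat the two asserted equalities separately. The first is essentially formal. The Artin--Wedderburn map $\Phi$ of~\eqref{eq:aw-sn} is in particular an isomorphism of $\CC$-vector spaces, so $\dim_\CC \Loc_k(\symm_n,\CC) = \dim_\CC \Phi(\Loc_k(\symm_n,\CC))$, and Theorem~\ref{thm:aw-image} identifies the right-hand side with $\bigoplus_{\lambda \vdash n,\, \lambda_1 \geq n-k} \End_\CC(V^\lambda)$. Since $\dim_\CC \End_\CC(V^\lambda) = (\dim_\CC V^\lambda)^2 = (f^\lambda)^2$, summing over the partitions $\lambda \vdash n$ with $\lambda_1 \geq n-k$ yields the first displayed formula.

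For the second equality I would invoke the Robinson--Schensted correspondence, a bijection between $\symm_n$ and the set of pairs $(P,Q)$ of standard Young tableaux of a common shape $\lambda \vdash n$. In particular, for each fixed $\lambda$ the number of permutations $w \in \symm_n$ with $\shape(w) = \lambda$ is exactly $f^\lambda \cdot f^\lambda = (f^\lambda)^2$. Schensted's theorem states that the length of the longest increasing subsequence of $w$ equals $\lambda_1$, the length of the first row of $\shape(w)$. Hence the permutations whose longest increasing subsequence has length $\geq n-k$ are precisely those $w$ with $\shape(w)_1 \geq n-k$, and organizing the count by shape gives
\[ \#\{\, w \in \symm_n \,:\, \text{longest increasing subsequence of $w$ has length} \geq n-k \,\} = \sum_{\substack{\lambda \, \vdash \, n \\ \lambda_1 \, \geq \, n-k}} (f^\lambda)^2, \]
which agrees with the first formula.

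I do not anticipate a genuine obstacle: both steps are immediate given Theorem~\ref{thm:aw-image} and standard facts about RSK. The one ingredient not contained in the excerpt is Schensted's theorem on the first row of the RSK shape, which I would cite from a standard reference (e.g. Sagan's \emph{The Symmetric Group} or Stanley's \emph{Enumerative Combinatorics, Vol.~2}) rather than reprove. If a self-contained argument were desired, one could instead verify $\sum_{\lambda_1 \geq n-k} (f^\lambda)^2$ equals the stated count by refining the identity $\sum_{\lambda \vdash n} (f^\lambda)^2 = n!$, but routing through RSK and Schensted's theorem is the cleanest presentation.
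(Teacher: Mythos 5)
Your proposal is correct and follows exactly the paper's own argument: the paper likewise obtains the dimension formula by taking dimensions in Theorem~\ref{thm:aw-image}, and then cites standard properties of the Schensted correspondence (that $f^\lambda$ counts standard Young tableaux and that the first row of the RSK shape records the longest increasing subsequence) for the second equality. You have simply written out the details that the paper leaves implicit.
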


The second author described \cite{RhoadesShadow} an explicit basis of $\Loc_k(\symm_n,\CC)$ in terms of the Viennot shadow line avatar \cite{Viennot} of the Schensted correspondence.

\begin{proof}
    The first statement follows from taking dimensions in Theorem~\ref{thm:aw-image}. It is well-known that $f^\lambda$ counts standard Young tableaux of shape $\lambda$, so the second statement follows from standard properties of the Schensted correspondence \cite{Schensted}.
\end{proof}

Under the identification $\Fun(\symm_n,\CC) = \CC[\symm_n]$, the subspace $\Class(\symm_n\CC) \subseteq \Fun(\symm_n,\CC)$ of class functions corresponds to the center $Z(\CC[\symm_n])$ of the symmetric group algebra. The image of the space of $k$-local class functions under the Artin-Wedderburn isomorphism has the following description.

\begin{corollary}
    \label{cor:local-class}
    For $n,k \geq 0$, the image of the space of $k$-local class functions $f: \symm_n \to \CC$ under the Artin-Wedderburn isomorphism $\Phi$ is given by
    \[ \Phi \left(\Loc_k(\symm_n,\CC) \cap \Class(\symm_n,\CC  \right)) = \bigoplus_{\substack{\lambda \,  \vdash \, n \\ \lambda_1 \, \geq \, n-k}} \CC \cdot \mathrm{id}_{V^\lambda}\]
    where $\CC \cdot \mathrm{id}_{V^\lambda}$ is the 1-dimensional vector space of scalar maps $V^\lambda \to V^\lambda$.
\end{corollary}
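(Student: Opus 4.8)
The plan is to combine Corollary~\ref{cor:aw-central}, which identifies $\Phi(Z(\CC[\symm_n])) = \bigoplus_{\lambda \vdash n} \CC \cdot \mathrm{id}_{V^\lambda}$, with Theorem~\ref{thm:aw-image}, which identifies $\Phi(\Loc_k(\symm_n,\CC)) = \bigoplus_{\lambda_1 \geq n-k} \End_\CC(V^\lambda)$. Since $\Phi$ is an isomorphism of $\CC$-algebras, it restricts to a linear isomorphism on any subspace, so $\Phi\left( \Loc_k(\symm_n,\CC) \cap \Class(\symm_n,\CC) \right)$ is the intersection of the two images above. The intersection of $\bigoplus_{\lambda \vdash n} \CC \cdot \mathrm{id}_{V^\lambda}$ with $\bigoplus_{\lambda_1 \geq n-k} \End_\CC(V^\lambda)$ is visibly $\bigoplus_{\lambda_1 \geq n-k} \CC \cdot \mathrm{id}_{V^\lambda}$, which is the claimed answer.

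The one point requiring a sentence of justification is that $\Phi$ commutes with intersection of subspaces in the relevant way: because $\Phi$ is a bijection, $\Phi(A \cap B) = \Phi(A) \cap \Phi(B)$ for any subsets $A, B$ of $\CC[\symm_n]$. Here I would take $A = \Loc_k(\symm_n,\CC)$ and $B = Z(\CC[\symm_n])$, using the identification $\Fun(\symm_n,\CC) = \CC[\symm_n]$ under which $\Class(\symm_n,\CC)$ corresponds to $Z(\CC[\symm_n])$ — a fact recalled in the text just before the statement. Then $\Phi(A \cap B) = \Phi(A) \cap \Phi(B)$, and the two factors on the right are supplied by Theorem~\ref{thm:aw-image} and Corollary~\ref{cor:aw-central} respectively.

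I do not anticipate a genuine obstacle here: both ingredients are already in hand, and the only content is that a direct-sum decomposition respects the obvious intersections (a summand-by-summand check, using that $\CC \cdot \mathrm{id}_{V^\lambda} \subseteq \End_\CC(V^\lambda)$ and that the $\End_\CC(V^\lambda)$ are independent in the direct sum). If I wanted to be fully explicit I would note that an element of $\bigoplus_\lambda \End_\CC(V^\lambda)$ lies in both images iff each of its components is simultaneously a scalar map and supported only at partitions with $\lambda_1 \geq n-k$, which is exactly membership in $\bigoplus_{\lambda_1 \geq n-k} \CC \cdot \mathrm{id}_{V^\lambda}$. This completes the proof.
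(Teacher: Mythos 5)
Your proposal is correct and matches the paper's proof, which consists precisely of the one line ``Apply Corollary~\ref{cor:aw-central} and Theorem~\ref{thm:aw-image}.'' The extra justification you supply — that an injective map satisfies $\Phi(A \cap B) = \Phi(A) \cap \Phi(B)$, so the image of the intersection is the intersection of the images computed by those two results — is exactly the routine verification the paper leaves implicit.
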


\begin{proof}
    Apply Corollary~\ref{cor:aw-central} and Theorem~\ref{thm:aw-image}.
\end{proof}

Recall that $\chi^\mu: \symm_n \to \CC$ is the irreducible character associated to $\mu \vdash n$. It is well-known that the group algebra element $\sum_{w \in \symm_n} \chi^\mu(w) \cdot w \in \CC[\symm_n]$ acts on $\symm_n$-modules by projection onto the $\mu$-isotypic component (up to a nonzero scalar).\footnote{The group algebra element $\sum_{w \in \symm_n} \chi^\mu(w) \cdot w \in \CC[\symm_n]$ is central, and so (by Schur's lemma) acts by a scalar $c_{\lambda,\mu} \in \CC$ on any $\symm_n$-irreducible $V^\lambda$. Character orthogonality implies that the trace of $\sum_{w \in \symm_n} \chi^\mu(w) \cdot w$ acting on $V^\lambda$ is $c_{\lambda, \mu} \cdot \dim(V^\lambda) = \sum_{w \in \symm_n} \chi^\mu(w) \cdot \chi^\lambda(w) = n! \cdot \delta_{\mu,\lambda}$ where $\delta_{\mu,\lambda}$ is the Kronecker delta.} In particular, the image 
\[
\Phi \left( \sum_{w \, \in \, \symm_n} \chi^\mu(w) \cdot w  \right) \in \bigoplus_{\lambda \, \vdash \, n} \End_\CC(V^\lambda)
\]
of this element under the Artin-Wedderburn isomorphism acts by a nonzero scalar on $V^\mu$ and annihilates $V^\lambda$ for all $\lambda \neq \mu$. Corollary~\ref{cor:local-class} therefore shows 
\begin{equation} 
\label{eq:irreducible-character-locality}
\text{ the irreducible character }
    \chi^\mu: \symm_n \to \CC \text{ is $k$-local but not $(k-1)$-local where } k = n - \mu_1.
\end{equation}
By the same reasoning, we see that 
\begin{multline}
    \label{eq:local-class-basis}
    \{ \chi^\lambda: \symm_n \to \CC \,:\, \lambda_1 \geq n-k \} \text{ is a basis of the vector space } \\ \Loc_k(\symm_n,\CC) \cap \Class(\symm_n,\CC) \text{ of $k$-local class functions $\symm_n \to \CC$.}
\end{multline}
We deduce the following classical result of Murnaghan.

\begin{corollary}
    \label{cor:kronecker} {\em (Murnaghan \cite{Murnaghan})}
    Let $\lambda, \mu \vdash n$ and consider the irreducible character expansion
    \[\chi^\lambda \cdot \chi^\mu = \sum_{\nu \, \vdash \, n} g_{\lambda,\mu,\nu} \cdot \chi^\nu\]
    of the pointwise product $\chi^\lambda \cdot \chi^\nu$. We have $g_{\lambda,\mu,\nu} = 0$ unless 
    \[ (n - \nu_1) \leq (n - \lambda_1) + (n - \mu_1).\]
\end{corollary}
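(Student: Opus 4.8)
The plan is to deduce this entirely from the locality properties of irreducible characters established above, using no further representation theory. Set $k := n - \lambda_1$ and $\ell := n - \mu_1$. First I would invoke \eqref{eq:irreducible-character-locality}, which says precisely that $\chi^\lambda : \symm_n \to \CC$ is $k$-local and $\chi^\mu : \symm_n \to \CC$ is $\ell$-local. Then, since $\chi^\lambda \cdot \chi^\mu$ is a pointwise product of two class functions, it is again a class function; and by Proposition~\ref{prop:local-product} it is $(k+\ell)$-local. Hence $\chi^\lambda \cdot \chi^\mu$ lies in $\Loc_{k+\ell}(\symm_n,\CC) \cap \Class(\symm_n,\CC)$.

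The final step is to feed this into \eqref{eq:local-class-basis}: the set $\{\chi^\nu : \nu \vdash n,\ \nu_1 \geq n - (k+\ell)\}$ is a basis of $\Loc_{k+\ell}(\symm_n,\CC) \cap \Class(\symm_n,\CC)$. Because this basis is a linearly independent subset of the global irreducible-character basis $\{\chi^\nu : \nu \vdash n\}$ of $\Class(\symm_n,\CC)$, the unique expansion of any element of $\Loc_{k+\ell}(\symm_n,\CC) \cap \Class(\symm_n,\CC)$ into irreducible characters is supported only on those $\nu$ with $\nu_1 \geq n - (k+\ell)$. Applying this to $\chi^\lambda \cdot \chi^\mu = \sum_{\nu \vdash n} g_{\lambda,\mu,\nu}\, \chi^\nu$ forces $g_{\lambda,\mu,\nu} = 0$ whenever $\nu_1 < n - (k+\ell)$, i.e.\ whenever $n - \nu_1 > (n-\lambda_1) + (n-\mu_1)$. (If $k+\ell \geq n$ the asserted inequality is automatically satisfied by every $\nu$, so there is nothing to prove.)

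I do not expect a genuine obstacle here: all the work has already been done in assembling \eqref{eq:irreducible-character-locality}, Proposition~\ref{prop:local-product}, and \eqref{eq:local-class-basis}. The only place warranting a sentence of care is the passage from ``$\chi^\lambda\cdot\chi^\mu$ is a $(k+\ell)$-local class function'' to ``its irreducible expansion omits the $\chi^\nu$ with small $\nu_1$'', which rests on the observation that the basis in \eqref{eq:local-class-basis} is literally a sub-basis of the full basis of $\Class(\symm_n,\CC)$, so uniqueness of expansion does the rest.
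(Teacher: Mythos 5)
Your proposal is correct and is exactly the paper's argument: the paper's proof is the one-line instruction to combine \eqref{eq:local-class-basis} with Proposition~\ref{prop:local-product}, and your write-up simply fills in the same chain (locality of $\chi^\lambda$ and $\chi^\mu$ from \eqref{eq:irreducible-character-locality}, $(k+\ell)$-locality of the product, then uniqueness of expansion in the sub-basis from \eqref{eq:local-class-basis}). No differences worth noting.
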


The coefficients $g_{\lambda,\mu,\nu}$ in Corollary~\ref{cor:kronecker} are called {\em Kronecker coefficients}.

\begin{proof}
    Combine \eqref{eq:local-class-basis} with Proposition~\ref{prop:local-product}.
\end{proof}

\section{Atomic symmetric functions and path power sums}
\label{sec:Atomic}

\subsection{Atomic symmetric functions} In this section we turn our focus from general $k$-local statistics $f: \symm_n \to \CC$ to $k$-local class functions. Recall that if $f: \symm_n \to \CC$ is a function, its image $R\, f$ under the Reynolds operator is given by
\[
R \, f: w  \mapsto \frac{1}{n!} \sum_{v \, \in \, \symm_n} f(v^{-1} w v).
\]
This formula defines a projection $R: \Fun(\symm_n,\CC) \to \Class(\symm_n,\CC)$. In particular, the space $\Loc_k(\symm_n,\CC) \cap \Class(\symm_n,\CC)$ of $k$-local class functions is spanned by $\{ R \, \one_{I,J} \,:\, (I,J) \in \symm_{n,k} \}$. We use the Frobenius characteristic $\ch_n: \Class(\symm_n, \CC) \xrightarrow{ \, \sim \, } \Lambda_n$ to encode $R \, \one_{I,J}$ as a symmetric function.

\begin{defn}
    \label{def:atomic}
    Let $(I,J) \in \symm_{n,k}$. The {\em atomic symmetric function} $A_{n,I,J}$ is 
    \[ A_{n,I,J} := n! \cdot \ch_n (R \, \one_{I,J}).\]
\end{defn}

The factor of $n!$ in Definition~\ref{def:atomic} makes our results more aesthetic. For example, it banishes a factor of $\frac{1}{n!}$ from the power sum expansion of $A_{n,I,J}$.

\begin{proposition}
    \label{prop:power-sum-expansion}
    Let $(I,J) \in \symm_{n,k}$. We have the power sum expansion
    \[ A_{n,I,J} = \sum_{\substack{w \, \in \, \symm_n \\ w(I) \, = \,J}} p_{\mathrm{cyc}(w)}\]
    where $\mathrm{cyc}(w) \vdash n$ is the cycle type of the permutation $w \in \symm_n$.
\end{proposition}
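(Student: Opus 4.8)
The plan is to unwind the definitions of the atomic symmetric function, the Reynolds operator, and the Frobenius characteristic, and then observe that the only subtle point is a counting argument coming from conjugation. First I would write, directly from Definition~\ref{def:atomic} and the formula for $\ch_n$,
\[
A_{n,I,J} = n! \cdot \ch_n(R\,\one_{I,J}) = \sum_{w \, \in \, \symm_n} (R\,\one_{I,J})(w) \cdot p_{\mathrm{cyc}(w)},
\]
and then substitute the definition of $R$ to get
\[
A_{n,I,J} = \sum_{w \, \in \, \symm_n} \left( \frac{1}{n!} \sum_{v \, \in \, \symm_n} \one_{I,J}(v^{-1} w v) \right) p_{\mathrm{cyc}(w)}.
\]
Since $p_{\mathrm{cyc}(w)}$ depends only on the conjugacy class of $w$, I would reindex the double sum by the substitution $w = v u v^{-1}$: as $w$ ranges over $\symm_n$ with $v$ fixed, so does $u$, and $\mathrm{cyc}(vuv^{-1}) = \mathrm{cyc}(u)$ while $\one_{I,J}(v^{-1}(vuv^{-1})v) = \one_{I,J}(u)$. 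Thus each of the $n!$ choices of $v$ contributes the same inner sum $\sum_{u \in \symm_n} \one_{I,J}(u) \cdot p_{\mathrm{cyc}(u)}$, and the factor $\frac{1}{n!}$ cancels the $n!$ copies.

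This leaves
\[
A_{n,I,J} = \sum_{u \, \in \, \symm_n} \one_{I,J}(u) \cdot p_{\mathrm{cyc}(u)},
\]
and by the definition \eqref{eq:one-definition} of $\one_{I,J}$ the indicator $\one_{I,J}(u)$ equals $1$ exactly when $u(I) = J$ and $0$ otherwise, so the sum collapses to $\sum_{u(I) = J} p_{\mathrm{cyc}(u)}$, which is the claimed identity.

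I do not expect a genuine obstacle here; the one step requiring care is the reindexing $w \mapsto vuv^{-1}$ and the verification that both $p_{\mathrm{cyc}(\cdot)}$ and $\one_{I,J}(v^{-1}(\cdot)v)$ are invariant under it. It may also be worth remarking explicitly that the $n!$ factor in Definition~\ref{def:atomic} is exactly what makes the $\frac{1}{n!}$ from the Reynolds operator (or equivalently from $\ch_n$) disappear, which is the ``aesthetic'' point alluded to after the definition. Everything else is a direct substitution of definitions.
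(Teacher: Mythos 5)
Your proof is correct and follows essentially the same route as the paper: the paper first proves the general identity $\ch_n(R\,f) = \frac{1}{n!}\sum_{w} f(w)\cdot p_{\mathrm{cyc}(w)}$ by exactly the change of variables $w = v^{-1}xv$ and conjugation-invariance of cycle type that you use, and then specializes to $f = \one_{I,J}$. The only cosmetic difference is that you work with $\one_{I,J}$ directly rather than stating the intermediate formula for general $f$.
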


\begin{proof}
    For any function $f: \symm_n \to \CC$, we claim 
    \begin{equation}
        \label{eq:ch-R-formula}
        \ch_n(R \, f) = \frac{1}{n!} \sum_{w \, \in \, \symm_n} f(w) \cdot p_{\cyc(w)}.
    \end{equation}
    Equation~\eqref{eq:ch-R-formula} follows from the computation
    \begin{multline}
        \ch_n(R \, f) = \frac{1}{n!} \cdot \sum_{x \, \in \, \symm_n} R \, f(x) \cdot p_{\cyc(x)} = \frac{1}{n!} \cdot \sum_{x \, \in \symm_n} \left( \frac{1}{n!} \sum_{v \in \symm_n} f(v^{-1}xv)  \right) \cdot p_{\cyc(x)} \\ 
        = \left( \frac{1}{n!} \right)^2 \cdot \sum_{x,v \, \in \, \symm_n} f(v^{-1}xv) \cdot p_{\cyc(x)} =
        \left( \frac{1}{n!} \right)^2 \cdot \sum_{w,v \, \in \, \symm_n} f(w) \cdot p_{\cyc(vwv^{-1})} \\
        =  
        \left( \frac{1}{n!} \right)^2 \cdot \sum_{w,v \, \in \, \symm_n} f(w) \cdot p_{\cyc(w)} = \frac{1}{n!} \sum_{w \, \in \, \symm_n} f(w) \cdot p_{\cyc(w)}
    \end{multline}
    where we made the change of variables $w = v^{-1}xv$ and used $\cyc(v w v^{-1}) = \cyc(w)$. The proposition is proven by taking $f = \one_{I,J}$.
\end{proof}

The factor of $n!$ in Definition~\ref{def:atomic} also makes the Schur expansion of $A_{n,I,J}$ look nicer. If $(I,J)$ is a partial permutation of $[n]$, recall the group algebra element $[I,J] = \sum_{w(I) = J} w \in \CC[\symm_n]$.

\begin{proposition}
    \label{prop:atomic-character-interpretation}
    Let $(I,J)$ be a partial permutation of $[n]$. In the Schur expansion
    \[ A_{n,I,J} = \sum_{\lambda \, \vdash \, n} c_\lambda \cdot s_\lambda\]
    the coefficient $c_\lambda$ is the trace of the linear operator $[I,J]: V^\lambda \to V^\lambda$. Equivalently, we have
    \[ c_\lambda =  \chi^\lambda([I,J]) =\sum_{\substack{w \, \in \, \symm_n \\ w(I) \, = \, J}} \chi^\lambda(w). \]
\end{proposition}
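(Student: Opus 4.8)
The plan is to avoid any new combinatorics and simply feed the power sum expansion from Proposition~\ref{prop:power-sum-expansion} through the classical transition \eqref{eq:classical-power-to-schur} between the power sum and Schur bases of $\Lambda_n$. The only genuine content is bookkeeping: tracking the normalization $n!$ from Definition~\ref{def:atomic} and the linear extension of $\chi^\lambda$ to $\CC[\symm_n]$.

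First, recall from Proposition~\ref{prop:power-sum-expansion} that
\[
A_{n,I,J} = \sum_{\substack{w \, \in \, \symm_n \\ w(I) \, = \, J}} p_{\cyc(w)}.
\]
For each $w$ occurring in this sum, Equation~\eqref{eq:classical-power-to-schur} gives $p_{\cyc(w)} = \sum_{\lambda \, \vdash \, n} \chi^\lambda_{\cyc(w)} \cdot s_\lambda$, and by definition $\chi^\lambda_{\cyc(w)} = \chi^\lambda(w)$. Substituting this and interchanging the two finite sums yields
\[
A_{n,I,J} = \sum_{\lambda \, \vdash \, n} \left( \sum_{\substack{w \, \in \, \symm_n \\ w(I) \, = \, J}} \chi^\lambda(w) \right) s_\lambda.
\]
Since $\{ s_\lambda : \lambda \vdash n \}$ is a basis of $\Lambda_n$, comparison with $A_{n,I,J} = \sum_\lambda c_\lambda \cdot s_\lambda$ forces $c_\lambda = \sum_{w(I) = J} \chi^\lambda(w)$, which by the linear extension \eqref{eq:character-linear-extension} is precisely $\chi^\lambda([I,J])$.

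It remains to identify this with the trace of the operator $[I,J] : V^\lambda \to V^\lambda$ obtained from the $\lambda$-component of the Artin-Wedderburn isomorphism $\Phi$ of Theorem~\ref{thm:aw}. Here we use that $\chi^\lambda(w) = \mathrm{trace}_{V^\lambda}(w)$ for each $w \in \symm_n$, together with the linearity of the trace and of the action map $\CC[\symm_n] \to \End_\CC(V^\lambda)$, to get
\[
\sum_{\substack{w \, \in \, \symm_n \\ w(I) \, = \, J}} \chi^\lambda(w) = \mathrm{trace}_{V^\lambda}\!\left( \sum_{\substack{w \, \in \, \symm_n \\ w(I) \, = \, J}} w \right) = \mathrm{trace}_{V^\lambda}([I,J]).
\]

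There is no serious obstacle here; the proposition is essentially a repackaging of Proposition~\ref{prop:power-sum-expansion}. As an independent check one can instead extract $c_\lambda$ via character orthogonality: because $R$ is the orthogonal projection onto class functions and $\chi^\lambda$ is a class function, $\langle R\,\one_{I,J}, \chi^\lambda \rangle = \langle \one_{I,J}, \chi^\lambda \rangle = \tfrac{1}{n!} \sum_{w(I) = J} \chi^\lambda(w)$, and multiplying by the normalization $n!$ of Definition~\ref{def:atomic} and using $\ch_n(\chi^\lambda) = s_\lambda$ recovers the same value of $c_\lambda$.
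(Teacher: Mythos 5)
Your argument is correct and is essentially identical to the paper's own proof: both substitute the power sum expansion of Proposition~\ref{prop:power-sum-expansion} into the classical transition \eqref{eq:classical-power-to-schur}, interchange the finite sums, and read off $c_\lambda = \chi^\lambda([I,J])$ via the linear extension \eqref{eq:character-linear-extension}. The trace identification and the orthogonality cross-check are routine additions that do not change the argument.
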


\begin{proof}
    If $\mu \vdash n$, we have $p_\mu = \sum_{\lambda \vdash n} \chi^{\lambda}_\mu \cdot s_\lambda$. Applying Proposition~\ref{prop:power-sum-expansion}, we see that
    \begin{multline}
        A_{n,I,J} = \sum_{\substack{w \, \in \, \symm_n \\ w(I) \, = \, J}} p_{\cyc(w)} = 
        \sum_{\substack{w \, \in \, \symm_n \\ w(I) \, = \, J}} \left( \sum_{\lambda \, \vdash \, n} \chi^\lambda(w) \cdot s_\lambda \right) \\ =
        \sum_{\lambda \, \vdash \, n} \chi^\lambda \left(\sum_{\substack{w \, \in \, \symm_n \\ w(I) \, = \, J}} w  \right) \cdot s_\lambda = \sum_{\lambda \, \vdash \, n} \chi^\lambda([I,J]) \cdot s_\lambda
    \end{multline}
    and the result is proven.
\end{proof}

Proposition~\ref{prop:atomic-character-interpretation} gives a restriction on the support of the Schur expansion of $A_{n,I,J}$. The following result will also follow from our path Murnaghan--Nakayama formula.

\begin{corollary}
    \label{cor:atomic-support-bound}
    Let $(I,J) \in \symm_{n,k}$. In the Schur expansion
    \[ A_{n,I,J} = \sum_{\lambda \, \vdash \, n} c_\lambda \cdot s_\lambda\]
    we have $c_\lambda = 0$ unless $\lambda_1 \geq n-k$.
\end{corollary}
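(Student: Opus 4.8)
The plan is to deduce this immediately from the two results already established: the trace interpretation of the Schur coefficients (Proposition~\ref{prop:atomic-character-interpretation}) and the computation of the Artin--Wedderburn image of the locality filtration (Theorem~\ref{thm:aw-image}).

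First I would recall that, by Proposition~\ref{prop:atomic-character-interpretation}, the coefficient $c_\lambda$ in $A_{n,I,J} = \sum_{\lambda \vdash n} c_\lambda s_\lambda$ equals $\chi^\lambda([I,J])$, which is the trace of the linear operator $[I,J] \colon V^\lambda \to V^\lambda$. So it suffices to show that $[I,J]$ acts as the zero operator on $V^\lambda$ whenever $\lambda_1 < n-k$; a fortiori its trace vanishes.

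Next I would invoke the identification $\CC[\symm_n] = \Fun(\symm_n,\CC)$, under which $[I,J]$ corresponds to the indicator statistic $\one_{I,J}$. Since $(I,J) \in \symm_{n,k}$, the function $\one_{I,J}$ is by definition a $k$-local statistic, i.e. $[I,J] \in \Loc_k(\symm_n,\CC)$ (equivalently, $[I,J]$ lies in the two-sided ideal $\III_{n,k}$ generated by $\eta_{n-k}$, as noted in the proof of Theorem~\ref{thm:aw-image}). By Theorem~\ref{thm:aw-image}, the Artin--Wedderburn image satisfies
\[
\Phi([I,J]) \in \Phi\bigl(\Loc_k(\symm_n,\CC)\bigr) = \bigoplus_{\substack{\lambda \, \vdash \, n \\ \lambda_1 \, \geq \, n-k}} \End_\CC(V^\lambda).
\]
Because $\Phi(a)$ acts on the $\lambda$-component by $v \mapsto a \cdot v$, this says precisely that $[I,J]$ annihilates $V^\lambda$ for every $\lambda \vdash n$ with $\lambda_1 < n-k$. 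Hence $c_\lambda = \chi^\lambda([I,J]) = \mathrm{trace}_{V^\lambda}([I,J]) = 0$ for such $\lambda$, which is the assertion.

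There is essentially no genuine obstacle here: the statement is a formal consequence of Theorem~\ref{thm:aw-image} once one passes to traces. The only point requiring (minor) care is the bookkeeping that $\one_{I,J}$ for $(I,J) \in \symm_{n,k}$ is a generator of $\Loc_k(\symm_n,\CC)$ and therefore lands in the correct summand under $\Phi$; this is immediate from Definition~\ref{def:local-statistics}. I would also remark, as the corollary statement hints, that this gives an independent route to the bound which will be re-derived combinatorially from the path Murnaghan--Nakayama rule in Section~\ref{sec:Path}.
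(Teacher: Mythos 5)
Your proposal is correct and follows exactly the paper's own argument: the paper also deduces the corollary by combining Theorem~\ref{thm:aw-image} (which shows $[I,J]$ annihilates $V^\lambda$ when $\lambda_1 < n-k$) with Proposition~\ref{prop:atomic-character-interpretation} (which identifies $c_\lambda$ as the trace of $[I,J]$ on $V^\lambda$). Your write-up simply spells out the intermediate bookkeeping more explicitly.
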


\begin{proof}
    Theorem~\ref{thm:aw-image} implies that $[I,J] \in \CC[\symm_n]$ acts as the zero operator on $V^\lambda$ whenever $\lambda_1 < n-k$. Proposition~\ref{prop:atomic-character-interpretation} yields the corollary.
\end{proof}

\subsection{Path power sums and path-cycle factorization} In this subsection we prove an important factorization property of the atomic symmetric functions. To begin, we show that atomic functions are invariant under the action of $\symm_n$ on partial permutations.

\begin{proposition}
    \label{prop:atomic-conjugacy-independence}
    Let $(I,J)$ be a partial permutation of $[n]$. For any $w \in \symm_n$ we have
    \[A_{n,I,J} = A_{n,w(I),w(J)}.\]
\end{proposition}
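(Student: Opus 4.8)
The plan is to show the stronger statement that $R\,\one_{I,J} = R\,\one_{w(I),w(J)}$ as class functions on $\symm_n$, from which the equality $A_{n,I,J} = A_{n,w(I),w(J)}$ follows immediately by applying $n!\cdot\ch_n$ to both sides. Since $R$ is a projection onto class functions defined by averaging over conjugacy, the key input will be Lemma~\ref{lem:action-on-indicators}, which describes how $\symm_n\times\symm_n$ acts on the indicator functions $\one_{I,J}$ by $(u,v)\cdot\one_{I,J} = \one_{v(I),u(J)}$. In particular, taking $u = v = w$ gives $(w,w)\cdot\one_{I,J} = \one_{w(I),w(J)}$, i.e. $\one_{w(I),w(J)}(x) = \one_{I,J}(w^{-1}xw)$ for all $x\in\symm_n$.

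First I would write out $R\,\one_{w(I),w(J)}(x) = \frac{1}{n!}\sum_{v\in\symm_n}\one_{w(I),w(J)}(v^{-1}xv)$ and substitute the identity just derived to get $\frac{1}{n!}\sum_{v\in\symm_n}\one_{I,J}(w^{-1}v^{-1}xvw) = \frac{1}{n!}\sum_{v\in\symm_n}\one_{I,J}((vw)^{-1}x(vw))$. Then the change of variables $v' = vw$ ranges over all of $\symm_n$ as $v$ does, so the sum equals $\frac{1}{n!}\sum_{v'\in\symm_n}\one_{I,J}(v'^{-1}xv') = R\,\one_{I,J}(x)$. This establishes $R\,\one_{w(I),w(J)} = R\,\one_{I,J}$ as functions, hence applying $n!\cdot\ch_n$ and invoking Definition~\ref{def:atomic} gives $A_{n,w(I),w(J)} = A_{n,I,J}$, which is the claim.

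Alternatively, one could prove it directly from the power sum expansion of Proposition~\ref{prop:power-sum-expansion}: $A_{n,I,J} = \sum_{x(I)=J} p_{\cyc(x)}$, and the map $x\mapsto wxw^{-1}$ is a bijection from $\{x : x(I)=J\}$ to $\{y : y(w(I)) = w(J)\}$ that preserves cycle type, since $y(w(I)) = w(J)$ iff $w^{-1}yw(I) = J$. I would likely present the Reynolds-operator argument as the main proof and perhaps mention the power sum version as a remark, since both are short. There is no serious obstacle here: the only thing to be careful about is bookkeeping the left/right conventions in the $\symm_n\times\symm_n$ action and making sure the change of variables is applied on the correct side, which is exactly what Lemma~\ref{lem:action-on-indicators} was set up to handle cleanly.
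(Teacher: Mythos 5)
Your argument is correct and is essentially the paper's own proof: both rest on Lemma~\ref{lem:action-on-indicators} together with the change of variables $v \mapsto vw$ in the conjugation average defining $R$, the only difference being that the paper reindexes the identity $R\,\one_{I,J} = \frac{1}{n!}\sum_{v}\one_{v(I),v(J)}$ at the level of functions while you carry out the same substitution pointwise. The power-sum alternative you mention is also valid but is not needed.
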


\begin{proof}
    Lemma~\ref{lem:action-on-indicators} implies  
    \[ R \, \one_{I,J} = \frac{1}{n!} \sum_{v \, \in \, \symm_n} \one_{v(I),v(J)} = \frac{1}{n!} \sum_{v \, \in \, \symm_n} \one_{vw(I),vw(J)} = R \, \one_{w(I),w(J)} \]
    and the result follows from Definition~\ref{def:atomic}.
\end{proof}

Proposition~\ref{prop:atomic-conjugacy-independence} gives an alternative way to index atomic symmetric functions. 
Let $\GGG_n$ be the family of directed graphs $G$ on $n$ unlabelled vertices which are disjoint unions of directed paths and directed cycles. For $G \in \GGG_n$, we set 
\begin{equation}
    A_G := A_{n,I,J}
\end{equation}
where $(I,J)$ is any partial permutation of $[n]$ whose graph $G(I,J)$ coincides with $G$ after erasing labels. Proposition~\ref{prop:atomic-conjugacy-independence} guarantees that $A_G$ is independent of the choice of $(I,J)$. Graphs which consist only of paths will play a special role. In the next definition, we write $P_n$ for the directed path on $n$ unlabeled vertices.

\begin{defn}
    \label{def:path-power-sum}
    Let $\mu = (\mu_1, \dots, \mu_k) \models n$ be a composition of $n$ and let $G \in \GGG_n$ be the disjoint union
    $G := P_{\mu_1} \sqcup \cdots \sqcup P_{\mu_k}$
    of directed paths. The {\em path power sum} $\vec{p}_\mu \in \Lambda_n$ is the symmetric function 
    \[\vec{p}_\mu := A_G.\]
\end{defn}

The notation $\vec{p}_\mu$ is meant to evoke of the shape of the graph whose atomic function equals $\vec{p}_\mu$. Every atomic function factors as a product of a path and classical power sum.

\begin{proposition}
    \label{prop:path-cycle-factorization}
    Let $(I,J) \in \symm_{n,k}$ be a partial permutation, let $\mu$ be the path type of $(I,J)$, and let $\nu$ be the cycle type of $(I,J)$. We have 
    \[ A_{n,I,J} = \vec{p}_\mu \cdot p_\nu.\]
\end{proposition}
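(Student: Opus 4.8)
The plan is to use the power sum expansion of $A_{n,I,J}$ from Proposition~\ref{prop:power-sum-expansion} and analyze the cycle structure of permutations $w$ with $w(I) = J$ in terms of the components of the graph $G(I,J)$. Write $G(I,J) = Q_1 \sqcup \cdots \sqcup Q_a \sqcup C_1 \sqcup \cdots \sqcup C_b$, where $Q_1, \dots, Q_a$ are the directed paths (with sizes forming the partition $\mu$) and $C_1, \dots, C_b$ are the directed cycles (with sizes forming $\nu$). A permutation $w \in \symm_n$ satisfies $w(I) = J$ precisely when $w$ respects all the edges $i_p \to j_p$; equivalently, $w$ must act as the prescribed partial bijection on each component. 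On a cycle component $C_m$, the constraint already forces $w$ to be the full cyclic permutation of those vertices, contributing a fixed cycle of length $|C_m|$; these components contribute the fixed factor $p_\nu$ to every term. On a path component $Q_m$ with vertex set $S_m$ and endpoints (source with no incoming edge, sink with no outgoing edge), the constraint forces $w$ to map each non-sink vertex of the path to its successor, but leaves $w(\text{sink})$ free to be \emph{any} vertex not already hit — and dually the source is the unique vertex of $S_m$ not in the image of the path map. Thus the "free" part of $w$ is a bijection from the set of path-sinks to the set of path-sources, and $w$ is obtained by ``splicing'' this bijection into the forced partial chains.

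The key combinatorial step is therefore: describe how the cycle type of $w$ depends on the chosen bijection $\sigma$ from path-sinks to path-sources. If we contract each path $Q_m$ to a single arc from its sink to its source, then $w$ on the union of the path-vertices has cycles that are in bijection with the cycles of $\sigma$ (viewed as a permutation of the $a$ paths via source/sink identification), where a cycle of $\sigma$ of length $r$ passing through paths $Q_{m_1}, \dots, Q_{m_r}$ produces a single $w$-cycle of length $|Q_{m_1}| + \cdots + |Q_{m_r}|$. Consequently, summing $p_{\mathrm{cyc}(w)}$ over all valid $w$ factors as $p_\nu$ times the sum over all bijections $\sigma$ of the path-sinks onto the path-sources of $\prod_{\text{cycles } \gamma \text{ of } \sigma} p_{(\text{total size of paths in } \gamma)}$. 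But this latter sum is, by the very same Proposition~\ref{prop:power-sum-expansion} applied to the partial permutation obtained by contracting each path to its single defining edge — i.e.\ to the graph $P_{\mu_1} \sqcup \cdots \sqcup P_{\mu_a}$ — exactly $A_{P_{\mu_1} \sqcup \cdots \sqcup P_{\mu_a}} = \vec{p}_\mu$ by Definition~\ref{def:path-power-sum}. This gives $A_{n,I,J} = \vec{p}_\mu \cdot p_\nu$.

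To make the contraction argument clean it is convenient to invoke Proposition~\ref{prop:atomic-conjugacy-independence} and $A_G$-notation: choose a concrete representative $(I,J)$ in which the vertex labels are arranged so the path components occupy $\{1, \dots, |\mu|\}$ and the cycle components occupy the remaining labels; then exhibit an explicit bijection between $\{w \in \symm_n : w(I) = J\}$ and (pairs consisting of) a permutation $w'$ of $\{1,\dots,|\mu|\}$ realizing a valid filling of the contracted path graph, together with the forced cyclic permutation on the cycle vertices, such that $\mathrm{cyc}(w) = \mathrm{cyc}(w') \cup \nu$ as multisets. Summing $p_{\mathrm{cyc}(w)} = p_{\mathrm{cyc}(w')} \cdot p_\nu$ over this bijection and using Proposition~\ref{prop:power-sum-expansion} twice yields the claim.

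The main obstacle is the bookkeeping in the contraction step: one must verify carefully that gluing the free sink-to-source bijection back into the forced chains produces cycles whose lengths are the sums of path sizes along each $\sigma$-cycle (and in particular that no extraneous cycles or merges occur), and that this correspondence $w \leftrightarrow \sigma$ is genuinely a bijection respecting the identification of each path with its contracted edge. Once this structural lemma about cycle types is in hand, the identity $A_{n,I,J} = \vec p_\mu \cdot p_\nu$ follows formally from Proposition~\ref{prop:power-sum-expansion} and Definition~\ref{def:path-power-sum}.
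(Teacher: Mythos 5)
Your proof is correct and rests on the same observation as the paper's: the cycle components force $w$ and factor out as $p_\nu$, and what remains is the atomic function of a disjoint union of paths, which is $\vec{p}_\mu$ by Definition~\ref{def:path-power-sum} — the paper merely packages this as an induction peeling off one cycle at a time via $A_G = A_{G-C}\cdot p_c$. The middle portion of your argument (the sink-to-source bijection $\sigma$ and the claim that a $\sigma$-cycle through $Q_{m_1},\dots,Q_{m_r}$ yields a $w$-cycle of length $|Q_{m_1}|+\cdots+|Q_{m_r}|$) is correct but unnecessary here, as it amounts to re-deriving Proposition~\ref{prop:path-to-classical} inside the proof: once you know the free part of $w$ is an arbitrary permutation of the path vertices extending the path constraints, Proposition~\ref{prop:power-sum-expansion} applied to that restricted partial permutation finishes the argument without computing any cycle types (and note that "contracting each path to a single edge" is not quite the right description of the relevant graph, which is the uncontracted $P_{\mu_1}\sqcup\cdots\sqcup P_{\mu_a}$, as you correctly write).
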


\begin{proof}
    Let $G \in \GGG_n$, let $C$ be a directed cycle in $G$, and let $G - C$ be the graph obtained by removing the cycle $C$ from $G$. Proposition~\ref{prop:power-sum-expansion} implies that 
    \[ A_G = A_{G - C} \cdot p_c \]
    where $c$ is the number of vertices in $C$. The result follows by induction.
\end{proof}

Proposition~\ref{prop:power-sum-expansion} leads to an expansion of path power sums into classical power sums. To state this expansion, we introduce some notation which will be useful in this section and the next.

\begin{notation}
\label{not:mu-parts}
Let $\mu = (\mu_1, \dots, \mu_r)$ be a composition with $r$ parts. If $B \subseteq [r]$, we set 
    \[ \mu_B := \sum_{b \, \in \, B} \mu_b.\]
    Similarly, if $C$ is a cycle of a permutation in $\symm_r$, we set
    \[ \mu_C := \sum_{c \, \in \, C} \mu_c.\]
    Finally, if $w = w_1 \dots w_k$ is a word over the alphabet $[r]$, we set
    \[ \mu_w := \sum_{i \, = \, 1}^k \mu_{w_i}.\]    
\end{notation}

For example, we have $\mu_{\{2,4,5\}} = \mu_{(2,5,4)} = \mu_{425} = \mu_2 + \mu_4 + \mu_5$. Recall that $\Pi_r$ is the lattice of set partitions of $[r]$.

\begin{proposition}
    \label{prop:path-to-classical}
    Let $\mu = (\mu_1, \dots, \mu_r)$ be a composition with $r$ parts. We have 
    \[
    \vec{p}_\mu = \sum_{w \, \in \, \symm_r} \prod_{C \, \in \, w} p_{\mu_C} = \sum_{\pi \, \in \, \Pi_r} \prod_{B \, \in \, \pi} (\# B  - 1)!  \cdot p_{\mu_B}
    \]
    where the product in the middle expression is over all cycles $C$ belonging to $w \in \symm_r$.
\end{proposition}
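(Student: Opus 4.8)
The plan is to compute $\vec{p}_\mu = A_G$ directly from the power-sum expansion in Proposition~\ref{prop:power-sum-expansion}, where $G = P_{\mu_1} \sqcup \cdots \sqcup P_{\mu_r}$. Pick a concrete partial permutation $(I,J) \in \symm_{n,n-r}$ realizing $G$: label the vertices of the $i$-th path $P_{\mu_i}$ by the consecutive block of integers $B_i \subseteq [n]$ of size $\mu_i$, oriented so that the edges go from the smallest to the largest element of $B_i$, and take $I,J$ to record these edges. Then Proposition~\ref{prop:power-sum-expansion} gives $\vec{p}_\mu = \sum_{w(I)=J} p_{\cyc(w)}$, where the sum is over all $w \in \symm_n$ that extend the partial bijection $G$ to a full permutation. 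The key observation is that such a $w$ is determined by choosing, for each path in $G$, where to send its "head" (the sink vertex, with no outgoing edge in $G$); equivalently, $w$ restricted to the path heads is an arbitrary bijection from the set of heads to the set of tails (the source vertices). So these extensions $w$ are in bijection with $\symm_r$: a permutation $\sigma \in \symm_r$ corresponds to the extension sending the head of $P_i$ to the tail of $P_{\sigma(i)}$.

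The next step is to identify the cycle type of the extension $w_\sigma$ associated to $\sigma \in \symm_r$. Following the permutation $w_\sigma$ starting from a tail of some path $P_i$: it traverses all $\mu_i$ vertices of $P_i$, arrives at the head of $P_i$, then jumps to the tail of $P_{\sigma(i)}$, traverses all $\mu_{\sigma(i)}$ of its vertices, and so on. Thus each cycle $C$ of $\sigma$ (as an element of $\symm_r$) produces a single cycle of $w_\sigma$ whose length is $\sum_{c \in C}\mu_c = \mu_C$ in the notation of Notation~\ref{not:mu-parts}. Hence $\cyc(w_\sigma)$ is the partition whose parts are $\{\mu_C : C \text{ a cycle of }\sigma\}$, so $p_{\cyc(w_\sigma)} = \prod_{C \in \sigma} p_{\mu_C}$. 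Summing over $\sigma \in \symm_r$ gives the first equality $\vec{p}_\mu = \sum_{w \in \symm_r}\prod_{C \in w} p_{\mu_C}$.

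For the second equality, group the sum over $\symm_r$ by the set partition $\pi \in \Pi_r$ whose blocks are the supports of the cycles of $w$. For a fixed set partition $\pi$, the number of permutations $w \in \symm_r$ whose cycle supports are exactly the blocks of $\pi$ is $\prod_{B \in \pi}(\#B - 1)!$, since a cyclic order on a block $B$ is one of $(\#B-1)!$ possibilities, chosen independently across blocks; and all such $w$ contribute the same product $\prod_{B \in \pi} p_{\mu_B}$. This yields $\vec{p}_\mu = \sum_{\pi \in \Pi_r}\prod_{B \in \pi}(\#B-1)!\cdot p_{\mu_B}$.

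I do not expect a serious obstacle here; the only point requiring care is the bijection between extensions $w$ of the path graph $G$ and permutations $\sigma \in \symm_r$, and the verification that cycles of $\sigma$ track cycles of $w_\sigma$ with the claimed lengths — this is a bookkeeping argument about how the path segments concatenate, and the $\mu_C$ notation is set up precisely to record it. One should also note that the answer does not depend on the chosen labelling or orientation of the paths, which is guaranteed in advance by Proposition~\ref{prop:atomic-conjugacy-independence}.
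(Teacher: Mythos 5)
Your proof is correct and takes essentially the same route as the paper's: the paper's own proof is just two sentences, deducing the first equality from Proposition~\ref{prop:power-sum-expansion} and the second from the fact that a finite set $S$ admits $(\#S-1)!$ cyclic orderings. Your write-up fills in exactly the details the paper leaves implicit, namely the bijection between extensions of the path graph to full permutations and elements of $\symm_r$, and the verification that each cycle $C$ of $\sigma\in\symm_r$ concatenates the corresponding paths into a single cycle of length $\mu_C$.
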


\begin{proof}
    The first equality follows from Proposition~\ref{prop:power-sum-expansion}. The second equality applies the fact that there are $(\# S - 1)!$ ways to cyclically order a finite set $S$.
\end{proof}

For example, if $\mu = (a,b,c)$ has three parts we have
\[\vec{p}_{(a,b,c)} = p_{(a,b,c)} + p_{(a+b,c)} + p_{(a+c,b)} + p_{b+c,a)} + 2 \cdot p_{(a+b+c)}.\]
For a partition $\mu \vdash n$, the terms $p_\rho$ appearing in the $p$-expansion of $\vec{p}_\mu$ are indexed by partitions $\rho$ obtained by combining parts of $\mu$. Since the coefficient of $p_\mu$ in $\vec{p}_\mu$ is 1, the following corollary is a consequence of Proposition~\ref{prop:path-to-classical}.

\begin{corollary}
    \label{cor:path-basis}
    The set $\{ \vec{p}_\mu \,:\, \mu \vdash n \}$ is a basis of $\Lambda_n$.
\end{corollary}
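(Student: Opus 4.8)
The plan is to show that the transition matrix expressing the path power sums $\{\vec{p}_\mu : \mu \vdash n\}$ in terms of the classical power sums $\{p_\rho : \rho \vdash n\}$ is invertible, since $\{p_\rho : \rho \vdash n\}$ is already known to be a basis of $\Lambda_n$. By Proposition~\ref{prop:path-to-classical}, for a partition $\mu \vdash n$ we have
\[
\vec{p}_\mu = p_\mu + \sum_{\rho} c_{\mu,\rho} \cdot p_\rho
\]
where every $\rho$ occurring on the right is obtained from $\mu$ by merging two or more parts of $\mu$ into single parts (equivalently, $\rho$ corresponds to a nontrivial coarsening of the composition $\mu$), and the coefficient of $p_\mu$ itself is $1$ (coming from the identity permutation in $\symm_r$, whose cycles are all singletons). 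The merging operation only decreases the number of parts, so every such $\rho \neq \mu$ satisfies $\ell(\rho) < \ell(\mu)$.

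The key step is to put a partial order on partitions of $n$ witnessing this triangularity. First I would order partitions of $n$ by the \emph{refinement} order: say $\rho \preceq \mu$ if the multiset of parts of $\rho$ can be obtained from that of $\mu$ by repeatedly merging parts. This is a genuine partial order on the set of partitions of $n$ (it is, for instance, the order induced from the set-partition lattice $\Pi_n$ by taking multisets of block sizes, or one can simply check reflexivity, antisymmetry — using that merging strictly decreases the number of parts — and transitivity directly). Proposition~\ref{prop:path-to-classical} then says precisely that $\vec{p}_\mu$ lies in the span of $\{p_\rho : \rho \preceq \mu\}$ with the $p_\mu$-coefficient equal to $1$. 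Hence, ordering the partitions of $n$ by any linear extension of $\preceq$, the change-of-basis matrix from $\{p_\rho\}$ to $\{\vec{p}_\mu\}$ is unitriangular, in particular invertible over $\CC$.

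Since $\{p_\rho : \rho \vdash n\}$ is a basis of $\Lambda_n$ and the transition is given by an invertible (even unitriangular) matrix, $\{\vec{p}_\mu : \mu \vdash n\}$ is also a basis of $\Lambda_n$; note also that $|\{\mu : \mu \vdash n\}| = \dim \Lambda_n$, so spanning and linear independence are equivalent here and either one suffices. I do not expect a serious obstacle: the only point that requires a moment of care is verifying that $\preceq$ is antisymmetric (so that "$\rho$ appears in $\vec p_\mu$ and $\rho\neq\mu$ implies $\rho\prec\mu$ strictly"), which follows immediately from the observation that a nontrivial merge strictly reduces $\ell(\rho)$, ruling out cycles in the order. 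Everything else is the standard "unitriangular change of basis preserves the basis property" argument.
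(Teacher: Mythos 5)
Your argument is correct and is essentially the paper's own: the paper likewise deduces the corollary from Proposition~\ref{prop:path-to-classical} by observing that every $p_\rho$ appearing in $\vec{p}_\mu$ has $\rho$ obtained by merging parts of $\mu$ (hence $\ell(\rho)<\ell(\mu)$ unless $\rho=\mu$) and that the coefficient of $p_\mu$ is $1$, giving a unitriangular transition matrix. Your write-up simply makes the triangularity and the choice of partial order explicit, which the paper leaves as a one-line remark.
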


There is a clean formula for the expansion of a classical power sum in the path power sum basis. If $\mu = (\mu_1, \dots, \mu_r)$ is a composition with $r$ parts we have
\begin{equation}
\label{eq:classical-to-path}
    p_\mu = \sum_{\pi \, \in \, \Pi_r} (-1)^{r \, - \, \# \pi} \prod_{B \, \in \, \pi} \vec{p}_{\mu_B}.
\end{equation}
The proof of Equation~\eqref{eq:classical-to-path} uses the formula
\begin{equation}
    \mu_{\Pi_r}(\hat{0},\pi) = (-1)^{r \, - \, \# \pi} \prod_{B \, \in \, \pi} (\#B - 1)!
\end{equation}
for the M\"obius function of $\Pi_r$ together with Proposition~\ref{prop:path-to-classical} and M\"obius inversion.
We will have no use for Equation~\eqref{eq:classical-to-path} and leave the details of its verification to the reader.

\section{Path Murnaghan--Nakayama Rule}
\label{sec:Path}

\subsection{Classical Murnaghan--Nakayama via alternants}
The classical Murnaghan--Nakayama rule describes the coefficients $\chi^{\lambda}_{\mu}$ in the expansion
$p_{\mu} = \sum_{\lambda \vdash n} \chi^{\lambda}_{\mu} \cdot s_{\lambda}$ in terms of ribbon tableaux.
There are several ways to prove this result. The proof that extends best to the path power sum setting uses alternants;
we recall this proof before  treating the more elaborate case of
 $\vec{p}_{\mu}$
 
Restricting to a finite variable set $\{x_1, \dots, x_N\}$,  for $\lambda$ with $\leq N$ parts, one has
\begin{equation}
s_{\lambda}(x_1, \dots, x_N) = \frac{ \varepsilon \cdot x^{\lambda + \delta} }{ \varepsilon \cdot x^{\delta}}
\end{equation}
where  $\varepsilon = \sum_{w \in \symm_N} \sign(w) \cdot w \in \CC[\symm_N]$ and $\delta = (N-1,N-2,\dots,1,0)$.
The numerator is the alternant
$a_{\lambda}(x_1, \dots, x_N) = \varepsilon \cdot x^{\lambda + \delta}.$
The set $\{ a_{\lambda}(x_1,\dots,x_N) \,:\, \ell(\lambda) \leq N \}$ 
 is a basis of the vector space $\varepsilon \cdot \CC[x_1,\dots,x_N]$  of alternating 
polynomials.
Since $p_\mu(x_1,\dots,x_N)$ is symmetric, we have
\begin{equation}
\label{eqn:first-classical}
p_{\mu}(x_1, \dots, x_N) \times \varepsilon \cdot (x^{\delta}) =
\varepsilon \cdot \left( p_{\mu}(x_1, \dots, x_N) \times x^{\delta} \right).
\end{equation}
Multiplying through by $\varepsilon \cdot x^\delta$, the Murnaghan--Nakayama rule is equivalent to finding the expansion of 
\eqref{eqn:first-classical} in the alternant basis $\{ a_\lambda(x_1,\dots,x_n) \,:\, \ell(\lambda) \leq N \}$.

Since the $p$-basis is multiplicative, the alternant expansion of \eqref{eqn:first-classical} can be understood inductively. For $k \geq 1$ and  $\lambda = (\lambda_1, \dots, \lambda_N)$ we have
\begin{multline}
\label{eqn:inductive-step}
p_k(x_1, \dots, x_N) \cdot a_{\lambda}(x_1, \dots, x_N) = 
\varepsilon \cdot \left( p_k(x_1, \dots, x_N) \cdot x^{\lambda + \delta}  \right) \\ =
\sum_{i \, = \, 1}^N \varepsilon \cdot \left( x_1^{\lambda_1 + N-1} \cdots x_i^{k + \lambda_i + N-i} \cdots x_N^{\lambda_N} \right).
\end{multline}
Each term 
$\varepsilon \cdot \left( x_1^{\lambda_1 + N-1} \cdots x_i^{k + \lambda_i + N-i} \cdots x_N^{\lambda_N} \right)$ in this sum 
is an alternant, the negative of an alternant, or zero according to the following combinatorial criterion.

\begin{observation}
\label{obs:ribbon-addition}
For $1 \leq i \leq N$, we have 
\begin{equation*}
\varepsilon \cdot \left( x_1^{\lambda_1 + N-1} \cdots x_i^{k + \lambda_i + N-i} \cdots x_N^{\lambda_N} \right) \neq 0
\end{equation*}
if and only if it is possible to add a size $k$ ribbon $\xi$ to the Young diagram of $\lambda$ such that the tail of $\xi$ is in row $i$
and $\lambda \cup \xi$ is the Young diagram of a partition. In this case, we have
\begin{equation}
\varepsilon \cdot \left( x_1^{\lambda_1 + N-1} \cdots x_i^{k + \lambda_i + N-i} \cdots x_N^{\lambda_N} \right) =
\sign(\xi) \cdot a_{\lambda \cup \xi}.
\end{equation}
\end{observation}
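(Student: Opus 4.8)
The statement combines two standard ingredients: the behaviour of the antisymmetrizer $\varepsilon$ on monomials, and the beta-number (first-column hook length) encoding of ribbon addition. The plan is to keep these separate.

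First I would record the two elementary facts about $\varepsilon = \sum_{w \in \symm_N} \sign(w)\, w$ acting on $\CC[x_1, \dots, x_N]$. For an exponent vector $\beta = (\beta_1, \dots, \beta_N) \in \ZZ_{\geq 0}^N$: (i) if two coordinates of $\beta$ agree, then $\varepsilon \cdot x^\beta = 0$, since the transposition of the two corresponding variables fixes $x^\beta$ while contributing sign $-1$; and (ii) if the coordinates of $\beta$ are pairwise distinct, then $\varepsilon \cdot x^\beta = \sign(\sigma) \cdot a_\mu$, where $\sigma \in \symm_N$ is the permutation that sorts $\beta$ into strictly decreasing order and $\mu$ is the partition determined by $\mu + \delta$ equal to the decreasing rearrangement of $\beta$. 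Both are immediate from the definition $a_\mu = \varepsilon \cdot x^{\mu + \delta}$.

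Next I would apply this to the exponent vector of the $i$-th summand in \eqref{eqn:inductive-step}, namely $\beta = \lambda + \delta + k\,\ee_i$ where $\ee_i$ is the $i$-th standard basis vector; its multiset of coordinates is the beta-set $X(\lambda) := \{\lambda_j + N - j : 1 \leq j \leq N\}$ of $\lambda$ with the element $b := \lambda_i + N - i$ deleted and $b + k$ inserted. By (i)--(ii) the summand is nonzero if and only if $b + k \notin X(\lambda)$, and in that case equals $\sign(\sigma) \cdot a_\mu$ with $\mu + \delta$ the decreasing rearrangement of $(X(\lambda) \setminus \{b\}) \cup \{b+k\}$. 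I would then invoke the classical dictionary between a single bead move $b \mapsto b + k$ on a beta-set and the addition of a rim hook: $b + k \notin X(\lambda)$ precisely when $\lambda$ admits a size $k$ ribbon $\xi$ with $\lambda \cup \xi$ a partition and with tail (southwesternmost cell) in row $i$, in which case the beta-set of $\lambda \cup \xi$ is exactly $(X(\lambda) \setminus \{b\}) \cup \{b+k\}$, so that $\mu = \lambda \cup \xi$. One may cite James--Kerber or Macdonald for this, or supply a short induction on the number of beads of $X(\lambda)$ in the open interval $(b, b+k)$.

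Finally the sign. Let $h := \#\big(X(\lambda) \cap (b, b+k)\big)$ be the number of beta-numbers of $\lambda$ strictly between $b$ and $b+k$. Sorting $\beta$ into decreasing order amounts to bubbling the entry $b+k$ leftward past exactly those $h$ entries, so $\sigma$ is a product of $h$ adjacent transpositions and $\sign(\sigma) = (-1)^h$. The same count $h$ equals the number of rows occupied by $\xi$ minus one, i.e. $h = \height(\xi)$ (this is the leg length of the rim hook read off from the beta-set), whence $\sign(\sigma) = (-1)^{\height(\xi)} = \sign(\xi)$, giving the stated identity. I expect the only real obstacle to be pinning down the conventions in the bead-move/ribbon dictionary, i.e. matching ``move the bead in position $i$'' with ``tail in row $i$'' and verifying $h = \height(\xi)$; these are classical facts about the abacus description of rim hooks, and if a self-contained treatment is desired it follows by induction on $h$, the bead in the new position $b+k$ together with the largest beta-number of $X(\lambda)$ below it cutting $\xi$ into its top row and a ribbon occupying one fewer row.
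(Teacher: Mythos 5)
Your proposal is correct, and it follows exactly the route the paper has in mind: the paper states this as an unproved Observation, illustrating the beta-number mechanism only through the example with $\lambda=(3,3,1)$, $k=3$, $N=6$, and your argument (repeated exponents are killed by $\varepsilon$, otherwise sorting gives $\sign(\sigma)\cdot a_\mu$, and the bead move $b\mapsto b+k$ on the beta-set $\{\lambda_j+N-j\}$ is the classical dictionary for rim-hook addition with $h=\height(\xi)$) is precisely the standard justification being taken for granted. No gaps; the only care needed is the convention check you already flag, which your worked sign count $(-1)^h$ handles correctly.
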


For example, let $k = 3, \lambda = (3,3,1),$ and $N = 6$. Starting with the length
$N = 6$ sequence $(\lambda_1, \dots, \lambda_6) + (\delta_1, \dots, \delta_6) = (8,7,4,2,1,0)$, adding $k = 3$ in all possible positions yields
\begin{multline*}
(11,7,4,2,1,0), \quad (8,10,4,2,1,0), \quad (8,7,7,2,1,0), \\ \quad (8,7,4,5,1,0),  \quad (8,7,4,2,4,0), \quad \text{and} \quad
(8,7,4,2,1,3).
\end{multline*}
The third and fifth sequences above have repeated terms, so their corresponding monomials in $\CC[x_1, \dots, x_6]$
are annihilated by $\varepsilon$. Sorting the remaining sequences into decreasing order and applying $\varepsilon$
introduces signs of $+1, -1, -1,$ and $+1$ into the first, second, fourth, and sixth sequences (respectively).
At the level of partitions, these are the four signs associated to the ways to add size $3$ ribbons to $\lambda = (3,3,1)$.
\begin{center}
\begin{tikzpicture}[scale = 0.25]

  \begin{scope}
    \clip (37,0) -| (38,4) -| (40,6) -| (37,0);
    \draw [color=black!25] (37,0) grid (40,6);
  \end{scope}

  \draw [thick] (37,0) -| (38,4) -| (40,6) -| (37,0);

  \draw [thick, rounded corners] (37.5,0.5) -- (37.5,2.5);
  \draw [color=black,fill=black,thick] (37.5,2.5) circle (.4ex);
  \node [draw, circle, fill = white, inner sep = 1pt] at (37.5,0.5) { };

  \begin{scope}
    \clip (32,2) -| (34,4) -| (35,6) -| (32,2);
    \draw [color=black!25] (32,2) grid (35,6);
  \end{scope}

  \draw [thick] (32,2) -| (34,4) -| (35,6) -| (32,2);

  \draw [thick, rounded corners] (32.5,2.5) -| (33.5,3.5);
  \draw [color=black,fill=black,thick] (33.5,3.5) circle (.4ex);
  \node [draw, circle, fill = white, inner sep = 1pt] at (32.5,2.5) { };

 \begin{scope}
    \clip (25,3) -| (26,4) -| (29,5) -| (30,6) -| (25,3);
    \draw [color=black!25] (25,3) grid (30,6);
  \end{scope}

  \draw [thick] (25,3) -| (26,4) -| (29,5) -| (30,6) -| (25,3);

   \draw [thick, rounded corners] (28.5,4.5) |- (29.5,5.5);
  \draw [color=black,fill=black,thick] (29.5,5.5) circle (.4ex);
  \node [draw, circle, fill = white, inner sep = 1pt] at (28.5,4.5) { };

 \begin{scope}
    \clip (17,3) -| (18,4) -| (20,5) -| (23,6) -| (17,3);
    \draw [color=black!25] (17,3) grid (23,6);
  \end{scope}

  \draw [thick]  (17,3) -| (18,4) -| (20,5) -| (23,6) -| (17,3);
  
   \draw [thick, rounded corners] (20.5,5.5) -- (22.5,5.5);
  \draw [color=black,fill=black,thick] (22.5,5.5) circle (.4ex);
  \node [draw, circle, fill = white, inner sep = 1pt] at (20.5,5.5) { };

\end{tikzpicture}
\end{center}
The tails of these added ribbons are in rows 1, 2, 4, and 6.  These agree with the relative values 
of the terms in $\lambda + \delta = (8,7,4,2,1,0)$ to which $k  =3$ was added. The illegal ribbon additions
\begin{center}
\begin{tikzpicture}[scale = 0.25]

\begin{scope}
   \clip (0,0) -| (4,1) -| (3,3) -| (0,0);
   \draw [color=black!25] (0,0) grid (4,3);
\end{scope}

\draw [thick] (0,0) -| (4,1) -| (3,3) -| (0,0);

  \draw [thick, rounded corners] (1.5,0.5) -- (3.5,0.5);
  \draw [color=black,fill=black,thick] (3.5,0.5) circle (.4ex);
  \node [draw, circle, fill = white, inner sep = 1pt] at (1.5,0.5) { };

\begin{scope}
   \clip (10,-2) -|  (11,-1) -| (12,0) -| (11,1) -| (13,3) -| (10,-2);
   \draw [color=black!25] (13,3) grid (10,-2);
\end{scope}  

\draw [thick] (10,-2) -|  (11,-1) -| (12,0) -| (11,1) -| (13,3) -| (10,-2);

  \draw [thick, rounded corners] (10.5,-1.5) |- (11.5,-0.5);
  \draw [color=black,fill=black,thick] (11.5,-0.5) circle (.4ex);
  \node [draw, circle, fill = white, inner sep = 1pt] at (10.5,-1.5) { };

\end{tikzpicture}
\end{center}
obtained by adding a 3-ribbon with tail in rows 3 and 5 correspond precisely to the sequences 
$\lambda + \delta + (0,0,3,0,0,0)$ and $\lambda + \delta + (0,0,0,0,3,0)$ having repeated terms.

Observation~\ref{obs:ribbon-addition} and induction on the number of parts of $\mu$ show that
multiplying $p_{\mu} = p_{\mu_1} p_{\mu_2} \cdots $ by $\varepsilon \cdot x^{\delta}$ corresponds to building up a standard ribbon
tableau of type $\mu$ ribbon by ribbon, starting with the empty tableau $\varnothing$. That is, we have
\begin{equation}
p_{\mu}(x_1, \dots, x_N) \times \varepsilon \cdot x^\delta =
\sum_T \sign(T) \times  a_{\shape(T)}(x_1, \dots, x_N)
\end{equation}
where the sum is over all standard ribbon tableaux $T$ of type $\mu$. Dividing by  $\varepsilon \cdot x^\delta$ and taking the limit
as $N \rightarrow \infty$ yields the Murnaghan--Nakayama rule  (Theorem~\ref{thm:classical-mn}).

\subsection{Word arrays}
We embark on finding the $s$-expansion of $\vec{p}_{\mu}$.  The difficulty in adapting the proof
in the previous subsection to path power sums is that $\vec{p}_{\mu}$ is not multiplicative in $\mu$
so the induction argument breaks down. In spite of this, the $s$-expansion of the path power sum $\vec{p}_{\mu}$ will admit 
a reasonably compact expansion in terms of certain ribbon tilings.
In this section we set the stage for this result by giving a combinatorial description of the alternating
polynomial $\vec{p}_{\mu}(x_1, \dots, x_N) \times \varepsilon \cdot x^\delta$.

Fix an integer $N \gg 0$ and a partition $\mu = (\mu_1, \dots, \mu_r)$ with $r$ parts.  A {\em $\mu$-word array} of length $N$
is  a sequence $\omega = ( w_1 \mid \cdots \mid w_N)$ of finite (possibly empty) words $w_1, \dots, w_N$
over the alphabet $\{1, \dots, r \}$.
The word array $\omega$ is {\em standard} if each letter $1, \dots, r$ appears exactly once among the words 
$w_1,  \dots, w_N$.
We will mainly be interested in standard word arrays, but will pass through more general word arrays 
in the course of our inductive arguments.

If $\omega = (w_1 \mid w_2 \mid \cdots \mid w_{N-1} \mid w_N)$ is a $\mu$-word array of length $N$, the {\em weight}
$\wt(\omega)$ is the sequence of integers
\begin{multline}
\wt(\omega) := ( \mu_{w_1} + N - 1, \mu_{w_2} + N - 2, \dots, \mu_{w_{N-1}} + 1, \mu_{w_N} ) \\ =
( \mu_{w_1}, \mu_{w_2}, \dots, \mu_{w_{N-1}}, \mu_{w_N}) + (N-1,N-2, \dots, 1,0)
\end{multline}
where the addition in the second line is componentwise.  Recall that $\mu_{w_i}$ is shorthand for the sum of the parts $\mu_j$
corresponding to the letters $j$ of the word $w_i$.
The sequence $\wt(\omega)$ depends on $\mu$; we leave this dependence implicit to reduce clutter.
Word arrays have the following connection to path power sums.

\begin{lemma}
\label{lem:path-to-array}
Let $\mu = (\mu_1, \dots, \mu_r)$ be a partition. 
We have
\begin{equation}
\vec{p}_{\mu}(x_1, \dots, x_N) \times \varepsilon \cdot x^\delta = \sum_{\omega}
 \varepsilon \cdot x^{\wt(\omega)}
\end{equation}
where the sum is over all standard $\mu$-word arrays $\omega = (w_1 \mid \cdots \mid w_N)$ of length $N$.
\end{lemma}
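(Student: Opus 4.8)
The plan is to prove Lemma~\ref{lem:path-to-array} directly from the definition of the path power sum $\vec{p}_\mu = A_G$ where $G = P_{\mu_1} \sqcup \cdots \sqcup P_{\mu_r}$, using the power sum expansion from Proposition~\ref{prop:path-to-classical} together with a bijective unpacking of how the power sum operator $p_{\mu_C}$ acts on $x^\delta$ after antisymmetrization. First I would recall that Proposition~\ref{prop:path-to-classical} gives $\vec{p}_\mu = \sum_{v \in \symm_r} \prod_{C \in v} p_{\mu_C}$, so that
\[
\vec{p}_\mu(x_1,\dots,x_N) \times \varepsilon \cdot x^\delta = \sum_{v \in \symm_r} \left( \prod_{C \in v} p_{\mu_C} \right) \cdot \varepsilon \cdot x^\delta.
\]
Then I would expand each product $\prod_C p_{\mu_C}$ as a sum of monomials: since $p_{\mu_C} = \sum_{i\geq 1} x_i^{\mu_C}$, a choice of one variable index for each cycle $C$ of $v$ determines a monomial, and applying this inside $\varepsilon \cdot x^\delta$ sends the exponent vector $\delta$ to $\delta$ with $\mu_C$ added into the chosen coordinate for each $C$.

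The key step is the combinatorial translation. A term in the fully expanded sum is indexed by a pair $(v, \phi)$ where $v \in \symm_r$ and $\phi$ assigns to each cycle $C$ of $v$ a ``bucket'' index in $\{1,\dots,N\}$ (I will not require these to be distinct). I claim such pairs $(v,\phi)$ are in bijection with standard $\mu$-word arrays $\omega = (w_1\mid\cdots\mid w_N)$ of length $N$: given $\omega$, each word $w_i$ is a sequence of distinct letters from $\{1,\dots,r\}$ (distinct because $\omega$ is standard — each letter appears once total), and I read $w_i$ as a product of disjoint cycles, one for each nonempty word, getting a permutation $v \in \symm_r$ (the cycles of $v$ are exactly the cyclic words $w_i$) together with the assignment $\phi$ sending the cycle coming from $w_i$ to bucket $i$. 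Conversely, from $(v,\phi)$ one recovers each $w_i$ by listing the cycles assigned to bucket $i$; but a cycle as an abstract object must be written as a linear word, and there are $(\#C)$ ways to do this for a cycle of size $\#C$ — \emph{except} that the monomial expansion of $p_{\mu_C}$ contributes each variable choice only once, not $\#C$ times. This is where I must be careful: the middle formula of Proposition~\ref{prop:path-to-classical} already counts cyclic orderings, i.e. $p_{\mu_C}$ appears once per cycle $C$ of $v$, and $v$ itself is determined by an \emph{unordered} collection of cyclically-ordered blocks. The resolution is that a word $w_i$ in the array is a \emph{linear} word, and distinct linear words $w_i, w_i'$ that are cyclic rotations of each other give the same cycle but occupy the same bucket; so standard word arrays overcount pairs $(v,\phi)$ by exactly $\prod_C \#C$.

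I expect this overcounting bookkeeping to be the main obstacle, and I would handle it by being scrupulous about what data a word array carries versus what $\vec p_\mu$'s expansion carries. The cleanest route is probably to avoid Proposition~\ref{prop:path-to-classical} as an intermediate and instead go back to Proposition~\ref{prop:power-sum-expansion} applied to $G = P_{\mu_1}\sqcup\cdots\sqcup P_{\mu_r}$: we have $\vec{p}_\mu = \sum_{w(I)=J} p_{\cyc(w)}$ where $(I,J)$ realizes $G$ on the vertex set $[n]$ (with $n = |\mu|$), and each permutation $w$ with $w(I) = J$ corresponds to a way of closing up the $r$ directed paths into a disjoint union of directed cycles — i.e. a linear arrangement of the $r$ path-blocks into cyclic sequences, which is precisely a choice of $v \in \symm_r$ written cycle-by-cycle as linear words. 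Distributing $p_{\mu_C} = \sum_i x_i^{\mu_C}$ over these blocks and applying $\varepsilon$ to $x^\delta$ then produces exactly $\sum_\omega \varepsilon \cdot x^{\wt(\omega)}$, since the buckets $1,\dots,N$ receive the $\mu_{w_i}$ increments and the ordering within each bucket's word records which block starts each cycle. I would then double-check the edge cases (empty words $w_i$ contribute $\mu_{w_i} = 0$, so those coordinates of $\wt(\omega)$ are just $\delta_i = N-i$, matching the ``don't touch this variable'' terms) and confirm that summing over all standard $\mu$-word arrays of length $N$ indeed recovers every monomial of $\vec p_\mu(x_1,\dots,x_N) \cdot x^\delta$ with multiplicity one before antisymmetrizing, which gives the stated identity.
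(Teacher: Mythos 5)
Your overall strategy coincides with the paper's: expand $\vec p_\mu$ via Proposition~\ref{prop:path-to-classical} (your proposed detour through Proposition~\ref{prop:power-sum-expansion} lands in the same place, since the permutations $w$ with $w(I)=J$ are exactly parametrized by $v\in\symm_r$ closing the paths into cycles), expand each $p_{\mu_C}$ into monomials, and match the resulting terms with standard word arrays. The gap is in the matching, which is the only step of the lemma that needs an argument, and the one concrete claim you make about it is false: standard word arrays do \emph{not} overcount pairs $(v,\phi)$ by $\prod_{C}\#C$. If they did, the lemma as stated would be off by that factor. Take $r=2$ and $N=1$: there are exactly two standard arrays, namely $(1\,2)$ and $(2\,1)$, and exactly two pairs $(v,\phi)$, namely $v=e$ with both fixed points assigned to bucket $1$ and $v=(1\,2)$ with its unique cycle assigned to bucket $1$; both sides give $2\,x_1^{\mu_1+\mu_2}$, with no factor of $\#C=2$ anywhere. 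The error stems from reading each word $w_i$ as a \emph{single} cycle. A word $w_i$ should instead be read as a linear order on its letter set $S_i$, and the relevant fact is that linear orders on $S_i$ are equinumerous with permutations of $S_i$ --- that is, with ways of covering $S_i$ by disjoint cycles --- both being counted by $|S_i|!$. Since the exponent contributed to $x_i$ depends only on $\mu_{S_i}$ and not on any finer structure, it suffices to fix an ordered set partition $(S_1,\dots,S_N)$ of $[r]$ into possibly empty blocks and check that the number of pairs $(v,\phi)$ whose cycles assigned to bucket $i$ union to $S_i$ equals the number of standard arrays with letter sets $S_i$; both are $\prod_i |S_i|!$, which makes the two sides agree term by term before $\varepsilon$ is applied.

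Your second route does not repair this: ``the ordering within each bucket's word records which block starts each cycle'' is also not correct, since marking a starting point in each cycle produces $\sum_{v}\prod_{C\in v}\#C$ objects on a block, which already fails to equal $|S_i|!$ when $|S_i|=2$ (one gets $1+2=3\neq 2$). The paper is terse at exactly this point, but it correctly describes a term of the monomial expansion as a permutation $v\in\symm_r$ together with an assignment of its cycles to the $N$ exponent positions, with several cycles allowed to share a position; the identification with word arrays is the block-by-block count above (equivalently, any explicit bijection between words on a set and permutations of that set, such as Foata's fundamental correspondence). Replace your overcounting discussion with that count and the rest of your argument goes through.
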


\begin{proof}
Proposition~\ref{prop:path-to-classical} implies  
\begin{equation}
\label{path-to-array-one}
\vec{p}_{\mu}(x_1, \dots, x_N) = \sum_{w \, \in \, \symm_r} \prod_{C \, \in \, w} p_{\mu_C}(x_1, \dots, x_N)
\end{equation}
where the notation $C \in w$ indicates that $C$ is a cycle of the permutation $w \in \symm_r$.
Multiplying both sides of Equation~\eqref{path-to-array-one} by $\varepsilon \cdot x^\delta$ yields
\begin{equation*}
\vec{p}_{\mu}(x_1, \dots, x_N) \times
\varepsilon \cdot x^\delta  = 
\varepsilon \cdot \left(
\vec{p}_{\mu}(x_1, \dots, x_N) \times
x^\delta  \right)  = 
\varepsilon \cdot \left( \sum_{w \in \symm_r} \prod_{C \in w} p_{\mu_C}(x_1, \dots, x_N) \times x^{\delta} \right).
\end{equation*}
Consider the monomial expansion of 
\begin{equation}
\label{eqn:path-to-array-two}
 \sum_{w \, \in \, \symm_r} 
 \prod_{C \, \in \, w} p_{\mu_C}(x_1, \dots, x_N) \times x^{\delta} = 
  \sum_{w \, \in \, \symm_r} \prod_{C \, \in \, w} (x_1^{\mu_C} + \cdots + x_N^{\mu_C}) \times x^{\delta}.
 \end{equation}
 A typical term in this expansion is obtained by first selecting a permutation $w \in \symm_r$, then assigning the cycles $C$ of $w$
 to the $N$ exponents of $x_1, \dots, x_N$ (where a given exponent can get no, one, or multiple cycles), and finally 
 multiplying by $x^{\delta}$. Applying  $\varepsilon$ to both sides of 
 Equation~\eqref{eqn:path-to-array-two} completes the proof.
\end{proof}

Each term $\varepsilon \cdot x^{\wt(\omega)}$ on the right-hand side in Lemma~\ref{lem:path-to-array}
 is 0 when $\wt(\omega)$ has repeated entries, 
 or $\pm$ some alternant 
 $a_{\lambda(\omega)}(x_1, \dots, x_N)$
 where $\lambda(\omega)$ is a partition.
Dividing by $\varepsilon \cdot x^\delta$ and taking the limit as $N \rightarrow \infty$
gives an $s$-expansion of $\vec{p}_\mu$, but this expansion is  too large and
involves too much cancellation to be of much use.
Roughly speaking, the $s$-expansion
coming from Lemma~\ref{lem:path-to-array} arises
by computing the $p$-expansion of $\vec{p}_\mu$,
and then applying classical Murnaghan--Nakayama
to each term.

\subsection{A sign-reversing involution}
This subsection gives a sign-reversing involution $\iota$ on word arrays
which removes a large part (but not all) of the cancellation
in Lemma~\ref{lem:path-to-array}. The involution $\iota$ will act by swapping `unstable pairs' in word arrays.  

Let $\omega = (w_1 \mid \cdots \mid w_N)$ 
be a $\mu$-word array. An {\em unstable pair} in $\omega$ consists of two prefix-suffix factorizations
\begin{equation}
w_i = u_i v_i \quad \text{and} \quad w_j = u_j v_j 
\end{equation}
of the words $w_i$ and $w_j$
at distinct positions $1 \leq i < j \leq N$ such that we have the equality
\begin{equation}
\label{unstable-equality}
\mu_{v_i} - i = \mu_{v_j} - j
\end{equation}
involving the suffixes of these factorizations.
A word array $\omega$ can admit unstable pairs at multiple pairs of positions $i < j$, and even at the same pair of positions
there could be more than one factorization
$w_i = u_i v_i$ and $w_j = u_j v_j$ of the words $w_i$ and $w_j$  witnessing an unstable pair. 
It is possible for either or both of the prefixes $u_i$ and $u_j$ in an unstable pair to be empty, but at least one of the suffixes
$v_i$ or $v_j$ must be nonempty.
We define the {\em score}
of an unstable pair to be the common value $\mu_{v_i} - i = \mu_{v_j} - j$ of \eqref{unstable-equality}.

\begin{example}
\label{ex:unstable-score-example}
Let $\mu = (\mu_1, \dots, \mu_9) = (3,3,3,3,2,2,2,1,1)$ and $N = 6$.  We have the standard $\mu$-word array
\begin{equation*}
\omega = (w_1 \mid w_2 \mid w_3 \mid w_4 \mid w_5 \mid w_6) = ( \varnothing \mid \varnothing \mid 3 \, 7 \mid 8 \, 1 \, 4 \mid \varnothing \mid 9 \, 5 \, 2 \, 6).
\end{equation*}
The ten unstable pairs in $\omega$ correspond to the prefix-suffix factorizations
\begin{equation*}
(w_1, w_3) = ( \varnothing , 3 \, \cdot \, 7),  \quad  
(w_1, w_4) = ( \varnothing, 8 \, 1 \, \cdot \, 4), \quad 
(w_1, w_6) = (\varnothing ,  9 \, \cdot \,  5 \, 2 \, 6),  
\end{equation*}
\begin{equation*}
(w_3, w_4) = ( 3  \, \cdot \,  7 , 8 \, 1 \, \cdot \, 4),  \quad 
(w_3, w_4) = ( \cdot \, 3 \,  7 , 8 \, \cdot \, 1 \, 4),  \quad 
(w_3, w_6) = (3 \cdot 7, 9 \, 5 \cdot  2 \, 6),
\end{equation*}
\begin{equation*}
(w_3, w_6) = ( \cdot \,  3 \, 7 , \cdot \, 9 \, 5 \, 2 \, 6),   \quad 
(w_4, w_6) = ( 8 \, 1 \, 4 \, \cdot \, , 9 \, 5 \, 2 \, \cdot \, 6), \quad 
(w_4, w_6) = (8 \, 1 \, \cdot \, 4, 9 \, 5 \, \cdot \, 2 \, 6),
\end{equation*}
\begin{equation*}
\text{ and } \quad 
(w_4, w_6) = (8 \, \cdot \, 1 \, 4, \, \cdot \, 9 \, 5 \, 2 \, 6).
\end{equation*}
In reading order from the top left, the scores of these unstable pairs are 
\begin{equation*}
-1, -1, -1, -1, 2, -1, 2, -4, -1, \text{ and } 2.
\end{equation*}
\end{example}

A $\mu$-word array $\omega = (w_1 \mid \cdots \mid w_N)$ is {\em stable} if $\omega$ has no unstable pairs.
Stability is sufficient to detect when  the monomial $x^{\wt(\omega)}$ is annihilated by the antisymmetrizer $\varepsilon$.

\begin{lemma}
\label{lem:collision-instability}
Let $\mu$ be a partition and let
 $\omega = (w_1 \mid \cdots \mid w_N)$ be a $\mu$-word array with weight sequence
 $\wt(\omega) = (\wt(\omega)_1, \dots, \wt(\omega)_N)$.  If there exist $i < j$ such that $\wt(\omega)_i = \wt(\omega)_j$
 then $\omega$ is unstable.
\end{lemma}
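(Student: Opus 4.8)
The plan is to show directly that a coincidence $\wt(\omega)_i = \wt(\omega)_j$ with $i < j$ produces an unstable pair by taking the trivial (empty-suffix) factorizations. Writing out the definition of the weight sequence, $\wt(\omega)_i = \mu_{w_i} + (N-i)$ and $\wt(\omega)_j = \mu_{w_j} + (N-j)$, so the hypothesis $\wt(\omega)_i = \wt(\omega)_j$ is equivalent to $\mu_{w_i} - i = \mu_{w_j} - j$. Now consider the prefix-suffix factorizations $w_i = u_i v_i$ with $u_i$ empty and $v_i = w_i$, and likewise $w_j = u_j v_j$ with $u_j$ empty and $v_j = w_j$. Then $\mu_{v_i} = \mu_{w_i}$ and $\mu_{v_j} = \mu_{w_j}$, so \eqref{unstable-equality} reads $\mu_{v_i} - i = \mu_{v_j} - j$, which is exactly what we just derived.

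\textbf{The one subtlety} is the requirement in the definition of an unstable pair that at least one of the suffixes $v_i, v_j$ be nonempty. In our chosen factorizations $v_i = w_i$ and $v_j = w_j$, so we must rule out the case $w_i = w_j = \varnothing$. But if both words were empty, then $\mu_{w_i} = \mu_{w_j} = 0$, forcing $i = j$ from $\mu_{w_i} - i = \mu_{w_j} - j$, contradicting $i < j$. Hence at least one of $w_i, w_j$ is nonempty, so at least one of $v_i, v_j$ is nonempty, and the pair of factorizations genuinely witnesses an unstable pair in $\omega$. Therefore $\omega$ is unstable.

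\textbf{Main obstacle.} There is essentially no difficulty here; this lemma is a direct unwinding of definitions. The only thing to be careful about is precisely the nonemptiness condition noted above — one could erroneously think the empty-suffix factorizations always work, but the contrapositive of ``both suffixes empty'' forces $i = j$, which resolves it cleanly. One might alternatively phrase the argument as: the map $\omega \mapsto \wt(\omega)$ records the multiset $\{\mu_{w_m} + (N-m)\}_{m=1}^N$, and stability is designed so that no two of these values collide; the lemma just says the ``obvious'' collision (two full words) is one of the forbidden configurations.
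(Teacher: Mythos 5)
Your proof is correct and follows exactly the paper's argument: the paper's one-line proof also takes the empty-prefix factorizations $u_i = u_j = \varnothing$ so that the weight collision $\mu_{w_i} - i = \mu_{w_j} - j$ becomes the unstable-pair condition \eqref{unstable-equality}. Your extra check that not both suffixes can be empty (since $w_i = w_j = \varnothing$ would force $i = j$) is a detail the paper leaves implicit, and you resolve it correctly.
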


\begin{proof}
This instability is witnessed by the prefix-suffix
 factorizations $w_i = u_i v_i$ and $w_j = u_j v_j$ where $u_i = u_j = \varnothing$ are empty prefixes.
\end{proof}

Lemma~\ref{lem:collision-instability} says that any individual term in the expansion
\begin{equation*}
 \vec{p}_{\mu}(x_1, \dots, x_N) \times \varepsilon \cdot x^\delta = \sum_{\text{$\omega$ standard}} \varepsilon \cdot x^{\wt(\omega)}
 \end{equation*}
of Lemma~\ref{lem:path-to-array} which vanishes corresponds to an unstable
word array.  We will use stability to cancel more terms of this expansion pairwise.
Before doing so, we record
the following hereditary property of stable word arrays which will be crucial for our inductive arguments.

\begin{lemma}
\label{lem:hereditary-stability}
Let $\mu$ be a partition and let $\omega = (w_1 \mid \cdots \mid w_N)$ be a stable $\mu$-word array.  
For each position $i$, let $w_i = u_i v_i$ be a prefix-suffix factorization of the word $w_i$.  
The  $\mu$-word array
$(v_1 \mid \cdots \mid v_N)$ formed by the suffices $v_i$ of the words $w_i$ is also stable.
\end{lemma}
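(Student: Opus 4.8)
The plan is to argue by contradiction: assuming the suffix array $(v_1 \mid \cdots \mid v_N)$ is unstable, I will manufacture an unstable pair in $\omega$ itself, contradicting the hypothesis that $\omega$ is stable. The whole argument rests on the transitivity of the ``is a suffix of'' relation, together with the fact that the score of an unstable pair depends only on the suffixes of the witnessing factorizations and their positions.

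First I would unwind the definition of an unstable pair in the suffix array. If $(v_1 \mid \cdots \mid v_N)$ is unstable, there are positions $1 \leq i < j \leq N$ and prefix-suffix factorizations $v_i = a_i b_i$ and $v_j = a_j b_j$ with $\mu_{b_i} - i = \mu_{b_j} - j$, where at least one of the suffixes $b_i, b_j$ is nonempty. Since $b_i$ is a suffix of $v_i$ and $v_i$ is in turn a suffix of $w_i$, the word $b_i$ is a suffix of $w_i$; concretely, $w_i = u_i v_i = (u_i a_i) b_i$ is a prefix-suffix factorization of $w_i$. Likewise $w_j = (u_j a_j) b_j$ is a prefix-suffix factorization of $w_j$.

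Next I would check that these two factorizations of $w_i$ and $w_j$ witness an unstable pair in $\omega$. The required score identity is precisely $\mu_{b_i} - i = \mu_{b_j} - j$, which we already have in hand, and the clause that at least one of $b_i, b_j$ be nonempty is inherited verbatim from the unstable pair in the suffix array. Hence $\omega$ is unstable, contradicting the hypothesis, and therefore $(v_1 \mid \cdots \mid v_N)$ must be stable.

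I do not anticipate any real obstacle here; the lemma is essentially a bookkeeping statement. The only point requiring a moment's care is tracking the nonemptiness condition through the construction, which carries over without change because prepending the prefixes $u_i$ and $u_j$ does not alter the suffixes $b_i$ and $b_j$ or their positions.
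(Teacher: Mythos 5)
Your proof is correct and is essentially the paper's own argument: the paper's one-line proof observes that the instability condition depends only on suffixes, and you have simply unpacked that observation via the transitivity of the suffix relation. No issues.
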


\begin{proof}
The suffix  array $(v_1 \mid \cdots \mid v_N)$ is stable because
 the condition \eqref{unstable-equality} defining unstable pairs
depends only on the suffixes of the words $w_1, \dots, w_N$.
\end{proof}

We define a swapping operation on unstable pairs as follows.
Consider an unstable $\mu$-word array $\omega = ( w_1 \mid \cdots \mid w_N)$ at positions $1 \leq i < j \leq N$ 
with prefix-suffix factorizations 
$w_i = u_i v_i$ and $w_j = u_j v_j$. The {\em swap} $\sigma(\omega)$ of $\omega$ at this unstable pair is the 
$\mu$-word array
\begin{equation}
\omega = (w_1 \mid \cdots \mid w_i \mid \cdots \mid w_j \mid \cdots \mid w_N) \quad \leadsto \quad
\sigma(\omega) := (w_1 \mid \cdots \mid w'_i \mid \cdots \mid w'_j \mid \cdots \mid w_N)
\end{equation}
where 
\begin{equation}
w'_i := u_j v_i \quad \text{and} \quad w'_j := u_i v_j. 
\end{equation}
That is, the swap $\sigma(\omega)$ is obtained from $\omega$ by interchanging the 
prefixes $u_i$ and $u_j$.
It is possible for $\sigma$ to leave the 
word array $\omega$ unchanged:
if both prefixes $u_i$ and $u_j$ are empty, we have $\sigma(\omega) = \omega$.
The instability condition \eqref{unstable-equality} implies
\begin{equation}
\mu_{w_i} + N - i = \mu_{u_i} + \mu_{v_i} + N - i = \mu_{u_i} + \mu_{v_j} + N - j = \mu_{w'_j} + N - j
\end{equation}
and
\begin{equation}
\mu_{w_j} + N - j = \mu_{u_j} + \mu_{v_j} + N - j = \mu_{u_j} + \mu_{v_i} + N - i = \mu_{w'_i} + N - i
\end{equation}
We record this crucial property of swapping in the following observation.

\begin{observation}
\label{obs:swap-reverse}
Let $\omega = (w_1 \mid \cdots \mid w_N)$ be a 
$\mu$-word array which is unstable
at positions $1 \leq i < j \leq n$ with respect to some prefix-suffix factorizations
$w_i = u_i v_i$ and $w_j = u_j v_j$
 of $w_i$ and $w_j$.  Let $\sigma(\omega)$ be the swapped $\mu$-word array.

The weight sequence $\wt(\sigma(\omega)) = (\wt(\sigma(\omega))_1, \dots, \wt(\sigma(\omega))_N)$ of the swapped word array $\sigma(\omega)$
 is obtained from the weight sequence
$\wt(\omega) = (\wt(\omega)_1, \dots, \wt(\omega)_N)$ of the original word array $\omega$ 
by interchanging the entries in positions $i$ and $j$.
\end{observation}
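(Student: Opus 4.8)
The statement is essentially a bookkeeping consequence of the swap construction together with the instability condition, so the plan is to make the bookkeeping explicit. Recall that the weight sequence is defined coordinate-wise by $\wt(\omega)_p = \mu_{w_p} + N - p$, so it suffices to check that each coordinate $\wt(\sigma(\omega))_p$ equals $\wt(\omega)_{\pi(p)}$ where $\pi$ is the transposition $(i \, j)$. First I would observe that outside the two positions $i$ and $j$ nothing changes: for $p \neq i, j$ we have $w'_p = w_p$ by the definition of the swap, hence $\wt(\sigma(\omega))_p = \mu_{w_p} + N - p = \wt(\omega)_p$, which is exactly what is needed.

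Next I would treat the two affected positions. By definition of the swap, $w'_i = u_j v_i$ and $w'_j = u_i v_j$, and $\mu_w$ is additive over concatenation of words, so $\mu_{w'_i} = \mu_{u_j} + \mu_{v_i}$ and $\mu_{w'_j} = \mu_{u_i} + \mu_{v_j}$. The computation already displayed in the text before the observation gives
\[
\mu_{w'_i} + N - i = \mu_{u_j} + \mu_{v_i} + N - i = \mu_{u_j} + \mu_{v_j} + N - j = \mu_{w_j} + N - j,
\]
where the middle equality is precisely the instability condition \eqref{unstable-equality} rewritten as $\mu_{v_i} - i = \mu_{v_j} - j$. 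Thus $\wt(\sigma(\omega))_i = \wt(\omega)_j$. The symmetric computation
\[
\mu_{w'_j} + N - j = \mu_{u_i} + \mu_{v_j} + N - j = \mu_{u_i} + \mu_{v_i} + N - i = \mu_{w_i} + N - i
\]
gives $\wt(\sigma(\omega))_j = \wt(\omega)_i$. Combining the three cases $p \notin \{i,j\}$, $p = i$, $p = j$ shows that $\wt(\sigma(\omega))$ is obtained from $\wt(\omega)$ by interchanging the entries in positions $i$ and $j$, as claimed.

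There is really no hard part here: the only input beyond the definitions is additivity of $\mu_{(-)}$ under concatenation (Notation~\ref{not:mu-parts}) and the defining equation of an unstable pair. The mild point worth flagging is that one should write the instability condition in the form $\mu_{v_i} + N - i = \mu_{v_j} + N - j$ (equivalently $\mu_{v_i} - i = \mu_{v_j} - j$), since that is the shape in which it gets substituted in the middle steps of the two displays; this is the same as \eqref{unstable-equality} after adding $N$ to both sides. I would present the proof as exactly the two displayed computations plus the one-line remark about unaffected positions.
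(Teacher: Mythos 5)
Your proof is correct and is essentially the paper's own argument: the two displayed computations you give are exactly the ones the paper records immediately before stating the observation, with the instability condition $\mu_{v_i}-i=\mu_{v_j}-j$ supplying the middle equality and additivity of $\mu_{(-)}$ over concatenation doing the rest. The only addition is your explicit remark that positions outside $\{i,j\}$ are untouched, which the paper leaves implicit.
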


Observation~\ref{obs:swap-reverse}
 implies that
\begin{equation}
\label{eqn:basic-swapping-cancellation}
\varepsilon \cdot x^{\wt(\omega)} + \varepsilon \cdot x^{\wt(\sigma(\omega))} = 0
\end{equation}
so that (for standard arrays) the terms corresponding to $\omega$ and $\sigma(\omega)$ appearing in 
Lemma~\ref{lem:path-to-array} cancel.
If both prefixes in the swap $\sigma$ are empty so that $\sigma(\omega) = \omega$, the corresponding entries of $\wt(\omega) = \wt(\sigma(\omega))$ coincide
and
Equation~\eqref{eqn:basic-swapping-cancellation} reads $0 + 0 = 0$.

As Example~\ref{ex:unstable-score-example} illustrates, an unstable word array $\omega$ can have multiple unstable pairs.
Our sign-reversing involution $\iota$ will act on unstable arrays by making a  swap
$\iota: \omega \mapsto \sigma(\omega)$ at a strategically chosen unstable pair of $\omega$.
Since the positions and prefix-suffix factorizations of unstable pairs in unstable arrays can be affected by swapping,
care must be taken to ensure the map $\iota$ is actually an involution. 
The score of an unstable pair was introduced to solve this problem.

In order to state the next result, we need one more definition. If $\mu = (\mu_1, \dots, \mu_r)$
is a partition with $r$ parts, the {\em content} of a $\mu$-word
array $\omega = (w_1 \mid \cdots \mid w_N)$ is the sequence $\content(\omega) = (\content(\omega)_1, \dots, \content(\omega)_r)$
where $\content(\omega)_i$ counts the total number of copies of the letter $i$ among the words $w_1, \dots, w_N$.
In particular, a standard word array is simply a word array of content $(1^r)$.

\begin{lemma}
\label{lem:sign-reversing}
Let $\mu = (\mu_1, \dots, \mu_r)$ be a partition with $r$ parts and let $\gamma = (\gamma_1, \dots, \gamma_r)$ be a length
$r$ sequence of nonnegative integers.  We have
\begin{equation}
\sum_{\omega}  \varepsilon \cdot x^{\wt(\omega)} = \sum_{\text{$\omega$ {\em stable}}} \varepsilon \cdot x^{\wt(\omega)}
\end{equation}
where the sum on the left-hand side is over all $\mu$-word arrays of length $N$ and content $\gamma$ and the sum
on the right-hand side is over all {\em stable} $\mu$-word arrays of length $N$ and content $\gamma$.
\end{lemma}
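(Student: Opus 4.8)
The plan is to construct an explicit sign-reversing involution $\iota$ on the set of unstable $\mu$-word arrays of length $N$ and content $\gamma$, which pairs each unstable array $\omega$ with the array $\sigma(\omega)$ obtained by swapping prefixes at a canonically chosen unstable pair. By Observation~\ref{obs:swap-reverse} and Equation~\eqref{eqn:basic-swapping-cancellation}, each such pair contributes $\varepsilon \cdot x^{\wt(\omega)} + \varepsilon \cdot x^{\wt(\sigma(\omega))} = 0$, and fixed points of $\iota$ (which occur exactly when both prefixes of the chosen unstable pair are empty) already satisfy $\varepsilon \cdot x^{\wt(\omega)} = 0$ by Lemma~\ref{lem:collision-instability} applied in reverse — or rather, because the weight has a repeated entry. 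Summing over $\iota$-orbits then collapses the left-hand sum to the contribution of the stable arrays, which is the right-hand side.

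First I would pin down the canonical choice. Given an unstable array $\omega$, among all its unstable pairs select the one of \emph{maximal score} $s$ (by Notation~\ref{not:mu-parts} the score is bounded above, so a maximum exists); among unstable pairs of score $s$ there may still be several, so I would break ties by a secondary rule — say, take the lexicographically largest pair of positions $(i,j)$, and then the factorizations $w_i = u_i v_i$, $w_j = u_j v_j$ with the longest suffixes $v_i, v_j$ (equivalently shortest prefixes) realizing score $s$ at those positions. The key point is that the suffixes $v_i, v_j$ at positions $i,j$ are determined by the score: $v_i$ is the unique suffix of $w_i$ with $\mu_{v_i} = s + i$ and $v_j$ the unique suffix of $w_j$ with $\mu_{v_j} = s + j$, since $\mu$ consists of positive integers so the partial sums of suffixes of a fixed word are strictly decreasing in suffix length. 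Thus once $s$ and $(i,j)$ are fixed there is no remaining ambiguity, and I should drop the "longest suffix" clause.

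Next I would verify that $\iota(\omega) := \sigma(\omega)$ is well-defined as an unstable array and that $\iota$ is an involution. Since swapping only interchanges positions $i$ and $j$ of the weight sequence (Observation~\ref{obs:swap-reverse}) and does not change the content, $\sigma(\omega)$ has content $\gamma$. The crucial stability bookkeeping is: the swap replaces $w_i$ by $w_i' = u_j v_i$ and $w_j$ by $w_j' = u_i v_j$, so the \emph{set of suffixes available at position $i$} changes (the new word at $i$ has suffixes $v_i$ and the suffixes of $u_j$ with $v_i$ appended), but the suffix $v_i$ itself is still a suffix of $w_i'$, and likewise $v_j$ of $w_j'$ — so $(i,j)$ with factorizations $u_j v_i$, $u_i v_j$ is again an unstable pair of $\sigma(\omega)$ of the same score $s$. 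I then need: (a) no unstable pair of $\sigma(\omega)$ has score exceeding $s$, and (b) among score-$s$ pairs of $\sigma(\omega)$, the pair $(i,j)$ is still the lexicographically largest. For (a), a score-$s'$ unstable pair of $\sigma(\omega)$ with $s' > s$ would have to involve the modified positions (since elsewhere $\sigma(\omega)$ agrees with $\omega$), and any suffix of $w_i' = u_j v_i$ is either a suffix of $v_i$ — hence a suffix of $w_i$, giving a score already present in $\omega$ — or contains all of $v_i$ as a suffix, in which case $\mu$-value $\geq \mu_{v_i} = s+i$, corresponding to score $\geq s$; pairing two such "long" suffixes, or one long suffix against an unchanged word, and checking the arithmetic $\mu_{v} - i = \mu_{v'} - j$ forces the score to be $\leq s$ using that the score-$s$ pair was lexicographically maximal and maximal-score in $\omega$ — this case analysis is the technical crux. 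For (b) a symmetric argument applies. Then $\iota(\iota(\omega))$ swaps back the same prefixes at the same positions, returning $\omega$.

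The main obstacle I anticipate is precisely step (a)–(b): controlling how the lattice of unstable pairs transforms under a swap, and proving that the "maximal score, then lexicographically maximal position" selection rule is genuinely $\sigma$-invariant. The score was introduced exactly to make this rigid — swapping at a maximal-score pair cannot create a higher-score instability because that would require a suffix of one of the new words whose $\mu$-value is too large, and the two new words are built from the old suffixes $v_i, v_j$ (whose $\mu$-values are $s+i, s+j$) together with the swapped prefixes, whose $\mu$-values only contribute \emph{on top of} $v_i$ or $v_j$, i.e. to score $\geq s$ again, while any genuinely new instability at score $> s$ between the two modified positions would have been an instability of $\omega$ at score $> s$ between $i$ and $j$ with different factorizations, contradicting maximality. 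I would organize this as a short lemma ("swapping at a score-$s$ unstable pair creates no unstable pair of score $> s$") proved by the suffix-arithmetic described above, after which the involution property is routine.
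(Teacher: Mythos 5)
Your proposal is correct and follows essentially the same route as the paper: a sign-reversing involution that swaps prefixes at a canonically chosen unstable pair, with the score used to rigidify the choice so that the swap is reversible, and with fixed points (both prefixes empty) contributing zero because the weight sequence then has a repeated entry. The only difference is that you select the unstable pair of \emph{maximal} score while the paper selects the one of \emph{minimal} score; the two choices are mirror images of one another, and your suffix-arithmetic argument (a score-$>s$ suffix of the new word $u_j v_i$ must contain $v_i$, hence transports to a score-equal suffix of $w_j$ in $\omega$, contradicting maximality) is the exact analogue of the paper's argument that a score-$<m$ suffix of $u_j v_i$ must be a suffix of $v_i$, hence already a suffix of $w_i$.
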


In particular, Lemmas~\ref{lem:path-to-array} and \ref{lem:sign-reversing} imply that $\vec{p}_{\mu} \times \varepsilon \cdot x^\delta$ may be expressed
\begin{equation}
\label{eqn:sign-standard-case}
\vec{p}_{\mu}(x_1, \dots, x_N) \times 
\varepsilon \cdot x^\delta = \sum_{\substack{ \text{$\omega$ standard} \\ \text{$\omega$ stable}}} 
\varepsilon \cdot x^{\wt(\omega)}.
\end{equation}
in terms of $\mu$-word arrays $\omega = (w_1 \mid \cdots \mid w_N)$ which are both standard and stable. Upon division by $\varepsilon \cdot x^\delta$,
Equation~\eqref{eqn:sign-standard-case} expresses the $s$-expansion of $\vec{p}_{\mu}$ in the most efficient way known to the authors.
In the next section we give a combinatorially amenable avatar of standard stable arrays.

\begin{proof}
Let $\WWW$ be the family of all $\mu$-word arrays $\omega = ( w_1 \mid \cdots \mid w_N)$ of length $N$ and content $\gamma$. Let 
 $\UUU \subseteq \WWW$ be the subfamily unstable arrays.
 We define a function $\iota: \WWW \rightarrow \WWW$ as follows.
 If $\omega \in \WWW - \UUU$ is stable, we set $\iota(\omega) := \omega$.
 The definition of $\iota(\omega)$ for $\omega \in \UUU$ unstable requires a fact about unstable pairs of minimal score.

Let $\omega = (w_1 \mid \cdots \mid w_N) \in \UUU$. There may be multiple factorization pairs $(w_i = u_i v_i, w_j = u_j v_j)$ 
and multiple pairs of positions which are unstable in $\omega$.  Choose one such unstable pair such that the score
$m := \mu_{v_i} - i = \mu_{v_j} - j$ is as small as possible. Let 
$\sigma(\omega) = (w_1 \mid \cdots \mid w'_i \mid \cdots \mid w'_j \mid \cdots \mid w_N)$ be the result of swapping this pair,
so that $w'_i = u_j v_i$ and $w'_j = u_i v_j$.

{\bf Claim:} {\em The swapped array $\sigma(\omega) = (w_1 \mid \cdots \mid w'_i \mid \cdots \mid w'_j \mid \cdots \mid w_N)$ 
has an unstable pair at positions $i < j$ of score $m$ given by the factorizations $w'_i = u_j v_i$ and $w'_j = u_i v_j$.
Furthermore, the array $\sigma(\omega)$ has no unstable pairs 
of score $< m$. Finally, the unstable pairs in the array $\sigma(\omega)$ of score $m$ occur at the same positions as the unstable pairs of score $m$ in 
$\sigma$.}

The first part of the claim is immediate from the defining condition \eqref{unstable-equality} of unstable pairs. For the second part,
suppose $\sigma(\omega)$ admitted an unstable pair of score $< m$. By the minimality of $m$, this unstable pair must involve 
one of the positions $i, j$ and some other position $k \neq i,j$.  Without loss of generality, assume that this unstable pair
involves the positions $i$ and $k$ and  corresponds to the
prefix-suffix factorizations $w'_i = u'_i v'_i$ and $w_k = u_k v_k$ of the words in at positions $i$ and $k$ of $\sigma(\omega)$.
By assumption the score of this pair is
\begin{equation}
\mu_{v'_i} - i < m = \mu_{v_i} - i 
\end{equation}
which implies that $v'_i$ is a (proper) suffix of $v_i$. If we write $v_i = t_i v'_i$ then $w_i = u_i t_i v'_i$ and
 $v'_i$ is also a suffix of $w_i$. The factorizations $w_i = (u_i t_i) v'_i$ and $w_k = u_k v_k$ witness an unstable pair in $\omega$
 of score $< m$, which contradicts the choice of $m$.  
 For the last part of the claim, if there is an unstable pair in $w$ of score $m$ at positions $i$ and $k$ with corresponding factorization
 $w_k = u_k v_k$, we have 
 \begin{equation}
 \mu_{v_k} - k = m = \mu_{v_i} - i = \mu_{v_j} - j,
 \end{equation}
 which implies that $w$ also has an unstable pair of score $m$ at positions $j$ and $k$.  Since the suffices $v_i, v_j,$ and $v_k$ remain unchanged
 by the operation $\omega \mapsto \sigma(\omega)$, we see that $\sigma(\omega)$ also has unstable pairs of score $m$ at positions $(i,k)$ and $(j,k)$.
 This completes the proof of the claim.
 
 With the claim in hand, the rest of the proof is straightforward. Given $\omega \in \UUU$, define 
 $\iota(\omega) := \sigma(\omega)$ where $\sigma$ swaps the unstable pair of minimal score $m$
 (if there are multiple unstable pairs of score $m$, let $\sigma$ swap the unstable pair of score $m$ at positions 
 $1 \leq i < j \leq N$ which are lexicographically maximal).   Since $\iota$ fixes any stable array, we have a function $\iota: \WWW \rightarrow \WWW$.
 The claim guarantees that $\iota(\iota(\omega)) = \omega$
 for all $\omega \in \UUU$ so that $\iota$ is an involution.
 The discussion following Lemma~\ref{lem:hereditary-stability} shows that
 \begin{equation}
 \sum_{\omega \, \in \, \UUU} \varepsilon \cdot x^{\wt(\omega)} = 0
 \end{equation}
 which completes the proof.
\end{proof}

To show how the involution $\iota$ in the proof of Lemma~\ref{lem:sign-reversing} works, we consider the unstable word
array of Example~\ref{ex:unstable-score-example}.

\begin{example}
\label{ex:involution-example}
Consider applying $\iota$ to the $\mu$-word array $\omega = (w_1 \mid w_2 \mid w_3 \mid w_4 \mid w_5 \mid w_6)$ 
of Example~\ref{ex:unstable-score-example}.
The unstable pair of lowest score $m = -4$  is $(w_4, w_6) = ( 8 \, 1 \, 4 \, \cdot \, , 9 \, 5 \, 2 \, \cdot \, 6)$. Performing a swap at this unstable pair 
interchanges these two prefixes. The resulting array is 
\begin{equation*}
\iota(\omega) = ( \varnothing \mid \varnothing \mid 3 \, 7 \mid 9 \, 5 \, 2 \mid \varnothing \mid 8 \, 1 \, 4 \, 6).
\end{equation*}
The factorizations $(9 \, 5 \, 2 \, \cdot \, , 8 \, 1 \, 4 \, \cdot \, 6)$ yield an unstable pair in $\iota(\omega)$,
also of score $m = -4$. 
\end{example}

\subsection{Monotonic ribbon tilings}
Lemma~\ref{lem:sign-reversing} is equivalent to our expansion of $\vec{p}_{\mu}$ in the $s$-basis, but we will need 
this expansion in a more combinatorially transparent form.
To achieve this, we consider ribbon tilings of Young diagrams that satisfy a monotone condition on their tails.

\begin{defn}
\label{def:monotonic-definition}
A {\em monotonic ribbon tiling} $T$ of a shape $\lambda$ is a disjoint union decomposition \[\lambda = \xi^{(1)} \sqcup \cdots \sqcup \xi^{(r)}\]
of the Young diagram of $\lambda$ into ribbons $\xi^{(1)}, \dots, \xi^{(r)}$ such that 
\begin{itemize}
\item
the tails of $\xi^{(1)}, \dots, \xi^{(r)}$ occupy distinct columns
$c_1 < \cdots < c_r$ of $\lambda$, and 
\item
each initial union $\xi^{(1)} \sqcup \cdots \sqcup \xi^{(i)}$ of ribbons is the Young diagram of a partition for $0 \leq i \leq r$.
\end{itemize}
\end{defn}

Definition~\ref{def:monotonic-definition} forces the tails of $\xi^{(1)}, \dots, \xi^{(r)}$ to lie on the southern boundary of $\lambda$. In particular, if $b_i$ is the row containing the tail of $\xi^{(r)}$, we have $b_1 \geq \cdots \geq b_r$. 
This is the origin of the term `monotonic'; the ribbons have a left-to-right total order and their tails become shallower when read from left to right.  
We define the {\em tail depth sequence} of $T$ to be 
\begin{equation}
    \depth(T) := (b_1 \geq \cdots \geq b_r)
\end{equation}
where $b_i$ is the row occupied by the tail of the $i^{th}$ ribbon of $T$. 

\begin{example}
    \label{ex:motonoic-and-not}
Figure~\ref{fig:monotonic-and-not} shows two tilings of the Young diagram of $\lambda = (10,9,4,2,2)$ by ribbons. The tiling on the left is monotonic while the tiling on the right is not. The tail depth sequence of the tiling on the left is 
$\depth(T) = (5,5,3,2,2,1)$.
\end{example}

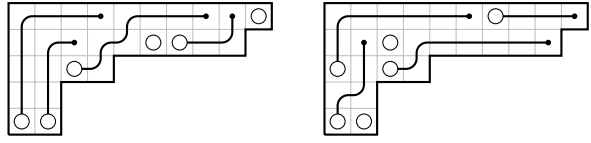
\begin{figure}
\begin{center}
\begin{tikzpicture}[scale = 0.35]

  \begin{scope}
    \clip (0,0) -| (2,2) -| (4,3) -| (9,4) -| (10,5) -| (0,0);
    \draw [color=black!25] (0,0) grid (10,5);
  \end{scope}

  \draw [thick] (0,0) -| (2,2) -| (4,3) -| (9,4) -| (10,5) -| (0,0);

  \draw [thick, rounded corners] (0.5,0.5) |- (3.5,4.5);
  \draw [color=black,fill=black,thick] (3.5,4.5) circle (.4ex);
  \node [draw, circle, fill = white, inner sep = 2pt] at (0.5,0.5) { };
  
  \draw [thick, rounded corners] (1.5,0.5) |- (2.5,3.5);
  \draw [color=black,fill=black,thick] (2.5,3.5) circle (.4ex);
  \node [draw, circle, fill = white, inner sep = 2pt] at (1.5,0.5) { };
  
  \draw [thick, rounded corners] (2.5,2.5) -| (3.5,3.5) -| (4.5,4.5) -- (7.5,4.5);
  \draw [color=black,fill=black,thick] (7.5,4.5) circle (.4ex);
  \node [draw, circle, fill = white, inner sep = 2pt] at (2.5,2.5) { };

    \node [draw, circle, fill = white, inner sep = 2pt] at (5.5,3.5) { };
    
  \draw [thick, rounded corners] (6.5,3.5) -| (8.5,4.5);
  \draw [color=black,fill=black,thick] (8.5,4.5) circle (.4ex);
  \node [draw, circle, fill = white, inner sep = 2pt] at (6.5,3.5) { };   
  
      \node [draw, circle, fill = white, inner sep = 2pt] at (9.5,4.5) { };

   \begin{scope}
    \clip (12,0) -| (14,2) -| (16,3) -| (21,4) -| (22,5) -| (12,0);
    \draw [color=black!25] (12,0) grid (22,5);
  \end{scope}

   \draw [thick] (12,0) -| (14,2) -| (16,3) -| (21,4) -| (22,5) -| (12,0);    
  
  \draw [thick, rounded corners] (12.5,2.5) |- (17.5,4.5);
  \draw [color=black,fill=black,thick] (17.5,4.5) circle (.4ex);
  \node [draw, circle, fill = white, inner sep = 2pt] at (12.5,2.5) { };   
  
  \draw [thick, rounded corners] (12.5,0.5) |- (13.5,1.5) -- (13.5,3.5);
  \draw [color=black,fill=black,thick] (13.5,3.5) circle (.4ex);
  \node [draw, circle, fill = white, inner sep = 2pt] at (12.5,0.5) { };   
  
  \draw [thick, rounded corners] (14.5,2.5) -| (15.5,3.5) -- (20.5,3.5);
  \draw [color=black,fill=black,thick] (20.5,3.5) circle (.4ex);
  \node [draw, circle, fill = white, inner sep = 2pt] at (14.5,2.5) { };   
  
  \draw [thick, rounded corners]  (18.5,4.5) -- (21.5,4.5);
  \draw [color=black,fill=black,thick] (21.5,4.5) circle (.4ex);
  \node [draw, circle, fill = white, inner sep = 2pt] at (18.5,4.5) { };

   \node [draw, circle, fill = white, inner sep = 2pt] at (13.5,0.5) { };

    \node [draw, circle, fill = white, inner sep = 2pt] at (14.5,3.5) { };

\end{tikzpicture}
\end{center}
\caption{A monotonic ribbon tiling (left) and a tiling by ribbons which is not monotonic (right).}
\label{fig:monotonic-and-not}
\end{figure}

\begin{figure}
\begin{center}
\begin{tikzpicture}[scale = 0.3]

\begin{scope}
   \clip (0,0) -| (6,1) -| (0,0);
    \draw [color=black!25] (0,0) grid (6,1);
\end{scope}

\draw [thick] (0,0) -| (6,1) -| (0,0);

  \draw [thick, rounded corners]  (0.5,0.5) -- (2.5,0.5);
  \draw [color=black,fill=black,thick] (2.5,0.5) circle (.4ex);
  \node [draw, circle, fill = white, inner sep = 1.2pt] at (0.5,0.5) { };    
  
  \draw [thick, rounded corners]  (3.5,0.5) -- (4.5,0.5);
  \draw [color=black,fill=black,thick] (4.5,0.5) circle (.4ex);
  \node [draw, circle, fill = white, inner sep = 1.2pt] at (3.5,0.5) { };    
  
    \node [draw, circle, fill = white, inner sep = 1.2pt] at (5.5,0.5) { };    
    
    \node at (3,-1) {$+1$};

  \begin{scope}
   \clip (8,0) -| (14,1) -| (8,0);
    \draw [color=black!25] (8,0) grid (14,1);
\end{scope}

\draw [thick] (8,0) -| (14,1) -| (8,0);

  \draw [thick, rounded corners]  (8.5,0.5) -- (10.5,0.5);
  \draw [color=black,fill=black,thick] (10.5,0.5) circle (.4ex);
  \node [draw, circle, fill = white, inner sep = 1.2pt] at (8.5,0.5) { };    
  
  \draw [thick, rounded corners]  (12.5,0.5) -- (13.5,0.5);
  \draw [color=black,fill=black,thick] (13.5,0.5) circle (.4ex);
  \node [draw, circle, fill = white, inner sep = 1.2pt] at (12.5,0.5) { };    
  
    \node [draw, circle, fill = white, inner sep = 1.2pt] at (11.5,0.5) { };    
    
    \node at (11,-1) {$+1$};

  \begin{scope}
   \clip (16,0) -| (22,1) -| (16,0);
    \draw [color=black!25] (16,0) grid (22,1);
\end{scope}

\draw [thick] (16,0) -| (22,1) -| (16,0);

  \draw [thick, rounded corners]  (16.5,0.5) -- (17.5,0.5);
  \draw [color=black,fill=black,thick] (17.5,0.5) circle (.4ex);
  \node [draw, circle, fill = white, inner sep = 1.2pt] at (16.5,0.5) { };    
  
  \draw [thick, rounded corners]  (18.5,0.5) -- (20.5,0.5);
  \draw [color=black,fill=black,thick] (20.5,0.5) circle (.4ex);
  \node [draw, circle, fill = white, inner sep = 1.2pt] at (18.5,0.5) { };    
  
    \node [draw, circle, fill = white, inner sep = 1.2pt] at (21.5,0.5) { };    
    
    \node at (19,-1) {$+1$};

 \begin{scope}
   \clip (24,0) -| (30,1) -| (24,0);
    \draw [color=black!25] (24,0) grid (30,1);
\end{scope}

\draw [thick] (24,0) -| (30,1) -| (24,0);

  \draw [thick, rounded corners]  (24.5,0.5) -- (25.5,0.5);
  \draw [color=black,fill=black,thick] (25.5,0.5) circle (.4ex);
  \node [draw, circle, fill = white, inner sep = 1.2pt] at (24.5,0.5) { };    
  
  \draw [thick, rounded corners]  (27.5,0.5) -- (29.5,0.5);
  \draw [color=black,fill=black,thick] (29.5,0.5) circle (.4ex);
  \node [draw, circle, fill = white, inner sep = 1.2pt] at (27.5,0.5) { };    
  
    \node [draw, circle, fill = white, inner sep = 1.2pt] at (26.5,0.5) { };    
    
    \node at (27,-1) {$+1$};     
    
 \begin{scope}
   \clip (32,0) -| (38,1) -| (32,0);
    \draw [color=black!25] (32,0) grid (38,1);
\end{scope}

\draw [thick] (32,0) -| (38,1) -| (32,0);

  \draw [thick, rounded corners]  (33.5,0.5) -- (35.5,0.5);
  \draw [color=black,fill=black,thick] (35.5,0.5) circle (.4ex);
  \node [draw, circle, fill = white, inner sep = 1.2pt] at (33.5,0.5) { };    
  
  \draw [thick, rounded corners]  (36.5,0.5) -- (37.5,0.5);
  \draw [color=black,fill=black,thick] (37.5,0.5) circle (.4ex);
  \node [draw, circle, fill = white, inner sep = 1.2pt] at (36.5,0.5) { };    
  
    \node [draw, circle, fill = white, inner sep = 1.2pt] at (32.5,0.5) { };    
    
    \node at (35,-1) {$+1$};

 \begin{scope}
   \clip (40,0) -| (46,1) -| (40,0);
    \draw [color=black!25] (40,0) grid (46,1);
\end{scope}

\draw [thick] (40,0) -| (46,1) -| (40,0);

  \draw [thick, rounded corners]  (41.5,0.5) -- (42.5,0.5);
  \draw [color=black,fill=black,thick] (42.5,0.5) circle (.4ex);
  \node [draw, circle, fill = white, inner sep = 1.2pt] at (41.5,0.5) { };    
  
  \draw [thick, rounded corners]  (43.5,0.5) -- (45.5,0.5);
  \draw [color=black,fill=black,thick] (45.5,0.5) circle (.4ex);
  \node [draw, circle, fill = white, inner sep = 1.2pt] at (43.5,0.5) { };    
  
    \node [draw, circle, fill = white, inner sep = 1.2pt] at (40.5,0.5) { };    
    
    \node at (43,-1) {$+1$};

\end{tikzpicture}
\end{center}

\begin{center}
\begin{tikzpicture}[scale = 0.3]

\begin{scope}
   \clip (0,0) -| (1,1) -| (5,2) -| (0,0);
    \draw [color=black!25] (0,0) grid (5,2);
\end{scope}

\draw [thick] (0,0) -| (1,1) -| (5,2) -| (0,0);

\node at (2.5,-0.5) {$-1$};

  \draw [thick, rounded corners]  (0.5,0.5) |- (1.5,1.5);
  \draw [color=black,fill=black,thick] (1.5,1.5) circle (.4ex);
  \node [draw, circle, fill = white, inner sep = 1.2pt] at (0.5,0.5) { };

    \draw [thick, rounded corners]  (2.5,1.5) -- (3.5,1.5);
  \draw [color=black,fill=black,thick] (3.5,1.5) circle (.4ex);
  \node [draw, circle, fill = white, inner sep = 1.2pt] at (2.5,1.5) { };  
  
    \node [draw, circle, fill = white, inner sep = 1.2pt] at (4.5,1.5) { };

\begin{scope}
   \clip (10,0) -| (11,1) -| (15,2) -| (10,0);
    \draw [color=black!25] (10,0) grid (15,2);
\end{scope}

\draw [thick] (10,0) -| (11,1) -| (15,2) -| (10,0);

\node at (12.5,-0.5) {$-1$};

  \draw [thick, rounded corners]  (10.5,0.5) |- (11.5,1.5);
  \draw [color=black,fill=black,thick] (11.5,1.5) circle (.4ex);
  \node [draw, circle, fill = white, inner sep = 1.2pt] at (10.5,0.5) { };  
  
      \node [draw, circle, fill = white, inner sep = 1.2pt] at (12.5,1.5) { };

      \draw [thick, rounded corners]  (13.5,1.5) -- (14.5,1.5);
  \draw [color=black,fill=black,thick] (14.5,1.5) circle (.4ex);
  \node [draw, circle, fill = white, inner sep = 1.2pt] at (13.5,1.5) { };

\begin{scope}
   \clip (20,0) -| (21,1) -| (25,2) -| (20,0);
    \draw [color=black!25] (20,0) grid (25,2);
\end{scope}

\draw [thick] (20,0) -| (21,1) -| (25,2) -| (20,0);

\node at (22.5,-0.5) {$-1$};

  \draw [thick, rounded corners]  (20.5,0.5) -- (20.5,1.5);
  \draw [color=black,fill=black,thick] (20.5,1.5) circle (.4ex);
  \node [draw, circle, fill = white, inner sep = 1.2pt] at (20.5,0.5) { };

   \draw [thick, rounded corners]  (21.5,1.5) -- (23.5,1.5);
  \draw [color=black,fill=black,thick] (23.5,1.5) circle (.4ex);
  \node [draw, circle, fill = white, inner sep = 1.2pt] at (21.5,1.5) { };  
  
  \node [draw, circle, fill = white, inner sep = 1.2pt] at (24.5,1.5) { };

\begin{scope}
   \clip (30,0) -| (31,1) -| (35,2) -| (30,0);
    \draw [color=black!25] (30,0) grid (35,2);
\end{scope}

\draw [thick] (30,0) -| (31,1) -| (35,2) -| (30,0);

\node at (32.5,-0.5) {$-1$};

  \draw [thick, rounded corners]  (30.5,0.5) -- (30.5,1.5);
  \draw [color=black,fill=black,thick] (30.5,1.5) circle (.4ex);
  \node [draw, circle, fill = white, inner sep = 1.2pt] at (30.5,0.5) { };

   \draw [thick, rounded corners]  (32.5,1.5) -- (34.5,1.5);
  \draw [color=black,fill=black,thick] (34.5,1.5) circle (.4ex);
  \node [draw, circle, fill = white, inner sep = 1.2pt] at (32.5,1.5) { };  
  
    \node [draw, circle, fill = white, inner sep = 1.2pt] at (31.5,1.5) { };  

\end{tikzpicture}
\end{center}

\begin{center}
\begin{tikzpicture}[scale = 0.3]

\begin{scope}
\clip (0,0) -| (2,1) -| (4,2) -| (0,0);
\draw [color=black!25] (0,0) grid (4,2);
\end{scope}

\draw [thick] (0,0) -| (2,1) -| (4,2) -| (0,0);

\node at (2,-1) {$-1$};

  \draw [thick, rounded corners]  (0.5,0.5) |- (1.5,1.5);
  \draw [color=black,fill=black,thick] (1.5,1.5) circle (.4ex);
  \node [draw, circle, fill = white, inner sep = 1.2pt] at (0.5,0.5) { };

    \draw [thick, rounded corners]  (2.5,1.5) -- (3.5,1.5);
  \draw [color=black,fill=black,thick] (3.5,1.5) circle (.4ex);
  \node [draw, circle, fill = white, inner sep = 1.2pt] at (2.5,1.5) { };  
  
    \node [draw, circle, fill = white, inner sep = 1.2pt] at (1.5,0.5) { };

\begin{scope}
\clip (6,0) -| (8,1) -| (10,2) -| (6,0);
\draw [color=black!25] (6,0) grid (10,2);
\end{scope}

\draw [thick] (6,0) -| (8,1) -| (10,2) -| (6,0);

\node at (8,-1) {$+1$};

  \draw [thick, rounded corners]  (6.5,0.5) -- (6.5,1.5);
  \draw [color=black,fill=black,thick] (6.5,1.5) circle (.4ex);
  \node [draw, circle, fill = white, inner sep = 1.2pt] at (6.5,0.5) { };

  \draw [thick, rounded corners]  (7.5,0.5) |- (8.5,1.5);
  \draw [color=black,fill=black,thick] (8.5,1.5) circle (.4ex);
  \node [draw, circle, fill = white, inner sep = 1.2pt] at (7.5,0.5) { };

 \node [draw, circle, fill = white, inner sep = 1.2pt] at (9.5,1.5) { };

\begin{scope}
\clip (14,0) -| (17,2) -| (14,0);
\draw [color=black!25] (14,0) grid (17,2);
\end{scope}

\draw [thick] (14,0) -| (17,2) -| (14,0);

\node at (15.5,-1) {$+1$};

  \draw [thick, rounded corners]  (14.5,0.5) |- (15.5,1.5);
  \draw [color=black,fill=black,thick] (15.5,1.5) circle (.4ex);
  \node [draw, circle, fill = white, inner sep = 1.2pt] at (14.5,0.5) { };  
  
  \draw [thick, rounded corners]  (16.5,0.5) -- (16.5,1.5);
  \draw [color=black,fill=black,thick] (16.5,1.5) circle (.4ex);
  \node [draw, circle, fill = white, inner sep = 1.2pt] at (16.5,0.5) { };    
  
   \node [draw, circle, fill = white, inner sep = 1.2pt] at (15.5,0.5) { };

\begin{scope}
\clip (19,0) -| (22,2) -| (19,0);
\draw [color=black!25] (19,0) grid (22,2);
\end{scope}

\draw [thick] (19,0) -| (22,2) -| (19,0);

\node at (20.5,-1) {$+1$};

  \draw [thick, rounded corners]  (19.5,0.5) -- (19.5,1.5);
  \draw [color=black,fill=black,thick] (19.5,1.5) circle (.4ex);
  \node [draw, circle, fill = white, inner sep = 1.2pt] at (19.5,0.5) { };  
  
  \draw [thick, rounded corners]  (20.5,0.5) |- (21.5,1.5);
  \draw [color=black,fill=black,thick] (21.5,1.5) circle (.4ex);
  \node [draw, circle, fill = white, inner sep = 1.2pt] at (20.5,0.5) { };   
  
   \node [draw, circle, fill = white, inner sep = 1.2pt] at (21.5,0.5) { };

\begin{scope}
\clip (26,-1) -| (27,1) -| (30,2) -| (26,-1);
\draw [color=black!25] (26,-1) grid (30,2);
\end{scope}

\draw [thick] (26,-1) -| (27,1) -| (30,2) -| (26,-1);

\node at (28.5,-1) {$+1$};

  \draw [thick, rounded corners]  (26.5,-0.5) -- (26.5,1.5);
  \draw [color=black,fill=black,thick] (26.5,1.5) circle (.4ex);
  \node [draw, circle, fill = white, inner sep = 1.2pt] at (26.5,-0.5) { };

    \draw [thick, rounded corners]  (27.5,1.5) -- (28.5,1.5);
  \draw [color=black,fill=black,thick] (28.5,1.5) circle (.4ex);
  \node [draw, circle, fill = white, inner sep = 1.2pt] at (27.5,1.5) { };  
  
   \node [draw, circle, fill = white, inner sep = 1.2pt] at (29.5,1.5) { };

\begin{scope}
\clip (32,-1) -| (33,1) -| (36,2) -| (32,-1);
\draw [color=black!25] (32,-1) grid (36,2);
\end{scope}

\draw [thick]  (32,-1) -| (33,1) -| (36,2) -| (32,-1);

\node at (34.5,-1) {$+1$};

  \draw [thick, rounded corners]  (32.5,-0.5) -- (32.5,1.5);
  \draw [color=black,fill=black,thick] (32.5,1.5) circle (.4ex);
  \node [draw, circle, fill = white, inner sep = 1.2pt] at (32.5,-0.5) { };  
  
    \node [draw, circle, fill = white, inner sep = 1.2pt] at (33.5,1.5) { };  
    
  \draw [thick, rounded corners]  (34.5,1.5) -- (35.5,1.5);
  \draw [color=black,fill=black,thick] (35.5,1.5) circle (.4ex);
  \node [draw, circle, fill = white, inner sep = 1.2pt] at (34.5,1.5) { };

\begin{scope}
\clip (40,-1) -| (41,0) -| (42,1) -| (43,2) -| (40,-1);
\draw [color=black!25] (40,-1) grid (43,2);
\end{scope}

\draw [thick] (40,-1) -| (41,0) -| (42,1) -| (43,2) -| (40,-1);

\node at (42.5,-1) {$-1$};

  \draw [thick, rounded corners]  (40.5,-0.5) -- (40.5,1.5);
  \draw [color=black,fill=black,thick] (40.5,1.5) circle (.4ex);
  \node [draw, circle, fill = white, inner sep = 1.2pt] at (40.5,-0.5) { };  
  
  \draw [thick, rounded corners]  (41.5,0.5) -- (41.5,1.5);
  \draw [color=black,fill=black,thick] (41.5,1.5) circle (.4ex);
  \node [draw, circle, fill = white, inner sep = 1.2pt] at (41.5,0.5) { };

  \node [draw, circle, fill = white, inner sep = 1.2pt] at (42.5,1.5) { };

\end{tikzpicture}
\end{center}

\caption{The monotonic ribbon tilings used to calculate the Schur expansion of $\vec{p}_{321}$ using 
Theorem~\ref{thm:path-Murnaghan--Nakayama}.}
\label{fig:tilings}
\end{figure}
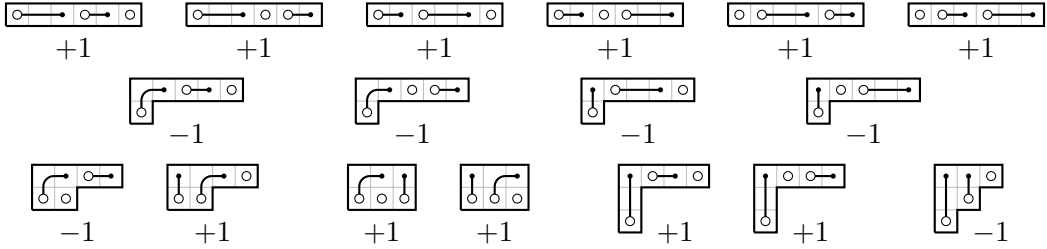

Let $T$ be a monotonic ribbon tiling of a partition $\lambda \vdash n$. We may attach various combinatorial objects to $T$ as in the case of classical ribbon tableaux. We write
$\shape(T) := \lambda$ for the partition tiled by $T$. The {\em sign} of $T$ is 
\begin{equation}
    \sign(T) := \prod_\xi \sign(\xi) = \prod_\xi (-1)^{\height(\xi)}
\end{equation}
where the product is over all ribbons $\xi$ in the tiling $T$. The {\em type} of the tiling $T$ is the composition
\begin{equation}
\type(T) = (k_1, k_2, \dots ) \models n 
\end{equation}
where $k_i$ is the size of the $i^{th}$ ribbon in $T$, read from left to right. The sequence $\sort(\type(T))$ obtained by sorting $\type(T)$ into weakly decreasing order is a partition of $n$. For $i \geq 1$, we let
\begin{equation}
    \rho_i(T) := \text{number of ribbons in $T$ of size $i$}
\end{equation}
and define
\begin{equation}
    \rho(T) := \sum_{i \, \geq \, 1} \rho_i(T) = \text{total number of ribbons in $T$}.
\end{equation}
If $\sort(\type(T)) = \mu \vdash n$, one has $\rho_i(T) = m_i(\mu)$ and $\rho(T) = \ell(\mu)$.

\begin{example}
    Let $T$ be the tiling on the left of Figure~\ref{fig:monotonic-and-not}. We have 
$\shape(T) = (10,9,4,2,2) \vdash 27$. Reading the ribbon sizes in $T$ from left to right, we have $\type(T) = (8,5,8,1,4,1) \models 27$ so that $\sort(\type(T)) = (8,8,5,4,1,1) \vdash 27$. The ribbon size multiplicities are 
\[\rho_1(T) = 2, \quad \rho_4(T) = 1, \quad \rho_5(T) = 1, \quad \rho_8(T) = 2\]
and $\rho_i(T) = 0$ for $i \neq 1,4,5,8$. We have $\rho(T) = 2 + 1 + 1 + 2 = 6$. Taking the signs of the ribbons in $T$ from left to right gives
\[\sign(T) = (+1)(-1)(+1)(+1)(-1)(+1) = +1.\]
\end{example}

A monotonic tiling is uniquely encoded by its type composition and depth sequence.

\begin{observation}
\label{obs:monotonic-characterization}
Let $\alpha$ be a composition with $r$ parts and let $b_1 \geq \cdots \geq b_r$ be a weakly decreasing sequence of $r$ positive integers. There is at most one monotonic ribbon tiling $T$ with $\type(T) = \alpha$ and $\depth(T) = (b_1 \geq \cdots \geq b_r)$.
\end{observation}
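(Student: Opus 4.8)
The plan is to show that a monotonic ribbon tiling is forced, ribbon by ribbon from left to right, by the data $\alpha = \type(T)$ and $(b_1\geq\cdots\geq b_r)=\depth(T)$ alone. Suppose $T$ is such a tiling, with ribbons $\xi^{(1)},\dots,\xi^{(r)}$ listed left to right, and set $\lambda^{(i)}:=\xi^{(1)}\sqcup\cdots\sqcup\xi^{(i)}$. By Definition~\ref{def:monotonic-definition} each $\lambda^{(i)}$ is (the Young diagram of) a partition, with $\lambda^{(0)}=\varnothing$ and $\lambda^{(r)}=\shape(T)$, and $\xi^{(i)}=\lambda^{(i)}/\lambda^{(i-1)}$ is a ribbon of size $\alpha_i$ with tail in row $b_i$ whose removal from $\lambda^{(i)}$ leaves the partition $\lambda^{(i-1)}$; that is, $\xi^{(i)}$ is a rim hook of $\lambda^{(i)}$. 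Thus it suffices to prove the reconstruction step: \emph{given a partition $\rho$, an integer $k>0$, and a row index $b$, there is at most one partition $\nu\supseteq\rho$ such that $\nu/\rho$ is a ribbon of size $k$ with tail in row $b$}. Granting this, one argues by induction on $i$ that $\lambda^{(i)}$ is forced — $\lambda^{(0)}=\varnothing$, and given $\lambda^{(i-1)}$ the partition $\lambda^{(i)}$ is the unique one (if any) obtained by adjoining a size-$\alpha_i$ ribbon with tail in row $b_i$ — so $\shape(T)$ and each $\xi^{(i)}=\lambda^{(i)}/\lambda^{(i-1)}$, hence $T$ itself, are determined.

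For the reconstruction step I would run Observation~\ref{obs:ribbon-addition} in reverse. Fix $N$ sufficiently large and write $\rho+\delta$ for the strictly decreasing exponent vector $(\rho_1+N-1,\dots,\rho_N)$ with $\delta=(N-1,\dots,1,0)$. Observation~\ref{obs:ribbon-addition} tells us that adjoining a size-$k$ ribbon to $\rho$ with tail in row $b$ so as to obtain a partition $\nu$ is possible exactly when the sequence obtained from $\rho+\delta$ by adding $k$ to its $b$-th entry has distinct entries, and in that case $\nu+\delta$ is precisely this sequence rearranged into decreasing order; the worked example following that observation confirms that the $b$-th coordinate of the exponent vector corresponds to the ribbon's tail lying in row $b$. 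Since rearranging into decreasing order is a single-valued operation, $\nu+\delta$, and hence $\nu$, is a function of $(\rho,k,b)$ alone; if two candidate ribbons $\xi,\xi'$ both work, the associated partitions coincide and then $\xi=\nu/\rho=\xi'$. This gives the claimed uniqueness, and the induction above goes through, yielding "at most one" such tiling $T$.

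I do not anticipate a genuinely hard step. The one point to handle carefully — the nearest thing to an obstacle — is the bookkeeping identifying the tail-row index of Definition~\ref{def:monotonic-definition} with the exponent-coordinate index of Observation~\ref{obs:ribbon-addition}, together with the remark that the "distinct columns $c_1<\cdots<c_r$" and monotonicity clauses of Definition~\ref{def:monotonic-definition} play no role in the uniqueness argument: those clauses may simply fail to be satisfiable for a given pair $(\alpha,(b_i))$, but the statement only asserts that \emph{at most one} tiling exists, so it is enough that each successive $\lambda^{(i)}$ is forced whenever it exists at all.
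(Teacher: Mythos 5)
Your proof is correct. The paper states Observation~\ref{obs:monotonic-characterization} without proof, and your argument --- reconstructing the chain $\varnothing = \lambda^{(0)} \subset \cdots \subset \lambda^{(r)}$ inductively from the fact that a partition $\rho$, a ribbon size $k$, and a tail row $b$ admit at most one rim-hook addition (which is exactly the content of Observation~\ref{obs:ribbon-addition}, since $\rho+\delta+k e_b$ determines $\nu+\delta$ up to sorting) --- is precisely the intended justification. Your closing remark that the distinct-columns and weak-decrease clauses of Definition~\ref{def:monotonic-definition} bear only on existence, not uniqueness, is also accurate.
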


Not every pair of $\alpha$ and $b_1 \geq \cdots \geq b_r$ as in Observation~\ref{obs:monotonic-characterization} corresponds to a monotonic
ribbon tiling.
For example, the pair $\alpha = (5,3,3)$ and $(b_1, b_2, b_3) = (3,2,1)$ does not yield a monotonic ribbon tiling. Indeed, the tiling
\begin{center}
\begin{tikzpicture}[scale = 0.25]

  \begin{scope}
    \clip (0,0) -| (1,1) -| (4,2) -| (6,3)  -| (0,0);
    \draw [color=black!25] (0,0) grid (6,3);
  \end{scope}

  \draw [thick] (0,0) -| (1,1) -| (4,2) -| (6,3)  -| (0,0);
  
  \draw [thick, rounded corners]  (0.5,0.5) |- (2.5,2.5);
  \draw [color=black,fill=black,thick] (2.5,2.5) circle (.4ex);
  \node [draw, circle, fill = white, inner sep = 1pt] at (0.5,0.5) { };

   \draw [thick, rounded corners]  (3.5,2.5) -- (5.5,2.5);
  \draw [color=black,fill=black,thick] (5.5,2.5) circle (.4ex);
  \node [draw, circle, fill = white, inner sep = 1pt] at (3.5,2.5) { };

     \draw [thick, rounded corners]  (1.5,1.5) -- (3.5,1.5);
  \draw [color=black,fill=black,thick] (3.5,1.5) circle (.4ex);
  \node [draw, circle, fill = white, inner sep = 1pt] at (1.5,1.5) { };    

\end{tikzpicture}
\end{center}
is not monotonic because the union of the ribbons with the two leftmost tails
\begin{center}
\begin{tikzpicture}[scale = 0.25]

  \begin{scope}
    \clip (0,0) -| (1,1) -| (4,2) -| (3,3)  |- (2,3) -| (0,0);
    \draw [color=black!25] (0,0) grid (6,3);
  \end{scope}

  \draw [thick] (0,0) -| (1,1) -| (4,2) -| (3,3)  |- (2,3) -| (0,0);
  
  \draw [thick, rounded corners]  (0.5,0.5) |- (2.5,2.5);
  \draw [color=black,fill=black,thick] (2.5,2.5) circle (.4ex);
  \node [draw, circle, fill = white, inner sep = 1pt] at (0.5,0.5) { };

   \draw [thick, rounded corners]  (1.5,1.5) -- (3.5,1.5);
  \draw [color=black,fill=black,thick] (3.5,1.5) circle (.4ex);
  \node [draw, circle, fill = white, inner sep = 1pt] at (1.5,1.5) { };    

\end{tikzpicture}
\end{center}
is not the Young diagram of a partition. Monotonic tilings may be used to calculate the Schur expansion of the path power sum $\vec{p}_\mu$ as follows.

\begin{theorem}
\label{thm:path-Murnaghan--Nakayama}
{\em (Path Murnaghan--Nakayama rule)}  Let $\mu \vdash n$.  We have 
\begin{equation}
\vec{p}_{\mu} = m(\mu)! \cdot \sum_T \sign(T) \cdot s_{\shape(T)}
\end{equation}
where the sum is over all monotonic ribbon tilings $T$ with
$\sort(\type(T)) = \mu.$
\end{theorem}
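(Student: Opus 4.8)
The plan is to start from Equation~\eqref{eqn:sign-standard-case}, which already expresses
\[
\vec{p}_{\mu}(x_1, \dots, x_N) \times \varepsilon \cdot x^\delta = \sum_{\omega} \varepsilon \cdot x^{\wt(\omega)}
\]
as a sum over standard stable $\mu$-word arrays $\omega$ of length $N$, and to reinterpret the right-hand side. By Lemma~\ref{lem:collision-instability} a standard stable array $\omega$ has $\wt(\omega)$ with pairwise distinct entries, so $\varepsilon \cdot x^{\wt(\omega)} = \epsilon(\omega) \cdot \varepsilon \cdot x^{\lambda(\omega) + \delta}$, where $\lambda(\omega) + \delta$ is the decreasing rearrangement of $\wt(\omega)$ and $\epsilon(\omega) \in \{\pm 1\}$ is the sign of the permutation sorting $\wt(\omega)$. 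Dividing by $\varepsilon \cdot x^{\delta}$ and letting $N \to \infty$, it suffices to prove that for $N \gg 0$
\[
\sum_{\text{$\omega$ standard, stable}} \epsilon(\omega) \cdot s_{\lambda(\omega)} = m(\mu)! \cdot \sum_{T} \sign(T) \cdot s_{\shape(T)},
\]
the right-hand sum being over monotonic ribbon tilings $T$ with $\sort(\type(T)) = \mu$.

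The heart of the argument is a surjection $\Psi$ from standard stable $\mu$-word arrays onto such tilings whose fibers all have size $m(\mu)!$, with $\shape(\Psi(\omega)) = \lambda(\omega)$ and $\sign(\Psi(\omega)) = \epsilon(\omega)$. To build $\Psi(\omega)$, read the letters of $\omega$ from right to left (equivalently, process $w_N, w_{N-1}, \dots, w_1$ in turn, each right to left), obtaining an ordering $j_1, \dots, j_r$ of $[r]$ with $j_s \in w_{i_s}$ and $i_1 \geq i_2 \geq \cdots \geq i_r$. Starting from the empty shape with $\delta$-shifted exponent vector $\delta = (N-1, \dots, 1, 0)$, process $j_1, \dots, j_r$ in order: to process $j_s$, add $\mu_{j_s}$ to the entry of the current exponent vector occupying the slot that started in position $i_s$ (tracking this slot as the vector is re-sorted after each step), and re-sort. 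The intermediate stages correspond to suffix arrays of $\omega$, which are stable by Lemma~\ref{lem:hereditary-stability} and hence (by Lemma~\ref{lem:collision-instability}) have weight sequences with distinct entries at every stage; so by Observation~\ref{obs:ribbon-addition} each step is a legal ribbon addition, and we obtain a chain of partitions $\varnothing = \rho^{(0)} \subset \cdots \subset \rho^{(r)}$ in which $\xi^{(s)} := \rho^{(s)}/\rho^{(s-1)}$ is a ribbon of size $\mu_{j_s}$ whose tail lies in the row that the modified slot then occupies. We let $\Psi(\omega)$ be this tiling of $\rho^{(r)}$ with the ribbons ordered $\xi^{(1)}, \dots, \xi^{(r)}$.

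The cumulative effect on the exponent vector is to add $\mu_{w_i}$ to slot $i$ for each $i$ and re-sort, so $\rho^{(r)} + \delta$ is the decreasing rearrangement of $\wt(\omega)$; hence $\shape(\Psi(\omega)) = \lambda(\omega)$. Re-sorting after the $s$-th step is a product of $\height(\xi^{(s)})$ adjacent transpositions by Observation~\ref{obs:ribbon-addition}, so composing these steps recovers the permutation sorting $\wt(\omega)$ and $\sign(\Psi(\omega)) = \prod_s (-1)^{\height(\xi^{(s)})} = \epsilon(\omega)$; also $\sort(\type(\Psi(\omega))) = \sort(\mu_{j_1}, \dots, \mu_{j_r}) = \mu$. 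For the fiber count: replaying the ribbon additions of a monotonic tiling $T$ with $\sort(\type(T)) = \mu$ and tracking slots, one reads off which word each ribbon belongs to and the order of the ribbons within each word, so $T$ itself determines the combinatorial skeleton of any $\omega \in \Psi^{-1}(T)$; such an $\omega$ is then pinned down by a bijective labeling of the $r$ ribbons of $T$ by $[r]$ compatible with $\mu$ (the $s$-th ribbon getting a letter $j$ with $\mu_j = |\xi^{(s)}|$), of which there are exactly $m(\mu)!$. Conversely every standard stable $\omega$ is the array recovered from $(\Psi(\omega), \text{the tautological labeling})$. Hence $|\Psi^{-1}(T)| = m(\mu)!$, and summing gives $\sum_\omega \epsilon(\omega)\, s_{\lambda(\omega)} = m(\mu)! \sum_T \sign(T)\, s_{\shape(T)}$, as required.

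The main obstacle is showing that $\Psi(\omega)$ is genuinely a monotonic ribbon tiling and that the reconstruction is well defined — in other words, that stability of $\omega$ is equivalent to the monotonicity conditions: that the tail rows are weakly decreasing (tails lie on the southern boundary) and, crucially, that the tail columns are strictly increasing, so the ribbons are added in left-to-right column order. This is exactly where the score bookkeeping of Lemmas~\ref{lem:collision-instability}--\ref{lem:sign-reversing} is needed: one expresses the tail column of $\xi^{(s)}$ in terms of the suffix sum $\mu_v$ of the suffix $v$ of $w_{i_s}$ beginning at $j_s$ together with $i_s$, and argues that a repeated or out-of-order tail column forces an unstable pair in $\omega$. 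Verifying this equivalence, and checking its compatibility with the labeling bijection, is the technical core of the section.
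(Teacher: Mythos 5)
Your overall architecture matches the paper's: reduce to Equation~\eqref{eqn:sign-standard-case} via Lemmas~\ref{lem:path-to-array} and \ref{lem:sign-reversing}, then set up an $m(\mu)!$-to-one weight- and sign-preserving correspondence between standard stable word arrays and monotonic tilings (the paper packages the $m(\mu)!$ into ``enhanced'' tilings $(T,w)$ and gives an explicit bijection). The shape and sign computations via Observation~\ref{obs:ribbon-addition} are also fine. But there is a genuine gap, and it is not merely the unverified technicality you flag at the end: the processing order you chose for $\Psi$ is wrong. You read the letters of $\omega$ in a \emph{fixed, position-based} order ($w_N$ first, then $w_{N-1}$, etc., forcing $i_1 \geq \cdots \geq i_r$), whereas the order that produces a monotonic tiling must be chosen \emph{greedily by current weight}: at each step one must process the last letter of the word at the nonempty position $j$ whose current weight $a_j$ is minimal. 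These orders genuinely differ, because after a letter is processed the weight at its position jumps up and that position may no longer be the next one to visit even if its word is nonempty. Concretely, for the paper's running example $\omega = (1 \mid \varnothing \mid \varnothing \mid 3 \mid 6\,4\,5\,2 \mid \varnothing)$ with $\mu = (8,8,5,4,1,1)$, your order is $2,5,4,6,3,1$ (positions $5,5,5,5,4,1$), while the correct order is $2,3,1,5,4,6$ (positions $5,4,1,5,5,5$). Running your order, the depth sequence of the resulting chain is $(5,1,1,1,5,3)$, which is not weakly decreasing, the tail columns are $1,5,6,10,2,3$, which are not increasing in the order of addition, and even after reordering the ribbons by tail column the partial unions fail to be Young diagrams. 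So $\Psi(\omega)$ is simply not a monotonic ribbon tiling, and no amount of ``score bookkeeping'' can rescue the claimed equivalence for this order: stability of $\omega$ does not imply monotonicity of the position-ordered chain.

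The underlying confusion is between the position index $i_s$ and the depth $b_s$ (the rank of the modified weight entry among all $N$ entries). These coincide at the very first step, since initially position $N$ carries the smallest weight and hence the largest depth, but they decouple as soon as weights are modified. The fix is exactly the paper's map $\psi$: define the order of processing by the unique monotonic $\omega$-suffix permutation, i.e., always strip the last letter from the nonempty word of minimal current weight; Lemmas~\ref{lem:hereditary-stability} and \ref{lem:collision-instability} make this well defined, and the instability/score argument then shows the resulting tail columns strictly increase. With that change your fiber-count argument (labelings of ribbons compatible with $\mu$, giving $m(\mu)!$ per tiling) goes through and recovers the paper's proof.
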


To prove Theorem~\ref{thm:path-Murnaghan--Nakayama} we will find a bijection between standard stable word arrays and 
monotonic ribbon tilings. This proof is technical and appears in the next subsection.
There are two key differences between using the classical Murnaghan--Nakayama rule to find the 
$s$-expansion of $p_{\mu}$ and using the path Murnaghan--Nakayama rule to calculate the $s$-expansion of 
$\vec{p}_{\mu}$.
\begin{enumerate}
\item  When calculating the $s$-expansion of $p_{\mu}$, one first fixes a definite order of the composition $\mu = (\mu_1, \dots, \mu_r)$
and always adds ribbons of sizes $\mu_1, \dots, \mu_r$ in that order. 
When calculating the $s$-expansion of $\vec{p}_{\mu}$, one must add ribbons of the sizes $\mu_1, \dots, \mu_r$ in all possible orders.
The factor of $m(\mu)!$ in Theorem~\ref{thm:path-Murnaghan--Nakayama} distinguishes between repeated parts of $\mu$.
\item  When calculating the $s$-expansion of $p_{\mu}$, one need only add the ribbons in a standard fashion.
When calculating the $s$-expansion of $\vec{p}_{\mu}$, Theorem~\ref{thm:path-Murnaghan--Nakayama} imposes the stronger
monotonic condition.
\end{enumerate}
We close this subsection with an example of Theorem~\ref{thm:path-Murnaghan--Nakayama}.

\begin{example}
Let $\mu = (3,2,1)$ so that $m(\mu)! = 1$.  We use Theorem~\ref{thm:path-Murnaghan--Nakayama} to calculate the Schur expansion of 
$\vec{p}_{\mu}$.
The relevant monotonic tilings, together with their signs, are as shown in Figure~\ref{fig:tilings}.
We have the Schur expansion
\begin{equation*}
\vec{p}_{321} = 6 s_6 - 4 s_{51} + 2 s_{411} + 2 s_{33} - s_{321}.
\end{equation*}
\end{example}




\subsection{Proof of the path Murhaghan-Nakayama rule}
This subsection gives a bijective proof of  Theorem~\ref{thm:path-Murnaghan--Nakayama}.
We fix a partition $\mu = (\mu_1, \dots, \mu_r)$ with $r$ positive parts
and an integer $N \gg 0$.

The factor of $m(\mu)!$ in Theorem~\ref{thm:path-Murnaghan--Nakayama} is a nuisance,
and we decorate our ribbon tilings with permutations in $\symm_r$ to get around it.  An {\em enhanced monotonic ribbon tiling} with ribbon sizes
$\mu$ is a pair $(T,w)$ where
\begin{itemize}
\item $T$ is a monotonic ribbon tiling whose type 
$(k_1, \dots, k_r)$ is a rearrangement of $(\mu_1, \dots, \mu_r)$, and
\item   $w \in \symm_r$ is a permutation such that $\mu_{w(i)} = k_i$ for $1 \leq i \leq r$.
\end{itemize}
For any $T$ as in the first bullet point, we have $m(\mu)!$ choices of $w \in \symm_r$ as in the second bullet point. For example, if $T$ is the tiling on the left of Figure~\ref{fig:monotonic-and-not}, we have 
$r = 6, (k_1, \dots, k_6) = (8,5,8,1,4,1),$ and $\mu = (8,8,5,4,1,1)$. The $m(\mu)! = 2! \cdot 2!$ permutations $w \in \symm_6$ such that $(T,w)$ is an enhanced monotonic ribbon tiling are
\[ [1,3,2,5,4,6], [2,3,1,5,4,6], [1,3,2,6,4,5], \text{ and } [2,3,1,6,4,5].
\]Theorem~\ref{thm:path-Murnaghan--Nakayama} is equivalent to the assertion 
\begin{equation}
\label{eqn:our-formulation}
\vec{p}_{\mu} = \sum_{(T,w) \, \in \, \TTT} \sign(T) \cdot s_{\shape}(T)
\end{equation}
where 
\begin{equation}
\TTT := \{ \text{all enhanced monotonic ribbon tilings $(T,w)$ with ribbon sizes $\mu$} \}.
\end{equation}
In light of Lemmas~\ref{lem:path-to-array} and \ref{lem:sign-reversing},
Equation~\eqref{eqn:our-formulation} in the finite variable set $\{x_1, \dots, x_N\}$ is equivalent to the equality
\begin{equation}
\label{goal-formulation}
\sum_{\omega \, \in \, \OOO} \varepsilon \cdot x^{\wt(\omega)} 
= \sum_{(T,w) \, \in \, \TTT} \sign(T) \cdot a_{\shape(T)}(x_1, \dots, x_N).
\end{equation}
of alternating polynomials in $\CC[x_1, \dots, x_N]$ where
\begin{equation}
\OOO := \{ \text{all standard stable $\mu$-word arrays $\omega = ( u_1 \mid \cdots \mid u_N )$} \}.
\end{equation}
Equation~\ref{goal-formulation} is the formulation of Theorem~\ref{thm:path-Murnaghan--Nakayama} we will prove. We show Equation~\eqref{goal-formulation} is true by exhibiting a bijection between $\TTT$ and $\OOO$ that preserves 
weights and signs.

\begin{proof} {\em (of Theorem~\ref{thm:path-Murnaghan--Nakayama})}
We define a function $\varphi: \TTT \rightarrow \OOO$ by the following algorithm. Start with an enhanced
monotonic ribbon tiling $(T, w) \in \TTT$. 
We have the type and depth sequences
$\type(T) = (k_1, \dots, k_r)$ and $\depth(T) = (b_1 \geq \cdots \geq b_r)$.  We initialize
\begin{equation*}
\varphi(T,w) := (\varnothing \mid \cdots \mid \varnothing) 
\end{equation*} 
to be $N$ copies of the empty word. The weight sequence of this empty word array is
\[\wt(\varnothing \mid \cdots \mid \varnothing) = (N-1, N-2, \dots, 1, 0).\]
For $i = 1, 2, \dots, r$ we do the following.
\begin{enumerate}
\item  Suppose the word array $\varphi(T,w) = (u_1 \mid \cdots \mid u_N)$ contains precisely
one copy of each of the letters $w(1), w(2), \dots, w(i-1)$, and no other letters.  
Let $\wt( u_1 \mid \cdots \mid u_N) = (a_1, \dots, a_N)$ be the weight sequence of this array; the entries in this weight
sequence are distinct.
\item  Let $1 \leq j \leq n$ be such that $a_j$ is the $b_i^{th}$ largest element among $a_1, \dots, a_N$.
Prepend the letter $w(i)$ to the beginning of the word $u_j$ and change the $j^{th}$ entry of the weight sequence
from $a_j$ to $a_j + k_i$.
\end{enumerate}

We verify that $\varphi$ is  a well-defined function from $\TTT$ to $\OOO$. 
Ill-definedness of $\varphi$ could happen in one of two ways.
\begin{enumerate}
\item  The replacement $a_j \leadsto a_j + k_i$ in (2) could cause two entries in the weight sequence
\begin{equation*}
(a_1, \dots, a_{j-1}, a_j + k_i, a_{j+1}, \dots, a_N)
\end{equation*}
 to coincide. Depending on the value of $b_{i+1}$ in the next iteration, the choice of which word
 among $u_1, \dots, u_{j-1}, w(i) \cdot u_j, u_{j+1}, \dots, u_N$ to prepend with $w(i+1)$ could be ill-defined.
\item  The replacement $u_j \leadsto w(i) \cdot u_j$ could destabilize the word array $\varphi(T,w)$.
\end{enumerate}
We need to show that neither of these things actually happen.
Since $T$ is monotonic, adding a ribbon of size $k_i$ whose tail occupies row $b_i$
results in the Young diagram of a partition, so
Observation~\ref{obs:ribbon-addition} applies to show that (1) cannot happen. As for (2), since we prepend $w(i)$ 
the only new suffix that appears in the transformation of word arrays
\begin{equation*}
(u_1 \mid \cdots \mid u_{j-1} \mid  u_j \mid u_{j+1} \mid \cdots \mid u_N) \quad \leadsto \quad
(u_1 \mid \cdots \mid u_{j-1} \mid w(i) \cdot u_j \mid u_{j+1} \mid \cdots \mid u_N)
\end{equation*}
is the entire word $w(i) \cdot u_j$ in position $j$. Applying the defining condition \eqref{unstable-equality},
if this transformation destabilized $\varphi(T,w)$ there would be some position $s \neq j$ and a prefix-suffix factorization
$u_s = u'_s u''_s$ of $u_s$ such that 
\begin{equation}
\label{eqn:trouble-equation}
\mu_{u''_s} - s = \mu_{w(i) \cdot u_j} - j = \mu_{w(i)} + \mu_{u_j} - j = k_i + \mu_{u_j} - j.
\end{equation}
The prefix $u'_s$ is either empty or nonempty. 
\begin{itemize}
\item
If $u'_s$ is empty, then $u''_s = u_s$ and
Equation~\eqref{eqn:trouble-equation} reads 
\begin{equation*}
\mu_{u_s} - s = k_i + \mu_{u_j} - j. 
\end{equation*}
By Observation~\ref{obs:ribbon-addition},
 adding a ribbon of size $k_i$ whose tail occupies row $b_i$ does not result in a Young diagram.
This contradicts the monotonicity of $T$.
\item  If $u'_s$ is not empty, the last letter of $u'_s$ is $w(p)$ for some $p < i$ and corresponds to a ribbon of $T$ located
strictly to the left of the $i^{th}$ left-to-right ribbon. Since $k_i > 0$, Equation~\eqref{eqn:trouble-equation}
implies $\mu_{u''_s} - s > \mu_{u_j} - j$ and the algorithm for computing $\varphi(T,w)$ implies that $b_p < b_i$.
Since the depth sequence $\depth(T) = (b_1 \geq \cdots \geq b_r)$ is weakly decreasing, this is a contradiction.
\end{itemize}
In summary, the function $\varphi: \TTT \rightarrow \OOO$ is well-defined.

The map $\varphi: \TTT \rightarrow \OOO$ is best understood with an example.
Let $r = 6, \mu = (8,8,5,4,1,1),$ and let $T$ be the monotonic ribbon tiling on the left of Figure~\ref{fig:monotonic-and-not}
with $\depth(T) = (b_1, \dots, b_6) = (5,5,3,2,2,1)$.
There are $m(\mu)! = 2! \cdot 2! = 4$ permutations $w \in \symm_6$ such that $(T,w) \in \TTT$. Among these, we take 
$w = [2,3,1,5,4,6]$.  The stable word array $\varphi(T,w)$ is computed using the following table.

\begin{center}
\begin{tabular}{c | c | c | c | c | c}
$i$ & $w(i)$ & $\mu_{w(i)}$ & $b_i$ &  $\varphi(T,w)$ & $\wt(\varphi(T,w))$ \\ \hline
 &  &  &  & $(\varnothing \mid \varnothing \mid \varnothing \mid \varnothing \mid \varnothing \mid \varnothing)$ & $(5,4,3,2,1,0)$ \\
1 & 2 & 8 &  5 & $(\varnothing \mid \varnothing \mid \varnothing \mid \varnothing \mid 2 \mid \varnothing)$ & $(5,4,3,2,9,0)$ \\
2 & 3 & 5 &  5 & $(\varnothing \mid \varnothing \mid \varnothing \mid 3 \mid 2 \mid \varnothing)$ & $(5,4,3,7,9,0)$ \\
3 & 1 & 8 & 3 & $(1 \mid \varnothing \mid \varnothing \mid 3 \mid 2 \mid \varnothing)$ & $(13,4,3,7,9,0)$ \\
4 & 5 & 1 & 2 & $(1 \mid \varnothing \mid \varnothing \mid 3 \mid 5 \, \, 2 \mid \varnothing)$ & $(13,4,3,7,10,0)$ \\
5 & 4 & 4 & 2 & $(1 \mid \varnothing \mid \varnothing \mid 3 \mid 4 \, \, 5 \, \, 2 \mid \varnothing)$ & $(13,4,3,7,14,0)$ \\
6 & 6 & 1 & 1 & $(1 \mid \varnothing \mid \varnothing \mid 3 \mid 6 \, \,  4 \, \, 5 \, \, 2 \mid \varnothing)$ & $(13,4,3,7,15,0)$ \\
\end{tabular}
\end{center}
To move from the previous row of the table to the current row, we add the number $\mu_{w(i)}$ to the $b_i^{th}$ largest entry in $\wt(\varphi(T,w))$ and prepend $w(i)$ to the word in the corresponding position of the array $\varphi(T,w)$. The last row of the table gives $\varphi(T,w) = (1 \mid \varnothing \mid \varnothing \mid 3 \mid 6 \, \,  4 \, \, 5 \, \, 2 \mid \varnothing) \in \OOO$.

We prove that $\varphi: \TTT \to \OOO$ is a bijection by constructing its inverse. To this end, we define a function $\psi: \OOO \rightarrow \TTT$ using the following algorithm. Let $\omega  \in \OOO$. We initialize \[\psi(\omega) := (T,w) \text{ where $T$ is the empty filling and $w$ is the empty word.}\]
We also initialize two `current word arrays' as \[(u_1 \mid \cdots \mid u_N) := \omega \text{ and }
(v_1 \mid \cdots \mid v_N) := (\varnothing \mid \cdots \mid \varnothing).\]
We build up $T$ and $w$ step-by-step using the following procedure which moves letters  from the $u$-array to the $v$-array.
For $i =1, \dots, r$ we do the following.
\begin{enumerate}
\item Suppose we have determined the first $i-1$ values $w(1), w(2), \dots, w(i-1)$ of the permutation $w$. Suppose further that
\begin{itemize}
    \item the current $v$-array $(v_1 \mid \cdots \mid v_N)$ is stable and has letters $w(1), w(2), \dots, w(i-1)$, 
    \item the current $u$-array $(u_1 \mid \cdots \mid u_N)$ has the remaining letters in $[r]$ which are not contained in the $v$-array, and
    \item the current monotonic ribbon tiling $T$ satisfies $\type(T) = (\mu_{w(1)}, \dots, \mu_{w(i-1)})$ and $\depth(T) = (b_1 \geq \cdots \geq b_{i-1})$.
\end{itemize}
Let $\wt(v_1 \mid \cdots \mid v_N) = (a_1, \dots, a_N)$
be the weight sequence of the current $v$-array.
\item At least one of the words $u_1, \dots, u_N$ is not empty. Among the positions $1 \leq j \leq N$ for which $u_j$ is not 
empty, choose $j$ so that $a_j$ is minimal.  Suppose $a_j$ is the $b_i^{th}$ largest number 
in the list  $(a_1, \dots, a_N)$.
\item
If $\ell$ is the last letter of the word $u_j$, set $w(i) := \ell$, erase $\ell$ from the end of $u_j$ and prepend $\ell$ to the start
of $v_j$. Add a ribbon of size $\mu_{\ell}$ to $T$ whose tail occupies row $b_i$.
\end{enumerate}

We verify that this algorithm gives a well-defined function $\psi: \OOO \rightarrow \TTT$.
This follows from two observations.
\begin{itemize}
\item
In Step 2, we need to verify that the value of $j$ is uniquely determined. In fact, we have the stronger property that the entries of $\wt(v_1 \mid \cdots \mid v_N) = (a_1, \dots, a_N)$ are distinct.  By construction, the words $v_1, \dots, v_N$ in the $v$-array $(v_1 \mid \cdots \mid v_N)$ 
are always suffices of those in the stable array $\omega$, so $(v_1 \mid \cdots \mid v_N)$ is stable by Lemma~\ref{lem:hereditary-stability}. By
Lemma~\ref{lem:collision-instability} 
$\wt(v_1 \mid \cdots \mid v_N) = (a_1, \dots, a_N)$  has distinct entries. 
\item  In Step 3, we need to show that we can add a ribbon $\xi$ to $T$ of size $|\xi| = \mu_{\ell}$ whose tail
occupies row $b_i$ such that $T \sqcup \xi$ remains a monotonic ribbon tiling.
Lemma~\ref{lem:hereditary-stability}, Lemma~\ref{lem:collision-instability}, and Observation~\ref{obs:ribbon-addition}
guarantee that a ribbon $\xi$ of size $\mu_{\ell}$ whose tail occupies row $b_i$ can be added to $T$ 
such that $T \sqcup \xi$ is the Young diagram of a partition. Step 2 forces $b_1 \geq \cdots \geq b_{i-1} \geq b_i$, so that the column occupied by the tail of $\xi$ is strictly to the right of the columns occupied by the tails of the ribbons in $T$.
\end{itemize}
By the above two bullet points, the function $\psi: \OOO \rightarrow \TTT$ is well-defined.

As with $\varphi$, an example should help clarify the definition of $\psi: \OOO \rightarrow \TTT$. 
Suppose we are given $\omega = (1 \mid \varnothing \mid \varnothing \mid 3 \mid 6 \, \,  4 \, \, 5 \, \, 2 \mid \varnothing) \in \OOO$
where $\mu = (8,8,5,4,1,1)$ so that $r = 6$.
\begin{center}
\begin{tabular}{ c | c | c | c | c | c | c}
$i$ & $w(i)$ & $b_i$ & $\mu_{w(i)}$ & $(u_1 \mid \cdots \mid u_N)$  & $(v_1 \mid \cdots \mid v_N)$ & 
$\wt(v_1 \mid \cdots \mid v_N)$ \\ \hline
  &  &  &  & $ (1 \mid \varnothing \mid \varnothing \mid 3 \mid 6 \, \,  4 \, \, 5 \, \, 2 \mid \varnothing)$ &
  $(\varnothing \mid \varnothing \mid \varnothing \mid \varnothing \mid \varnothing \mid \varnothing)$  & 
  $(5,4,3,2,1,0)$ \\
 1 & 2 & 5 & 8 & $ (1 \mid \varnothing \mid \varnothing \mid 3 \mid 6 \, \,  4 \, \, 5  \mid \varnothing)$ &
  $(\varnothing \mid \varnothing \mid \varnothing \mid \varnothing \mid 2 \mid \varnothing)$  & 
  $(5,4,3,2,9,0)$ \\
   2 & 3 & 5 & 5 & $ (1 \mid \varnothing \mid \varnothing \mid \varnothing \mid 6 \, \,  4 \, \, 5  \mid \varnothing)$ &
  $(\varnothing \mid \varnothing \mid \varnothing \mid 3 \mid 2 \mid \varnothing)$  & 
  $(5,4,3,7,9,0)$ \\
   3 & 1 & 3 & 8 & $ (\varnothing \mid \varnothing \mid \varnothing \mid \varnothing \mid 6 \, \,  4 \, \, 5  \mid \varnothing)$ &
  $(1 \mid \varnothing \mid \varnothing \mid 3 \mid 2 \mid \varnothing)$  & 
  $(13,4,3,7,9,0)$ \\
   4 & 5 & 2 & 1 & $ (\varnothing \mid \varnothing \mid \varnothing \mid \varnothing \mid 6 \, \,  4  \mid \varnothing)$ &
  $(1 \mid \varnothing \mid \varnothing \mid 3 \mid 5 \, \,  2 \mid \varnothing)$  & 
  $(13,4,3,7,10,0)$ \\
   5 & 4 & 2 & 4 & $ (\varnothing \mid \varnothing \mid \varnothing \mid \varnothing \mid 6   \mid \varnothing)$ &
  $(1 \mid \varnothing \mid \varnothing \mid 3 \mid 4 \, \,  5 \, \,  2 \mid \varnothing)$  & 
  $(13,4,3,7,14,0)$ \\
    6  & 6 & 1 & 1 & $ (\varnothing \mid \varnothing \mid \varnothing \mid \varnothing \mid \varnothing   \mid \varnothing)$ &
  $(1 \mid \varnothing \mid \varnothing \mid 3 \mid 6 \, \,  4 \, \,  5 \, \,  2 \mid \varnothing)$  & 
  $(13,4,3,7,15,0)$ \\
\end{tabular}
\end{center}
We conclude that $\psi(\omega) = (T,w)$ where $T$ is the unique monotonic $\mu$-ribbon tiling with ribbons of sizes 
$8,5,8,1,4,1$ being added from left to right with tails occupying rows $5,5,3,2,2,1$ (respectively) and 
$w = [2,3,1,5,4,6] \in \symm_6$.
This is again the ribbon tableau from Figure~\ref{fig:monotonic-and-not}.

Our next task is to show that $\varphi: \TTT \to \OOO$ and $\psi: \OOO \to \TTT$ are mutually inverse bijections. We start by establishing $\psi \circ \varphi = \mathrm{id}_\TTT$. To do this, we need some machinery.

Let $\omega \in \OOO$ be a stable word array. A permutation $s = [s(1), \dots, s(r)] \in \symm_r$ an {\em $\omega$-suffix permutation} if 
\begin{quote}
for all $0 \leq  i \leq r$ the word array
$\omega^{(s,i)} := (u_1^{(i)} \mid \cdots \mid u_N^{(i)} )$ obtained by restricting $\omega$ to the letters $s(1), s(2), \dots, s(i)$
has the property that each $u_j^{(i)}$ is a suffix of $u_j$. 
\end{quote}
If $\ell(u_i)$ is the length of the word $u_i$, the number of $\omega$-suffix permutations is the multinomial coefficient ${\ell(u_1) + \cdots + \ell(u_N) \choose \ell(u_1), \, \dots, \, \ell(u_N)}$. The $\omega$-suffix permutations $s \in \symm_r$ encode chains 
\[(\varnothing \mid \cdots \mid \varnothing) =  \omega^{(s,0)}, \, \omega^{(s,1)}, \dots, \omega^{(s,r)} = \omega\]
of word arrays in which $\omega^{(s,i)}$ is obtained from $\omega^{(s,i-1)}$ by prepending the letter $s(i)$ to the $s(i)^{th}$ word $u_{s(i)}^{(s,i-1)}$ in the array $\omega^{(s,i-1)}$. In our running example  $\omega = (1 \mid \varnothing \mid \varnothing \mid 3 \mid 6 \, 4 \, 5 \, 2 \mid \varnothing)$, 
 a permutation $s \in \symm_6$ is $\omega$-suffix if and only if $s^{-1}(2) < s^{-1}(5) < s^{-1}(4) < s^{-1}(6)$.
Two such permutations are $[2,3,5,4,6,1]$ and $[1,2,5,3,4,6]$.

Let $\omega \in \OOO$ and let $s \in \symm_r$ be an $\omega$-suffix permutation. We associate a `depth sequence' to $s$ as follows. By Lemma~\ref{lem:hereditary-stability} and the stability of $\omega$ the word array $\omega^{(s,i)}$ is stable for all $0 \leq i \leq r$.  By Lemma~\ref{lem:collision-instability}, the weight sequence $\wt(\omega^{(s,i)}) = (a_1^{(s,i)}, \dots, a_N^{(s,i)})$ has distinct entries for $0 \leq i \leq r$. For $1 \leq i \leq r$, the weight sequence
$\wt(\omega^{(s,i)}) = (a_1^{(s,i)}, \dots, a_N^{(s,i)})$ is obtained from the previous weight sequence
$\wt(\omega^{(s,i-1)}) = (a_1^{(s,i-1)}, \dots, a_N^{(s,i-1)})$ by adding $\mu_{s(i)}$ to an entry $a_{j_i}^{(s,i-1)}$ for some unique $1 \leq j_i \leq N$. If $a_{j_i}^{(s,i-1)}$ is the $b_i^{th}$ largest number among the $N$ distinct numbers $a_1^{(s,i-1)}, \dots, a_N^{(s,i-1)}$, we define the {\em depth sequence} of $s$ by $\depth(s) := (b_1, \dots, b_r)$. The $\omega$-suffix permutation $s$ is {\em monotonic} if its depth sequence $\depth(s) = (b_1 \geq \cdots \geq b_r)$ is weakly decreasing. As one might expect, monotonic suffix permutations and their depth sequences are related to monotonic tilings. 

\vspace{0.1in}

{\bf Claim:}  {\em Let $\omega \in \OOO$ be a stable word array. There exists a unique monotonic $\omega$-suffix permutation $w \in \symm_r$.  We have $\psi(\omega) = (T,w)$ and $\depth(T) = \depth(w)$.}

\vspace{0.1in}

Indeed, the unique monotonic $\omega$-suffix permutation $w \in \symm_r$ is the suffix permutation such that the value which differs between the weight sequences $\wt(\omega^{(w,i-1)}) = (a_1^{(w,i-1)}, \dots, a_N^{(w,i-1)})$ and $\wt(\omega^{(w,i)}) = (a_1^{(w,i)}, \dots, a_N^{(w,i)})$ is as small as possible for $i = 1, 2, \dots ,r$. This is precisely the algorithm defining the map $\psi: \OOO \to \TTT$. We use this Claim to prove that $\varphi$ and $\psi$ are mutually inverse bijections as follows.

Let $(T,w) \in \TTT$ be an enhanced monotonic ribbon tiling. Then $\varphi(T,w) = \omega \in \OOO$ is a stable word array. We have $\psi(\omega) = (T',w') \in \TTT$. Our goal is to prove $(T,w) = (T',w')$. Since $T$ is a mononic tiling, the algorithm defining $\varphi(T,w) = \omega$ forces $w$ to be the unique monotonic $\omega$-suffix permutation. The claim forces $w' = w$. It follows that $T'$ is a monotonic ribbon tiling with $\type(T') = \type(T)$ and $\depth(T') = \depth(T)$. Observation~\ref{obs:monotonic-characterization} forces $T = T'$.  We conclude that $(T,w) = (T',w')$ and therefore   $\psi \circ \varphi = \mathrm{id}_\TTT$.

We show that the map $\varphi: \TTT \to \OOO$ is surjective. Let $w \in \symm_r$ be the unique monotonic $\omega$-suffix permutation. We construct a monotonic ribbon tiling $T^{(w)}$ from $w$ as follows. For $0 \leq i \leq r$, let $\lambda^{(i)} = (\lambda^{(i)}_1 \geq \cdots \geq \lambda^{(i)}_N)$ be the partition such that
\begin{quote}
    $(\lambda^{(i)}_1 + N -1 > \lambda^{(i)}_2 + N - 2 >  \cdots > \lambda^{(i)}_N)$ is the decreasing rearrangement of the sequence $\wt(\omega^{(w,i)}) = (a_1^{(w,i)}, \dots, a_N^{(w,i)})$.
\end{quote}
Lemmas~\ref{lem:collision-instability} and \ref{lem:hereditary-stability} guarantee that the entries of $\wt(\omega^{(w,i)}) = (a_1^{(w,i)}, \dots, a_N^{(w,i)})$ sort to a partition with distinct parts, so that the entries of $\lambda^{(i)}$ are weakly decreasing and nonnegative for all $0 \leq i \leq r$. Since the sequence $\wt(\omega^{(w,i)})$ is obtained from $\wt(\omega^{(w,i-1)})$ by increasing a single value, we have $\lambda^{(i-1)} \subset \lambda^{(i)}$ and we see that $\xi^{(i)} := \lambda^{(i)}/\lambda^{(i-1)}$ is a ribbon for all $1 \leq i \leq r$. Let $T^{(w)}$ be the tiling
\[ T^{(w)} := \xi^{(1)} \sqcup \xi^{(2)} \sqcup \cdots \sqcup \xi^{(r)}\]
of the Young diagram $\lambda^{(r)}$ the ribbons $\xi^{(1)}, \dots, \xi^{(r)}$. Let $b_i$ be the row occupied by the tail of $\xi^{(i)}$ and let $c_i$ be the column occupied by the tail of $\xi^{(i)}$.  Since $w$ is monotonic we have $b_1 \geq \cdots \geq b_r$.  Since $\xi^{(1)} \sqcup \cdots \sqcup \xi^{(i)} = \lambda^{(i)}$ is a partition for all $1 \leq i \leq r$, the weak inequalities $b_1 \geq \cdots \geq b_r$ force the strict inequalities $c_1 < \cdots < c_r$. Therefore $T^{(w)}$ satisfies Definition~\ref{def:monotonic-definition} and is a monotonic ribbon tiling and $(T^{(w)},w) \in \TTT$ is an enhanced monotonic ribbon tiling. We have $\varphi(T^{(w)},w) = \omega$ by the construction of the map $\varphi: \TTT \to \OOO$. This proves that $\varphi: \TTT \to \OOO$ is surjective. Since $\TTT$ and $\OOO$ are finite sets and $\psi \circ \varphi = \mathrm{id}_\TTT$, we conclude that $\psi: \OOO \to \TTT$ and $\varphi: \TTT \to \OOO$ are mutually inverse bijections.

Recall that the desired Theorem~\ref{thm:path-Murnaghan--Nakayama} will be proven if we can establish
Equation~\eqref{goal-formulation}, which we restate here:
\begin{equation*}
\sum_{\omega \, \in \, \OOO} \varepsilon \cdot x^{\wt(\omega)} 
= \sum_{(T,w) \, \in \, \TTT} \sign(T) \cdot a_{\shape(T)}(x_1, \dots, x_N).
\end{equation*}
We have a bijection $\varphi: \TTT \to \OOO$ between the indexing sets of these sums.
 Observation~\ref{obs:ribbon-addition} shows that if $\varphi(T,w) = \omega$ we have
 \begin{equation}
 \label{term-equality}
 \varepsilon \cdot x^{\wt(\omega)} = \sign(T) \cdot a_{\shape(T)}(x_1, \dots, x_N).
 \end{equation}
 This proves Equation~\eqref{goal-formulation}.
 Dividing both sides of Equation~\eqref{goal-formulation} by $\varepsilon \cdot x^\delta$
 and taking the limit as $N \rightarrow \infty$ completes the proof of 
 Theorem~\ref{thm:path-Murnaghan--Nakayama}.
\end{proof}

\section{Characters of partial permutations}

Our motivation for this work is to expand the symmetric function $A_{n,I,J}$ associated to a partial permutation into the Schur basis.
In this section, we explain how to use the path Murnaghan--Nakayama formula to perform this computation using the, then use $A_{n,I,J}$ to compute the character evaluation $\chi_\lambda([I,J])$.
As a consequence, we show $A_{n,I,J}$ and exhibits a stability phenomenon that extends to $\chi_\lambda([I,J])$.

\subsection{Characters of partial permutations}
Let $\lambda, \mu \vdash n$.  We introduce the monotonic tiling enumerator
\begin{equation}
\vec{\chi}^{ \, \lambda}_{ \, \mu} := \sum_T \sign(T)
\end{equation}
where the sum is over monotonic ribbon tilings $T$ of shape $\lambda$ with ribbon sizes $\mu_1, \dots, \mu_r$.
With this notation, Theorem~\ref{thm:path-Murnaghan--Nakayama} reads
\begin{equation}
\label{eqn:pmn}
\vec{p}_{\mu} = m(\mu)! \cdot \sum_{\lambda \, \vdash \, n} \vec{\chi}^{ \, \lambda}_{ \, \mu}  \cdot s_{\lambda}
\end{equation}
in parallel with the classical expansion $p_{\mu} = \sum_{\lambda \vdash n} \chi^{\lambda}_{\mu} \cdot s_{\lambda}.$ Our combinatorial formula for the Schur expansion of $A_{n,I,J}$ reads as follows.

\begin{corollary}
\label{cor:atomic-schur-expansion}
Let $(I,J) \in \symm_{n,k}$ be a partial permutation with path type
$\mu \vdash a$ and cycle type
$\nu$.  Then
\begin{equation}
\label{eqn:atomic-schur-eqn}
A_{n,I,J} = m(\mu)! \cdot \sum_{\lambda \, \vdash \, n}  \left(
\sum_{\rho \, \vdash \, a}  \vec{\chi}_{ \, \mu}^{ \, \rho} \cdot \chi_{\nu}^{\lambda/\rho}
 \right) \cdot s_{\lambda}.
\end{equation}
\end{corollary}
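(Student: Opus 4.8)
The plan is to chain three facts already in hand --- the path--cycle factorization of $A_{n,I,J}$, the path Murnaghan--Nakayama rule, and the skew form of the classical Murnaghan--Nakayama rule --- and then interchange the order of summation.

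First, by Observation~\ref{obs:path-cycle-type} we have $|\mu| + |\nu| = n$, so that $a = |\mu|$ and $|\nu| = n-a$. Proposition~\ref{prop:path-cycle-factorization} gives $A_{n,I,J} = \vec{p}_\mu \cdot p_\nu$ as an identity in $\Lambda_n$, with $\vec{p}_\mu \in \Lambda_a$ and $p_\nu \in \Lambda_{n-a}$. Applying the path Murnaghan--Nakayama rule in the form \eqref{eqn:pmn} and using $\mu \vdash a$ yields $\vec{p}_\mu = m(\mu)! \cdot \sum_{\rho \, \vdash \, a} \vec{\chi}^{\,\rho}_{\,\mu} \cdot s_\rho$, hence $A_{n,I,J} = m(\mu)! \cdot \sum_{\rho \, \vdash \, a} \vec{\chi}^{\,\rho}_{\,\mu} \cdot (s_\rho \cdot p_\nu)$.

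The remaining step is to expand each product $s_\rho \cdot p_\nu$ in the Schur basis. Iterating Theorem~\ref{thm:classical-mn} --- multiply $s_\rho$ successively by the factors $p_{\nu_1}, p_{\nu_2}, \dots$ of $p_\nu$, recording at each stage which ribbon was added --- yields $s_\rho \cdot p_\nu = \sum_{\lambda \, \vdash \, n} \chi^{\lambda/\rho}_\nu \cdot s_\lambda$, where $\chi^{\lambda/\rho}_\nu = \sum_T \sign(T)$ is the signed count of standard ribbon tableaux of skew shape $\lambda/\rho$ and type $\nu$ defined at the end of Section~\ref{sec:Background}. This is the standard derivation of the classical Murnaghan--Nakayama rule run starting from the partition $\rho$ in place of $\varnothing$, so that the tableaux now occupy the skew shape $\lambda/\rho$; the degree bookkeeping $|\rho| = a$ and $|\nu| = n-a$ forces $|\lambda| = n$, so $\lambda$ ranges over partitions of $n$. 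Substituting and interchanging the two finite sums gives
\[
A_{n,I,J} = m(\mu)! \cdot \sum_{\rho \, \vdash \, a} \vec{\chi}^{\,\rho}_{\,\mu} \sum_{\lambda \, \vdash \, n} \chi^{\lambda/\rho}_\nu \cdot s_\lambda = m(\mu)! \cdot \sum_{\lambda \, \vdash \, n} \left( \sum_{\rho \, \vdash \, a} \vec{\chi}^{\,\rho}_{\,\mu} \cdot \chi^{\lambda/\rho}_\nu \right) \cdot s_\lambda,
\]
which is exactly \eqref{eqn:atomic-schur-eqn}.

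Since every ingredient is already established, there is no genuine obstacle; the only point demanding a little care is the skew version $s_\rho \cdot p_\nu = \sum_{\lambda} \chi^{\lambda/\rho}_\nu \cdot s_\lambda$ of the classical Murnaghan--Nakayama rule, which was only recorded in the excerpt for straight shapes and should be stated (or cited) explicitly before it is invoked.
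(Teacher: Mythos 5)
Your proposal is correct and follows exactly the same three-step chain as the paper's proof: factor $A_{n,I,J} = \vec{p}_\mu \cdot p_\nu$ via Proposition~\ref{prop:path-cycle-factorization}, expand $\vec{p}_\mu$ by the path Murnaghan--Nakayama rule in the form \eqref{eqn:pmn}, and then expand $s_\rho \cdot p_\nu$ by iterating the classical rule (Theorem~\ref{thm:classical-mn}), which is precisely how the skew coefficients $\chi^{\lambda/\rho}_\nu$ defined at the end of Section~\ref{sec:Background} arise. Your closing remark about making the skew form of the classical rule explicit is a reasonable point of exposition, but it is not a gap in the argument.
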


\begin{proof}
We calculate
\begin{align}
A_{n,I,J} &= \vec{p}_{\mu} \cdot p_{\nu} \\
&= m(\mu)! \cdot \left( \sum_{\rho \, \vdash \, a} \vec{\chi}_{ \, \mu}^{ \, \rho} \cdot s_{\rho} \right) \cdot p_{\nu} \\
&= m(\mu)! \cdot \sum_{\lambda \, \vdash \, n}  \left(
\sum_{\rho \, \vdash \, a}  \vec{\chi}_{ \, \mu}^{ \, \rho} \cdot \chi_{\nu}^{\lambda/\rho}
 \right) \cdot s_{\lambda}
\end{align}
where the first equality uses Proposition~\ref{prop:path-cycle-factorization}, the second uses 
 Equation~\eqref{eqn:pmn}, 
and the third uses the classical Murnaghan--Nakayama rule (Theorem~\ref{thm:classical-mn}).
\end{proof}

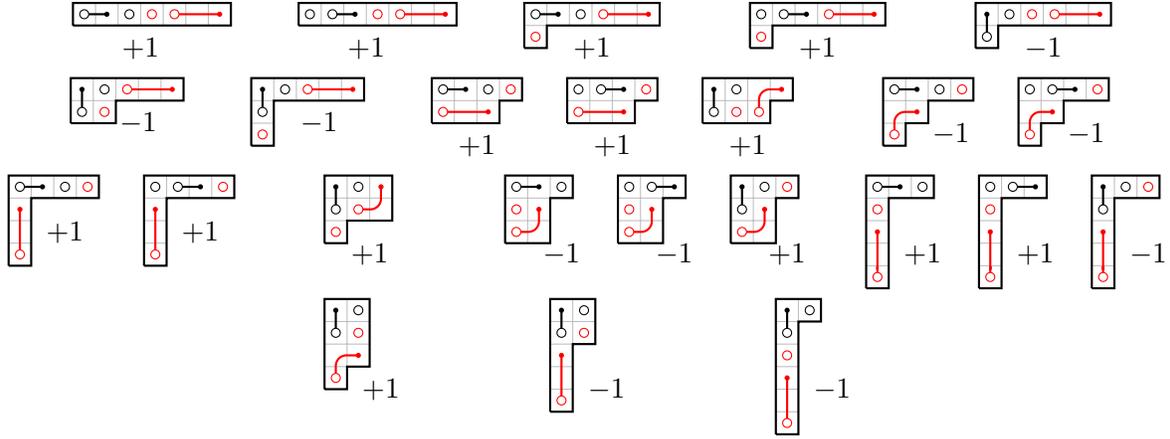
\begin{figure}
\begin{center}
\begin{tikzpicture}[scale = 0.3]

\begin{scope}
   \clip (0,0) -| (7,1) -| (0,0);
    \draw [color=black!25] (0,0) grid (7,1);
\end{scope}

\draw [thick] (0,0) -| (7,1) -| (0,0);

\draw [thick, rounded corners]  (0.5,0.5) -- (1.5,0.5);
\draw [color=black,fill=black,thick] (1.5,0.5) circle (.4ex);
\node [draw, circle, fill = white, inner sep = 1.2pt] at (0.5,0.5) { };    
\node [draw, circle, fill = white, inner sep = 1.2pt] at (2.5,0.5) { };    
\draw [color=red, thick, rounded corners]  (4.5,0.5) -- (6.5,0.5);   
\draw [color=red,fill=black,thick] (6.5,0.5) circle (.4ex);
\node [color=red,draw, circle, fill = white, inner sep = 1.2pt] at (4.5,0.5) { }; 
\node [color=red,draw, circle, fill = white, inner sep = 1.2pt] at (3.5,0.5) { };   
    
\node at (3,-1) {$+1$};

\begin{scope}
   \clip (10,0) -| (17,1) -| (10,0);
    \draw [color=black!25] (10,0) grid (17,1);
\end{scope}

\draw [thick] (10,0) -| (17,1) -| (10,0);

\draw [thick, rounded corners]  (11.5,0.5) -- (12.5,0.5);
\draw [color=black,fill=black,thick] (12.5,0.5) circle (.4ex);
\node [draw, circle, fill = white, inner sep = 1.2pt] at (11.5,0.5) { };    
\node [draw, circle, fill = white, inner sep = 1.2pt] at (10.5,0.5) { };    
\draw [color=red, thick, rounded corners]  (14.5,0.5) -- (16.5,0.5);   
\draw [color=red,fill=black,thick] (16.5,0.5) circle (.4ex);
\node [color=red,draw, circle, fill = white, inner sep = 1.2pt] at (14.5,0.5) { }; 
\node [color=red,draw, circle, fill = white, inner sep = 1.2pt] at (13.5,0.5) { };   
    
\node at (13,-1) {$+1$};

\begin{scope}
   \clip (20,-1) -| (21,0) -| (26,1) -| (20,-1);
    \draw [color=black!25] (20,-1) grid (26,1);
\end{scope}

\draw [thick] (20,-1) -| (21,0) -| (26,1) -| (20,-1);

\draw [thick, rounded corners]  (20.5,0.5) -- (21.5,0.5);
\draw [color=black,fill=black,thick] (21.5,0.5) circle (.4ex);
\node [draw, circle, fill = white, inner sep = 1.2pt] at (20.5,0.5) { };    
\node [draw, circle, fill = white, inner sep = 1.2pt] at (22.5,0.5) { };    
\draw [color=red, thick, rounded corners]  (23.5,0.5) -- (25.5,0.5);   
\draw [color=red,fill=black,thick] (25.5,0.5) circle (.4ex);
\node [color=red,draw, circle, fill = white, inner sep = 1.2pt] at (23.5,0.5) { }; 
\node [color=red,draw, circle, fill = white, inner sep = 1.2pt] at (20.5,-0.5) { };   
    
\node at (23,-1) {$+1$};

\begin{scope}
   \clip (30,-1) -| (31,0) -| (36,1) -| (30,-1);
    \draw [color=black!25] (30,-1) grid (36,1);
\end{scope}

\draw [thick] (30,-1) -| (31,0) -| (36,1) -| (30,-1);

\draw [thick, rounded corners]  (31.5,0.5) -- (32.5,0.5);
\draw [color=black,fill=black,thick] (32.5,0.5) circle (.4ex);
\node [draw, circle, fill = white, inner sep = 1.2pt] at (31.5,0.5) { };    
\node [draw, circle, fill = white, inner sep = 1.2pt] at (30.5,0.5) { };    
\draw [color=red, thick, rounded corners]  (33.5,0.5) -- (35.5,0.5);   
\draw [color=red,fill=black,thick] (35.5,0.5) circle (.4ex);
\node [color=red,draw, circle, fill = white, inner sep = 1.2pt] at (33.5,0.5) { }; 
\node [color=red,draw, circle, fill = white, inner sep = 1.2pt] at (30.5,-0.5) { };   
    
\node at (33,-1) {$+1$};

\begin{scope}
   \clip (40,-1) -| (41,0) -| (46,1) -| (40,-1);
    \draw [color=black!25] (40,-1) grid (46,1);
\end{scope}

\draw [thick] (40,-1) -| (41,0) -| (46,1) -| (40,-1);

\draw [thick, rounded corners]  (40.5,-0.5) -- (40.5,0.5);
\draw [color=black,fill=black,thick] (40.5,0.5) circle (.4ex);
\node [draw, circle, fill = white, inner sep = 1.2pt] at (40.5,-0.5) { };    
\node [draw, circle, fill = white, inner sep = 1.2pt] at (41.5,0.5) { };    
\draw [color=red, thick, rounded corners]  (43.5,0.5) -- (45.5,0.5);   
\draw [color=red,fill=black,thick] (45.5,0.5) circle (.4ex);
\node [color=red,draw, circle, fill = white, inner sep = 1.2pt] at (43.5,0.5) { }; 
\node [color=red,draw, circle, fill = white, inner sep = 1.2pt] at (42.5,0.5) { };   
    
\node at (43,-1) {$-1$};

\end{tikzpicture}
\end{center}

\begin{center}
\begin{tikzpicture}[scale = 0.3]

\begin{scope}
   \clip (0,0) -| (2,1) -| (5,2) -| (0,0);
    \draw [color=black!25] (0,0) grid (5,2);
\end{scope}

\draw [thick] (0,0) -| (2,1) -| (5,2) -| (0,0);

\draw [thick, rounded corners]  (0.5,0.5) -- (0.5,1.5);
\draw [color=black,fill=black,thick] (0.5,1.5) circle (.4ex);
\node [draw, circle, fill = white, inner sep = 1.2pt] at (0.5,0.5) { };    
\node [draw, circle, fill = white, inner sep = 1.2pt] at (1.5,1.5) { };    
\draw [color=red, thick, rounded corners]  (2.5,1.5) -- (4.5,1.5);   
\draw [color=red,fill=black,thick] (4.5,1.5) circle (.4ex);
\node [color=red,draw, circle, fill = white, inner sep = 1.2pt] at (2.5,1.5) { }; 
\node [color=red,draw, circle, fill = white, inner sep = 1.2pt] at (1.5,0.5) { };   
    
\node at (3,0) {$-1$};

\begin{scope}
   \clip (8,-1) -| (9,1) -| (13,2) -| (8,-1);
    \draw [color=black!25] (8,-1) grid (13,2);
\end{scope}

\draw [thick] (8,-1) -| (9,1) -| (13,2) -| (8,-1);

\draw [thick, rounded corners]  (8.5,0.5) -- (8.5,1.5);
\draw [color=black,fill=black,thick] (8.5,1.5) circle (.4ex);
\node [draw, circle, fill = white, inner sep = 1.2pt] at (8.5,0.5) { };    
\node [draw, circle, fill = white, inner sep = 1.2pt] at (9.5,1.5) { };    
\draw [color=red, thick, rounded corners]  (10.5,1.5) -- (12.5,1.5);   
\draw [color=red,fill=black,thick] (12.5,1.5) circle (.4ex);
\node [color=red,draw, circle, fill = white, inner sep = 1.2pt] at (10.5,1.5) { }; 
\node [color=red,draw, circle, fill = white, inner sep = 1.2pt] at (8.5,-0.5) { };   
    
\node at (11,0) {$-1$};

\begin{scope}
   \clip (16,0) -| (19,1) -| (20,2) -| (16,0);
    \draw [color=black!25] (16,0) grid (20,2);
\end{scope}

\draw [thick] (16,0) -| (19,1) -| (20,2) -| (16,0);

\draw [thick, rounded corners]  (16.5,1.5) -- (17.5,1.5);
\draw [color=black,fill=black,thick] (17.5,1.5) circle (.4ex);
\node [draw, circle, fill = white, inner sep = 1.2pt] at (16.5,1.5) { };    
\node [draw, circle, fill = white, inner sep = 1.2pt] at (18.5,1.5) { };    
\draw [color=red, thick, rounded corners]  (16.5,0.5) -- (18.5,0.5);   
\draw [color=red,fill=black,thick] (18.5,0.5) circle (.4ex);
\node [color=red,draw, circle, fill = white, inner sep = 1.2pt] at (16.5,0.5) { }; 
\node [color=red,draw, circle, fill = white, inner sep = 1.2pt] at (19.5,1.5) { };   
    
\node at (18,-1) {$+1$};

\begin{scope}
   \clip (22,0) -| (25,1) -| (26,2) -| (22,0);
    \draw [color=black!25] (22,0) grid (26,2);
\end{scope}

\draw [thick]  (22,0) -| (25,1) -| (26,2) -| (22,0);

\draw [thick, rounded corners]  (23.5,1.5) -- (24.5,1.5);
\draw [color=black,fill=black,thick] (24.5,1.5) circle (.4ex);
\node [draw, circle, fill = white, inner sep = 1.2pt] at (23.5,1.5) { };    
\node [draw, circle, fill = white, inner sep = 1.2pt] at (22.5,1.5) { };    
\draw [color=red, thick, rounded corners]  (22.5,0.5) -- (24.5,0.5);   
\draw [color=red,fill=black,thick] (24.5,0.5) circle (.4ex);
\node [color=red,draw, circle, fill = white, inner sep = 1.2pt] at (22.5,0.5) { }; 
\node [color=red,draw, circle, fill = white, inner sep = 1.2pt] at (25.5,1.5) { };   
    
\node at (24,-1) {$+1$};

\begin{scope}
   \clip (28,0) -| (31,1) -| (32,2) -| (28,0);
    \draw [color=black!25] (28,0) grid (32,2);
\end{scope}

\draw [thick]  (28,0) -| (31,1) -| (32,2) -| (28,0);

\draw [thick, rounded corners]  (28.5,0.5) -- (28.5,1.5);
\draw [color=black,fill=black,thick] (28.5,1.5) circle (.4ex);
\node [draw, circle, fill = white, inner sep = 1.2pt] at (28.5,0.5) { };    
\node [draw, circle, fill = white, inner sep = 1.2pt] at (29.5,1.5) { };    
\draw [color=red, thick, rounded corners]  (30.5,0.5) |- (31.5,1.5);   
\draw [color=red,fill=black,thick] (31.5,1.5) circle (.4ex);
\node [color=red,draw, circle, fill = white, inner sep = 1.2pt] at (30.5,0.5) { }; 
\node [color=red,draw, circle, fill = white, inner sep = 1.2pt] at (29.5,0.5) { };   
    
\node at (30,-1) {$+1$};

\begin{scope}
   \clip (36,-1) -| (37,0) -| (38,1) -| (40,2) -| (36,-1);
    \draw [color=black!25] (36,-1) grid (40,2);
\end{scope}

\draw [thick]  (36,-1) -| (37,0) -| (38,1) -| (40,2) -| (36,-1);

\node at (39,-0.5) {$-1$};

\draw [thick, rounded corners]  (36.5,1.5) -- (37.5,1.5);
\draw [color=black,fill=black,thick] (37.5,1.5) circle (.4ex);
\node [draw, circle, fill = white, inner sep = 1.2pt] at (36.5,1.5) { };    
\node [draw, circle, fill = white, inner sep = 1.2pt] at (38.5,1.5) { };  
\draw [color=red, thick, rounded corners]  (36.5,-0.5) |- (37.5,0.5);   
\draw [color=red,fill=black,thick] (37.5,0.5) circle (.4ex);
\node [color=red,draw, circle, fill = white, inner sep = 1.2pt] at (36.5,-0.5) { }; 
\node [color=red,draw, circle, fill = white, inner sep = 1.2pt] at (39.5,1.5) { };

\begin{scope}
   \clip (42,-1) -| (43,0) -| (44,1) -| (46,2) -| (42,-1);
    \draw [color=black!25] (42,-1) grid (46,2);
\end{scope}

\draw [thick] (42,-1) -| (43,0) -| (44,1) -| (46,2) -| (42,-1);

\draw [thick, rounded corners]  (43.5,1.5) -- (44.5,1.5);
\draw [color=black,fill=black,thick] (44.5,1.5) circle (.4ex);
\node [draw, circle, fill = white, inner sep = 1.2pt] at (43.5,1.5) { };    
\node [draw, circle, fill = white, inner sep = 1.2pt] at (42.5,1.5) { };  
\draw [color=red, thick, rounded corners]  (42.5,-0.5) |- (43.5,0.5);   
\draw [color=red,fill=black,thick] (43.5,0.5) circle (.4ex);
\node [color=red,draw, circle, fill = white, inner sep = 1.2pt] at (42.5,-0.5) { }; 
\node [color=red,draw, circle, fill = white, inner sep = 1.2pt] at (45.5,1.5) { };

\node at (45,-0.5) {$-1$};

\end{tikzpicture}
\end{center}

\begin{center}
\begin{tikzpicture}[scale = 0.3]

\begin{scope}
   \clip (0,0) -| (1,3) -| (4,4) -| (0,0);
    \draw [color=black!25] (0,0) grid (4,4);
\end{scope}

\draw [thick] (0,0) -| (1,3) -| (4,4) -| (0,0);

\draw [thick, rounded corners]  (0.5,3.5) -- (1.5,3.5);
\draw [color=black,fill=black,thick] (1.5,3.5) circle (.4ex);
\node [draw, circle, fill = white, inner sep = 1.2pt] at (0.5,3.5) { };    
\node [draw, circle, fill = white, inner sep = 1.2pt] at (2.5,3.5) { };    
\draw [color=red, thick, rounded corners]  (0.5,0.5) -- (0.5,2.5);   
\draw [color=red,fill=black,thick] (0.5,2.5) circle (.4ex);
\node [color=red,draw, circle, fill = white, inner sep = 1.2pt] at (0.5,0.5) { }; 
\node [color=red,draw, circle, fill = white, inner sep = 1.2pt] at (3.5,3.5) { };   
    
\node at (2.5,1.5) {$+1$};

\begin{scope}
   \clip (6,0) -| (7,3) -| (10,4) -| (6,0);
    \draw [color=black!25] (6,0) grid (10,4);
\end{scope}

\draw [thick] (6,0) -| (7,3) -| (10,4) -| (6,0);

\draw [thick, rounded corners]  (7.5,3.5) -- (8.5,3.5);
\draw [color=black,fill=black,thick] (8.5,3.5) circle (.4ex);
\node [draw, circle, fill = white, inner sep = 1.2pt] at (7.5,3.5) { };    
\node [draw, circle, fill = white, inner sep = 1.2pt] at (6.5,3.5) { };    
\draw [color=red, thick, rounded corners]  (6.5,0.5) -- (6.5,2.5);   
\draw [color=red,fill=black,thick] (6.5,2.5) circle (.4ex);
\node [color=red,draw, circle, fill = white, inner sep = 1.2pt] at (6.5,0.5) { }; 
\node [color=red,draw, circle, fill = white, inner sep = 1.2pt] at (9.5,3.5) { };   
    
\node at (8.5,1.5) {$+1$};

\begin{scope}
   \clip (14,1) -| (15,2) -| (17,4) -| (14,1);
    \draw [color=black!25] (14,1) grid (17,4);
\end{scope}

\draw [thick] (14,1) -| (15,2) -| (17,4) -| (14,1);

\draw [thick, rounded corners]  (14.5,2.5) -- (14.5,3.5);
\draw [color=black,fill=black,thick] (14.5,3.5) circle (.4ex);
\node [draw, circle, fill = white, inner sep = 1.2pt] at (14.5,2.5) { };  
\node [draw, circle, fill = white, inner sep = 1.2pt] at (15.5,3.5) { };  
\draw [color=red, thick, rounded corners]  (15.5,2.5) -| (16.5,3.5);   
\draw [color=red,fill=black,thick] (16.5,3.5) circle (.4ex);
\node [color=red,draw, circle, fill = white, inner sep = 1.2pt] at (15.5,2.5) { }; 
\node [color=red,draw, circle, fill = white, inner sep = 1.2pt] at (14.5,1.5) { }; 
\node at (16,0.5) {$+1$};

\begin{scope}
   \clip (22,1) -| (24,3) -| (25,4) -| (22,1);
    \draw [color=black!25] (22,1) grid (25,4);
\end{scope}

\draw [thick] (22,1) -| (24,3) -| (25,4) -| (22,1);

\draw [thick, rounded corners]  (22.5,3.5) -- (23.5,3.5);
\draw [color=black,fill=black,thick] (23.5,3.5) circle (.4ex);
\node [draw, circle, fill = white, inner sep = 1.2pt] at (22.5,3.5) { };  
\node [draw, circle, fill = white, inner sep = 1.2pt] at (24.5,3.5) { };  
\node [color=red,draw, circle, fill = white, inner sep = 1.2pt] at (22.5,2.5) { }; 
\draw [color=red, thick, rounded corners]  (22.5,1.5) -| (23.5,2.5);  
\draw [color=red,fill=black,thick] (23.5,2.5) circle (.4ex);
\node [color=red,draw, circle, fill = white, inner sep = 1.2pt] at (22.5,1.5) { }; 
\node at (24.5,0.5) {$-1$};

\begin{scope}
   \clip (27,1) -| (29,3) -| (30,4) -| (27,1);
    \draw [color=black!25] (27,1) grid (30,4);
\end{scope}

\draw [thick] (27,1) -| (29,3) -| (30,4) -| (27,1);

\draw [thick, rounded corners]  (28.5,3.5) -- (29.5,3.5);
\draw [color=black,fill=black,thick] (29.5,3.5) circle (.4ex);
\node [draw, circle, fill = white, inner sep = 1.2pt] at (28.5,3.5) { };  
\node [draw, circle, fill = white, inner sep = 1.2pt] at (27.5,3.5) { };  
\node [color=red,draw, circle, fill = white, inner sep = 1.2pt] at (27.5,2.5) { }; 
\draw [color=red, thick, rounded corners]  (27.5,1.5) -| (28.5,2.5);  
\draw [color=red,fill=black,thick] (28.5,2.5) circle (.4ex);
\node [color=red,draw, circle, fill = white, inner sep = 1.2pt] at (27.5,1.5) { }; 
\node at (29.5,0.5) {$-1$};

\begin{scope}
   \clip (32,1) -| (34,3) -| (35,4) -| (32,1);
    \draw [color=black!25] (32,1) grid (35,4);
\end{scope}

\draw [thick] (32,1) -| (34,3) -| (35,4) -| (32,1);

\draw [thick, rounded corners]  (32.5,2.5) -- (32.5,3.5);
\draw [color=black,fill=black,thick] (32.5,3.5) circle (.4ex);
\node [draw, circle, fill = white, inner sep = 1.2pt] at (32.5,2.5) { };  
\node [draw, circle, fill = white, inner sep = 1.2pt] at (33.5,3.5) { };  
\node [color=red,draw, circle, fill = white, inner sep = 1.2pt] at (34.5,3.5) { }; 
\draw [color=red, thick, rounded corners]  (32.5,1.5) -| (33.5,2.5);  
\draw [color=red,fill=black,thick] (33.5,2.5) circle (.4ex);
\node [color=red,draw, circle, fill = white, inner sep = 1.2pt] at (32.5,1.5) { }; 
\node at (34.5,0.5) {$+1$};

\begin{scope}
   \clip (38,-1) -| (39,3) -| (41,4) -| (38,-1);
    \draw [color=black!25] (38,-1) grid (41,4);
\end{scope} 

\draw [thick] (38,-1) -| (39,3) -| (41,4) -| (38,-1);

\draw [thick, rounded corners]  (38.5,3.5) -- (39.5,3.5);
\draw [color=black,fill=black,thick] (39.5,3.5) circle (.4ex);
\node [draw, circle, fill = white, inner sep = 1.2pt] at (38.5,3.5) { };  
\node [draw, circle, fill = white, inner sep = 1.2pt] at (40.5,3.5) { };  
\node [color=red,draw, circle, fill = white, inner sep = 1.2pt] at (38.5,2.5) { }; 
\draw [color=red, thick, rounded corners]  (38.5,-0.5) -| (38.5,1.5);  
\draw [color=red,fill=black,thick] (38.5,1.5) circle (.4ex);
\node [color=red,draw, circle, fill = white, inner sep = 1.2pt] at (38.5,-0.5) { }; 
\node at (40.5,0.5) {$+1$};

\begin{scope}
   \clip (43,-1) -| (44,3) -| (46,4) -| (43,-1);
    \draw [color=black!25] (43,-1) grid (46,4);
\end{scope} 

\draw [thick] (43,-1) -| (44,3) -| (46,4) -| (43,-1);

\draw [thick, rounded corners]  (44.5,3.5) -- (45.5,3.5);
\draw [color=black,fill=black,thick] (45.5,3.5) circle (.4ex);
\node [draw, circle, fill = white, inner sep = 1.2pt] at (44.5,3.5) { };  
\node [draw, circle, fill = white, inner sep = 1.2pt] at (43.5,3.5) { };  
\node [color=red,draw, circle, fill = white, inner sep = 1.2pt] at (43.5,2.5) { }; 
\draw [color=red, thick, rounded corners]  (43.5,-0.5) -| (43.5,1.5);  
\draw [color=red,fill=black,thick] (43.5,1.5) circle (.4ex);
\node [color=red,draw, circle, fill = white, inner sep = 1.2pt] at (43.5,-0.5) { }; 
\node at (45.5,0.5) {$+1$};

\begin{scope}
   \clip (48,-1) -| (49,3) -| (51,4) -| (48,-1);
    \draw [color=black!25] (48,-1) grid (51,4);
\end{scope} 

\draw [thick] (48,-1) -| (49,3) -| (51,4) -| (48,-1);

\draw [thick, rounded corners]  (48.5,2.5) -- (48.5,3.5);
\draw [color=black,fill=black,thick] (48.5,3.5) circle (.4ex);
\node [draw, circle, fill = white, inner sep = 1.2pt] at (48.5,2.5) { };  
\node [draw, circle, fill = white, inner sep = 1.2pt] at (49.5,3.5) { };  
\node [color=red,draw, circle, fill = white, inner sep = 1.2pt] at (50.5,3.5) { }; 
\draw [color=red, thick, rounded corners]  (48.5,-0.5) -| (48.5,1.5);  
\draw [color=red,fill=black,thick] (48.5,1.5) circle (.4ex);
\node [color=red,draw, circle, fill = white, inner sep = 1.2pt] at (48.5,-0.5) { }; 
\node at (50.5,0.5) {$-1$};

\end{tikzpicture}
\end{center}

\begin{center}
\begin{tikzpicture}[scale = 0.3]

\begin{scope}
   \clip (0,0) -| (1,1) -| (2,4) -| (0,0);
    \draw [color=black!25] (0,0) grid (2,4);
\end{scope}

\draw [thick] (0,0) -| (1,1) -| (2,4) -| (0,0);

\draw [thick, rounded corners]  (0.5,2.5) -- (0.5,3.5);
\draw [color=black,fill=black,thick] (0.5,3.5) circle (.4ex);
\node [draw, circle, fill = white, inner sep = 1.2pt] at (0.5,2.5) { };    
\node [draw, circle, fill = white, inner sep = 1.2pt] at (1.5,3.5) { };    
\node [color=red,draw, circle, fill = white, inner sep = 1.2pt] at (1.5,2.5) { };  
\draw [color=red, thick, rounded corners]  (0.5,0.5) |- (1.5,1.5);   
\draw [color=red,fill=black,thick] (1.5,1.5) circle (.4ex);
\node [color=red,draw, circle, fill = white, inner sep = 1.2pt] at (0.5,0.5) { }; 
\node at (2.5,0) {$+1$};

\begin{scope}
   \clip (10,-1) -| (11,2) -| (12,4) -| (10,-1);
    \draw [color=black!25] (10,-1) grid (12,4);
\end{scope}

\draw [thick]  (10,-1) -| (11,2) -| (12,4) -| (10,-1);
\draw [thick, rounded corners]  (10.5,2.5) -- (10.5,3.5);
\draw [color=black,fill=black,thick] (10.5,3.5) circle (.4ex);
\node [draw, circle, fill = white, inner sep = 1.2pt] at (10.5,2.5) { };    
\node [draw, circle, fill = white, inner sep = 1.2pt] at (11.5,3.5) { }; 
\node [color=red,draw, circle, fill = white, inner sep = 1.2pt] at (11.5,2.5) { };   
\draw [color=red, thick, rounded corners]  (10.5,-0.5) -- (10.5,1.5);   
\draw [color=red,fill=black,thick] (10.5,1.5) circle (.4ex);
\node [color=red,draw, circle, fill = white, inner sep = 1.2pt] at (10.5,-0.5) { };  
\node at (12.5,0) {$-1$};

\begin{scope}
   \clip (20,-2) -| (21,3) -| (22,4) -| (20,-2);
    \draw [color=black!25] (20,-2) grid (22,4);
\end{scope}

\draw [thick]  (20,-2) -| (21,3) -| (22,4) -| (20,-2);
\draw [thick, rounded corners]  (20.5,2.5) -- (20.5,3.5);
\draw [color=black,fill=black,thick] (20.5,3.5) circle (.4ex);
\node [draw, circle, fill = white, inner sep = 1.2pt] at (20.5,2.5) { };    
\node [draw, circle, fill = white, inner sep = 1.2pt] at (21.5,3.5) { }; 
\node [color=red,draw, circle, fill = white, inner sep = 1.2pt] at (20.5,1.5) { };  
\draw [color=red, thick, rounded corners]  (20.5,-1.5) -- (20.5,0.5);  
\draw [color=red,fill=black,thick] (20.5,0.5) circle (.4ex);
\node [color=red,draw, circle, fill = white, inner sep = 1.2pt] at (20.5,-1.5) { };    
\node at (22.5,0) {$-1$};

\end{tikzpicture}
\end{center}

\caption{The Schur expansion in Example~\ref{atomic-expansion-example}.}
\label{fig:atomic-expansion}
\end{figure}

\begin{example}
\label{atomic-expansion-example}
Let $(I,J) \in \symm_{7,5}$ with $I = [1,4,5,6,7]$ and $J = [2,5,6,4,7]$.
The path type of $(I,J)$ is $\mu = (2,1)$ while the cycle type is $\nu = (3,1)$.  
Figure~\ref{fig:atomic-expansion} illustrates how Corollary~\ref{cor:atomic-schur-expansion} calculates the Schur expansion
\begin{equation*}
A_{7,I,J} = 2 s_7 + s_{61} - s_{52} - s_{511} + 3 s_{43} - 2 s_{421} + 2 s_{41^3} + s_{331} - s_{322} + s_{31^4} + s_{2^3 1} - s_{221^3} - s_{21^5}
\end{equation*} 
of $A_{7,I,J}$.  One first adds a 1-ribbon and a 2-ribbon in both orders in a monotonic fashion; this is shown in black.
Once this is done, one adds a 1-ribbon and a 3-ribbon (in that order) in a fashion which may not be monotonic; this is shown in red.
The signs of both the black and red ribbons contribute to the expansion.
\end{example}

We give our promised combinatorial formula for the character evaluation $\chi^\lambda([I,J]).$

\begin{corollary}
\label{cor:trace-interpretation}
Let $(I,J) \in \symm_{n,k}$ be a partial permutation of path type $\mu \vdash a$ and cycle type $\nu$. We have
\begin{equation}
\chi^{\lambda}([I,J]) = m(\mu)! \cdot \sum_{\rho \, \vdash \, a}
 \vec{\chi}_{ \, \mu}^{ \, \rho} \cdot \chi_{\nu}^{\lambda/\rho}
\end{equation}
where $\chi^{\lambda}: \CC[\symm_n] \rightarrow \CC$ is the irreducible character of $\symm_n$ and
$[I,J] = \sum_{w(I) = J} w$.
\end{corollary}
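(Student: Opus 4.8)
The plan is to simply read off the coefficient of $s_\lambda$ from the Schur expansion of the atomic symmetric function $A_{n,I,J}$, using the two results already in hand. First I would invoke Proposition~\ref{prop:atomic-character-interpretation}, which identifies the coefficient of $s_\lambda$ in the Schur expansion $A_{n,I,J} = \sum_{\lambda \vdash n} c_\lambda \cdot s_\lambda$ as the character evaluation $c_\lambda = \chi^\lambda([I,J]) = \sum_{w(I) = J} \chi^\lambda(w)$. Then I would apply Corollary~\ref{cor:atomic-schur-expansion}, which gives the explicit monotonic-tiling formula
\[
A_{n,I,J} = m(\mu)! \cdot \sum_{\lambda \, \vdash \, n} \left( \sum_{\rho \, \vdash \, a} \vec{\chi}_{\, \mu}^{\, \rho} \cdot \chi_\nu^{\lambda/\rho} \right) \cdot s_\lambda,
\]
where $\mu \vdash a$ is the path type and $\nu$ the cycle type of $(I,J)$; recall $a + |\nu| = n$ by Observation~\ref{obs:path-cycle-type}, so that every skew shape $\lambda/\rho$ with $\rho \vdash a$ and $\lambda \vdash n$ appearing here has size $|\nu|$ and the notation $\chi_\nu^{\lambda/\rho}$ is meaningful. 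Comparing the coefficients of $s_\lambda$ in these two expansions of $A_{n,I,J}$ — which is legitimate since the Schur functions form a basis of $\Lambda_n$ — yields exactly
\[
\chi^\lambda([I,J]) = m(\mu)! \cdot \sum_{\rho \, \vdash \, a} \vec{\chi}_{\, \mu}^{\, \rho} \cdot \chi_\nu^{\lambda/\rho},
\]
which is the claimed identity.

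There is essentially no obstacle remaining at this stage: the substance of the corollary was already absorbed into the path Murnaghan--Nakayama rule (Theorem~\ref{thm:path-Murnaghan--Nakayama}), the path-cycle factorization $A_{n,I,J} = \vec{p}_\mu \cdot p_\nu$ (Proposition~\ref{prop:path-cycle-factorization}), and the character-theoretic interpretation of the Schur coefficients of $A_{n,I,J}$ (Proposition~\ref{prop:atomic-character-interpretation}). The only point worth stating carefully is the bookkeeping that the index set $\{\rho \vdash a\}$ in the inner sum genuinely matches the intermediate shapes produced by first applying the path rule to $\vec p_\mu$ and then classical Murnaghan--Nakayama to multiply by $p_\nu$; this is precisely the content of the computation in the proof of Corollary~\ref{cor:atomic-schur-expansion}, so it can be cited rather than repeated. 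I would therefore keep the proof to two or three lines: apply Proposition~\ref{prop:atomic-character-interpretation}, apply Corollary~\ref{cor:atomic-schur-expansion}, and equate $s_\lambda$-coefficients.
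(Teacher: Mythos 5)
Your proposal is correct and matches the paper's proof exactly: the paper's argument is precisely to apply Proposition~\ref{prop:atomic-character-interpretation} and Corollary~\ref{cor:atomic-schur-expansion} and equate the coefficients of $s_\lambda$. The extra bookkeeping remarks you include (that $|\lambda/\rho| = |\nu|$ via Observation~\ref{obs:path-cycle-type}) are accurate but not strictly needed, since that is already built into Corollary~\ref{cor:atomic-schur-expansion}.
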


\begin{proof}
    Apply Proposition~\ref{prop:atomic-character-interpretation} and Corollary~\ref{cor:atomic-schur-expansion}.
\end{proof}

For example, suppose $(I,J)$ is the partial permutation of $[n]$ with $I = J = \varnothing$ so that $[I,J] = \sum_{w \in \symm_n} w$. As explained in the introduction, one can show algebraically that
\[
\chi^\lambda([\varnothing,\varnothing]) = \begin{cases}
    n! & \lambda = (n), \\
    0 & \lambda \neq (n).
\end{cases}
\]
On the other hand, Corollary~\ref{cor:atomic-schur-expansion} gives $A_{n,I,J} = n! \cdot s_{n}$ since the only monotonic ribbon tiling consisting
of $n$ ribbons of size 1 is 
\begin{center}
\begin{tikzpicture}[scale = 0.3]
\begin{scope}
   \clip (0,0) -| (2,1) -|  (0,0);
    \draw [color=black!25] (0,0) grid (10,1);
\end{scope}

\begin{scope}
   \clip (8,0) -| (10,1) -|  (8,0);
    \draw [color=black!25] (0,0) grid (10,1);
\end{scope}

\draw [thick]  (0,0) -| (10,1) -| (0,0);
\node [draw, circle, fill = white, inner sep = 1.2pt] at (0.5,0.5) { };   
\node [draw, circle, fill = white, inner sep = 1.2pt] at (1.5,0.5) { };   
\node [draw, circle, fill = white, inner sep = 1.2pt] at (9.5,0.5) { };   
\node [draw, circle, fill = white, inner sep = 1.2pt] at (8.5,0.5) { };   

\node at (5,0.5) {$\cdots$};
\end{tikzpicture}
\end{center}
which has shape $(n)$.
At the other extreme, if $(I,J) \in \symm_{n,n} = \symm_n$ is a genuine (rather than merely partial) permutation, 
Corollary~\ref{cor:trace-interpretation} reduces to the classical Murnaghan--Nakayama rule. We record a lemma on monotonic tilings for future use.

\begin{lemma}
    \label{lem:monotonic-restriction}
    Let $\lambda, \mu \vdash n$ and suppose that $\ell(\mu) > \lambda_1$. There are no monotonic ribbon tilings of shape $\lambda$ with $\sort(\type(T)) = \mu$.
\end{lemma}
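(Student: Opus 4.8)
The plan is to observe that this is an immediate consequence of the column condition built into the definition of a monotonic ribbon tiling (Definition~\ref{def:monotonic-definition}). The only real content is a bookkeeping identity relating the number of ribbons to $\ell(\mu)$, which is already recorded in the excerpt.

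First I would count ribbons. Suppose for contradiction that $T$ is a monotonic ribbon tiling of shape $\lambda$ with $\sort(\type(T)) = \mu$. Writing $T = \xi^{(1)} \sqcup \cdots \sqcup \xi^{(r)}$, the discussion following the definition of $\rho(T)$ gives $r = \rho(T) = \ell(\mu)$, since $\sort(\type(T)) = \mu$ forces $\rho_i(T) = m_i(\mu)$ for all $i$ and hence $\rho(T) = \sum_i m_i(\mu) = \ell(\mu)$.

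Next I would invoke the first bullet of Definition~\ref{def:monotonic-definition}: the tails of $\xi^{(1)}, \dots, \xi^{(r)}$ occupy $r$ \emph{distinct} columns $c_1 < c_2 < \cdots < c_r$ of the Young diagram of $\lambda$. But the Young diagram of $\lambda$ has exactly $\lambda_1$ columns (columns $1, 2, \dots, \lambda_1$), so any collection of cells of $\lambda$ lying in pairwise distinct columns has size at most $\lambda_1$. Applying this to the $r$ tails yields $r \leq \lambda_1$, i.e.\ $\ell(\mu) \leq \lambda_1$, contradicting the hypothesis $\ell(\mu) > \lambda_1$. Hence no such tiling exists.

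There is essentially no obstacle here: the argument is a direct pigeonhole on columns, and every ingredient (the identity $\rho(T) = \ell(\mu)$ and the distinct-column requirement) is already available from the definitions in Section~\ref{sec:Path}. The only point worth stating carefully is that the tails genuinely lie \emph{within} $\lambda$ (they lie on its southern boundary, hence in columns $1$ through $\lambda_1$), which is noted immediately after Definition~\ref{def:monotonic-definition}.
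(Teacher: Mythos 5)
Your proof is correct and is essentially the paper's own argument: the paper's one-line proof simply observes that the $\ell(\mu)$ ribbon tails would have to occupy distinct columns of $\lambda$, which is the same pigeonhole on the $\lambda_1$ available columns that you carry out. You have merely spelled out the bookkeeping ($\rho(T) = \ell(\mu)$ and the distinct-column condition from Definition~\ref{def:monotonic-definition}) that the paper leaves implicit.
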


\begin{proof}
    The tails of the $\ell(\mu)$ ribbons such a monotonic tiling would occur in distinct columns.
\end{proof}


\subsection{Stability and the $n \to \infty$ limit} Let $I = (i_1, \dots, i_k)$ and $J = (j_1, \dots, j_k)$ be two length $k$ sequences of distinct positive integers. Then $(I,J)$ is a size $k$ partial permutation of $[n]$ whenever $n \geq \max(i_1, \dots,i_k,j_1,\dots,j_k)$, and one has  $[I,J] = \sum_{w(I) = J} w \in \CC[\symm_n]$.  Corollary~\ref{cor:trace-interpretation} is a signed combinatorial rule for evaluating $\chi^\lambda([I,J])$ for any $\lambda \vdash n$.
Using this rule, we show the character evaluations $\{ \chi^\lambda([I,J]) \,:\, \lambda \vdash n \}$ exhibit a stability phenomenon -- for {\em all} $n$ sufficiently large they are calculated using a fixed and finite set of tilings.
By comparison, a na\"ive computation of these characters would apply the classical Murnaghan--Nakayam rule $(n-k)!$ times for each $\lambda \vdash n$.

The efficiency gain from Corollary~\ref{cor:trace-interpretation} as $n \to \infty$ relies on a decomposition of monotonic ribbon tilings. Let $T$ be a monotonic ribbon tiling and let $\xi$ be a ribbon in $T$. We say $\xi$ is {\em frozen} if its tail is not in the first row and {\em tropical} otherwise. Let $T_0$ be the union of the frozen ribbons of $T$ and $T_1$ be the union of the tropical ribbons of $T$. Then
\begin{equation}
\label{eq:frozen-tropical-decomposition}
    T = T_0\, \sqcup \, T_1.
\end{equation}

\begin{observation}
    \label{obs:frozen-tropical}
    The frozen region $T_0$ of $T$ is a (left justified) Young diagram while the tropical region $T_1$ lies entirely within the first row of $T$ and to the right of $T_0$.
\end{observation}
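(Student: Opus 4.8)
\emph{Proof sketch.} The plan is to read the frozen/tropical split directly off the monotonicity of the depth sequence. Write $\xi^{(1)}, \dots, \xi^{(r)}$ for the ribbons of $T$ listed in the left-to-right order supplied by Definition~\ref{def:monotonic-definition}, so that their tails occupy columns $c_1 < \cdots < c_r$ and rows $b_1 \geq \cdots \geq b_r$, where $\depth(T) = (b_1 \geq \cdots \geq b_r)$. A ribbon $\xi^{(i)}$ is tropical precisely when $b_i = 1$, so the weak monotonicity $b_1 \geq \cdots \geq b_r$ shows that the tropical ribbons form a (possibly empty) terminal block $\xi^{(m+1)}, \dots, \xi^{(r)}$ and the frozen ribbons form the complementary initial block $\xi^{(1)}, \dots, \xi^{(m)}$ for some $0 \leq m \leq r$. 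Hence $T_0 = \xi^{(1)} \sqcup \cdots \sqcup \xi^{(m)}$ is one of the initial unions of ribbons appearing in Definition~\ref{def:monotonic-definition}, and is therefore the Young diagram of a partition; this is the first assertion.

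For the second assertion, I would first record the elementary fact that the cells of a ribbon occupy a contiguous interval of rows (edgewise connectivity forces this), and that the tail, being the southwesternmost cell, lies in the bottom-most such row. For a tropical ribbon this bottom-most row is the first row, which forces its row interval to be $\{1\}$; thus every tropical ribbon is a horizontal strip inside the first row, and $T_1 = \xi^{(m+1)} \sqcup \cdots \sqcup \xi^{(r)}$ is contained in the first row of $\shape(T)$.

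To finish I would combine these two facts. Since $\shape(T) = T_0 \sqcup T_1$ is the Young diagram of a partition and $T_0 \subseteq \shape(T)$ is the Young diagram of a partition, the complement $T_1 = \shape(T) \setminus T_0$ is exactly the set of cells of $\shape(T)$ not lying in $T_0$; as $T_1$ is confined to the first row, it must consist precisely of the cells of $\shape(T)$ in row $1$ and in columns $(T_0)_1 + 1, \dots, \shape(T)_1$, i.e.\ $T_1$ lies in the first row strictly to the right of $T_0$. I do not expect a genuine obstacle here: the only care needed is with conventions — that the first row is the topmost (English) row, that the tail is the southwesternmost cell and hence sits in the bottom row of its ribbon, and that the degenerate cases $m = 0$ (no frozen ribbons, so $\shape(T)$ is a single row) and $m = r$ (no tropical ribbons, so $T_1 = \varnothing$) are handled trivially by the same reasoning.
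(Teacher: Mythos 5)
Your proof is correct, and it is essentially the argument the paper leaves implicit: the paper states this as an Observation without proof, and the intended justification is exactly the three facts you isolate (the weakly decreasing depth sequence puts the frozen ribbons in an initial block whose union is a partition by Definition~\ref{def:monotonic-definition}; a ribbon whose tail sits in row $1$ occupies only row $1$; and the complement of a subpartition inside row $1$ sits to its right). No gaps.
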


Observation~\ref{obs:frozen-tropical} is shown schematically below, with the frozen region in blue and the tropical region in red. The long first row and the large number of size 1 ribbons are relevant in the $n \to \infty$ situation of interest.

\begin{center}
\begin{tikzpicture}[scale = 0.35]

\draw [fill = blue!10] (0,0) -| (2,3) -| (5,4) -| (7,6) -| (10,7) -| (0,0) ;
\draw [fill = red!10] (10,6) -| (40,7) -| (10,6);

\begin{scope}

\clip (0,0) -| (2,3) -| (5,4) -| (7,6) -| (40,7) -| (0,0) ;
  \draw [color=black!25] (0,0) grid (40,7);

\end{scope}

\draw [thick]  (0,0) -| (2,3) -| (5,4) -| (7,6) -| (40,7) -| (0,0) ;

\draw [dashed] (10,3.5) -- (10,8);

  \draw [thick, rounded corners]  (9.5,6.5) -| (6.5,4.5) -- (5.5,4.5) ;
  \draw [color=black,fill=black,thick] (9.5,6.5) circle (.4ex);
  \node [draw, circle, fill = white, inner sep = 1.2pt] at (5.5,4.5) { }; 
  
  \node at (7,2) {$T_0$}; 
  
   \draw [thick, rounded corners]  (10.5,6.5) -- (12.5,6.5) ;
  \draw [color=black,fill=black,thick] (12.5,6.5) circle (.4ex);
  \node [draw, circle, fill = white, inner sep = 1.2pt] at (10.5,6.5) { }; 
  \node [draw, circle, fill = white, inner sep = 1.2pt] at (13.5,6.5) { }; 
  \node [draw, circle, fill = white, inner sep = 1.2pt] at (14.5,6.5) { }; 
  \node [draw, circle, fill = white, inner sep = 1.2pt] at (15.5,6.5) { }; 
  \node [draw, circle, fill = white, inner sep = 1.2pt] at (16.5,6.5) { }; 
     \draw [thick, rounded corners]  (17.5,6.5) -- (18.5,6.5) ;
  \draw [color=black,fill=black,thick] (18.5,6.5) circle (.4ex);
  \node [draw, circle, fill = white, inner sep = 1.2pt] at (17.5,6.5) { }; 
  \node [draw, circle, fill = white, inner sep = 1.2pt] at (19.5,6.5) { }; 
  \node [draw, circle, fill = white, inner sep = 1.2pt] at (20.5,6.5) { }; 
   \node [draw, circle, fill = white, inner sep = 1.2pt] at (21.5,6.5) { }; 
  \node [draw, circle, fill = white, inner sep = 1.2pt] at (22.5,6.5) { }; 
   \node [draw, circle, fill = white, inner sep = 1.2pt] at (23.5,6.5) { }; 
     \node [draw, circle, fill = white, inner sep = 1.2pt] at (24.5,6.5) { }; 
  \draw [thick, rounded corners]  (25.5,6.5) -- (27.5,6.5) ;
  \draw [color=black,fill=black,thick] (27.5,6.5) circle (.4ex);
  \node [draw, circle, fill = white, inner sep = 1.2pt] at (25.5,6.5) { }; 
  \draw [thick, rounded corners]  (28.5,6.5) -- (29.5,6.5) ;
  \draw [color=black,fill=black,thick] (29.5,6.5) circle (.4ex);
  \node [draw, circle, fill = white, inner sep = 1.2pt] at (28.5,6.5) { }; 
    \node [draw, circle, fill = white, inner sep = 1.2pt] at (30.5,6.5) { }; 
    \node [draw, circle, fill = white, inner sep = 1.2pt] at (31.5,6.5) { }; 
 \node [draw, circle, fill = white, inner sep = 1.2pt] at (32.5,6.5) { }; 
 \node [draw, circle, fill = white, inner sep = 1.2pt] at (33.5,6.5) { }; 
  \node [draw, circle, fill = white, inner sep = 1.2pt] at (34.5,6.5) { }; 
  \node [draw, circle, fill = white, inner sep = 1.2pt] at (35.5,6.5) { }; 
   \node [draw, circle, fill = white, inner sep = 1.2pt] at (36.5,6.5) { }; 
     \draw [thick, rounded corners]  (37.5,6.5) -- (38.5,6.5) ;
  \draw [color=black,fill=black,thick] (38.5,6.5) circle (.4ex);
  \node [draw, circle, fill = white, inner sep = 1.2pt] at (37.5,6.5) { }; 
  \node [draw, circle, fill = white, inner sep = 1.2pt] at (39.5,6.5) { };

\node at (25,4.5) {$T_1$}; 

\end{tikzpicture}
\end{center}

By Observation~\ref{obs:frozen-tropical}, the tropical portion of a monotonic tiling is entirely contained in the first row, so
\begin{equation}
    \label{eqn:sign-relation}
    \sign(T) = \sign(T_0)
\end{equation}
whenever $T$ is a monotonic ribbon tiling with frozen portion $T_0$. Given $\mu \vdash n$, by Theorem~\ref{thm:path-Murnaghan--Nakayama}  
\begin{equation}
\label{eq:reduced-path-mn}
    \vec{p}_\mu = m(\mu)! \cdot \sum_T \sign(T) \cdot s_{\shape(T)} = m(\mu)! \cdot \sum_T \sign(T_0) \cdot s_{\shape(T)}
\end{equation}
where the sum is over monotonic ribbon tilings $T$ whose with $\sort(\type(T)) = \mu$.

We use Equation~\eqref{eq:reduced-path-mn} to study the path Murnaghan--Nakayama rule in the $n \to \infty$ limit.  Recall that if $\lambda$ is a partition and $n \geq |\lambda| + \lambda_1$, the padded partition $\lambda[n] := (n-|\lambda|,\lambda_1,\lambda_2, \cdots ) \vdash n$ is obtained by prepending a row of length $n - |\lambda|$. Similarly, if $\mu = (\mu_1, \dots, \mu_r)$ is a partition and $n \geq |\mu|$, we define 
\[\mu(n) := (\mu_1, \mu_2, \dots, \mu_r, 1^{n-|\mu|}) \vdash n\]
by adding $n - |\mu|$ boxes to the first column of $\mu$. 

For a fixed partition $\mu$, our goal is to state a finite formula for the Schur expansion of $\vec{p}_{\mu(n)}$ for all $n$ sufficiently large. In order to do this, we will need some terminology. Let $T = T_0 \sqcup T_1$ be a monotonic ribbon tiling with frozen region $T_0$. 
If $T$ has $\leq n$ boxes, define 
\begin{equation}
    \sigma(T,n) := \shape(T_0)[n],
\end{equation}
which is the partition obtained by adding $n - |\shape(T)|$ boxes to the first row of $\shape(T)$.
The tiling $T$ is {\em frozen} if $T = T_0$.
As before, we write $\rho_i(T_0)$ for the number of frozen ribbons of size $i$.

\begin{proposition}
\label{prop:path-power-formula}
Let $\mu$ be a partition whose parts are all of size $> 1$. 
\begin{enumerate}
\item
For any $n \geq |\mu|$ we have 
\begin{multline}
\label{eq:path-power-sum-expansion}
    \vec{p}_{\mu(n)} =  (n - |\mu|)! \cdot m(\mu)! \times \\ \sum_{T_0} \sign(T_0) \cdot {n - |\mu| + \ell(\mu) - \rho(T_0) \choose n - |\mu| - \rho_1(T_0), m_2(\mu) - \rho_2(T_0), m_3(\mu) - \rho_3(T_0), \dots } \cdot s_{\sigma(T_0,n)}
\end{multline}
where the sum is over all frozen monotonic ribbon tableaux $T_0$ which have $\leq n - |\mu|$ ribbons of size 1 and $\leq m_i(\mu)$ ribbons of size $i$ for $i > 1$.
\item The set of frozen tilings $T_0$ indexing the sum in Equation~\eqref{eq:path-power-sum-expansion} is the same (and finite) for all $n \geq 2 \cdot (|\mu| - \ell(\mu))$. 
\item If $s_{\sigma(T_0,n)}$ appears with nonzero coefficient in Equation~\eqref{eq:path-power-sum-expansion},  the difference in lengths between the first and second rows of $\sigma(T_0,n)$ is bounded below according to \[ \sigma(T_0,n)_1 - \sigma(T_0,n)_2 \geq n - 2 \cdot (|\mu| - \ell(\mu)).\]
\end{enumerate}
\end{proposition}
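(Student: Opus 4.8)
The plan is to refine Theorem~\ref{thm:path-Murnaghan--Nakayama} by splitting each monotonic ribbon tiling into the frozen and tropical pieces of \eqref{eq:frozen-tropical-decomposition}, observing that the tropical piece is essentially free data, and then bounding the frozen piece uniformly in $n$. All three assertions then reduce to a single inequality about frozen monotonic tilings.

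\emph{Part (1).} I would start from \eqref{eq:reduced-path-mn} applied with $\mu(n)$ in place of $\mu$. Since every part of $\mu$ exceeds $1$ we have $m_1(\mu(n)) = n-|\mu|$, $m_i(\mu(n)) = m_i(\mu)$ for $i>1$, and $\ell(\mu(n)) = \ell(\mu) + (n-|\mu|)$, so the leading constant is $m(\mu(n))! = (n-|\mu|)!\,m(\mu)!$. A tropical ribbon --- one whose tail lies in the first row --- is forced to be a horizontal strip inside the first row, hence has sign $+1$, and by Observation~\ref{obs:frozen-tropical} the frozen part $T_0$ is a Young diagram with the tropical ribbons appended, left to right, to its first row. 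Since $\depth(T)$ is weakly decreasing, in the left-to-right order every frozen ribbon precedes every tropical one; conversely, appending an arbitrary ordered sequence of horizontal strips to the first row of a frozen monotonic tiling $T_0$ of a Young diagram $\lambda$ respects Definition~\ref{def:monotonic-definition} automatically, because the new tails occupy columns $> \lambda_1 \ge \lambda_2 \ge$ every frozen tail column and each partial union stays a partition. Hence monotonic tilings $T$ with $\sort(\type(T)) = \mu(n)$ biject with pairs consisting of a frozen monotonic tiling $T_0$ satisfying $\rho_1(T_0) \le n-|\mu|$ and $\rho_i(T_0)\le m_i(\mu)$ for $i>1$, together with a linear arrangement of the leftover ribbon-size multiset of $(n-|\mu|)-\rho_1(T_0)$ ones and $m_i(\mu)-\rho_i(T_0)$ copies of each $i>1$. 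For fixed $T_0$ the number of such arrangements is exactly the multinomial coefficient in \eqref{eq:path-power-sum-expansion}; moreover $\sign(T) = \sign(T_0)$ by \eqref{eqn:sign-relation} and $\shape(T) = \sigma(T_0,n)$ depends only on $T_0$ and $n$. Summing over $T_0$ and restoring the constant $(n-|\mu|)!\,m(\mu)!$ gives \eqref{eq:path-power-sum-expansion}.

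\emph{The key estimate.} Parts (2) and (3) both follow from
\[ |\shape(T_0)| \;\le\; 2\bigl(|\mu|-\ell(\mu)\bigr) \qquad\text{for every frozen monotonic tiling $T_0$ with $\rho_i(T_0)\le m_i(\mu)$ for all $i>1$.} \]
First I would prove $|\shape(T_0)| \ge 2\rho(T_0)$ for an arbitrary frozen monotonic tiling $T_0$. Write $\lambda = \shape(T_0)$; the southern-boundary cell in column $c$ lies in a row $\ge 2$ exactly when $\lambda_2 \ge c$, so every (necessarily frozen) ribbon tail lies in one of the columns $1,\dots,\lambda_2$, and since distinct ribbons have distinct tail columns, $\rho(T_0) \le \lambda_2$. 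When $\rho(T_0)>0$ the diagram $\lambda$ has at least two rows, so $|\lambda| \ge \lambda_1 + \lambda_2 \ge 2\lambda_2 \ge 2\rho(T_0)$ (the case $\rho(T_0)=0$ being trivial). Now $|\shape(T_0)| = \rho_1(T_0) + \sum_{i\ge 2} i\,\rho_i(T_0)$ and $\rho(T_0) = \rho_1(T_0) + \sum_{i\ge 2}\rho_i(T_0)$, so $|\shape(T_0)| \ge 2\rho(T_0)$ rearranges to $\rho_1(T_0) \le \sum_{i\ge 2}(i-2)\rho_i(T_0)$; feeding this back into the expression for $|\shape(T_0)|$ gives $|\shape(T_0)| \le 2\sum_{i\ge 2}(i-1)\rho_i(T_0) \le 2\sum_{i\ge 2}(i-1)m_i(\mu) = 2(|\mu|-\ell(\mu))$, where the last equality uses $m_1(\mu)=0$.

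\emph{Parts (2) and (3), and the main obstacle.} The rearrangement above also yields $\rho_1(T_0) \le \sum_{i\ge 2}(i-2)m_i(\mu) = |\mu|-2\ell(\mu)$, so once $n \ge 2(|\mu|-\ell(\mu))$ we have $n-|\mu| \ge |\mu|-2\ell(\mu) \ge \rho_1(T_0)$ for every admissible $T_0$; thus the constraint $\rho_1(T_0)\le n-|\mu|$ in \eqref{eq:path-power-sum-expansion} is vacuous, the index set equals $\{T_0 : \rho_i(T_0)\le m_i(\mu)\text{ for }i>1\}$ independently of $n$, and it is finite because $|\shape(T_0)|\le 2(|\mu|-\ell(\mu))$ bounds the shapes. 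This proves (2). For (3), recall that $\sigma(T_0,n)$ adds $n-|\shape(T_0)|$ boxes to the first row of $\shape(T_0)$, so $\sigma(T_0,n)_2 = \shape(T_0)_2$ and $\sigma(T_0,n)_1 = n - (\shape(T_0)_2 + \shape(T_0)_3 + \cdots)$; hence $\sigma(T_0,n)_1 - \sigma(T_0,n)_2 = n - (2\,\shape(T_0)_2 + \shape(T_0)_3 + \cdots) \ge n - |\shape(T_0)| \ge n - 2(|\mu|-\ell(\mu))$, the first inequality using $\shape(T_0)_1 \ge \shape(T_0)_2$ and the second the key estimate. I expect the main obstacle to be making the bijection in part (1) fully rigorous --- checking that every monotonic tiling of the relevant type decomposes uniquely as a frozen monotonic tiling with an ordered list of horizontal strips glued to its first row, and that this preserves shape, sign, and ribbon multiset exactly as claimed --- whereas the inequality $|\shape(T_0)| \ge 2\rho(T_0)$, though short, is the essential new combinatorial ingredient.
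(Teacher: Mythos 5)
Your proposal is correct, and its overall architecture matches the paper's: part (1) is proved identically (specialize \eqref{eq:reduced-path-mn} to $\mu(n)$, compute $m(\mu(n))! = (n-|\mu|)!\,m(\mu)!$, and count the tropical extensions of a fixed frozen region by the multinomial coefficient), and parts (2) and (3) both reduce to a uniform bound on the frozen region. Where you genuinely diverge is in the key combinatorial estimate. The paper's Claim is that $\rho_1(T_0) \le |\mu| - 2\ell(\mu)$, proved by an injection sending each frozen size-$1$ ribbon to the box immediately above it (necessarily a non-tail box of a larger frozen ribbon) and observing that each larger ribbon contains at least two boxes outside the image of this injection. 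You instead observe that every frozen tail lies in a column $c \le \shape(T_0)_2$, so the distinct-tail-columns condition gives $\rho(T_0) \le \shape(T_0)_2$ and hence $|\shape(T_0)| \ge 2\rho(T_0)$; rearranging recovers the paper's bound $\rho_1(T_0) \le \sum_{i \ge 2}(i-2)\rho_i(T_0) \le |\mu|-2\ell(\mu)$ and additionally yields $|\shape(T_0)| \le 2(|\mu|-\ell(\mu))$ in one stroke. Your route is shorter, avoids the local analysis of which boxes sit above size-$1$ ribbons, and makes the finiteness assertion in (2) explicit (the paper merely asserts it), at the cost of being slightly less geometric about where the size-$1$ frozen ribbons can actually sit. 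For part (3) the paper applies Lemma~\ref{lem:monotonic-restriction} to the full tiling $T$ to bound the boxes outside the first row by $|\mu|-\ell(\mu)$, while you bound $2\,\shape(T_0)_2 + \shape(T_0)_3 + \cdots$ by $|\shape(T_0)|$; both give the same inequality, and both ultimately rest on the same distinct-columns feature of monotonic tilings.
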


Equation~\eqref{eq:path-power-sum-expansion} may appear forbidding, but we will see the multinomial coefficient has a simple interpretation. Proposition~\ref{prop:path-power-formula} (2) says the sum has finitely many terms, stabilizing when $n = 2 \cdot (|\mu| - \ell(\mu))$. Proposition~\ref{prop:path-power-formula} (3) is a technical condition required to establish an analogous finiteness property when we augment our monotonic tilings with skew classical ribbon tableaux to find the character evaluations $\chi^\lambda([I,J])$.

\begin{proof}
    We start by proving (1). Let $n \geq |\mu|$. Equation~\eqref{eq:reduced-path-mn} reads
    \begin{equation}
    \label{eq:degree-bound-one}
        \vec{p}_{\mu(n)} = m(\mu(n))! \cdot \sum_T \sign(T_0) \cdot s_{\shape(T)} = (n - |\mu|)! \cdot m(\mu)! \cdot \sum_T \sign(T_0) \cdot s_{\shape(T)}
    \end{equation}
    where the sum is over monotonic ribbon tilings $T$ with $\sort(\type(T)) = \mu(n)$ and $T_0$ is the frozen region of $T$. In particular, if $T_0$ contributes to the sum in \eqref{eq:degree-bound-one} then $\rho_i(T_0) \leq m_i(\mu)$ for all $i > 1$ and $\rho_1(T_0) \leq n - |\mu|$. 
    
     For a fixed frozen region $T_0$, how many ways are there to extend $T_0$ to a monotonic ribbon tiling $T = T_0 \sqcup T_1$ with $\sort(\type(T)) = \mu(n)$? The overall tiling $T$ contains ribbons of sizes $m_i(\mu)$ ribbons of size $i$ for each $i > 1$ together with $n - |\mu|$ singleton ribbons.
Each of these ribbons is in $T_0$ or in $T_1$.  
We have
\begin{multline}
\label{eq:multinomial-coefficient}
\# \{ \text{ribbon tilings $T = T_0 \sqcup T_1$ of $\lambda[n]$ extending $T_0$} \}  \\ = 
\binom{n - |\mu| + \ell(\mu) - \rho(T_0)}{ n - |\mu| - \rho_1(T_0), m_2(\mu) - \rho_2(T_0), m_3(\mu) - \rho_3(T_0), \dots }
\end{multline}
where the multinomial coefficient counts ways order ribbons within the tropical region $T_1$ from left to right. Furthermore, we have 
\begin{equation}
    \shape(T) = \sigma(T_0,n)
\end{equation}
since $T$ is formed from $T_0$ by adding boxes to the first row. Plugging Equation~\eqref{eq:multinomial-coefficient} into Equation~\eqref{eq:degree-bound-one} proves (1).

Let $n \geq |\mu|$ and let $T = T_0 \sqcup T_1$ be a monotonic ribbon tiling with $\sort(\type(T)) = \mu(n)$. Our proof of (2) rests on the following Claim.
    \begin{quote}
        {\bf Claim:} {\em The frozen region $T_0$ of $T$ contains at most $|\mu| - 2 \cdot \ell(\mu)$ ribbons of size $1$ for all $n \geq |\mu|$.}
    \end{quote}
This Claim is proven as follows. Let $\xi$ be a size 1 ribbon in $T_0$. We know that
\begin{itemize}
    \item the size 1 ribbon $\xi$ occurs outside the first row (for otherwise $\xi$ would be in $T_1$), and
    \item the box $b(\xi)$ immediately above $\xi$ in $T_0$ is a non-tail box in a frozen ribbon $\xi'$ of size $> 1$ (for otherwise there would be two ribbon tails in the same column, in violation of Definition~\ref{def:monotonic-definition}).
\end{itemize}
On the other hand, suppose that $\xi'$ is a ribbon in $T_0$ of size $> 1$. We claim that at least two boxes of $\xi'$ are not immediately above a size 1 ribbon in $T_0$. For otherwise the above two bullet points would force the structure of $T_0$ near $\xi'$ to have the form
\begin{center}
\begin{tikzpicture}[scale = 0.4]
\begin{scope}
\clip (0,0) |- (3,1) |-  (1,-1) |- (0,0);
\draw [color=black!25] (0,-1) grid (3,2);
\end{scope}

 \begin{scope}
   \clip (8,-1) -| (10,1) -|  (8,-1);
    \draw [color=black!25] (8,-1) grid (10,1);
\end{scope}

\draw[thick] (0,0) -| (1,-1) -| (10,1) -| (0,0);

\draw [thick, rounded corners]  (0.5,0.5) -- (9.5,0.5);
  \draw [color=black,fill=black,thick] (9.5,0.5) circle (.4ex);
  \node [draw, circle, fill = white, inner sep = 1.2pt] at (0.5,0.5) {}; 

  \node [draw, circle, fill = white, inner sep = 1.2pt] at (1.5,-0.5) {};
  \node [draw, circle, fill = white, inner sep = 1.2pt] at (2.5,-0.5) {};
  \node [draw, circle, fill = white, inner sep = 1.2pt] at (8.5,-0.5) {};
  \node [draw, circle, fill = white, inner sep = 1.2pt] at (9.5,-0.5) {};

  \node at (5.5,-0.5) {$\cdots$};
\end{tikzpicture}
\end{center}
where $\xi'$ is the ribbon on top. This configuration violates Definition~\ref{def:monotonic-definition} because the position immediately below the tail of $\xi'$ must be occupied by a ribbon preceding $\xi'$. Let $\AAA$ be the set of size $1$ ribbons in $T_0$ and let $\BBB$ be the set of boxes which are not in size 1 ribbons. We have a function $b: \AAA \to \BBB$ where $b(\xi)$ is the box immediately above $\xi$. The function $b$ is injective, and each ribbon $\xi'$ of size $> 1$ contains at least two boxes which are not in the image of $b$. It follows that
\begin{equation} \# \AAA \leq \# \BBB - 2 \cdot \sum_{i \, > \, 1} \rho_i(T_0) = \sum_{i \, > \, 1} i \cdot \rho_i(T_0) - 2 \cdot \sum_{i \, > \, 1} \rho_i(T_0) \leq |\mu| - 2 \cdot \ell(\mu) \end{equation}
where the last inequality uses the fact that every part of $\mu$ has size $> 1$. This completes the proof of the Claim.
    
To see why (2) follows from the Claim, fix $n \geq 2 \cdot (|\mu| - \ell(\mu))$ and let $T = T_0 \sqcup T_1$ be the frozen-tropical decomposition of a monotonic ribbon tiling with $\sort(\type(T)) = \mu(n+1)$. We assert that the tropical region $T_1$ contains at least one ribbon of size 1. If not, then all $n + 1 - |\mu|$ of the size 1 ribbons in $T$ would be frozen, so the number of size 1 ribbons in $T_0$ would be
\[n+1 - |\mu| > 2 \cdot (|\mu| - \ell(\mu)) - |\mu| = |\mu| - 2 \cdot \ell(\mu),\]
which contradicts the Claim.

Finally, we prove (3). Let $T$ be a monotonic ribbon tiling whose multiset of ribbon sizes is $\mu(n)$. Lemma~\ref{lem:monotonic-restriction} implies that 
\begin{equation*}
    \begin{array}{c}
        \text{number of boxes outside} \\ \text{the first row of $T$}
    \end{array} \leq n - \ell(\mu(n)) = |\mu| - \ell(\mu).
\end{equation*} 
Since $\shape(T)$ is a partition of $n$, this forces
\begin{equation*}
    \shape(T)_1 - \shape(T)_2 \geq n - 2 \cdot (|\mu|-\ell(\mu)).
\end{equation*}
If $T = T_0 \sqcup T_1$ is the decomposition of $T$ into frozen and tropical regions, we have
\[ \shape(T) = \sigma(T_0,n)\]
and the proof is complete.
\end{proof}

We use the following method for computing $\{\chi^\lambda([I,J]) \,:\, \lambda \vdash n\}$ for a partial permutation $(I,J)$.
\begin{enumerate}
    \item Factor the atomic symmetric function $A_{n,I,J} = \vec{p}_\mu \cdot p_\nu$ where $\mu$ is the path type of $(I,J)$ and $\nu$ is the cycle type of $(I,J)$. (Proposition~\ref{prop:path-cycle-factorization})
    \item Compute the Schur expansion of $\vec{p}_\mu$ using Proposition~\ref{prop:path-power-formula} (1).
    \item Use the classical Murnaghan--Nakayama rule and the result from the previous step to compute the Schur expansion of $A_{n,I,J} = \sum_{\lambda \vdash n} \chi^\lambda([I,J]) \cdot s_\lambda$.
\end{enumerate}
We close this section by showing that this method requires a constant (and finite) amount of computation whenever $n \geq 2k$, where $k$ is the size of the partial permutation $(I,J)$.

Let $I = (i_1,\dots,i_k)$ and $J = (j_1, \dots, j_k)$ be two sequences of distinct positive integers of the same length $k$. Without loss of generality, assume that the pair $(I,J)$ is {\em packed}, i.e.
$I \cup J = \{1, \dots, r\} \text{ for some } r \geq 0.$
Regarding $(I,J)$ as a partial permutation in $\symm_{r,k}$, let $\mu$ be the path type of $(I,J)$ and let $\nu$ be the cycle type of $(I,J)$. We make the observations
\begin{equation}
\label{eq:efficiency-conditions}
\text{every part of $\mu$ is $> 1$ and $|\mu| + |\nu| - \ell(\mu) = k$}
\end{equation}
where the first statement is true because $(I,J)$ is packed and the second statement applies Observation~\ref{obs:path-cycle-type}.

For any $n \geq r$, we may view $(I,J)$ as a partial permutation in $\symm_{n,k}$. It is not hard to see that the path type of $(I,J) \in \symm_{n,k}$ is $\mu(n - |\nu|)$ and the cycle type of $(I,J) \in \symm_{n,k}$ is $\nu$. By Proposition~\ref{prop:atomic-character-interpretation}, the irreducible character evaluations $\{ \chi^\lambda([I,J]) \,:\, \lambda \vdash n \}$ are the coefficients in the Schur expansion
\begin{equation}
\label{eq:atomic-expansion-limit}
    A_{n,I,J} = \vec{p}_{\mu(n-|\nu|)} \cdot p_\nu = \sum_{\lambda \, \vdash \, n} \chi^\lambda([I,J]) \cdot s_\lambda.
\end{equation}
By Proposition~\ref{prop:path-power-formula} (1) and the classical Murnaghan--Nakayama rule, the Schur expansion in Equation~\eqref{eq:atomic-expansion-limit} is 
\begin{multline}
\label{eq:double-power-expansion}
    A_{n,I,J} =  \vec{p}_{\mu(n-|\nu|)} \cdot p_\nu = m(\mu)! \cdot (n - |\mu| - |\nu|)! \times \\
    \sum_{\lambda \, \vdash \, n} \left(
    \sum_{T_0} \sign(T_0) \cdot {n - |\mu| - |\nu| + \ell(\mu) - \rho(T_0) \choose n - |\mu| - |\nu| - \rho_1(T_0), m_2(\mu) - \rho_2(T_0), m_3(\mu) - \rho_3(T_0), \dots } \cdot \chi^{\lambda/\sigma(T_0,n-|\nu|)}_\nu \right) \cdot s_\lambda
\end{multline}
where the inner sum is over frozen monotonic ribbon tilings $T_0$ for which $\rho_1(T_0) \leq n - |\mu| - |\nu|$ and $\rho_i(T_0) \leq m_i(\mu)$ for all $i > 1$. Equation~\eqref{eq:atomic-expansion-limit} is less than aesthetic at first glance, but it has  computational and theoretical utility.
For example, one consequence is:
\begin{theorem}
     \label{rmk:degree-bounds} For $(I,J)$ a partial permutation of size $k$ and $n \geq \max(I),\max(J), 2k$, there are polynomials $\{ c_\lambda(n) \,:\, |\lambda| \leq k \}$ with $\deg c_\lambda(n) \leq k - |\lambda|$ such that 
    \[  A_{n,I,J} = (n - \#(I \cup J))! \times \sum_{|\lambda| \, \leq \, k} c_\lambda(n) \cdot s_{\lambda[n]}. \]
\end{theorem}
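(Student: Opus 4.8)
The plan is to read off the $n$-dependence directly from the explicit expansion in Equation~\eqref{eq:double-power-expansion}. By Proposition~\ref{prop:atomic-conjugacy-independence} we may relabel and assume $(I,J)$ is packed, so that $I\cup J=\{1,\dots,r\}$ with $r=\#(I\cup J)$, the path type $\mu$ has all parts $>1$, the cycle type is $\nu$, and by Observation~\ref{obs:path-cycle-type} and \eqref{eq:efficiency-conditions} we have $r=|\mu|+|\nu|$ and $k=|\mu|-\ell(\mu)+|\nu|$. Then the scalar $m(\mu)!\,(n-|\mu|-|\nu|)!$ in front of \eqref{eq:double-power-expansion} is exactly $m(\mu)!\,(n-\#(I\cup J))!$. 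Setting $c_\lambda(n):=m(\mu)!\cdot(\text{inner coefficient of }s_{\lambda[n]})$, it suffices to show that for $n\geq 2k$ the inner sum in \eqref{eq:double-power-expansion} is indexed by Schur functions $s_{\lambda[n]}$ with $|\lambda|\leq k$ and has coefficients that are polynomials in $n$ of degree $\leq k-|\lambda|$.

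First I would reindex. In \eqref{eq:double-power-expansion} the outer shape ranges over partitions of $n$ containing $\sigma(T_0,n-|\nu|)$ with $\nu$-ribbon-tileable quotient. Write $\tau(T_0)$ for the partition obtained by deleting the first row of $\shape(T_0)$, so $\sigma(T_0,n-|\nu|)=\tau(T_0)[n-|\nu|]$; Lemma~\ref{lem:monotonic-restriction} (applied to the full monotonic tiling $T=T_0\sqcup T_1$ of $\sigma(T_0,n-|\nu|)$, whose number of ribbons is $\ell(\mu)+(n-|\nu|-|\mu|)$) gives $|\tau(T_0)|\leq |\mu|-\ell(\mu)$. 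Since adjoining ribbons of total size $|\nu|$ can only lengthen the first row, any outer shape has at most $|\tau(T_0)|+|\nu|\leq k$ boxes off its first row, hence equals $\lambda[n]$ with $|\lambda|\leq k$ (a genuine partition because $n\geq 2k\geq |\lambda|+\lambda_1$). By Proposition~\ref{prop:path-power-formula}(2) the set of frozen tilings $T_0$ that contribute is finite and $n$-independent once $n\geq 2k$. Moreover, Proposition~\ref{prop:path-power-formula}(3) shows that for $n\geq 2k$ the first row of $\sigma(T_0,n-|\nu|)$ is strictly longer than the second row of any $\lambda[n]$ reachable from it; consequently the skew shape $\lambda[n]/\tau(T_0)[n-|\nu|]$ splits as a disjoint union of the fixed skew shape $\lambda/\tau(T_0)$ in rows $\geq 2$ and a single horizontal strip in row $1$, so the classical Murnaghan--Nakayama quotient $\chi^{\lambda[n]/\tau(T_0)[n-|\nu|]}_\nu$ is a constant independent of $n$ (ribbon-tableau counts are translation invariant on connected components), vanishing unless $\tau(T_0)\subseteq\lambda$ and $|\lambda|-|\tau(T_0)|\leq|\nu|$. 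Thus each $c_\lambda(n)$ is $m(\mu)!$ times a sum over a fixed finite set of $T_0$ of $\sign(T_0)$, times a multinomial coefficient, times an $n$-independent integer; the multinomial coefficient $\binom{n-|\mu|-|\nu|+\ell(\mu)-\rho(T_0)}{n-|\mu|-|\nu|-\rho_1(T_0),\,m_2(\mu)-\rho_2(T_0),\,\dots}$ is a polynomial in $n$ of degree $d(T_0):=\ell(\mu)-\sum_{i>1}\rho_i(T_0)=\sum_{i>1}(m_i(\mu)-\rho_i(T_0))$, the number of non-singleton ribbons of $\mu$ not placed in $T_0$.

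The one substantive point is the degree bound, for which I claim that every frozen monotonic ribbon tiling $T_0$ appearing above satisfies
\[
d(T_0)+|\tau(T_0)|\leq |\mu|-\ell(\mu).
\]
Granting this, any $T_0$ that contributes to $c_\lambda$ has $|\tau(T_0)|\geq|\lambda|-|\nu|$, so $d(T_0)\leq (|\mu|-\ell(\mu))-(|\lambda|-|\nu|)=k-|\lambda|$, whence $\deg c_\lambda(n)\leq k-|\lambda|$. To prove the claim, note $|\tau(T_0)|=|\shape(T_0)|-\shape(T_0)_1=\rho_1(T_0)+\sum_{i>1}i\rho_i(T_0)-\shape(T_0)_1$, while discarding the nonnegative term $\sum_{i>1}(i-2)\bigl(m_i(\mu)-\rho_i(T_0)\bigr)$ gives $|\mu|-\ell(\mu)-d(T_0)\geq\sum_{i>1}(i-1)\rho_i(T_0)=\sum_{i>1}i\rho_i(T_0)-\sum_{i>1}\rho_i(T_0)$; so the claim reduces to $\rho(T_0)=\rho_1(T_0)+\sum_{i>1}\rho_i(T_0)\leq\shape(T_0)_1$. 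But $T_0$ has $\rho(T_0)$ ribbons whose tails occupy distinct columns and, being frozen, lie in rows $\geq 2$; hence the rightmost tail column is at least $\rho(T_0)$ and also at most $\shape(T_0)_2\leq\shape(T_0)_1$, as needed.

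The delicate part is the degree estimate and, hidden inside it, verifying carefully that $\chi^{\lambda[n]/\tau(T_0)[n-|\nu|]}_\nu$ really stabilizes (this is where Proposition~\ref{prop:path-power-formula}(3) and the hypothesis $n\geq 2k$ are used) and that the binomial factor is polynomial of the stated degree throughout the range $n\geq 2k$. Once the key inequality is reduced to $\rho(T_0)\leq\shape(T_0)_1$, which is immediate from Definition~\ref{def:monotonic-definition}, everything else — the packing reduction, the reindexing, and the finiteness of the index set — is routine bookkeeping built on Proposition~\ref{prop:path-cycle-factorization}, Corollary~\ref{cor:atomic-schur-expansion}, Proposition~\ref{prop:path-power-formula}, and the classical Murnaghan--Nakayama rule.
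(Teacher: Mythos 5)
Your proof is correct and follows the paper's own route: everything is read off from Equation~\eqref{eq:double-power-expansion}, with polynomiality coming from the multinomial coefficient, the support restriction $|\lambda|\le k$, the $n$-independence of the skew Murnaghan--Nakayama factor $\chi^{\lambda/\sigma(T_0,n-|\nu|)}_\nu$ via Proposition~\ref{prop:path-power-formula}, and the degree bound obtained by trading non-singleton frozen ribbons against boxes of $\lambda$. Where the paper disposes of the degree count in a single sentence, you make it precise as the inequality $d(T_0)+|\tau(T_0)|\le|\mu|-\ell(\mu)$, reduced to $\rho(T_0)\le\shape(T_0)_1$ via the distinct-tail-columns condition of Definition~\ref{def:monotonic-definition}; this is a careful elaboration (which also correctly accounts for the singleton frozen ribbons and the $|\nu|$ cycle boxes) rather than a different argument.
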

\begin{proof}
    Corollary~\ref{cor:atomic-support-bound} shows only partitions satisfying $|\lambda| \leq k$ are needed.
To show $c_\lambda(n)$ is a polynomial, observe that each term in~\eqref{eq:double-power-expansion} contributing to $c_\lambda(n)$ is a polynomial in $n$, with the factor $(n-|\mu|-|\nu|)!$ subsumed by the $(n - \#(I \cup J))!$ term since $\#(I \cup J) = |\mu| + |\nu| + \ell(\mu)$.
To bound the degree of $c_\lambda(n)$, note the degree of each multinomial in~\eqref{eq:double-power-expansion} is equal to $\ell(\mu) - \ell(\shape(T_0)) + \rho_1(T_0)$.
This means the degree of $c_\lambda(n)$ decreases by one for each ribbon in a contributing $T_0$ of length greater than one, while $|\lambda|$ increases by at least one for each such ribbon.
The partition $\lambda/\sigma(T_0,n-|\nu|)$ has size $n - |\nu| \leq k$, so $\chi^{\lambda/\sigma(T_0,n-|\nu|)}_\nu$ is constant as $n$ varies.
Since each term satisfies the desired degree contribution, the result follows.
\end{proof}

\section{Statistic class functions}
\label{sec:Statistic}

Our original motivation for introducing the symmetric functions $A_{n,I,J}$ was to better understand class functions given by symmetrized permutation statistics.
In this section we discuss how to apply our results for this purpose.
A key tool for this approach is the theory of character polynomials, introduced in~\cite{Specht}.
We apply this theory to several examples including exceedances, inversion count.
The interested reader can find several other examples and additional details in our original paper~\cite{HR}.

\subsection{Applications to permutation statistics}

We begin by stating:
\begin{theorem}[Character polynomials]
    \label{t:character-polynomial}
    For $\lambda \vdash n$ and $w \in \symm_n$ of cycle type $\mu$, there is a multivariate polynomial $f_\lambda$ with rational coefficients so that
\[
\chi^{\lambda}(w) = f_\lambda(m_1(\mu),m_2(\mu),\dots,m_n(\mu))
\]    
where $\deg(f_\lambda) = n-\lambda_1$ with the grading where $\deg(m_k) =k$.
\end{theorem}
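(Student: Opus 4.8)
The plan is to deduce Theorem~\ref{t:character-polynomial} from the standard symmetric–function dictionary, working on the Schur/power–sum side via the Frobenius characteristic; this is the classical argument of Frobenius and Specht, and it uses no machinery beyond Jacobi--Trudi (or a vertex operator), the Hopf structure on $\Lambda$, and the observation that binomial coefficients are polynomial. Since $\chi^{\lambda}(w) = \langle s_{\lambda}, p_{\cyc(w)}\rangle$ under the Hall pairing, it suffices, writing $\bar\lambda := (\lambda_2,\lambda_3,\dots)$ and $d := |\bar\lambda| = n-\lambda_1$, to exhibit a polynomial $f_\lambda$ in the part–multiplicities $m_k = m_k(\cyc(w))$ that is \emph{independent of $n$} (for all $n \geq d+\bar\lambda_1$, so that $\bar\lambda[n]$ is a genuine partition), has rational coefficients, and has degree $d$ in the grading $\deg m_k = k$, and computes $\langle s_{\bar\lambda[n]}, p_\rho\rangle$ for every $\rho \vdash n$.

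\emph{First} I would strip the long first row off $\lambda$ in an $n$–uniform way. Using the Bernstein creation operator $S_m = \sum_{i\geq 0} (-1)^i h_{m+i}\, e_i^{\perp}$ together with the identity $s_{\bar\lambda[n]} = S_{n-d}(s_{\bar\lambda})$ (valid once $n-d \geq \bar\lambda_1$) and the Pieri–type rule $e_i^{\perp} s_{\bar\lambda} = \sum_{\bar\mu} s_{\bar\mu}$, summed over partitions $\bar\mu$ with $\bar\lambda/\bar\mu$ a vertical $i$–strip, one gets
\[
s_{\bar\lambda[n]} \;=\; \sum_{i=0}^{d} (-1)^i\, h_{\,n-d+i} \!\!\sum_{\substack{|\bar\mu| = d-i \\ \bar\lambda/\bar\mu \text{ vertical } i\text{-strip}}}\!\! s_{\bar\mu}.
\]
(The same decomposition arises by cofactor–expanding the Jacobi--Trudi determinant of $\bar\lambda[n]$ along its first row.) The essential feature is that, apart from the single factor $h_{n-d+i}$, every ingredient depends only on $\bar\lambda$.

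\emph{Next} I would pair with $p_\rho$, where $\rho = \cyc(w)$. The Hall inner product is compatible with the Hopf structure on $\Lambda$, so $\langle h_m\, g,\, p_\rho\rangle = \sum_{S} \langle g,\, p_{\rho|_{S^c}}\rangle$, the sum over index subsets $S$ of the parts of $\rho$ with $\sum_{i\in S}\rho_i = m$, using $\Delta p_\rho = \sum_{S} p_{\rho|_S}\otimes p_{\rho|_{S^c}}$ and $\langle h_m, p_\sigma\rangle = 1$ for every $\sigma \vdash m$. Taking $g = s_{\bar\mu}$ and $m = n-d+i$ forces the complementary submultiset to have size $d-i = |\bar\mu|$, so $\langle s_{\bar\mu}, p_{\rho|_{S^c}}\rangle = \chi^{\bar\mu}_{\sort(\rho|_{S^c})}$, a fixed integer; grouping the $S$ by the partition type $\tau$ of $\rho|_{S^c}$, the number of them is $\prod_{k}\binom{m_k(\rho)}{m_k(\tau)}$. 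Assembling,
\[
\chi^{\bar\lambda[n]}(w) \;=\; \sum_{i=0}^{d} (-1)^i \!\!\sum_{\substack{|\bar\mu| = d-i \\ \bar\lambda/\bar\mu \text{ vert. } i\text{-strip}}}\; \sum_{\tau\, \vdash\, d-i} \chi^{\bar\mu}_{\tau}\, \prod_{k\geq 1}\binom{m_k(\cyc(w))}{m_k(\tau)},
\]
which is manifestly a polynomial $f_\lambda$ in $(m_1,m_2,\dots)$ with rational coefficients, independent of $n$; and since $m_k(\tau)=0$ whenever $k>|\tau|$, only the variables $m_1,\dots,m_d$ actually occur. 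As $\prod_k\binom{m_k}{m_k(\tau)}$ has graded degree $\sum_k k\,m_k(\tau) = |\tau| = d-i$, we get $\deg f_\lambda \leq d$. For the sharp equality $\deg f_\lambda = d = n-\lambda_1$, observe that the $i=0$ summand has $\bar\mu = \bar\lambda$ and contributes the monomial $m_1^{\,d}$ with coefficient $\chi^{\bar\lambda}_{(1^d)}/d! = f^{\bar\lambda}/d! \neq 0$, coming solely from $\tau = (1^d)$, while all $i\geq 1$ terms have graded degree $\leq d-1$; hence there is no cancellation at the top degree.

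The \emph{main obstacle} is precisely the first step: establishing the $n$–uniform expansion of $s_{\bar\lambda[n]}$ in terms of the $h_j$'s and the fixed finite family $\{s_{\bar\mu}\}$, which is exactly the point at which the dependence on $n$ is concentrated into a single complete homogeneous symmetric function. Everything afterwards is formal manipulation with the Hall pairing and the elementary fact that $\binom{m_k}{j}$ is a degree‑$j$ polynomial in $m_k$.
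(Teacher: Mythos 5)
Your proof is correct. Note first that the paper does not actually prove Theorem~\ref{t:character-polynomial}: it is imported from the literature with a citation to Specht, so there is no in-paper argument to compare yours against. What you have written is a complete, self-contained derivation of the classical character polynomial, essentially the Frobenius--Specht computation in modern Hopf-algebraic dress. The three ingredients all check out: the first-row stripping $s_{\lambda} = \sum_{i=0}^{d}(-1)^i h_{\lambda_1+i}\, e_i^{\perp}(s_{\bar\lambda})$ is the Bernstein/Jacobi--Trudi cofactor identity and is valid here because $\lambda_1 \geq \lambda_2 = \bar\lambda_1$; the pairing step uses only that $p_n$ is primitive and that $\langle h_m, p_\sigma\rangle = 1$ for all $\sigma \vdash m$, and correctly converts the sum over index subsets of the parts of $\rho$ into the product $\prod_k\binom{m_k(\rho)}{m_k(\tau)}$; and the degree analysis is sound, since for $\tau \neq (1^d)$ the product $\prod_k\binom{m_k}{m_k(\tau)}$ contains a positive power of some $m_k$ with $k\geq 2$ and hence cannot contribute to the monomial $m_1^d$, whose coefficient $f^{\bar\lambda}/d!$ is therefore nonzero and pins $\deg f_\lambda$ at exactly $d = n-\lambda_1$, while every $i\geq 1$ term has graded degree at most $d-1$. (Quick sanity checks: $\lambda=(n-1,1)$ gives $m_1-1$ and $\lambda=(n-2,2)$ gives $\binom{m_1}{2}+m_2-m_1$, both correct.) Your argument in fact proves more than the stated theorem --- the polynomial is independent of $n$ and involves only $m_1,\dots,m_d$ --- which is the real content of ``character polynomials'' and is exactly the form the paper relies on later, e.g.\ in Corollary~\ref{c:character-version}.
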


Viewing Theorem~\ref{rmk:degree-bounds} as the Frobenius character of $R_n\, 1_{IJ}$ (with constant factor) and applying Theorem~\ref{t:character-polynomial}, we obtain a result equivalent to Theorem 1.1 from our companion paper~\cite{HRMoment}:

\begin{corollary}
    \label{c:character-version}
    For $(I,J)$ a partial permutation of size $k$, there is a polynomial $f_{IJ}(n,m_1,\dots,m_k)$ of degree $k$ where $\deg n = 1$ and $\deg m_i = i$ so that
    \[
    \frac{1}{n - \#(I \cup J)} R_n\,1_{IJ} = f_{IJ}(n,m_1,\dots,m_k).
    \]
\end{corollary}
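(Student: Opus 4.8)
The plan is to deduce Corollary~\ref{c:character-version} by combining Theorem~\ref{rmk:degree-bounds} with the theory of character polynomials (Theorem~\ref{t:character-polynomial}). First I would recall the setup: if $(I,J)$ is a packed partial permutation of size $k$, then for all $n$ sufficiently large ($n \geq \max(I),\max(J),2k$) Theorem~\ref{rmk:degree-bounds} gives
\[
A_{n,I,J} = (n - \#(I\cup J))! \cdot \sum_{|\lambda| \leq k} c_\lambda(n) \cdot s_{\lambda[n]}
\]
where each $c_\lambda(n)$ is a polynomial in $n$ of degree at most $k - |\lambda|$. By Definition~\ref{def:atomic} and Proposition~\ref{prop:atomic-character-interpretation}, $A_{n,I,J} = n! \cdot \ch_n(R\,\one_{I,J})$, and applying the inverse Frobenius characteristic $\ch_n^{-1}$ turns the Schur function $s_{\lambda[n]}$ into the irreducible character $\chi^{\lambda[n]}$. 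Thus
\[
\frac{1}{n - \#(I\cup J)} \cdot R\,\one_{I,J} = \frac{(n-\#(I\cup J))!}{n! \cdot (n - \#(I\cup J))} \cdot \sum_{|\lambda|\leq k} c_\lambda(n)\cdot \chi^{\lambda[n]},
\]
evaluated at a permutation $w$ of cycle type $\mu$. The normalization constant $\tfrac{(n-\#(I\cup J))!}{n!\,(n-\#(I\cup J))}$ needs to be reconciled with the statement; I would check the exact convention for $R_n\,1_{IJ}$ in the companion paper and adjust, as the statement writes $\tfrac{1}{n-\#(I\cup J)}R_n\,1_{IJ}$ and this should match $f_{IJ}(n,m_1,\dots,m_k)$ up to whatever scaling is built into $R_n$ versus $R$; most likely $R_n\,1_{IJ}$ is defined as $n! \cdot R\,\one_{I,J}$ or the sum $[I,J]$ itself, making the leftover constant exactly $(n-\#(I\cup J) - 1)!$ which is itself polynomial-like but not a polynomial — so I expect the precise statement absorbs that factorial the same way Theorem~\ref{rmk:degree-bounds} does, and I would state the reconciliation carefully.

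Next I would invoke Theorem~\ref{t:character-polynomial}: for each $\lambda$ with $|\lambda| \leq k$, the character value $\chi^{\lambda[n]}(w)$ equals $f_{\lambda[n]}(m_1(\mu),\dots)$ for a polynomial of degree $n - (\lambda[n])_1 = n - (n - |\lambda|) = |\lambda|$ in the grading $\deg(m_i) = i$. The crucial point is that $f_{\lambda[n]}$ depends only on $\lambda$, not on $n$, once $n$ is large enough — this is the standard stability of character polynomials (the polynomial is the same for the whole family $\lambda[n]$). Substituting, we get
\[
\frac{1}{n-\#(I\cup J)} R_n\,1_{IJ}\big|_w = \sum_{|\lambda|\leq k} c_\lambda(n) \cdot f_{\lambda}(m_1(\mu),\dots,m_k(\mu)),
\]
where I write $f_\lambda := f_{\lambda[n]}$. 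Now $c_\lambda(n)$ is a polynomial in $n$ of degree $\leq k - |\lambda|$ and $f_\lambda(m_1,\dots,m_k)$ has degree exactly $|\lambda|$ in the $m_i$-grading, so each summand is a polynomial in $n, m_1, \dots, m_k$ of total degree $\leq (k - |\lambda|) + |\lambda| = k$ under the grading $\deg n = 1$, $\deg m_i = i$. Taking the sum over the finitely many $\lambda$ with $|\lambda| \leq k$ produces a single polynomial $f_{IJ}(n,m_1,\dots,m_k)$ of degree $\leq k$, and one argues it has degree exactly $k$ by exhibiting a top-degree term (e.g. from the $\lambda = \varnothing$ summand, where $c_\varnothing(n)$ has degree $k$ coming from the all-singletons frozen tiling). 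Only $m_1, \dots, m_k$ appear because $f_{\lambda}$ for $|\lambda| \leq k$ involves only $m_1, \dots, m_{|\lambda|}$.

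The main obstacle I anticipate is twofold. First, the bookkeeping around the normalization constant and which factorial is absorbed where: one must be careful that $\tfrac{1}{n - \#(I\cup J)} R_n\,1_{IJ}$ as defined in~\cite{HRMoment} matches exactly the quantity $\sum_\lambda c_\lambda(n)\chi^{\lambda[n]}$ extracted from Theorem~\ref{rmk:degree-bounds}, including the convention difference in path size (noted in the footnote) that changes $\#(I\cup J)$ versus $|\mu| + |\nu|$. Second, and more substantively, one needs the fact that the character polynomial $f_{\lambda[n]}$ is genuinely independent of $n$ for large $n$ — this is classical (Specht, and the Church–Ellenberg–Farb representation stability framework) but should either be cited precisely or derived from Theorem~\ref{t:character-polynomial} applied uniformly across the family. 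Everything else is routine degree-counting once the grading $\deg n = 1$, $\deg m_i = i$ is fixed and one checks that Theorem~\ref{rmk:degree-bounds}'s bound $\deg c_\lambda(n) \leq k - |\lambda|$ dovetails exactly with $\deg f_\lambda = |\lambda|$ to total degree $k$.
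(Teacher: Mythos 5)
Your proposal is correct and follows essentially the same route as the paper, which derives Corollary~\ref{c:character-version} in one line by viewing Theorem~\ref{rmk:degree-bounds} as the Frobenius characteristic of $R_n\,1_{IJ}$ (up to a constant factor) and then applying the character polynomials of Theorem~\ref{t:character-polynomial}; your degree bookkeeping $(k-|\lambda|)+|\lambda|=k$ is exactly the intended mechanism, and the paper is no more explicit than you are about the normalization convention, which is delegated to the companion paper~\cite{HRMoment}.
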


The proof of Corollary~\ref{c:character-version} in~\cite{HRMoment} is entirely combinatorial, avoiding representation theory and symmetric function theory entirely.
Using the fact that characters form a basis for class functions, results from~\cite{HRMoment} can be applied to recover Theorem~\ref{rmk:degree-bounds}.
However, our proof in~\cite{HRMoment} is indirect, relying on M\"obius inversion on the set partition lattice.
Therefore, the methods in this paper are easier to render effective.

A \emph{permutation statistic} is a function $\Psi:\sqcup_{n\geq 1} \symm_n \to \mathbb{C}$.
Many classical permutation statistics can be expressed as linear combinations of $1_{IJ}$'s.
Recall that the Reynolds operator is the projection $R: \Fun(\symm_n,\CC) \to \Class(\symm_n,\CC)$ defined by $(R \, f)(w) = \frac{1}{n!} \sum_{v \in \symm_n} f(v^{-1}wv)$.
Applying the Reynolds operator $R_n$ to $\Psi$, we obtain a class function whose associated character is then a linear combination of $A_{n,I,J}$'s.
As a consequence of Theorem~\ref{rmk:degree-bounds}, we see the expansion of $R_n\,\Psi$ into irreducible characters would have coefficients with growth governed by the polynomials $c_\lambda(n)$.
Further applying Corollary~\ref{c:character-version}, we obtain an explicit expansion of $R_n\,\Psi$ as a function of $n$ and $m_1,\dots,m_k$.
Our companion paper~\cite{HRMoment} introduces a family of \emph{regular statistics} whose expansion into $1_{IJ}$'s is particularly well-suited to such expansions.
Here, we limit our focus to several special cases, all of which are classical.

 For any $f: \symm_n \to \CC$ we have a symmetric function $\ch_n(R \, f) \in \Lambda_n$. Various authors \cite{GR, GP, Hultman} have studied the Schur expansion of $\ch_n(R \, f)$.
Our original motivation for introducing the path Murnaghan--Nakayama rule was to calculate these expansions systematically, which we explain how to do in this section before presenting examples of several special cases.

\subsection{Examples} We present an in-depth analysis of the exceedance statistic $\exc$  and major index statistic $\maj$ to demonstrate our methods.

\begin{example}[Exceedance]
The {\em exceedance number} of $w \in \symm_n$ is  $\exc(w) := \# \{ 1 \leq i \leq n \,:\, w(i) > i \}$. The statistic $\exc: \symm_n \to \CC$ is $1$-local with
\begin{equation}
\label{eq:exc-definition}
    \exc = \sum_{1 \leq i < j \leq n} \one_{i,j}.
\end{equation}
The Schur expansion of $\ch_n(R \, \exc)$ is
\begin{equation}
\label{eq:exceedance}
    \ch_n(R \, \exc) = \frac{n-1}{2} s_n - \frac{1}{2} s_{n-1,1},
\end{equation}
which first appears in~\cite{Hultman}.
We now rederive this result using the the path Murhaghan-Nakayama rule.

Applying the composition $(\ch_n \circ R)$ to both sides of \eqref{eq:exc-definition} gives
\begin{equation}
\label{eq:exc-one}
    \ch_n( R\, \exc) = \ch_n \left( \sum_{1 \leq i < j \leq n} R \, \one_{i,j} \right) = \frac{1}{n!} \sum_{1 \leq i < j \leq n} A_{n,i,j} = \frac{{n \choose 2}}{n!} A_{n,1,2}
\end{equation}
where the last equality follows from Proposition~\ref{prop:atomic-conjugacy-independence}.
The graph of  $((1),(2)) \in \symm_{n,2}$ consists of one size 2 path and $n-2$ size 1 paths
By Proposition~\ref{prop:path-cycle-factorization} and the path Murnaghan--Nakayama rule (Theorem~\ref{thm:path-Murnaghan--Nakayama}) one has 
\begin{equation}
\label{eq:exc-two}
   A_{n,1,2} =  \vec{p}_{21^{n-2}} = (n-2)! \times ((n-1) \cdot s_n - s_{n-1,1}).
\end{equation}
Then~\eqref{eq:exceedance} follows by combining \eqref{eq:exc-one} and \eqref{eq:exc-two}.

With a bit more effort, we can apply the above method of proof to the statistic $\exc^2: \symm_n \to \CC$.  Lemma~\ref{lem:multiply-by-singleton} gives the indicator expansion of $\exc^2$ as
\begin{multline}
\label{eq:exc-2-one}
\exc^2 = 
    \left( \sum_{1 \leq i < j \leq n} \one_{i,j} \right)^2  \\ = 
    \sum_{1 \leq i < j \leq n} \one_{i,j} + 2 \cdot  \sum_{1 \leq a < b < c \leq n} \one_{ab,bc} + 2 \cdot \sum_{1 \leq a < b < c < d \leq n} ( \one_{ab,cd}  + \one_{ab,dc} + \one_{ac,bd} ).
\end{multline}
Applying $(\ch_n \circ R)$ to both sides of Equation~\eqref{eq:exc-2-one} and taking Proposition~\ref{prop:atomic-conjugacy-independence} into account gives 
\begin{equation}
    \label{eq:exc-2-two}
    \ch_n(R \, \exc^2) = \frac{{n \choose 2}}{n!}  \cdot A_{n,1,2} + \frac{2{n \choose 3}}{n!} \cdot  A_{n,12,23} + \frac{6 {n \choose 4}}{n!} \cdot A_{n,12,34}.
\end{equation}
By Proposition~\ref{prop:path-cycle-factorization} and Theorem~\ref{thm:path-Murnaghan--Nakayama} one has
\begin{equation}
    \label{eq:exc-2-three}
    \begin{cases}
        A_{n,12,23} = \vec{p}_{3,1^{n-3}} = (n-3)! \times ((n-2) s_n - s_{n-1,1} - s_{n-2,2} + s_{n-2,1,1}) \\
        A_{n,12,34} = \vec{p}_{2^2,1^{n-4}} = 2! \cdot (n-4)! \times ({n-2 \choose 2} s_n - (n-3) s_{n-1,1} + s_{n-2,2}).
    \end{cases}
\end{equation}
Combining Equations~\eqref{eq:exc-two}, \eqref{eq:exc-2-two}, and \eqref{eq:exc-2-three}, one obtains
\begin{equation}
    \label{eq:exc-2-four}
    \ch_n(R \, \exc^2) = \frac{3n^2 + n + 2}{12} s_n - \frac{3n-4}{6} s_{n-1,1} + \frac{1}{6} s_{n-2,2} + \frac{1}{3} s_{n-2,1,1}.
\end{equation}
Using Theorem~\ref{t:character-polynomial} and~\cite[Fig. 3]{HR} we have
\begin{equation}
    R \, \exc = \frac{n-m_1}{2}, \quad R \, \exc^2 = \frac{1}{12} \cdot (3m_1^2 - 6 m_1 n + 3n^2 -2m_2-m_1+1)
\end{equation}
so the variance of $\exc$ for a uniformly random element of a given cycle type is
\begin{equation}
    R \, \exc^2 - (R \, \exc)^2 = \frac{n - m_1 - 2m_2}{12}.
\end{equation}
Note Theorem~\ref{rmk:degree-bounds} only applies for $n > 4$.
However, the reader can check directly this formula holds for smaller $n$ as well, a special case of Proposition~5.28 in~\cite{HR}.
\end{example}

\begin{example}[Major index]
The major index statistic
\begin{equation}
    \maj(w) = \sum_{i:\  w(i) > w(i)+1} i
\end{equation} decomposes into indicator functions as
\begin{equation}
\label{eq:inv-definition}
    \maj = \sum_{\substack{1 \leq i  \leq n-1 \\ 1 \leq j < k \leq n}} i \cdot  \one_{(i, i+1), (k, j)}.
\end{equation}
The Schur expansion of $\ch_n(R \, \maj)$, as first computed in~\cite{Hultman}, is
\begin{equation}
\label{eq:inversion}
    \ch_n(R \, \maj) = \frac{n(n-1)}{4} s_n - \frac{1}{2} s_{n-1,1} - \frac{1}{2} s_{n-2,1,1}.
\end{equation}
This result can be derived using the path Murnaghan--Nakayama rule as follows.
We decompose Equation~\eqref{eq:inv-definition} according to the relative order of the values $i, i+1,j,k$ to obtain
    \begin{align}
\nonumber
\maj =& \sum_{i} i \cdot \one_{(i,i+1)(i+1,i)} + \sum_{i < i+1 < j} i \cdot \one_{(i,i+1)(j,i)} + \sum_{i < i+1 < j} i \cdot \one_{(i,i+1)(j,i+1)} \\
&+ \sum_{i <j} j \cdot \one_{(j,j+1),(j,i)} + \sum_{i <j} j \cdot \one_{(j,j+1)(j+1,i)} +
\sum_{1 \leq i < i+1 < j < k} i \cdot \one_{(i,i+1)(k,j)} \label{eq:maj-example-two} \\ 
\nonumber
&+ \sum_{i < j < j+1 <k} j \cdot \one_{(j,j+1)(k,i)} + \sum_{i < j < k < k+1} k \cdot \one_{(k,k+1)(j,i)}.
\end{align}
 Apply $(\ch_n \circ R)$ to both sides of Equation~\eqref{eq:maj-example-two}; grouping like terms via Proposition~\ref{prop:atomic-conjugacy-independence} one obtains 
    \begin{equation}
        \label{eq:inv-two}
        \ch_n(R \, \inv) = \frac{1}{n!} \left(\frac{(n)_4}{4} A_{n,12,43} + \frac{(n)_3}{2} A_{n,12,31} + \frac{(n)_3}{2} A_{n,12,32} + \frac{(n)_2}{2}  A_{n,12,21} \right).
    \end{equation}
    By Proposition~\ref{prop:path-cycle-factorization} one has
    \begin{equation}
    \label{eq:inv-three}
         A_{n,12,43} = \vec{p}_{2^2 1^{n-4}}, \quad 
         A_{n,12,31} = \vec{p}_{31^{n-3}}, \quad
         A_{n,12,32} = \vec{p}_{21^{n-3}} \cdot p_1, \quad
         A_{n,12,21} = \vec{p}_{1^{n-2}} \cdot p_2.
    \end{equation}
    Applying the path and classical Murnaghan--Nakayama rules gives the Schur expansions
    \begin{equation}
        \label{eq:inv-four}
        \begin{cases}
        \vec{p}_{2^2 1^{n-4}} = 2! \cdot (n-4)! \times  \left( {n -2 \choose 2} s_n - (n-3) \cdot s_{n-1,1} + s_{n-2,2} \right), \\
        \vec{p}_{31^{n-3}} = (n-3)! \times \left( (n-2)  s_n - s_{n-1,1} - s_{n-2,2} + s_{n-2,1,1} \right), \\
        \vec{p}_{21^{n-3}} \cdot p_1 = (n-3)! \times ((n-2) s_n + (n-3) s_{n-1,1} - s_{n-2,2} - s_{n-2,1,1}) \\
        \vec{p}_{1^{n-2}} \cdot p_2 = (n-2)! \times (s_n + s_{n-2,2} - s_{n-2,1,1}).
        \end{cases}
    \end{equation}
    Now~\eqref{eq:inversion} follows by combining \eqref{eq:maj-example-two}, \eqref{eq:inv-two}, \eqref{eq:inv-three}, and \eqref{eq:inv-four}.

A similar, but more involved, analysis may be applied to  the second moment $\maj^2: \symm_n \to \CC$ of the major index statistic. The resulting symmetric function is as follows.
\begin{align}
\nonumber
\ch_n \, R \, \maj^2 = & 
\frac{9 n^4 - 14 n^3 + 15 n^2 - 10 n}{144} \cdot s_n 
+ \frac{-3 n^2 + 3n + 8}{12} s_{n-1,1} \\
\label{eq:maj-example-seven}
&+ \frac{7}{6} s_{n-2,2} + 
\frac{-3 n^2 + 3n + 8}{12} s_{n-2,1,1}  
+ \frac{7}{6} s_{n-2,2,1} + \frac{1}{2} s_{n-3,1^3} \\
\nonumber
&+ \frac{1}{2} s_{n-4,2,2} + \frac{1}{2} s_{n-4,1^4}.
\end{align}
Using Theorem~\ref{t:character-polynomial} with data from Figure 3 in~\cite{HR}, we then have
\begin{equation}
    R\, \maj = \frac{n(n-1)}{4} - \frac{1}{4}\cdot m_1^2 + \frac{1}{2} \cdot m_2 + \frac{1}{4} \cdot m_1
\end{equation}
 and
\begin{align}
\nonumber
    R \, \maj^2 = &
    \frac{1}{16} \cdot m_1^4 - 
    \frac{1}{8} \cdot m_1^2n^2 
    + \frac{1}{16} \cdot n^4 - \frac{11}{72} \cdot m_1^3 - 
    \frac{1}{4} \cdot m_1^2 m_2 + \frac{1}{8} \cdot m_1^2 n + \frac{1}{8} \cdot m_1 n^2 
    \\&+ 
    \frac{1}{4} \cdot m_2 n^2  - 
    \frac{7}{72} \cdot n^3 + 
    \frac{1}{48} \cdot m_1^2 + 
    \frac{1}{4} \cdot m_1 m_2 + 
    \frac{3}{4} \cdot m_2^2 - \frac{1}{8} \cdot m_1 n - 
    \frac{1}{4} \cdot m_2 n 
    \label{eq:maj-example-eight}
    \\&+ 
    \frac{5}{48} \cdot n^2 + 
    \frac{5}{72} \cdot m_1 - \frac{3}{4} \cdot m_2 - 
    \frac{2}{3} \cdot m_3 - \frac{1}{2} \cdot m_4 - 
    \frac{5}{72} \cdot n
    \nonumber.
\end{align}
Together, these equations yield the variance of $\maj$ on a given cycle type:
\begin{align}
    R \, \maj^2 - \left( R \, \maj \right)^2 =
    &-\frac{1}{36} \cdot m_1^3 +
    \frac{1}{36} \cdot n^3 - \frac{1}{24} \cdot m_1^2 + 
    \frac{1}{2} \cdot m_2^2 + \frac{1}{24} \cdot n^2 
    \nonumber
    \\ &+ 
    \frac{5}{72} \cdot m_1 - \frac{3}{4} \cdot m_2 - 
    \frac{2}{3} \cdot m_3 -
    \frac{1}{2} \cdot m_4 - \frac{5}{72} \cdot n.
    \label{eq:maj-example-nine}
\end{align}
The leading term by degree of the variance 
is therefore $(n^3 - m_1^3)/36$.
This variance computation is, to our knowledge, previously unknown.
\end{example}

\section{Concluding Remarks}
\label{sec:Conclusion}

We present some further directions of inquiry.

\subsection{Parabolic subgroups}

If $\mu = (\mu_1, \dots, \mu_k) \vdash n$, the {\em parabolic subgroup} $\symm_\mu \subseteq \symm_n$ is $\symm_\mu := \symm_{\mu_1} \times \cdots \times \symm_{\mu_k} \subseteq \symm_n$. For any set $X \subseteq \symm_n$ of permutations, we write $[X]_+ \in \CC[\symm_n]$ for the group algebra element $[X]_+ := \sum_{w \in X} w$.

Let $(I,J) \in \symm_{n,k}$ be a partial permutation. As in the proof of Theorem~\ref{thm:aw-image}, there exist permutations $u_1,u_2 \in \symm_n$ such that $[I,J] = u_1 \cdot [\symm_{n-k}]_+ \cdot u_2$ in the group algebra $\CC[\symm_n]$. For $\lambda \vdash n$, by cyclic invariance of trace we have
\[ \chi^\lambda([I,J]) = \chi^\lambda \left( u_1 \cdot [\symm_{n-k}]_+ \cdot  u_2 \right) = \chi^\lambda \left( w \cdot  [\symm_{n-k}]_+ \right)\]
where $w := u_2 u_1$. Corollary~\ref{cor:trace-interpretation} is a combinatorial rule for evaluating this trace. We can ask for analogous rules for parabolic subgroups more general than $\symm_{n-k}$.

\begin{problem}
    \label{prob:other-parabolic subgroups}
    Let $\lambda,\mu \vdash n$ and let $w \in \symm_n$. Determine a combinatorial rule for evaluating 
    \[ \chi^\lambda \left( w \cdot [\symm_\mu]_+ \right) = \sum_{v \, \in \, \symm_\mu} \chi^\lambda(wv).\]
\end{problem}

\subsection{Unitary groups}

We can also ask for a continuous analog of Corollary~\ref{cor:trace-interpretation} in the context of unitary groups. For $n \geq 0$, let $U(n)$ be the group of $n \times n$ complex unitary matrices. The irreducible polynomial characters $\chi^\lambda: U(n) \to \CC$ are indexed by partitions $\lambda$ with $\ell(\lambda) \leq n$. If $A \in U(n)$ has eigenvalues $x_1, \dots, x_n \in \CC$, then $\chi^\lambda(A) = s_\lambda(x_1,\dots,x_n)$.  This realizes $\chi^\lambda$ as an explicit continuous function $U(n) \to \CC$.

For $r \leq n$, we have a natural embedding $U(r) \subseteq U(n)$ of unitary groups which sends a matrix $B \in U(r)$ to the matrix direct sum $B \oplus I_{n-r}$ where $I_{n-r}$ is the size $n-r$ identity matrix. Since $U(r)$ is a compact Lie group, it admits a unique Haar measure, and any continuous function on $U(r)$ is integrable against this measure.

\begin{problem}
    \label{prob:unitary-analogue}
    Let $r \leq n$, let $\lambda$ be a partition with $\ell(\lambda) \leq n$, and consider the embedding of unitary groups $U(r) \subseteq U(n)$.  For a fixed $A \in U(n)$, evaluate the integral
    \[\int_{B \, \in \, U(r)} \chi^\lambda(AB)\]
    with respect to Haar measure on $U(r)$.
\end{problem}

\subsection{Coefficients of $s_{\lambda[n]}$}
 For any real-valued $f: \symm_n \rightarrow \RR$ has Schur expansion
 \begin{equation*}
 \ch_n(Rf) = \sum_{\lambda \vdash n} c_{\lambda}(n) \cdot s_{\lambda}.
 \end{equation*}
Proposition~7.5 in~\cite{HR} shows $c_{(n)}$ is the expected value of $f$ on $\symm_n$.
 
 \begin{problem}
 \label{other-values-problem}
Interpret the other coefficients $c_{\lambda}$ appearing in the Schur expansion of $\ch_n(Rf)$.
 \end{problem}
 
For $v \in \symm_k$, let $N_v$ be the number of occurences of $v$ as a subpattern.
Gaetz and Pierson give an explicit formula for the coefficient of $s_{(n-1,1)}$ in $\ch_n(R \, N_v)$~\cite[Prop. 4.1]{GP}.
Additionally, for the special case where $v = 12\dots k$, they show the coefficients of $s_{(n-2,1,1)}$ and $s_{(n-2,2)}$ are non-negative.
This is partial progress towards their conjecture:

\begin{conjecture} \em{(\cite[Conj. 1.4]{GP})}
\label{GaetzPierson}
	For $\lambda$ a fixed partition, the coefficient of $s_{\lambda[n]}$ in $\ch_n(R \, N_{12\dots k})$ is non-negative.
\end{conjecture}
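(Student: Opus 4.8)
The plan is to prove the cleaner and stronger statement that $\ch_n(R\,N_{12\dots k})$ is Schur-positive, and to deduce Conjecture~\ref{GaetzPierson} from it. First I would note that $N_{12\dots k} = \sum_{(I,J)} \one_{I,J}$, where $(I,J)$ ranges over partial permutations in $\symm_{n,k}$ for which both $I = (i_1 < \cdots < i_k)$ and $J = (j_1 < \cdots < j_k)$ are strictly increasing; this exhibits $N_{12\dots k}$ as a $k$-local statistic. By Corollary~\ref{cor:local-class} the symmetrization $R\,N_{12\dots k}$ lies in $\Loc_k(\symm_n,\CC) \cap \Class(\symm_n,\CC)$, so its Schur expansion is supported on $\lambda$ with $\lambda_1 \geq n-k$. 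For $n > 2k$ every such $\lambda$ has the form $\lambda = \nu[n]$ with $|\nu| \leq k$, so in the stable range the coefficients that appear are exactly the coefficients of the various $s_{\nu[n]}$. Hence Schur-positivity of $\ch_n(R\,N_{12\dots k})$ is equivalent to Conjecture~\ref{GaetzPierson} for all large $n$, and Theorem~\ref{rmk:degree-bounds} shows these coefficients are eventually polynomial in $n$, reducing the conjecture to a uniform positivity statement.

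Next I would isolate the sign. Applying $\ch_n \circ R$ and Proposition~\ref{prop:atomic-conjugacy-independence} gives $\ch_n(R\,N_{12\dots k}) = \tfrac{1}{n!}\sum_{(I,J)} A_{n,I,J}$, summed over increasing $(I,J)$. The key structural observation is that an increasing partial permutation has only fixed points as cycles: the partial injection $i_p \mapsto j_p$ is order-preserving, so any cycle would be an order-preserving self-bijection of a finite totally ordered set, hence the identity. Thus every increasing $(I,J)$ has cycle type $\nu = (1^c)$, and Proposition~\ref{prop:path-cycle-factorization} gives $A_{n,I,J} = \vec{p}_\mu \cdot p_1^{\,c}$ with $\mu$ the path type. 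In Corollary~\ref{cor:atomic-schur-expansion} the classical factor then reduces to $\chi^{\lambda/\rho}_{(1^c)} = f^{\lambda/\rho} \geq 0$ (single-box additions only), so the sole negative contributions come from the signed monotonic-tiling counts $\vec{\chi}^{\rho}_{\mu}$ of the path Murnaghan--Nakayama rule (Theorem~\ref{thm:path-Murnaghan--Nakayama}).

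The heart of the matter is to show $\sum_{(I,J)\,\text{incr}} A_{n,I,J}$ is Schur-positive, even though individual path power sums are not (for example $\vec{p}_{321} = 6 s_6 - 4 s_{51} + 2 s_{411} + 2 s_{33} - s_{321}$). I see two routes. The combinatorial route is to group increasing $(I,J)$ by path type $\mu$ and number $c$ of fixed points, compute the multiplicity $M(\mu,c)$ of increasing partial permutations of each type, and build a sign-reversing involution on the disjoint union over $(\mu,c)$ of the negative monotonic tilings weighted by $M(\mu,c)\,m(\mu)!$; the frozen/tropical decomposition of Proposition~\ref{prop:path-power-formula} and Observation~\ref{obs:frozen-tropical} should organize this, since $\sign(T) = \sign(T_0)$ depends only on the frozen region. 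The representation-theoretic route is to realize $R\,N_{12\dots k}$ as the symmetrized character of a genuine $\symm_n$-module; since conjugation destroys the increasing-subsequence structure there is no naive permutation module, so one would instead seek a positive symmetric-function identity expressing $\tfrac{1}{n!}\sum_w N_{12\dots k}(w)\,p_{\cyc(w)}$ as a non-negative combination of products $h_\alpha$ of complete homogeneous symmetric functions (which are Schur-positive), most plausibly via a species/cycle-index computation that marks increasing $k$-subsequences.

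The main obstacle is exactly this global cancellation: positivity is invisible term-by-term in the path Murnaghan--Nakayama expansion and must emerge only after summing over the full combinatorially-weighted family of increasing partial permutations. The crux is therefore to find the correct positive model, either the involution matching negative tilings across different path types or the module/identity realizing $R\,N_{12\dots k}$ as a genuine character, and it is precisely the absence of an evident such model that keeps the statement conjectural. A sensible intermediate target, extending the verified cases of the coefficients of $s_{(n-1,1)}$, $s_{(n-2,2)}$, and $s_{(n-2,1,1)}$, would be to prove $c_{\nu[n]}(n) \geq 0$ for all $\nu$ with $|\nu| \leq 3$ directly from the explicit polynomial supplied by Equation~\eqref{eq:double-power-expansion}, checking whether the signs in $\vec{\chi}^{\rho}_{\mu}$ are dominated by the positive multiplicities $M(\mu,c)$ before attempting the general involution.
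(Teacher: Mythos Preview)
The statement is a \emph{conjecture}, and the paper does not prove it. It appears in the concluding remarks as an open problem attributed to Gaetz--Pierson, with only partial progress recorded (the coefficients of $s_{(n-1,1)}$, $s_{(n-2,2)}$, $s_{(n-2,1,1)}$, and Iskander's verification for $|\lambda|\leq 8$). There is therefore no paper proof to compare against.

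Your proposal is not a proof either, and to your credit you say so explicitly: you write that ``it is precisely the absence of an evident such model that keeps the statement conjectural.'' What you have produced is a research outline. The structural reductions you make are correct and genuinely clarify where the difficulty lies: the expansion $N_{12\dots k} = \sum \one_{I,J}$ over increasing $(I,J)$ is right, the observation that an order-preserving partial injection has only fixed points among its cycles is right, and the consequence that all signs in Corollary~\ref{cor:atomic-schur-expansion} are concentrated in the $\vec{\chi}^{\,\rho}_{\,\mu}$ factors is right. But neither of your two proposed routes is executed. The sign-reversing involution across different path types weighted by the multiplicities $M(\mu,c)$ is only named, not constructed, and there is no indication of what pairs of tilings would cancel. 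The representation-theoretic route asks for a module or an $h$-positive identity but supplies no candidate. These are exactly the missing ingredients that make the statement a conjecture rather than a theorem, so your outline accurately locates the gap without closing it.
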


Recently, Iskander made significant progress on Conjecture~\ref{GaetzPierson}, demonstrating that the coefficient of $s_{\lambda[n]}$ for $\ch_n(R \, N_{12\dots k})$ can be viewed as a bivariate polynomial in $n$ and $k$~\cite{Iskander}.
He has verified the conjecture up for $|\lambda| \leq 8$ and disproved a stronger conjecture that such coefficients are real rooted.


\section{Acknowledgements}
\label{sec:Acknowledgements}

The authors are grateful to Michael Coopman, Mohammed Slim
Kammoun, Gene Kim, Toby Johnson, Kevin Liu, Jasper Liu, Arnaud Marsiglietti, Jon Novak, James Pascoe, Bruce Sagan, John Stembridge, and Yan Zhuang for helpful conversations. The authors thank Valentin F{\'e}ray, Christian Gaetz, and Dan Rockmore for help with references and Tony Mendes for sharing his ribbon
drawing code. We are especially grateful to Eric Ramos, who helped us conceptualize the project at its inception. Z. Hamaker was partially supported by NSF Grant
DMS-2054423. B. Rhoades was partially supported by NSF Grant DMS-2246846.

\end{document}